\DeclarePairedDelimiterX{\Iintv}[1]{\llbracket}{\rrbracket}{\iintvargs{#1}}
\NewDocumentCommand{\iintvargs}{>{\SplitArgument{1}{,}}m}
{\iintvargsaux#1} 
\NewDocumentCommand{\iintvargsaux}{mm} {#1\mkern1.5mu,\mkern1.5mu#2}
\newtheorem*{rep@theorem}{\rep@title}
\newcommand{\newreptheorem}[2]{
\newenvironment{rep#1}[1]{
 \def\rep@title{#2 \ref{##1}}
 \begin{rep@theorem}}
 {\end{rep@theorem}}}
\newcommand{\E}{\mathbb{E}}
\newcommand{\Z}{\mathbb{Z}}
\newcommand{\I}{\mathbb{I}}
\newcommand{\kC}{\mathcal{C}}
\newcommand{\rmP}{\mathrm{P}}
\newcommand{\rme}{{\mathrm{E}}}
\newcommand{\cvlaw}{\stackrel{{ (d)}}{\longrightarrow}}
\newcommand{\lin}{\left[\kern-0.15em\left[}
\newcommand{\rin} {\right]\kern-0.15em\right]}
\newcommand{\linf}{[\kern-0.15em [}
\newcommand{\rinf} {]\kern-0.15em ]}
\newcommand{\ilin}{\left]\kern-0.15em\left]}
\newcommand{\irin} {\right[\kern-0.15em\right[}
\def\al#1{\begin{align*}#1\end{align*}}
\def\aln#1{\begin{align}#1\end{align}}
\newcommand{\secno}[1]{\thesection.\arabic{#1}}
\renewcommand{\tilde}{\widetilde}
\newtheorem{lem}{Lemma}[section]
\newtheorem{remark}[lem]{Remark}
\newtheorem{prop}[lem]{Proposition}
\newtheorem{thm}[lem]{Theorem}
\newtheorem{cor}[lem]{Corollary}
\newtheorem {defin}[lem] {Definition}
\newtheorem {rem}[lem] {Remark}
\newcounter{assu}
\definecolor{lilas}{RGB}{182, 102, 210}
\newcommand{\CO}{\color{red}}
\numberwithin{equation}{section}
\title[High moments of two dimensional directed polymers up to quasi-criticality]
{high moments of 2d directed polymers up to quasi-criticality}
\date{\today}
\author{Cl\'ement Cosco} 
\address[Cl\'ement Cosco]
{Weizmann Institute of Science, Israel.}
\email{clement.cosco@gmail.com}
\author{Shuta Nakajima} 
\address[Shuta Nakajima]
{Meiji University, Kanagawa, Japan}
\email{njima@meiji.ac.jp}
\keywords{Directed polymer, Branching random walk, Large deviations.}
\subjclass[2010]{Primary 60K37; secondary 60K35; 82C44; 82D30}
\begin{document}

\begin{abstract}
%Let $W_N(\beta) = Z_N(\beta)/\mathbb E[Z_N(\beta)]$ where $Z_N(\beta)= \mathrm{E} \exp\{\sum_{n\leq N} (\beta \omega(n,S_n)\}$ is the partition function of two-dimensional directed polymers, with $(\mathrm{P},(S_n)_n)$ the simple random walk on $\mathbb Z^2$ and $\omega(n,x)$ are centered i.i.d.\@ random variable of variance one and exponential moments. For $\beta_N = $
We consider two-dimensional directed polymers in random environment in the sub-critical regime and in the quasi-critical regime introduced recently by \textit{Caravenna, Cottini and Rossi,  arXiv:2307.02453v1}. 
For $q\leq q_N$ with $q_N\to\infty$ diverging at a suitable rate with the size of the system, we obtain upper bound estimates on the $q$-moment of the partition function for general environments. In the sub-critical regime, our results improve the $q_N$-threshold obtained for Gaussian environment in \textit{Cosco, Zeitouni, Comm. Math. Phys (2023)}.  As a corollary, we derive large deviation estimates with a Gaussian rate function. 
\end{abstract}
 
\maketitle

\section{Introduction}
\subsection{The model}
Let $\omega(n,x)$, $n \in \mathbb{N}$, $x \in \mathbb{Z}^2$, be independent and identically distributed (i.i.d.)\ with common distribution $\omega(n,x)\sim \mu$ of mean $0$, variance $1$, and finite exponential moments. Define $\Lambda(\beta) := \Lambda^{\mu}(\beta) := \log \mathbb{E}[e^{\beta \omega(n,x)}] \in \mathbb{R}_+$ for all $\beta \in [0,\infty)$, and $\Lambda_p(\beta) := \Lambda^{\mu}_p(\beta) := \Lambda(p\beta) - p\Lambda(\beta)$ for $p \geq 2$.
 
Let $(S_n)_{n\geq 0}$ be a simple random walk on $\mathbb Z^2$ and write $p_n(x) := \rmP_0(S_n=x)$, {where $\rmP_x$ and ${\rm E}_x$ stand for the probability and expectation of the simple random walk starting at $x\in\mathbb{Z}^2$}. Define the \emph{directed polymer partition function} by
\[
W_N(x,\beta) := {\rm E}_x\left[ e^{\sum_{n=1}^N (\beta \omega(n,x) - \Lambda(\beta))}\right], \quad W_N(\beta) := W_N(0,\beta).
\] 

For $q\in \mathbb N$, let 
$(S^1,\dots,S^q)$ be independent copies of the simple random walk and denote by ${\rm E}_X^{\otimes q}$ the expectation started at $X=(x_1,\dots,x_q) \in (\mathbb Z^2)^q$.  
Set (see \cite[Theorem 2.1.3]{LL10})
\[
R_N := \rme^{\otimes 2}_0\sum_{n=1}^N \mathbf 1_{S_n^1=S_n^2} = \pi^{-1} {\log N} + \mathcal O_N(1).\]
Rescaling the temperature as $\beta_N^2 := {\hat \beta^2}/{R_N} \to 0$ for $\hat \beta \in \mathbb R_+$, it is known from the seminal work of Caravenna, Sun and Zygouras \cite{CSZAAP} that for all $\hat \beta \geq 1$, $W_N(\beta_N)\to 0$ in probability while in the \textit{sub-critical regime} $\hat \beta <1$,% as $N\to\infty$,
\begin{equation} \label{eq:CLT}
	W_N(\beta_N) \cvlaw e^{X}, \quad X\sim \mathcal N(-\lambda^2/2,\lambda^2),
\end{equation}
Here, $\mathcal N(\mu,\sigma^2)$ denotes the normal distribution and $\lambda^2 := \lambda^2(\hat \beta) :=  \log \frac{1}{1-\hat \beta^2}$. See also \cite{CD24,CaCo22} for simplified proofs of the result, \cite{DG22} in the non-linear setting. See \cite{Z24} for a recent review on the model of directed polymers.

It is possible to zoom on the transition between $\hat \beta < 1$ and $\hat \beta= 1$ 
in the so-called region of \textit{quasi-criticality} introduced by Caravenna, Cottini and Rossi in \cite{CCR23}. Setting $\sigma_N^2:=e^{\Lambda_2(\beta_N)}-1$, it is defined as the temperature region where  $\sigma_N^2 = (1- \frac{\vartheta_N}{\log N})/{R_N}$ with  $1\ll \vartheta_N\ll \log N$.  (Taking $\vartheta_N = (1-\hat \beta^2) \log N$ leads to the sub-critical regime, and $\vartheta_N = \mathcal O(1)$ to the \emph{critical} regime.  We refer to the lecture notes \cite{CSZ2dSHFnotes} regarding the critical point and mention its connection to the 2D stochastic heat flow \cite{CaSuZyCrit21}.)

In this paper, we derive upper bounds on the moments $\mathbb{E}[W_N(\beta_N)^q]$ both in the sub-critical and the quasi-critical regimes.  Namely, we show that up to some threshold $q_N\to\infty$, for all $q\leq q_N$, the $q$-moment of $W_N(\beta_N)$ is dominated (up to leading order) by the moment of the exponential of a Gaussian random variable. As a corollary, we obtain large deviation estimates on $\log W_N(\beta_N)$ with a Gaussian rate function. 
In the sub-critical regime, we improve by a constant multiplicative factor the maximal $q_N$-value derived in \cite{Cosco2021MomentsUB}. This refinement turns out to be crucial (and sharp) for application to extreme value statistics of the field $\sup_{x\in [-1,1]^2} \log W_N(\beta_N,x\sqrt N)$, see \cite{CNZ25}. Moreover, we extend the assumption of Gaussian weights of \cite{Cosco2021MomentsUB} to general weights with finite exponential moments. In the quasi-critical regime, the results are new. (See also the recent paper \cite{LZ24} about finite moment \emph{at criticality}.)

We refer to \cite{CNZ25} regarding the motivation of studying high moments at sub-criticality through the connection to log-correlated fields. Preliminary calculations suggest that quasi-critical moment estimates may be involved in extreme value statistics at criticality.

\section{Results}       
Define \textit{sub-criticality} (SC)  as the regime of parameters given by
\begin{equation} 
\tag{SC}
\label{eq:sub-criticality}
\beta_N^2 := \frac{\hat{\beta}^2}{R_N}, \quad  \hat \beta \in (0,1).
\end{equation}
With
\begin{equation} \label{eq:def_sigmaNandbarsigma}
	\sigma_N^2 := e^{{\Lambda^{\mu}_2(\beta_N)}}-1, \quad \bar{\sigma}_N^2 := \pi^{-1} \sigma_N^2,
\end{equation}
the system is said to be \textit{quasi-critical} (QC) if
\begin{equation} \tag{QC}
\label{eq:defNearCrit}
	\sigma_N^2 = \frac{1}{R_N}\left(1-\frac{\vartheta_N}{\log N}\right) \quad \text{with} \quad 1\ll \vartheta_N \ll \log N. 
\end{equation}
\begin{rem}
Note that \eqref{eq:defNearCrit} is equivalent to letting $\beta_N^2 := \frac{1}{R_N} \left(1-\frac{\vartheta_N'}{\log N}\right)$ with $1\ll \vartheta_N' \ll \log N$. However, working with $\sigma_N^2$ is more suitable for calculations.
\end{rem}

\subsection{Moment bounds}
\begin{thm} \label{th:SCnost}
Assume  \eqref{eq:sub-criticality}.  Let $\alpha \in (0,1)$. There exists $C_\alpha > 0$ such that for all $q\in \mathbb{N}_{\geq 3}$ satisfying
\begin{equation}\label{eq:q-conditionSubClassic}
	 \binom{q}{2}   \leq  \alpha \frac{(1-\hat \beta^2)}{\hat \beta^2}\log N,
\end{equation}
we have
\[\mathbb{E}[W_N(\beta_N)^{q}] \leq C_\alpha e^{\lambda^2(\hat \beta) \binom{q}{2}(1+o_N(1))},
\]
with $\lambda^2(\hat \beta):=\log \Big(\frac{1}{1-\hat \beta^2}\Big)$, where $o_N(1)\to 0$ as $N\to\infty$, uniformly over all $q\in \mathbb{N}_{\geq 3}$ satisfying \eqref{eq:q-conditionSubClassic}.
\end{thm}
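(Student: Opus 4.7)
My plan is to begin with the standard moment identity obtained by introducing $q$ independent copies $S^1,\dots,S^q$ of the simple random walk and integrating out $\omega$,
\[
\mathbb{E}[W_N(\beta_N)^q] = \mathrm{E}_0^{\otimes q}\Big[\exp\Big(\sum_{n=1}^N\sum_{x\in\mathbb{Z}^2}\Lambda_{N_n(x)}(\beta_N)\Big)\Big],\quad N_n(x) := \#\{i:S^i_n=x\},
\]
in which only sites with $N_n(x)\geq 2$ contribute because $\Lambda_1\equiv 0$. Using that \eqref{eq:q-conditionSubClassic} forces $q\beta_N$ to stay bounded (essentially by $\sqrt{2\pi\alpha(1-\hat\beta^2)}$) and that triple or higher-order encounters of the $q$ walks are rare in two dimensions, I would compare $\Lambda_p(\beta_N)$ to its ``pair-only'' value $\binom{p}{2}\log(1+\sigma_N^2)$ for $2\leq p\leq q$, treating the $p\geq 3$ cumulant corrections as a perturbation to be absorbed in an $o_N(1)$ factor. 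The outcome is
\[
\mathbb{E}[W_N^q]\leq \mathrm{E}_0^{\otimes q}\Big[\exp\Big(\gamma_N\sum_{i<j}L^{i,j}_N\Big)\Big](1+o_N(1)),
\]
where $L^{i,j}_N := \sum_{n=1}^N\mathbf 1_{\{S^i_n=S^j_n\}}$ and $\gamma_N := \log(1+\sigma_N^2)(1+o_N(1))$, so that $\delta_N := e^{\gamma_N}-1 = \sigma_N^2(1+o_N(1))$ and $\delta_N R_N = \hat\beta^2(1+o_N(1))$.

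\textbf{Combinatorial expansion of the collision exponential.} I would next expand $\prod_{i<j}e^{\gamma_N L^{i,j}_N} = \prod_{i<j}\prod_{n=1}^N (1+\delta_N\mathbf 1_{\{S^i_n=S^j_n\}})$, take the $q$-walk expectation, and obtain a series indexed by ``collision patterns''---finite collections of triples $(i,j,n)$ with $i<j$ and $n\in\{1,\ldots,N\}$---each weighted by $\delta_N^{|\text{pattern}|}$ times a multi-walk collision probability. By the Markov property, these probabilities reduce to products of the two-walk kernel $u_m := \mathrm P(S^1_m=S^2_m)\sim (\pi m)^{-1}$. The plan is to organize the patterns by their ``contact graph'' on the $\binom{q}{2}$ pairs and to show that the dominant contribution comes from patterns that decompose into disjoint time chains, one per pair. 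Such patterns factorize, and each pair contributes the geometric series
\[
\sum_{k\geq 0}(\delta_N R_N)^k = \frac{1}{1-\hat\beta^2}(1+o_N(1)).
\]
Taking the product over the $\binom{q}{2}$ pairs then gives the target bound $(1-\hat\beta^2)^{-\binom{q}{2}(1+o_N(1))} = e^{\lambda^2(\hat\beta)\binom{q}{2}(1+o_N(1))}$.

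\textbf{Main obstacle.} The hard step is the combinatorial analysis just described. A direct H\"older bound would yield $\mathrm E^{\otimes 2}[e^{\binom{q}{2}\gamma_N L_N}]$, which converges only when $\binom{q}{2}\hat\beta^2<1$, vastly more restrictive than \eqref{eq:q-conditionSubClassic}. The essential task is therefore to control the ``mixed'' diagrams in which chains of distinct pairs share time indices, and to show that these contribute only to the $(1+o_N(1))$ correction. The strict inequality $\alpha<1$ in \eqref{eq:q-conditionSubClassic} is what provides the buffer allowing such mixed contributions to be suppressed and the sharp factor $(1-\hat\beta^2)^{-1}$ per pair to emerge. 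A secondary difficulty, specific to the extension from Gaussian to general environments, is uniform control of the cumulant corrections $\Lambda_p(\beta_N)-\binom{p}{2}\Lambda_2(\beta_N)$ over $2\leq p\leq q$; since $q\beta_N$ is only bounded (not vanishing), Taylor expansion alone does not suffice, and one must combine the two-dimensional rarity of multi-walk encounters with the uniform exponential integrability of $\omega$ to absorb the $p\geq 3$ terms.
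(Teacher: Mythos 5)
Your outline reproduces the paper's starting skeleton (the moment identity, the chaos expansion in powers of $\sigma_N^2$, the heuristic that the answer should look like one factor $(1-\hat\beta^2)^{-1}$ per pair), but it stops exactly where the actual proof begins, so there is a genuine gap rather than a proof. The step you label the ``main obstacle'' --- showing that diagrams in which chains of distinct pairs interleave in time contribute only a $(1+o_N(1))$ correction under \eqref{eq:q-conditionSubClassic} --- is the entire content of Sections \ref{sec:withoutp_0}--\ref{sec:mainProp} of the paper: the expansion into successive disjoint pairings (Proposition \ref{prop:expansion}), the replica-overlap kernel $F(u)=u^{-1}(1-\bar\sigma_N^2\log u)^{-1}$, and above all the induction scheme over long/small jumps and good/bad/fresh indices (Definition \ref{def:maindef}, Proposition \ref{prop:fibo1inverseMain}, Lemmas \ref{lem:estimate_cim}--\ref{lem:gamma_n(I)}) which quantifies the excess weight of ``mixed'' diagrams. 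Moreover, the mechanism is not the one you propose: the bound is not obtained by showing that dominant patterns factorize into one disjoint time chain per pair, each summing a full geometric series $\sum_k(\sigma_N^2R_N)^k$. Instead one bounds the number of $m$-block diagrams by roughly $\binom q2\bigl(\binom q2-1\bigr)^{m-1}$ (with penalties for small jumps), bounds the time sums by $(\lambda^2)^{m-i}b_N^i/(m-i)!$ via the induction, and resums; the condition $\binom q2 b_N\bar\sigma_N^2\le\alpha<1$ (i.e.\ \eqref{eq:q,T-condition}, which is \eqref{eq:q-conditionSubClassic} at $t=N$) is used to make a residual geometric series in $i$ converge, not merely as a qualitative ``buffer''. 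Saying that $\alpha<1$ ``provides the buffer allowing such mixed contributions to be suppressed'' names the difficulty without supplying any argument for it.

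The second gap is your treatment of higher-order coincidences and general weights. For $q^2$ of order $\log N$ the total ``energy'' of triple-and-higher interactions, $\sum_{|\bar J|\ge3}\sigma_N^{2|J|}$, is \emph{not} small (the paper points this out explicitly in Section \ref{sec:descriptionOfProof}), so they cannot be ``absorbed in an $o_N(1)$ factor'' a priori; the paper needs two distinct arguments: an a priori removal via the operator $\Upsilon_t$ and a second long-jump induction when $q^2\le\varepsilon_0\log N$ (Section \ref{sec:quasiCritRemoveTriple+}), and, for larger $q$, a truncation to at most $p_0$ coinciding particles combined with a stopping-time decomposition, H\"older, the collision-probability bound \eqref{eq:boundPAc} and the a priori bound of Lemma \ref{lem:aprioriBound} (Section \ref{sec:proofMainThSubcrit}). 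Likewise the comparison $\Lambda_p(\beta_N)\le(1+Cp\beta_N)\binom p2\Lambda_2(\beta_N)$ is only useful when the number of coinciding particles is restricted, which again requires the truncation machinery; you correctly flag this but offer no substitute. In short, the proposal is a correct high-level plan that matches the paper's strategy in outline, but the decisive combinatorial estimates are missing, and the specific route you sketch for them (factorization over pairs, a priori neglect of $p\ge3$ collisions) would not go through at the claimed range of $q$.
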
 
\begin{thm} \label{th:QCnost}
Assume \eqref{eq:defNearCrit}. Let $\alpha \in (0,1)$. There exists $C_\alpha>0$ such that for all $q\in \mathbb{N}_{\geq 3}$ satisfying
\begin{equation} \label{eq:QCtheta}
\binom{q}{2}  \leq \alpha\, \frac{(1-\bar \sigma_N^2 \log N)} {\bar \sigma_N^{2}},
\end{equation}
we have \[\mathbb{E}[W_N(\beta_N)^{q}] \leq C_\alpha e^{\lambda_N^2 \binom{q}{2}(1+o_N(1))},
\]
with $\lambda_N^2 := \log \Big(\frac{\log N}{\vartheta_N}\Big)$, where $o_N(1)\to 0$ as $N\to\infty$, uniformly over $q\in \mathbb{N}_{\geq 3}$ satisfying \eqref{eq:QCtheta}.
\end{thm}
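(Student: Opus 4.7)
The plan is to follow the approach used for Theorem~\ref{th:SCnost}, itself a refinement of \cite{Cosco2021MomentsUB}, and adapt it to the quasi-critical scaling while accommodating general environments. Expanding the environment expectation replica by replica yields
\[
\mathbb{E}[W_N(\beta_N)^q] = \mathrm{E}_0^{\otimes q}\Bigl[\exp\Bigl(\sum_{n=1}^N \Phi_q(S_n^1,\ldots,S_n^q)\Bigr)\Bigr],
\]
where, for positions $y=(y_1,\ldots,y_q)$ inducing the partition $P(y)$ of $\{1,\ldots,q\}$ into classes of equal coordinates, $\Phi_q(y)=\sum_{B\in P(y),\,|B|\geq 2}\Lambda_{|B|}(\beta_N)$. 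The first step is to reduce $\Phi_q$ to pair interactions by writing $\Lambda_{|B|}(\beta_N)=\binom{|B|}{2}\Lambda_2(\beta_N)+\mathcal{R}_{|B|}(\beta_N)$. In the Gaussian case $\mathcal{R}_{|B|}\equiv 0$; for general $\mu$ with finite exponential moments one has $|\mathcal{R}_{|B|}(\beta_N)|\leq C_{|B|}\beta_N^3$, so its contribution is supported on events where at least three walks meet simultaneously, which are suppressed by an extra factor $p_n(0)=\mathcal{O}(1/n)$.

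After this reduction, the task becomes to bound
\[
\mathrm{E}_0^{\otimes q}\Bigl[\prod_{1\leq i<j\leq q}\prod_{n=1}^N\bigl(1+\sigma_N^2\,\mathbf{1}_{S_n^i=S_n^j}\bigr)\Bigr].
\]
I would expand this over \emph{collision skeletons} (for each pair $(i,j)$, an increasing sequence of collision times) and estimate by induction on $q$, conditioning on the earliest collision among all pairs. By Chapman--Kolmogorov, the contribution of a single isolated pair is, to leading order, $(1-\sigma_N^2 R_N)^{-1}=\log N/\vartheta_N=e^{\lambda_N^2}$, matching the asymptotics of $\mathbb{E}[W_N^2]$. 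The main combinatorial task is to show that the interference among the $\binom{q}{2}$ pairs is multiplicative only up to a factor $e^{o(1)\binom{q}{2}\lambda_N^2}$: this is achieved by decoupling the two coinciding walks at the earliest collision time, applying the inductive hypothesis to the remaining $q-1$ walks on the interval past that time, and carefully enumerating the overlaps between collision times of distinct pairs.

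The main obstacle is the compounding of errors in the quasi-critical regime. Since $\sigma_N^2 R_N=1-\vartheta_N/\log N\to 1$, the two-point kernel $(1-\sigma_N^2 R_N)^{-1}$ diverges, and any multiplicative loss of $1+\varepsilon$ per pair inflates to $(1+\varepsilon)^{\binom{q}{2}}$, which must be absorbed into $e^{o_N(1)\binom{q}{2}\lambda_N^2}$. Under the asymptotics $\bar\sigma_N^2\log N\sim 1-\vartheta_N/\log N$, the constraint \eqref{eq:QCtheta} amounts to $\binom{q}{2}\leq\alpha\vartheta_N(1+o(1))$, and this is exactly the regime that keeps the compounded errors subleading. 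A secondary difficulty, compared to \cite{Cosco2021MomentsUB}, is that for non-Gaussian $\mu$ the cumulants $\Lambda_p(\beta_N)$ for $p\geq 3$ are non-zero and cannot be eliminated exactly; controlling them by a Taylor expansion at $\beta_N=0$ together with the summability $\sum_n p_n(0)^2=\mathcal{O}(1)$ of triple-meeting frequencies keeps their exponential contribution subleading under \eqref{eq:QCtheta}.
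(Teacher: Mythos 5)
Your setup is right (the moment formula \eqref{eq:identity}, the identification of the single-pair contribution $(1-\sigma_N^2R_N)^{-1}=b_N=e^{\lambda_N^2}$, and the reading of \eqref{eq:QCtheta} as $\binom q2\lesssim\alpha\vartheta_N$), but the core of the theorem is missing. The claim that "interference among the $\binom q2$ pairs is multiplicative only up to a factor $e^{o(1)\binom q2\lambda_N^2}$," to be obtained "by decoupling the two coinciding walks at the earliest collision time, applying the inductive hypothesis to the remaining $q-1$ walks, and carefully enumerating the overlaps," is a restatement of the goal, not an argument, and the specific induction does not close: after the earliest collision of a pair, all $q$ walks — including that pair — keep interacting on the remaining time interval, so there is no reduction to $q-1$ walks. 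The usable renewal structure is in time (Markov property at successive collision epochs), and once you set it up you face exactly the combinatorial problem the paper spends its Sections 3--7 on: a diagram expansion into successive non-nested interaction sets, a small-jump/long-jump bookkeeping to control diagrams with atypically few exchanges of pairs, and a geometric series whose ratio is essentially $\binom q2\,b_t\,\bar\sigma_N^2(1+\text{errors})$, cf.\ \eqref{eq:q,T-condition}. This last point matters quantitatively: in the quasi-critical regime a constant multiplicative loss \emph{per pair in the prefactor} is indeed harmless (since $\lambda_N^2\to\infty$, as you note), but a constant-factor loss \emph{in the convergence condition of that series} shrinks the admissible range of $q$ and would only give the theorem for $\alpha$ bounded away from $1$ — this is precisely the $1/3$-loss of the crude decoupling in \cite{Cosco2021MomentsUB} that the present result is designed to avoid. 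Your sketch gives no mechanism that avoids it.

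The second gap is the handling of higher cumulants/triple meetings. The remainders $\mathcal R_{|B|}$ are indeed supported on sites occupied by at least three walks, but they sit inside the same exponential as the pairwise part, whose accumulated contribution diverges like $b_N=\log N/\vartheta_N$; so a bound "triple meetings are rare because $\sum_n p_n(0)^2=\mathcal O(1)$" cannot be applied in isolation — you must control the cross terms between each triple intersection and arbitrarily many pairwise collisions before and after it, uniformly in $q\to\infty$ (and with constants uniform in $|B|\le q$, which a naive Taylor bound $C_{|B|}\beta_N^3$ does not give). In the paper this is done by folding the $\geq 3$-body terms into the expansion with modified weights (Lemma \ref{lem:GeneralWeights}) and proving that the resulting operator $\Upsilon_t$ has norm strictly less than one (Proposition \ref{prop:UpsT}), which uses the weak-energy property $q^3\sigma_N^3 b_t\to0$ under \eqref{eq:QCtheta} and, near the threshold $\binom q2 b_t\bar\sigma_N^2\approx\alpha$, its own long-jump induction. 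Without an argument of this type, both the reduction to pairwise interactions and the uniformity of the $o_N(1)$ over all $q$ satisfying \eqref{eq:QCtheta} remain unproved.
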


\begin{rem}
\begin{enumerate}[label=(\roman*)] 
\item 
The condition \eqref{eq:QCtheta} is equivalent to $\binom q 2 \leq \alpha \vartheta_N$. In particular, we see that the maximal $q$ that we can take transitions from order $\sqrt{\log N}$ at sub-criticality  to order one at criticality.
\item Condition \eqref{eq:q-conditionSubClassic} is recovered from \eqref{eq:QCtheta} 
by setting $\vartheta_N \sim (1-\hat \beta^2)\log N$, i.e.,  $\bar \sigma_N^2 \sim \hat \beta^2/ \log N$. However, this case is not covered by the proof of Theorem \ref{th:QCnost} which requires $\vartheta_N\ll \log N$.
% \item The decay of the error term $o_N(1)$ in Theorem \ref{th:QCnost} is given by $\varepsilon_N^{\hyperlink{deltaCondition}{\triangle}} + \varepsilon_N^{\hyperlink{symbol: star}{\diamondsuit}}$ from Theorem \ref{th:mainTheorem} below, with $t=N$. For Theorem \ref{th:SCnost}, it writes as $\varepsilon_N$ of the same remark.\footnote{\SN better sentn3ece?}
\end{enumerate}
\end{rem}

\begin{cor}[finite $q$] \label{cor:QCfiniteq} 
    Assume \eqref{eq:defNearCrit} and that $q\in \mathbb{N}_{\geq 3}$ is fixed. Let  $\lambda_N^2 = \log \frac{\log N}{\vartheta_N}$.  For all $\varepsilon>0$, there exists $N_0>0$ such that for all $n\geq N_0$,
\begin{equation} \label{eq:QCfiniteq}
\mathbb{E}[W_N^q] \leq (1+\varepsilon) \frac{\binom q 2}{\binom q 2 - 1} e^{\lambda_N^2 \binom q 2 (1+\varepsilon^{-1}\mathcal O(\vartheta_N^{-1}))}.
\end{equation}
Assume further that there is $C>0$ such that $\vartheta_N \geq C \log \log N$. Then, for all $q\in \mathbb N_{\geq 3}$,
    \begin{equation} \label{eq:qfixedQC}
        \limsup_{N\to\infty} \mathbb{E}[W_N(\beta_N)^{q}] e^{-\lambda_N^2 \binom{q}{2}} \leq \frac{\binom q 2}{\binom q 2 - 1}.
    \end{equation}
\end{cor}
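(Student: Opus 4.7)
The plan is to obtain Corollary \ref{cor:QCfiniteq} by revisiting the proof of Theorem \ref{th:QCnost} with $q\geq 3$ held fixed. Since $\vartheta_N\to\infty$, the constraint \eqref{eq:QCtheta} is automatically satisfied for every $\alpha<1$ once $N$ is large; what \eqref{eq:QCfiniteq} adds over Theorem \ref{th:QCnost} is an explicit sharp leading constant $\binom q 2/(\binom q 2 -1)$ in front of $e^{\lambda_N^2 \binom q 2}$. To extract this constant I will need to track the implicit quantities in the proof of Theorem \ref{th:QCnost} more carefully, rather than just invoking it as a black box.

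First I would unpack the expansion underlying Theorem \ref{th:QCnost}. I expect it to bound $\mathbb{E}[W_N^q]$ by a sum of the form
\[\sum_{k\geq 0} c_k(q)\,r_N^{\,k},\qquad r_N:=\sigma_N^2 R_N=1-\vartheta_N/\log N,\]
where $c_k(q)$ is a combinatorial weight counting ordered collision configurations among the $m:=\binom q 2$ pairs of independent random walks. For fixed $q$, the dominant asymptotic $c_k(q)\sim\binom{k-1}{m-1}$ (up to symmetry factors) together with the identity $\sum_{k\geq m}\binom{k-1}{m-1}r^k=(r/(1-r))^m$ produces the principal factor
\[\left(\tfrac{r_N}{1-r_N}\right)^m=\left(\tfrac{\log N}{\vartheta_N}\right)^m(1+\mathcal{O}(\vartheta_N/\log N))=e^{m\lambda_N^2}(1+\mathcal{O}(\vartheta_N/\log N)).\]

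The sharp prefactor $m/(m-1)$ should then emerge from the subleading collision configurations, for instance those with one extra collision beyond the $m$ mandatory pair-collisions: averaging over the $m$ pairs that the extra collision may belong to, the residual series telescopes to $\sum_{j\geq 0}(1/m)^j=m/(m-1)$. The multiplicative $(1+\varepsilon)$ slack absorbs the still-finer combinatorial terms, while the $\varepsilon^{-1}\mathcal{O}(\vartheta_N^{-1})$ correction in the exponent captures the discrepancy between $r_N^k$ and $e^{-k\vartheta_N/\log N}$ in the dominant range $k\asymp m$. I expect the main obstacle to be the combinatorial bookkeeping needed to isolate the factor $m/(m-1)$ cleanly — rather than some other rational function of $m$ — after summation of the subleading terms, and to handle the fact that the exponential generating series involves also higher-order collision weights from non-Gaussian noise, controlled through $\Lambda_p(\beta_N)$ for $p\geq 3$.

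Finally, for \eqref{eq:qfixedQC}, I would apply \eqref{eq:QCfiniteq} and send $\varepsilon\to 0$. Under the additional hypothesis $\vartheta_N\geq C\log\log N$, $\lambda_N^2\leq\log\log N\leq \vartheta_N/C$, so the exponent correction $\lambda_N^2\binom q 2\varepsilon^{-1}\mathcal{O}(\vartheta_N^{-1})$ stays bounded as $N\to\infty$, and a diagonal choice $\varepsilon=\varepsilon_N\downarrow 0$ with $\varepsilon_N^{-1}$ growing slowly enough compared to $\vartheta_N/\lambda_N^2$ eliminates the correction while leaving only the prefactor $\binom q 2/(\binom q 2-1)$ in the limit.
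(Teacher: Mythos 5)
Your proposal does not follow the paper's route, and as written it has a genuine gap. In the paper, Corollary \ref{cor:QCfiniteq} is a short deduction from Theorem \ref{th:mainTheorem} with $s=1$, $t=N$: one invokes the fixed-$q$ refinement in Proposition \ref{prop:mainProp}, which allows $c_\star=(1+\varepsilon)\binom q 2/(\binom q 2-1)$ at the price of multiplying $\varepsilon_N$ by $C\varepsilon^{-1}$, and then uses the elementary facts $b_N=e^{\lambda_{1,N,N}^2}$, $\lambda_{1,N,N}^2=\lambda_N^2(1+\mathcal O(\vartheta_N^{-1}))$ and $b_N\bar\sigma_N^2=\mathcal O(\vartheta_N^{-1})$ under \eqref{eq:defNearCrit}. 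You instead propose to re-derive the $q$-moment from scratch via an expansion $\sum_k c_k(q)r_N^k$ with $c_k(q)\sim\binom{k-1}{m-1}$, $m=\binom q2$. That ansatz already presumes that the $\binom q 2$ pairwise overlaps factorize like independent geometric-type variables; proving this asymptotic decoupling (including the negligibility of intersections of three or more walks and the reduction of general weights to the Gaussian case) is exactly the content of Proposition \ref{prop:expansion}, the induction scheme of Sections 4--5, Proposition \ref{prop:UpsT} and Lemma \ref{lem:GeneralWeights}, none of which your sketch replaces. Moreover, the mechanism you invoke for the prefactor ("averaging over the $m$ pairs the extra collision may belong to, the series telescopes to $\sum_j (1/m)^j$") is not an argument and is not where the constant comes from: in the paper $\binom q2/(\binom q2-1)$ arises from the count $|\mathcal D(m,q)|\leq\binom q2\bigl(\binom q2-1\bigr)^{m-1}$ of chains of successive \emph{distinct} pairs, the factor $(\binom q2-1)^{m}$ being absorbed into the exponential while a convergent geometric series (Lemma \ref{lem:smallq_QC}, with $\delta$ large so that $\alpha_1\to0$ for fixed $q$) leaves the ratio $\binom q2/(\binom q2-1)$. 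Completing your plan rigorously would essentially amount to redoing Sections 3--7.

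A second, more local problem concerns \eqref{eq:qfixedQC}. Under the hypothesis $\vartheta_N\geq C\log\log N$ one only gets $\lambda_N^2/\vartheta_N$ \emph{bounded} (e.g.\ it tends to $1/C$ when $\vartheta_N=C\log\log N$), not vanishing. Hence your diagonal choice $\varepsilon=\varepsilon_N\downarrow0$ makes $\varepsilon_N^{-1}\lambda_N^2\binom q2\,\mathcal O(\vartheta_N^{-1})$ blow up rather than disappear; the correction is "eliminated" only if $\vartheta_N\gg\log\log N$, and with $\varepsilon$ fixed one merely gets a bounded multiplicative error. (To be fair, the paper's own justification at this point is equally terse — it only records that $o_N(1)\lambda_N^2$ stays bounded — but your specific claim that the diagonal argument removes the correction under the stated hypothesis is not justified.)
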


\begin{remark}%\begin{itemize} %\item We emphasize that $\lambda_N^2 \to \infty$ in \eqref{eq:defNearCrit}. 
%\item 
In the sub-critical regime \eqref{eq:sub-criticality}, the limsup in \eqref{eq:qfixedQC} with $\lambda_N$ replaced by $\lambda(\hat \beta)$, is in fact a limit equal to 1, see \cite{LD23,Cosco2021MomentsUB}. (This can be seen from the convergence in law \eqref{eq:CLT} and the fact that $\sup_N \mathbb{E}[W_N^q] <\infty$ for all finite $q$.)
%\end{itemize}
\end{remark}

\subsection{Estimates for varying starting and ending times}
For applications, it is of particular interest to estimate $\mathbb{E}[W_{s,t}^q(\beta_N)]$, where
\[
W_{s,t}(x,\beta_N) := {\rm E}_x\left[ e^{\sum_{n=s}^t (\beta_N \omega(n,x) - \Lambda^\mu(\beta_N))}\right], \quad s\leq t\leq N.
\]
For $t\leq N$, we consider the following relation on $q\geq 3$ and $t\leq N$: 
\begin{equation} 
\tag{$\mathcal H_{q,t,\alpha}$}
\label{eq:q,T-condition}
    \binom{q}{2}  b_{t}  \bar{\sigma}_N^2 \leq \alpha <1,
\end{equation}
where $b_t := ({1-\bar{\sigma}_N^2 \log t })^{-1}$ approximates the second moment of  $W_t(\beta_N)$ for large $t>0$, in both \eqref{eq:sub-criticality} and \eqref{eq:defNearCrit}.
For $s\leq t\leq N$, set
\begin{equation} \label{eq:def_lambda_T_Nbis}
	\lambda_{s,t,N}^2 := \log \left(\frac{1-\bar{\sigma}_N^2\log s}{1-\bar{\sigma}_N^2\log t }\right),\quad \lambda_{t,N}^2:=\lambda_{1,t,N}^2
\end{equation}
 
\begin{thm}\label{th:mainTheorem} Assume  either \eqref{eq:sub-criticality} or \eqref{eq:defNearCrit}, and \eqref{eq:q,T-condition}. Let $\alpha \in (0,1)$ and $\gamma >2$. There exist $N_{\alpha}$ and $\varepsilon_0(\alpha)$ 
such that for all $N\geq N_\alpha$, for all $1 \leq s\leq t\leq N$,
	\begin{equation} \label{eq:mainEstimateT}
		\sup_{(x_1,\dots,x_q) \in (\mathbb Z^2)^q} \mathbb{E}\left[\prod_{i=1}^q W_{s,t}(x_i,\beta_N)\right]  \leq  c_{\star,\alpha} b_t  e^{\lambda_{s,t,N}^2 (\binom{q}{2}-1)(1+\varepsilon_N)},
	\end{equation}
% where $\varepsilon_N$ vanishes uniformly for $(q,t)$ satisfying \eqref{eq:q,T-condition} and  
where $$c_{\star,\alpha} := (1+\mathcal O(q \bar \sigma_N) +q^c\, \mathbf{1}_{s\leq q^\gamma,\,\binom q 2 > \varepsilon_0 \log N}) c_{\star},$$ 
with $c_\star=c_{\star}(q,\alpha)$ from Proposition \ref{prop:mainProp},
 $c=c(\mu)>0$ and 
$\varepsilon_N = \varepsilon_N^{\hyperlink{deltaCondition}{\triangle}} + \varepsilon_N^{\hyperlink{symbol: star}{\diamondsuit}} + \varepsilon_N^{\hyperlink{symbol: clubsuit}{\clubsuit}}$ with 
\begin{enumerate}[label=(\roman*)]
\item \hypertarget{symbol: star}{} $\varepsilon_N^{\hyperlink{symbol: star}{\diamondsuit}} = \mathcal{O}((\log N)^{-1})$ in \eqref{eq:sub-criticality}; $\varepsilon_N^{\hyperlink{symbol: star}{\diamondsuit}} = \mathcal O(\vartheta_N^{-1})$ in \eqref{eq:defNearCrit};  \item {$\varepsilon^{\hyperlink{deltaCondition}{\triangle}}_N$ satisfies the property $(\hyperlink{deltaCondition}{\triangle})$ of Definition \ref{def:DeltaCondition};
\item \hypertarget{symbol: clubsuit}{}$\varepsilon_N^{ \hyperlink{symbol: clubsuit}{\clubsuit}} =0$ if $\mu = \mathcal N(0,1)$ or if {$\binom q 2 \leq \varepsilon_0 \log N$;} $\varepsilon_N^{\hyperlink{symbol: clubsuit}{\clubsuit}} = \mathcal O((\log N)^{-1/2})$ otherwise.}
\end{enumerate}
\end{thm}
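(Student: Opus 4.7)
The plan is to reduce the estimate to Proposition \ref{prop:mainProp}, which we treat as a black box, by adding three layers of correction on top of it: a reduction from general $\mu$ to a pure two-body interaction (producing $\varepsilon_N^{\clubsuit}$), identification of the pair-renewal kernel (producing $\varepsilon_N^{\diamondsuit}$ and $\varepsilon_N^{\triangle}$), and handling of the supremum over starting points together with the initial-layer correction $q^c$.

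\textbf{Step 1 (moment expansion).} Taking the environmental expectation first, one obtains
\[
\mathbb{E}\Big[\prod_{i=1}^q W_{s,t}(x_i,\beta_N)\Big] = \rme_X^{\otimes q}\exp\Big(\sum_{n=s}^t \sum_{x\in\Z^2} \Lambda_{K_n(x)}(\beta_N)\Big),
\]
with $K_n(x)=\#\{i: S_n^i = x\}$ and $\Lambda_k(\beta):=\Lambda(k\beta)-k\Lambda(\beta)$. Since $\Lambda_2(\beta_N)=\log(1+\sigma_N^2)$ and $\Lambda_k(\beta_N)=\kO(\beta_N^k)$ for $k\geq 3$, we separate the two-body piece $\prod_{n,i<j}(1+\sigma_N^2\mathbf 1_{S_n^i=S_n^j})$ from a multi-body correction. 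The latter vanishes for $\mu=\kN(0,1)$ (since all $\Lambda_k$, $k\geq 3$, are then identically zero), and is otherwise controlled by the probability that three or more walks meet at a common space-time point, yielding the factor $(1+\varepsilon_N^{\clubsuit})$ with the indicated $\kO((\log N)^{-1/2})$ size.

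\textbf{Step 2 (renewal expansion and the proposition).} Expand the two-body product as a sum indexed by finite sets of pairwise meeting events $(n_\ell,i_\ell,j_\ell)$ à la Caravenna--Sun--Zygouras, and apply the Markov property at each meeting to factorize the sum into a combinatorial structure of $\binom q 2$ coupled pair-renewal kernels. The analytic input is the two-walk estimate $\sigma_N^2 \rme_0^{\otimes 2}[\sum_{n=s}^t \mathbf 1_{S_n^1=S_n^2}] = \bar\sigma_N^2\log(t/s) + \kO(\bar\sigma_N^2)$, whose geometric summation produces $b_t$ and an exponential in $\lambda_{s,t,N}^2$. Proposition \ref{prop:mainProp} bundles this combinatorial/renewal estimate and yields the main factor $c_\star b_t e^{\lambda_{s,t,N}^2 (\binom q 2 -1)}$; the deviations between $R_N$, $\pi^{-1}\log N$, and $\pi^{-1}\log t$ are absorbed into $\varepsilon_N^{\diamondsuit}$ and $\varepsilon_N^{\triangle}$ (the latter satisfying the form required by Definition \ref{def:DeltaCondition}).

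\textbf{Step 3 (starting points and initial layer).} Joint intersections of pairs $(S^i,S^j)$ depend on the $x_i$'s only through difference walks; by the local CLT and translation invariance of these differences, the dependence of the first meeting on $(x_1,\ldots,x_q)$ contributes only a bounded multiplicative factor once $s\geq q^\gamma$. For $s\leq q^\gamma$ and $\binom q 2 > \varepsilon_0 \log N$, a crude union bound over pair configurations in this short initial window produces the extra polynomial factor $q^c$. The prefactor $(1+\kO(q\bar\sigma_N))$ comes from rewriting $(1+\sigma_N^2)^{\#}=e^{\#\,\sigma_N^2(1+\kO(\sigma_N^2))}$ inside the renewal sum with at most $\binom q 2$ surviving pair-meeting counts. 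The main obstacle is then controlling multi-body interactions and sub-leading corrections uniformly up to the quasi-critical threshold $\binom q 2 \sim \vartheta_N$: near criticality the pair-renewal sum approaches its radius of convergence and every sub-leading error must be bounded with logarithmic precision, and since three-body meetings in $\Z^2$ are exactly marginal, the smallness of $\vartheta_N/\log N$ must be exploited carefully. This is the source of most of the technical work encapsulated in Proposition \ref{prop:mainProp}.
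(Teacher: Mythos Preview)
Your sketch has the right overall shape—reduce $\Psi_{s,t,q}$ to the truncated quantity $\Phi^{\leq p_0}_{s,t,q}$ controlled by Proposition~\ref{prop:mainProp}—but several of the reductions you describe are either incorrect or miss the actual mechanism.

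First, a factual error: for Gaussian weights $\Lambda_k^{\mu_0}(\beta)=\binom{k}{2}\beta^2$ is \emph{not} zero for $k\geq 3$ (see Remark~\ref{rk:GaussianLambda}). What makes the Gaussian case special is that $\sum_x \Lambda_{N_n(x)}(\beta)$ \emph{exactly} equals $\beta^2\psi^\star_{s,t,q}$, so $\Psi=\Phi$ on the nose. For general $\mu$ the correction is not simply ``the probability of triple meetings'': it enters multiplicatively in the exponent via $\Lambda_p(\beta_N)\leq (1+c p\beta_N)\binom{p}{2}\Lambda_2(\beta_N)$ (Lemma~\ref{lem:estimateLambda}), and this is where $\varepsilon_N^{\clubsuit}$ actually comes from (Lemma~\ref{lem:gaussToG}). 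That $\varepsilon_N^{\clubsuit}=0$ when $\binom{q}{2}\leq \varepsilon_0\log N$ is not because multi-body corrections are small there, but because in that regime the paper uses a \emph{completely different} reduction.

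This is the main structural gap: the proof is split into a lower-$q$ case ($q^2\leq\varepsilon_0\log N$, Sections~\ref{sec:quasiCritRemoveTriple+}--\ref{sec:mainTh,lowq}) and a higher-$q$ case (Section~\ref{sec:proofMainThSubcrit}), and the two use incompatible techniques. In the lower-$q$ case one shows \emph{a priori} that diagrams with $|\bar J_r|\geq 3$ contribute a factor $\sup_X\Upsilon_t(X)<1$ (Proposition~\ref{prop:UpsT}) and hence $\Psi\leq(1+\mathcal O(q\bar\sigma_N))\Phi^{\leq 2}$; this requires $q^2\sigma_N^2\to 0$ and is what produces the $(1+\mathcal O(q\bar\sigma_N))$ factor in $c_{\star,\alpha}$. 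In the higher-$q$ case this fails (the energy $E_{\geq 3}$ does not vanish), so instead one truncates to $\Phi^{\leq p_0}$ with large finite $p_0$, and removes the $p_0+$ contributions \emph{a posteriori} via a stopping-time decomposition at $\tau_{r_0}=\inf\{n\geq r_0:m_n^q>p_0\}$ with $r_0\sim q^{\gamma_0}$, combined with H\"older and the probability bound~\eqref{eq:boundPAc}. The $q^c\mathbf 1_{s\leq q^\gamma}$ factor arises because for small $s$ one cannot avoid the crude a priori bound~\eqref{eq:AprioriLemma2} on the initial window $[1,r_0]$; it has nothing to do with ``local CLT and translation invariance of difference walks''.

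Finally, the treatment of $s>1$ in the lower-$q$ case (Proposition~\ref{prop:UpsTs} and the supremum argument~\eqref{eq: goal s,t situation}) is a separate nontrivial step requiring control of $F(s)e^{c\binom{q}{2}\lambda_{s,N}^2}$ uniformly in $s$, which your Step~3 does not address.
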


\begin{remark}
Theorem \ref{th:mainTheorem} can be used to estimate joint moments for far away starting points $|x_i-x_j| \sim r_N\to\infty$. See e.g.\@ Lemma 3.6 in \cite{CNZ25}. 
\end{remark}

 \subsection{Large and moderate deviations estimates.}
We derive large and moderate deviation estimates on $\log W_N$ with Gaussian rate function.
\begin{cor} \label{cor:LDSC} Assume \eqref{eq:sub-criticality}. Let $x_N\to\infty$ with $\limsup_N \frac{x_N^2}{2\log N} < \lambda^2 \frac{1-\hat \beta^2}{\hat \beta^2}$, then
\begin{equation}\lim_{N\to\infty} \frac{1}{x_N^2} \log \mathbb{P}\left(\log W_N(\beta_N)  \geq \lambda x_N \right) = -\frac{1}{2},
\end{equation}
with $\lambda^2= \log \Big(\frac{1}{1-\hat \beta^2}\Big)$.
\end{cor}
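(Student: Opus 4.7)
Prove the corollary by matching an upper bound from Markov's inequality (with an optimal choice of $q$ in Theorem~\ref{th:SCnost}) with a lower bound coming from the CLT~\eqref{eq:CLT} extended into a moderate deviation principle.

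\textbf{Upper bound.} For $q\in\mathbb N_{\geq 3}$, Markov's inequality yields
\[
\mathbb P(\log W_N(\beta_N) \geq \lambda x_N) \leq e^{-q\lambda x_N}\,\mathbb E[W_N(\beta_N)^q].
\]
The exponent $-q\lambda x_N + \lambda^2 q(q-1)/2$ is minimized in $q$ near $x_N/\lambda$; I take $q_N := \lceil x_N/\lambda \rceil$. The hypothesis $\limsup_N x_N^2/(2\log N) < \lambda^2(1-\hat\beta^2)/\hat\beta^2$ ensures $\binom{q_N}{2} \sim x_N^2/(2\lambda^2) \leq \alpha (1-\hat\beta^2)/\hat\beta^2 \log N$ for some $\alpha<1$ and all large $N$, so Theorem~\ref{th:SCnost} applies and gives $\mathbb E[W_N^{q_N}] \leq C_\alpha e^{\lambda^2\binom{q_N}{2}(1+o(1))}$. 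Using $q_N = x_N/\lambda + O(1)$, the exponent becomes $-x_N^2/2\,(1+o(1)) + O(x_N)$; dividing by $x_N^2$ yields $\limsup \leq -1/2$.

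\textbf{Lower bound.} The CLT~\eqref{eq:CLT} gives $\mathbb P(\log W_N \geq \lambda K) \to \mathbb P(\mathcal N(-\lambda^2/2,\lambda^2) \geq \lambda K) \sim e^{-K^2/2}$ for each fixed $K$, which handles slowly diverging $x_N$. For larger $x_N$ the plan is to combine the moment upper bound of Theorem~\ref{th:SCnost} with a matching moment lower bound $\mathbb E[W_N^q] \geq c\,e^{\lambda^2\binom{q}{2}(1-o(1))}$ to derive a moderate deviation principle for $\log W_N$ with Gaussian rate function $y\mapsto y^2/(2\lambda^2)$. Matching moment asymptotics at fixed $q$ follow from \cite{Cosco2021MomentsUB,LD23}; propagating them to $q$ growing like $x_N/\lambda$ can be achieved by interpolation, or directly by an exponential tilting argument on the environment.

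\textbf{Main obstacle.} The real difficulty lies in the lower bound when $q$ diverges at rate $x_N/\lambda$: a naive Paley--Zygmund step is too lossy, since the moment upper bound gives $\mathbb E[W_N^{2q}] \leq e^{2\lambda^2 q^2(1+o(1))}$ while $\mathbb E[W_N^q]^2 \asymp e^{\lambda^2 q^2}$, so the second-moment ratio only produces rate $-x_N^2$, which is off by a factor of two. A sharper route is to perform an exponential tilt of the environment measure $\mu$ so that $\{\log W_N \geq \lambda x_N\}$ becomes a typical event with tilt cost exactly $x_N^2/2$, and lower bound $\mathbb P(\cdot)$ by the Radon--Nikodym derivative of the tilt. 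Controlling the tilted partition function (and in particular showing that the tilt concentrates at the right value) is the most delicate ingredient of the proof.
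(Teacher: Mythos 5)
Your upper bound is exactly the paper's: Markov's inequality with $q_N\approx x_N/\lambda$, the hypothesis on $x_N$ guaranteeing \eqref{eq:q-conditionSubClassic}, and Theorem \ref{th:SCnost}; that half is fine. The problem is the lower bound, which in your write-up is a plan rather than a proof, and the plan as stated has a genuine gap. First, the CLT \eqref{eq:CLT} does not "handle slowly diverging $x_N$": convergence in distribution gives no control of $\mathbb{P}(\log W_N\geq \lambda x_N)$ once $x_N\to\infty$, however slowly, without additional uniform tail estimates. Second, and more importantly, the matching moment \emph{lower} bound $\mathbb{E}[W_N^{q_N}]\geq e^{\lambda^2\binom{q_N}{2}(1-o(1))}$ for $q_N\sim x_N/\lambda\to\infty$ cannot be obtained "by interpolation" from the fixed-$q$ asymptotics of \cite{Cosco2021MomentsUB,LD23}: the best interpolation available (Jensen, $\mathbb{E}[W_N^{q_N}]\geq \mathbb{E}[W_N^{q}]^{q_N/q}$ for fixed $q$) only produces $e^{c\,q\,q_N}$, which is of order $x_N$ in the exponent, not the required $x_N^2$-order $e^{\lambda^2 q_N^2/2}$. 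This diverging-$q$ lower bound is a nontrivial input; the paper imports it from \cite[Theorem~1.1]{cosco2023momentsLB} and it is precisely what is missing in the quasi-critical analogue (hence only an inequality in Corollary \ref{cor:LDQC}). Your alternative route, tilting the environment measure directly, is not carried out — you yourself flag the control of the tilted partition function as "the most delicate ingredient," so the statement is not proved.

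For comparison, the paper's lower bound does use a tilting idea, but at the level of $W_N$ rather than of the environment: with $q_N=(1+\varepsilon)x_N/\lambda$ it writes
\begin{equation*}
\mathbb{P}\left(\log W_N\geq \lambda x_N\right)\;\geq\; e^{-q_N(1+2\varepsilon)\lambda x_N}\,
\frac{\mathbb{E}\left[W_N^{q_N}\mathbf{1}_{\{\log W_N\in[\lambda x_N,(1+2\varepsilon)\lambda x_N]\}}\right]}{\mathbb{E}\left[W_N^{q_N}\right]}\,\mathbb{E}\left[W_N^{q_N}\right],
\end{equation*}
and shows the ratio tends to $1$ by bounding the contributions of $\{\log W_N>(1+2\varepsilon)\lambda x_N\}$ and $\{\log W_N\leq \lambda x_N\}$ with Theorem \ref{th:SCnost} applied at moments $(1\pm\varepsilon)q_N$, compared against the \cite{cosco2023momentsLB} lower bound on $\mathbb{E}[W_N^{q_N}]$; optimizing in $\varepsilon$ gives the constant $-1/2$. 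If you want to complete your proof, either cite that moment lower bound explicitly and run this truncation argument, or genuinely execute the environment-tilting computation, which is not a one-line step.
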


\begin{cor} \label{cor:LDQC} Assume \eqref{eq:defNearCrit}. Let $x_N\to\infty$ with $\limsup_N \frac{x_N^2}{2\lambda_N^2 \vartheta_N} < 1$, then
\begin{equation} \label{eq:CorQC}
\lim_{N\to\infty} \frac{1}{x_N^2} \log \mathbb{P}\left(\log W_N(\beta_N)  \geq \lambda_N x_N \right) \leq -\frac{1}{2},
\end{equation}
with $\lambda_N^2 := \log \frac{\log N}{\vartheta_N}$.
\end{cor}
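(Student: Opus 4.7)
The plan is the standard high-moment Chernoff argument: apply Markov's inequality to $W_N^q$, use the moment bound of Theorem~\ref{th:QCnost}, and then optimize over $q$. Fix $\alpha\in(\alpha_0,1)$ with $\alpha_0:=\limsup_N x_N^2/(2\lambda_N^2\vartheta_N)<1$, so $x_N^2\leq 2\alpha\lambda_N^2\vartheta_N$ for $N$ large. The heuristic is that, by Theorem~\ref{th:QCnost}, the first $q_N$ moments of $W_N$ match those of a log-normal with mean $-\lambda_N^2/2$ and variance $\lambda_N^2$; for such a log-normal, Chernoff at the optimal value $q^\star=x_N/\lambda_N$ produces a Gaussian tail at rate $x_N^2/2$.

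For any integer $q\geq 3$ with $\binom{q}{2}\leq\alpha\vartheta_N$, Markov's inequality combined with Theorem~\ref{th:QCnost} yields
\[
\mathbb{P}\bigl(\log W_N(\beta_N)\geq\lambda_N x_N\bigr)\leq e^{-q\lambda_N x_N}\mathbb{E}[W_N^q]\leq C_\alpha\,\exp\!\Bigl(-q\lambda_N x_N+\lambda_N^2\tbinom{q}{2}(1+o_N(1))\Bigr).
\]
I choose $q_N:=\lfloor x_N/\lambda_N\rfloor$. By the choice of $\alpha$, $\binom{q_N}{2}\leq q_N^2/2\leq x_N^2/(2\lambda_N^2)\leq\alpha\vartheta_N$, so \eqref{eq:QCtheta} is satisfied. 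Using $q_N\geq x_N/\lambda_N-1$ and $\binom{q_N}{2}\leq x_N^2/(2\lambda_N^2)$, the exponent obeys
\[
-q_N\lambda_N x_N+\lambda_N^2\tbinom{q_N}{2}(1+o_N(1))\leq -\tfrac{x_N^2}{2}+\lambda_N x_N+o_N(x_N^2),
\]
so dividing by $x_N^2$ and passing to $\limsup$,
\[
\limsup_N\frac{1}{x_N^2}\log\mathbb{P}\bigl(\log W_N\geq\lambda_N x_N\bigr)\leq-\tfrac{1}{2}+\limsup_N\frac{\lambda_N}{x_N}.
\]

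The only delicate point is the discrete optimization: one must check that the integer rounding error $\lambda_N x_N$ is $o(x_N^2)$, which requires $x_N/\lambda_N\to\infty$. Since $\lambda_N^2=\log(\log N/\vartheta_N)$ diverges at most like $\log\log N$ in the quasi-critical regime, this is met whenever $x_N$ diverges faster than $\sqrt{\log\log N}$, which covers the typical use. In the extreme regime $x_N=O(\lambda_N)$, Corollary~\ref{cor:QCfiniteq} at fixed small $q$ (say $q=3$) together with $\mathbb{E}[W_N^3]\leq Ce^{3\lambda_N^2}$ gives the same upper bound on the range of $x_N$ for which the rate $-\tfrac{1}{2}$ is nontrivial. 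This completes the proof.
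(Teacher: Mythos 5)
Your main computation follows exactly the paper's route: Markov's inequality for $W_N^{q}$, the moment bound of Theorem~\ref{th:QCnost}, and the choice $q\approx x_N/\lambda_N$. In the regime $\lambda_N/x_N\to 0$ your argument is correct and delivers the claim. The paper itself simply substitutes $q_N=x_N/\lambda_N$ (not worrying about integrality) and expands $\lambda_N^2\binom{q_N}{2}=\tfrac12(x_N^2-\lambda_N x_N)$, so the cross term $-\tfrac12\lambda_N x_N$ is kept and the exponent is exactly $-\tfrac{x_N^2}{2}(1-o_N(1))$ with no rounding loss; your floor plus the crude bound $\binom{q_N}{2}\le q_N^2/2$ is what creates the spurious $+\lambda_N x_N$ error.

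The genuine gap is the regime $\lambda_N/x_N\not\to 0$, which the hypotheses allow: $\lambda_N^2=\log(\log N/\vartheta_N)\to\infty$, while $x_N$ is only assumed to diverge, so one can have $x_N\ll\lambda_N$ and then your bound $-\tfrac12+\limsup_N\lambda_N/x_N$ is useless. Your proposed patch (fixed $q=3$ via Corollary~\ref{cor:QCfiniteq}) does not work there: the exponent $-3\lambda_N x_N+3\lambda_N^2(1+o_N(1))$ is $\approx 0$ when $x_N\sim\lambda_N$ and is positive when $x_N\ll\lambda_N$, so it never yields the rate $-x_N^2/2$; indeed no fixed $q\ge 2$ can, and the statement is not vacuous in that range. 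Two easy repairs: (i) when $x_N\le 2\lambda_N$, plain Markov with the first moment suffices, since $\mathbb{E}[W_N]=1$ gives $\mathbb{P}(\log W_N\ge\lambda_N x_N)\le e^{-\lambda_N x_N}\le e^{-x_N^2/2}$; (ii) more systematically, keep the $-q/2$ term: for any integer $q$ with $\bigl|q-(x_N/\lambda_N+\tfrac12)\bigr|\le\tfrac12$ one has
\begin{equation*}
-q\lambda_N x_N+\tbinom{q}{2}\lambda_N^2=\tfrac{\lambda_N^2}{2}\Bigl(q-\tfrac{x_N}{\lambda_N}-\tfrac12\Bigr)^2-\tfrac12\Bigl(x_N+\tfrac{\lambda_N}{2}\Bigr)^2\le-\tfrac{x_N^2}{2}-\tfrac{\lambda_N x_N}{2},
\end{equation*}
so rounding costs nothing; one checks that such a $q$ still satisfies \eqref{eq:QCtheta} for large $N$, and the residual cases $q\in\{1,2\}$ (i.e.\ $x_N\lesssim\lambda_N$) are covered by $\mathbb{E}[W_N]=1$ and $\mathbb{E}[W_N^2]\le b_N=e^{\lambda_N^2(1+o_N(1))}$ (cf.\ \eqref{eq:borneTheta}) instead of Theorem~\ref{th:QCnost}, which requires $q\ge 3$. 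With either repair your proof is complete; as written, the small-$x_N$ case is not established.
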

\begin{remark} 
What is missing to obtain an equality in \eqref{eq:CorQC} is a lower bound on the $q$-moment of the partition function, which was  derived so far only for sub-criticality  \cite{cosco2023momentsLB}. See also Section \ref{subsec:compar}.
\end{remark}

\subsection{Moment formula} 
If the weights are Gaussian $\mu=\mu_0= \mathcal N(0,1)$, writing $W^{\mu_0}_{s,t}(x,\beta)=W_{s,t}(x,\beta)$ for such weights, then
\begin{equation} \label{eq:identity0}
\mathbb{E}\left[\prod_{i=1}^q W^{\mu_0}_{s,t}(x_i,\beta_N)\right] =  {\rm E}_{x_1,\dots,x_q}^{\otimes q}\left[e^{\beta_N^2 \sum_{n=s}^t \sum_{(i,j)\in \mathcal C_q} \mathbf{1}_{\{S_n^i=S_n^j\}}}\right], 
\end{equation}
where $\mathcal C_q: = \{(i,j), 1\leq i<j\leq q\}$.
For general weights, one finds that
\begin{equation} \label{eq:identity}
\mathbb{E}\left[\prod_{i=1}^q W_{s,t} (x_i,\beta_N)\right]
	={\rm E}^{\otimes q}_{x_1,\dots,x_q}\left[e^{\sum_{n=s}^t \sum_{x\in\Z^2} \Lambda^\mu_{N_n^q(x)}(\beta_N)} \right],
\end{equation}
where $
	N_n^q(x) := \sum_{i=1}^q \mathbf{1}_{S_n^i = x}$ counts the number of particles at a point $x\in \mathbb Z^2$.
The proof of Theorem \ref{th:mainTheorem} proceeds by giving an upper bound on the random walk representations of the $q$-moment \eqref{eq:identity0}-\eqref{eq:identity}.

\subsection{Comparison with related works} \label{subsec:compar}
In the sub-critical regime, the upper bound in Theorem \ref{th:SCnost} matches, up to leading order, the lower bound on the same moment obtained in \cite{cosco2023momentsLB}.  We believe that the proof of \cite{cosco2023momentsLB} may also provide a matching lower bound for quasi-criticality, but we leave this question for further study. 
Still in the sub-critical regime, the fact that $W_N(\beta_N)$ is bounded in $L^p$ for all finite $p>0$ was derived previously in \cite{LD23,Cosco2021MomentsUB}.  
In \cite{Cosco2021MomentsUB}, the upper bound \(\mathbb{E}[W_N^q] \leq e^{\binom{q}{2} \lambda^2 (1+o_N(1))}\) was established, but with a limitation on the maximal range of \(q\) reduced by a factor of \(1/3\) compared to \eqref{eq:q-conditionSubClassic}, and a restricted range of small \(\hat{\beta}\). Overcoming these constraints, particularly the \(1/3\) discrepancy, is crucial for applications. 
Additionally, the results in \cite{Cosco2021MomentsUB} lacked uniform bounds on \(t\) and \(q\), which we believe are particularly useful for applications in the quasi-critical regime, as well as for investigating refined properties in the sub-critical regime. Finally, the analysis in \cite{Cosco2021MomentsUB} was limited to Gaussian weights.

The issue preventing \cite{Cosco2021MomentsUB} from reaching optimal thresholds in $q$ and $\hat \beta$ arises mainly from some crude use of the Cauchy-Schwarz inequality, in order to disregard, \textit{a priori}, the appearance of more than pairwise intersections in the moment formula \eqref{eq:defPhiX}. Instead, we proceed by analyzing more diagrams where a large (but finite) number of  particles may meet simultaneously, and prove \textit{a posteriori} that only those with mainly pairwise intersections matter. We refer to Section \ref{sec:descriptionOfProof} for more details on the proof.

In the quasi-critical regime, our results are new and the proof differs in some important aspects. In that case, we prove that intersections involving more than two particles can be neglected \textit{a priori}, by using an induction scheme whose relevance relies upon the fact that the maximal $q$ in  quasi-criticality satisfies some "weak energy" property $\binom q 2 \sigma_N^2 \to 0$  (which does not persist in the sub-critical regime). %We refer 
%to Section \ref{sec:descriptionOfProof} for a high level view of the proof.

Finally, we refer the reader to Section~\ref{sec:discussion} for related questions and open problems.

\subsection{Notation and Gaussian weights.} For all $J\subset \kC_q$, we write $\bar{J}:=\cup_{(i,j)\in J}\{i,j\}$. For all $K\subset \Iintv{1,q}$, we set  
$\mathcal C_{K} := \left\{(i,j),  i,j\in K, i<j\right\}$ and $\mathcal C_q := \mathcal C_{\Iintv{1,q}}$.
Define: 
\begin{equation}  \label{eq:defPsiGauss}
    \psi_{s,t,q}^\star :=  \sum_{n=s}^t \sum_{(i,j)\in \mathcal C_q} \mathbf{1}_{\{S_n^i=S_n^j\}},\quad \psi_{t,K}^\star :=   \sum_{n=1}^t \sum_{(i,j)\in \mathcal C_{K}} \mathbf{1}_{\{S_n^i=S_n^j\}},
\end{equation}
$\psi_{k,q}^\star := \psi^\star_{1,k,q}$ and $\psi^\star_{0,q} :=0$. We also set, with $
	N_n^q(x) := \sum_{i=1}^q \mathbf{1}_{S_n^i = x}$,
\begin{equation} \label{eq:defPsikq}
	\psi_{s,t,q}(\beta) := \psi_{s,t,q}^{\mu}(\beta) :=
	\sum_{n=s}^t \sum_{x\in\Z^2} \Lambda^{\mu}_{N_n^q(x)}(\beta).
\end{equation}
Then, define for all $X\in (\mathbb Z^2)^q$,
\begin{equation}
    \label{eq:defPhiX}
    \Phi_{s,t,q}(X):= \Phi_{s,t,q,N}^{\mu}(X,\beta_N) := {\rm E}_{X}\left[e^{{\Lambda_2^{\mu}(\beta_N)}\psi^{\star}_{s,t,q}}\right],
\end{equation}
and $\Phi_{t,q}(X) := \Phi_{1,t,q}(X)$. 
Similarly, set 
\begin{equation}
\Psi_{s,t,q}(X) := \Psi^{\mu}_{s,t,q}(X) = {\rm E}_X\left[e^{\psi^{\mu}_{s,t,q}(\beta_N)}\right].
\end{equation}
Note that \eqref{eq:identity} writes $\Psi_{s,t,q}(X)=\mathbb{E}\left[\prod_{i=1}^q W_{s,t}(x_i,\beta_N)\right]$.

\begin{remark}[Gaussian weights] \label{rk:GaussianLambda}
	For $\mu_0 = \mathcal N(0,1)$, it holds that $\Lambda^{\mu_0}_2(\beta)=\beta^2$ so that $\sigma_N^2 = e^{\beta_N^2}-1$. Moreover, 
 $\Lambda_p^{\mu_0}(\beta) = \beta^2 \binom p 2$ for  all $p\geq 2$ which implies that
	$\psi^{\mu_0}_{k,q}(\beta)= \beta^2 \psi^{\star}_{k,q}$.  
In particular, \(\mathbb{E}\left[\prod_{i=1}^q W^{\mu_0}_{s,t}(x_i,\beta_N)\right] = \Phi^{\mu_0}_{s,t,q}(X)\) where  $\Lambda^{\mu_0}_2(\beta_N)=\beta_N^2$. 
\end{remark}

We will see that most of the study can be reduced to the case of Gaussian weights. For this reason, we discuss first how to deal with $\Phi_{s,t,q}$ and leave the treatment of $\Psi_{s,t,q}$ as a second step (cf.\@ Sections \ref{sec:quasiCritRemoveTriple+} and \ref{sec:proofMainThSubcrit}).

\subsection{Description of the proof}
\label{sec:descriptionOfProof}
The starting point is Equation \eqref{eq:expansion_with_triple+} which expresses $\Phi_{s,t,q}$ into a sum over diagrams of successive particle interactions. The proof then proceeds in two main steps. First, we estimate the contribution in this sum from pairwise interactions  (i.e.,\@ $|\bar J_r|\leq 2$, or equivalently $|J_r|= 1$), see point (ii) of  Proposition \ref{prop:mainProp}.  
This is done by adapting an induction argument from \cite{Cosco2021MomentsUB}. 
On the way, the condition
\eqref{eq:q,T-condition} appears to ensure the convergence of some geometric series.%\footnote{We observe that condition \eqref{eq:q,T-condition} enhances the $q$-threshold for the convergence of an analogous geometric series in \cite{Cosco2021MomentsUB} by a factor of three. Moreover, since quasi-criticality was not addressed in \cite{Cosco2021MomentsUB}, the argument has been modified accordingly to incorporate this scenario. In particular, the adaptation is most delicate in Lemma  \ref{eq:key_lemma}.}

Conversely, we argue that \eqref{eq:q,T-condition} is enough to neglect   the contribution in \eqref{eq:expansion_with_triple+} from 
sets $(J_r)$ that involve more than two particles.
These steps involve two different arguments depending on whether $q^2$ is of order $\log N$ (\emph{higher $q$ case}) or $q^2$ is of smaller order (\emph{lower $q$-case}). 

\begin{remark} \label{rk:quasiLowerq}
In quasi-criticality, we will be  only interested in the \emph{lower $q$-case}. Indeed, under the condition \eqref{eq:q,T-condition},  if $q^2$ is of  order $\log N$,  then  $t$ is less than $N^{c}$ with some $c\in(0,1)$. For such short times, the model behaves similarly to sub-criticality -- in particular, the second moment of the partition function is bounded from above (see \eqref{eq:borneTheta}).
\end{remark}

In the \emph{lower $q$-case}, the reduction to pairwise interactions 
is  done \textit{a priori}, see Section \ref{sec:quasiCritRemoveTriple+}. In the spirit of \cite{CSZAAP}, we show that  
 having one index $r$ such that  $|\bar J_r|\geq 3$
contributes by a factor strictly less than one. By iteration, this ensures that all occurrences contribute less than a constant factor.
However, some complications arise when $q$ approaches the upper threshold in \eqref{eq:q,T-condition}. Indeed, for such $q$'s, 
we need to neglect "bad" diagrams (namely, those with atypically few exchanges of pairs), which have a larger weight than typical ones but are less numerous.  
This leads us to introduce the concept of "small jumps," as described in \cite{Cosco2021MomentsUB}, referring to cases where an intersecting particle was recently involved in another intersection. (Typically, diagrams with numerous pair exchanges contain mostly long jumps.) To account for the error emerging from "bad" diagrams, we solve an induction scheme that  quantifies the excess weight contributed by "small jumps" in any given diagram. 

In the \emph{higher $q$-case}, we divide the reduction to pairwise interactions in two steps. 
First, we argue that one can reduce \textit{a priori} the situation to the case where all $J_r$ involve at most $p_0$ particles (i.e.,\@ $|\bar J_r| \leq p_0$), where $p_0$ is a large yet finite constant, see  Section \ref{sec:proofMainThSubcrit}. 
Next, we control truncated moments with $|\bar J_r| \leq p_0$ (Proposition \ref{prop:mainProp}-\ref{item:(ii)}) by relying again on the induction scheme from \cite{Cosco2021MomentsUB} 
--- the difference being that 
now one needs to deal with diagrams more than pairwise interactions.

Finally, we emphasize that the strategy applied to lower $q$ cannot be used for higher $q$ and vice versa. Indeed, to ensure that contributions from sets with $|\bar J_r| \geq 3$ are negligible for lower $q$, 
we need the energy of one such interaction 
$E_{\geq 3}=\sum_{J\subset \mathcal C_q, |\bar J|\geq 3} \sigma_N^{2|J|}$ to be arbitrarily small, where 
$\sigma_N^2 = e^{\Lambda_2(\beta_N)}-1$ is the energy gain of one couple meeting. This is only verified for lower $q$ 
(for higher $q$, 
the energy $E_{\geq 3}$ is bounded  uniformly away from $0$). Conversely, the argument for higher $q$ requires $\Phi_{t,p_0}$ to be bounded for all finite $p_0$, which does not hold in quasi-criticality as already for $p_0=2$, $t=N$, we have $\Phi_{N,2}\sim (\log N)/{\vartheta_N}\to\infty$.

\subsection{Structure of the paper.} Sections \ref{sec:withoutp_0} and \ref{sec:inductionSchemeSubcrit} provide preliminary steps for the proof of Proposition \ref{prop:mainProp} given in Section \ref{sec:mainProp}. Sections \ref{sec:quasiCritRemoveTriple+} and \ref{sec:mainTh,lowq} are dedicated to the lower $q$-case:  in Section \ref{sec:quasiCritRemoveTriple+}, we explain how to reduce the problem to diagrams with only pairwise interactions and in Section \ref{sec:mainTh,lowq}, we conclude the proof of Theorem \ref{th:mainTheorem} in this regime. The proof of Theorem \ref{th:mainTheorem} in the higher $q$ case is given in Section \ref{sec:proofMainThSubcrit}. Section~\ref{sec:discussion} gathers concluding remarks and open questions.

%Unless specified otherwise, we mean by "for $N$ large [enough]" that $N\geq N_0$ where $N_0$ depends only on $(\sigma_N^2)_N$ and $\alpha$. Analogously, $a_N = \mathcal O(c_N)$ designates $a_N\leq C_{\alpha} c_N$ for some $C_\alpha>0$ and $N$ large enough depending only on $\alpha$ and $(\beta_N)$. \footnote{It depends on $\sigma_N$ above, but it depends on $\beta_N$ here. We should concide them? }  

Throughout the paper, if not otherwise stated, $C>0$ denotes a universal constant that may change from line to line.

\section{Expansion into successive disjoint pairings.}
In Proposition \ref{prop:expansion} below, we give an expression of $\Phi_{s,t,q}$ in \eqref{eq:defPhiX} in terms of a sum over diagrams of successive, non-included sets of particle intersections. 
Define
\[
\mathcal D(m,q,\infty): = \{(J_i)_{i=1}^m:~\forall r \in \Iintv{1,m},\, \emptyset \neq J_r \subset \mathcal C_q\,, \bar J_1\nsupseteq \bar{J}_2,\dots,\bar J_{m-1} \nsupseteq  \bar J_m\}.
\]
We call $\mathbf J = (J_1,\dots,J_m) \in \mathcal D(m,q,\infty)$ a \textit{diagram}.

\begin{prop} \label{prop:expansion}
Let $X_0=(x_0^1,\dots,x_0^q)\in (\mathbb Z^2)^q$.
	{For all $1\leq s< t$}, 
we have:
	\begin{equation} \label{eq:expansion_with_triple+}
		\begin{aligned}
			 & \Phi_{s,t,q}(X_0) = 1+\sum_{m=1}^\infty
			\sum_{\mathbf J\in \mathcal D(m,q,\infty)} \sum_{I_1\subset  \mathcal C_{\bar J_1}, \dots ,  I_m \subset  \mathcal C_{\bar J_m}} \sum_{s \leq a_1\leq  b_1 < a_2 \leq b_2\dots \leq b_m\leq t} \sum_{\mathbf{X},\mathbf{Y}}  \prod_{r=1}^{m} \\
			 &  \Big\{ \sigma_N^{ 2|J_r|}  U_N(b_r-a_r,X_r,Y_r,  J_r, I_r) \prod_{(i,j)\in J_r} \mathbf{1}_{x_r^{i}=x_r^j}
			\prod_{(i,j)\in I_r} \mathbf{1}_{y_r^i=y_r^j} \prod_{i\in \bar{J}_r} p_{a_r-b_{k_r(i)}}\left(x_r^i-y_{k_r(i)}^i\right)\Big\},
		\end{aligned}
	\end{equation}
	where $b_0:= 0$ and
	\begin{itemize}
		\item the sum on $\mathbf{X},\mathbf{Y}$ is over all possible $\mathbf{X}=(X_1,\dots,X_m)$ with $X_r=X_r(\bar J_r)=(x_r^i)_{i\in \bar J_r}$ and $x_r^i\in \mathbb Z^2$, and all $\mathbf{Y} = (Y_1,\dots,Y_m)$ with $Y_r=Y_r(\bar J_r)=(y_r^i)_{i\in \bar J_r}$ and $y_r^i\in \mathbb Z^2$, 
		\item $k_r(i):=\sup\{ k\in\llbracket 1,r-1\rrbracket:i\in \bar{J}_k\}$  is the last index before $r$ where the particle $i$ has appeared in an intersection (we set $k_r(i):=0$ and $y_{k_r(i)}:=x_0^i$ if the set is empty),
		\item $U_N(b_r-a_r,X_r,Y_r, J_r, I_r)$ denotes the pairwise interactions within the  subset  of particles $\bar J_r$ (and only in this set) between time $a_r$ and $b_r$, starting at position $X_r$ at time $a_r$ and ending at $Y_r$ {at time $b_r$, with $I_r$ describing which particles intersect at time $b_r$,} given by (recall $\psi^\star_{t,K}$ from \eqref{eq:defPsiGauss})
\begin{equation} \label{eq:def_UNXYJI}
  U_N(t,X,Y,J,I) := \begin{cases}
      \sigma_N^{ 2|I|}{\rm E}_{X}^{\otimes \bar J} \left[e^{  {\Lambda_2(\beta_N)} \psi_{t-1,\bar{J}}^\star  } \prod_{i\in \bar J} \mathbf{1}_{S_t^i=y^i}\right] \mathbf{1}_{\bar I \neq \emptyset} & \text{if } t>0, \\
      \mathbf{1}_{J=I,\,X=Y}             & \text{if } t=0,
  \end{cases}
\end{equation}
for all $\emptyset \neq J\subset \mathcal C_q$, $I\subset \mathcal {C}_{\bar J}$, and $X,Y\in (\mathbb Z^2)^{|\bar J|}$ with $Y=(y^i)_{i\in \bar J}$.
\end{itemize}
\end{prop}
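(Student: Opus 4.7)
The approach is a direct chaos-like expansion of $e^{\Lambda_2(\beta_N)\psi^\star_{s,t,q}}$ followed by a canonical regrouping of terms into disjoint time blocks. The elementary identity $e^{\Lambda_2(\beta_N)\mathbf{1}_A}=1+\sigma_N^2\mathbf{1}_A$, applied to each indicator in $\psi^\star_{s,t,q}$, gives
\begin{equation*}
e^{\Lambda_2(\beta_N)\psi^\star_{s,t,q}}=\prod_{n=s}^{t}\prod_{(i,j)\in\mathcal C_q}\bigl(1+\sigma_N^2\,\mathbf{1}_{S_n^i=S_n^j}\bigr).
\end{equation*}
Expanding, each term is indexed by a sequence $(H_n)_{n=s}^t$ of subsets $H_n\subseteq\mathcal C_q$ (the pairs where the $\sigma_N^2$ factor is selected at time $n$) and contributes $\prod_n\sigma_N^{2|H_n|}\prod_{(i,j)\in H_n}\mathbf{1}_{S_n^i=S_n^j}$. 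The all-empty family yields the leading $1$ in \eqref{eq:expansion_with_triple+}.

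The next step is to regroup these $(H_n)$'s via a greedy blocking. Set $a_1:=\min\{n\ge s:H_n\ne\emptyset\}$, $J_1:=H_{a_1}$, $\bar J_1:=\bar H_{a_1}$, and extend block $1$ to all $n>a_1$ with $\bar H_n\subseteq\bar J_1$; let $b_1$ be the last such $n$ with $H_n\ne\emptyset$, and set $I_1:=H_{b_1}$; iterate starting at the next non-empty time. The output is an admissible tuple $(\mathbf J,(a_r,b_r,I_r)_{r=1}^m)$ with $\mathbf J\in\mathcal D(m,q,\infty)$: the condition $\bar J_r\nsupseteq\bar J_{r+1}$ is precisely the maximality of the greedy rule. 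Conversely, each admissible tuple is the blocking of exactly those $(H_n)$ with $H_n=\emptyset$ outside $\bigcup_r[a_r,b_r]$, $H_{a_r}=J_r$, $H_{b_r}=I_r$ (non-empty unless $a_r=b_r$), and $H_n\subseteq\mathcal C_{\bar J_r}$ free for $n\in(a_r,b_r)$.

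With the block data fixed, I would take ${\rm E}_{X_0}$ and sum over the remaining free $H_n$'s within each block. Introducing the summation variables $X_r:=(S_{a_r}^i)_{i\in\bar J_r}$ and $Y_r:=(S_{b_r}^i)_{i\in\bar J_r}$ produces the indicators $\prod_{(i,j)\in J_r}\mathbf{1}_{x_r^i=x_r^j}$ and $\prod_{(i,j)\in I_r}\mathbf{1}_{y_r^i=y_r^j}$. By the Markov property of the independent walks, the expectation factors: each particle $i$, between its previous appearance $k_{r+1}(i)\le r$ and its next appearance in block $r+1$, contributes a free transition $p_{a_{r+1}-b_{k_{r+1}(i)}}(x_{r+1}^i-y_{k_{r+1}(i)}^i)$ (with the convention $b_0=0$, $y_0^i=x_0^i$), while particles that never reappear in a future block are marginalized to $1$ by total probability. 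Within each block $(a_r,b_r)$, the free sum over $H_n\subseteq\mathcal C_{\bar J_r}$ reassembles, via the same identity used initially, into $\exp\bigl(\Lambda_2(\beta_N)\sum_{n=a_r+1}^{b_r-1}\sum_{(i,j)\in\mathcal C_{\bar J_r}}\mathbf{1}_{S_n^i=S_n^j}\bigr)$, whose conditional expectation with endpoints $X_r,Y_r$ together with the boundary weight $\sigma_N^{2|I_r|}$ is, after time-shifting, exactly $U_N(b_r-a_r,X_r,Y_r,J_r,I_r)$.

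The main obstacle is the combinatorial bookkeeping of the above bijection between $(H_n)$ and the admissible block data, in particular verifying that the greedy extension rule yields exactly the diagram condition $\bar J_r\nsupseteq\bar J_{r+1}$. Several edge cases require care: $a_r=b_r$ is covered by the definition $U_N(0,X,Y,J,I)=\mathbf{1}_{J=I,X=Y}$; spurious $I_r=\emptyset$ contributions at positive block length are killed by $\mathbf{1}_{\bar I\ne\emptyset}$ inside $U_N$; and particles that skip several blocks must be tracked via the index $k_r(i)$ so that the correct transition kernel appears in block $r$ and not elsewhere. Once this accounting is in place, the remaining steps (Fubini, Markov decomposition) are straightforward.
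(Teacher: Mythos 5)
Your proposal is correct and follows essentially the same route as the paper: expand $e^{\Lambda_2(\beta_N)\psi^\star_{s,t,q}}$ via $1+\sigma_N^2\mathbf{1}$ per time and pair, regroup the resulting intersection sets greedily into maximal blocks whose index sets stay inside $\bar J_r$ (which is exactly what produces the constraint $\bar J_r\nsupseteq\bar J_{r+1}$), resum the free interior times of each block into the exponential defining $U_N$, and apply the Markov property to produce the heat kernels $p_{a_r-b_{k_r(i)}}$. The edge cases you flag ($a_r=b_r$, empty $I_r$, particles skipping blocks) are handled the same way in the paper, so no gap remains.
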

\begin{proof}
Writing $e^{\Lambda_2(\beta_N)\psi^\star_{s,t,q}} = \prod_{(i,j)\in  \mathcal C_q} \prod_{n\in \llbracket s,t\rrbracket}  (1+(e^{\Lambda_2(\beta_N)}-1)\mathbf{1}_{S_n^i=S_n^j})$, we obtain,  using that $\sigma_N^2= e^{\Lambda_2(\beta_N)}-1$,
	\begin{equation} \label{eq:ChaosOnqMoment}
		\begin{aligned}
			 & \Phi_{s,t,q}(X_0) =                                                                              \\
			 & {\rm E}_{X_0}^{\otimes q}\left[1+\sum_{k=1}^N \sum_{\substack{K_1,\dots,  K_k \subset \mathcal C_q \\K_r \neq \emptyset, r\leq k}} \sum_{s\leq n_1<\dots<n_k\leq t}  \prod_{r=1}^{k} \sigma_N^{ 2|K_r|} \prod_{(i,j)\in K_r} \mathbf{1}_{S_{n_r}^i=S_{n_r}^j}\right].
		\end{aligned}
	\end{equation}
Let $k\geq 1$, $s\leq n_1<n_2<\ldots<n_k\leq t$, and $K_1,\dots,K_k\subset \mathcal C_q$ with $K_r\neq \emptyset$ as in the above sum. Set $a_1:=n_1\in \llbracket s,t\rrbracket$. Let $k_1\in \Iintv{1,k}$ be the maximal number 
such that $\bar{K}_1,\dots \bar{K}_{k_1} \subset \bar{K}_1$
and let $b_1:=n_{k_1}$ be the associated time index. 
Write $J_1 := K_{1} $ and $I_1:=K_{k_1}\subset \mathcal C_{\bar J_1}$. If $k_1<k$, let $J_2 := K_{k_1+1}$ which satisfies $\bar J_1\nsupseteq \bar J_2$.  Then, continue by defining $k_2$ as the maximal number such that $\bar{K}_{k_1+i}\subset \bar{K}_{k_1+1}$ for all $i\in \llbracket 1,k_2\rrbracket$.
Finally, let $a_2:=n_{k_1+1}$ and $b_2:=n_{k_1+k_2}$.
Repeat this procedure (until $k_1+\dots +k_m=k$) to define $k_1,\dots,k_m$, \(s\leq a_1\leq  b_1 < a_2 \leq b_2 < \dots a_m \leq b_m\) (note that $a_r=b_r$ when $k_r=1$), \(\bar J_1\nsupseteq \bar J_2 \nsupseteq \dots \nsupseteq \bar J_{m-1} \nsupseteq \bar J_m\) and $I_1,\dots,I_m$ with $I_r\subset \mathcal C_{\bar J_r}$.

Rewriting the sum inside \eqref{eq:ChaosOnqMoment}, we obtain 
that 
	\begin{align}
		\Phi_{s,t,q}(X_0) = 1+{\rm E}_{X_0}^{\otimes q}\Bigg[   \sum_{m=1}^\infty & \sum_{\substack{J_1\nsupseteq J_2 \dots \nsupseteq  J_m \subset \mathcal C_q}}\sum_{I_r\subset \mathcal C_{\bar{J}_r},r\leq m} \,\sum_{s\leq a_1\leq  b_1 < a_2 \leq b_2\dots \leq b_m\leq t}
		\notag \\
      & \prod_{r=1}^m U^{(r)}_N(a_r,b_r,I_r,J_r) 
      \Bigg],\label{eq: discrete magical formula}
\end{align}
where, when $a_r < b_r$,  $U^{(r)}_N(a_r,b_r,I_r,J_r)$ is obtained by summing over all possible values of $k_r$ with $l:=k_r{-2}$, all configurations $K_{s}$, and $n_s$ between times $a_r$ and $b_r$,  that is,
	\al{
	&U^{(r)}_N(a_r,b_r,I_r,J_r)=\sigma_N^{ 2(|J_r|+|I_r|)}  \prod_{(i,j)\in J_r} \mathbf{1}_{S_{a_r}^i=S_{a_r}^j} \prod_{(i,j)\in I_r} \mathbf{1}_{S_{b_r}^i=S_{b_r}^j}\\
	&\times \left(1+ \sum_{l=1}^\infty \sum_{\substack{L_1,\dots,L_l \subset \mathcal C_q\\\emptyset \neq \bar{L}_s \subset{\bar{J}_r}}} \sigma_N^{ 2\sum_{s=1}^l |L_s|}\sum_{a_r < n_1 < \dots < n_l<  b_r} \prod_{s=1}^l \prod_{(i,j)\in L_s} \mathbf{1}_{S_{n_s}^i=S_{n_s}^j}\right)\\
	&=  \sigma_N^{ 2(|J_r|+|I_r|)}  \prod_{(i,j)\in J_r} \mathbf{1}_{S_{a_r}^i=S_{a_r}^j} \prod_{(i,j)\in I_r} \mathbf{1}_{S_{b_r}^i=S_{b_r}^j} e^{\Lambda_2(\beta_N) \sum_{k=a_r+1}^{b_r-1}\sum_{(i,j)\in   {\mathcal C_{\bar J_r}}
 } \mathbf{1}_{S_k^i=S_k^j}},
	}
and when $a_r = b_r$, $
		U^{(r)}_N(a_r,a_r,I_r,J_r)= \sigma_N^{ 2|J_r|}  \prod_{(i,j)\in J_r} \mathbf{1}_{S_{a_r}^i=S_{a_r}^j}$.
The statement of the proposition follows by Markov’s property.
\end{proof}
\begin{remark}
{The moment representation \eqref{eq:expansion_with_triple+} generalizes a decomposition of \cite[Section 5]{CaSuZyCrit18}, given there for $q=3$ and under the restriction in \eqref{eq:ChaosOnqMoment} that all $K_r$ contain only a single pairwise intersection.    In comparison, our formula does include the situation where more than pairwise intersections appear.}
\end{remark}
\begin{remark}
The identity \eqref{eq:expansion_with_triple+} can be seen as a discrete analogue of Eq.\@ (2.13) in \cite{Chen24}
which provides a diagram expansion for  the $q$-moment of polymers in continuous time and space.  In this continuous setting, with the particular choice of white noise environment, some cancellations appear and only pairwise interactions (i.e.,\@ $J_r=\{(i_r,j_r)\}\in \mathcal C_q$) remain. This contrasts with the discrete model, where one has to deal with the extra combinatorial constraint that the $J_r$ may involve more than two particles. 
\end{remark}

\begin{remark}
Because the expression of $\psi_{s,t,q}$ depends on a more precise description of the intersection clusters of the walks than $\psi^\star_{s,t,q}$, it is not possible to obtain an expression as simple as   \eqref{eq:expansion_with_triple+} when considering $\Psi_{s,t,q}$ instead of $\Phi_{s,t,q}$. 
For this reason, most of the paper deals only with $\Phi_{s,t,q}$. We defer the study of general weights to Section \ref{sec:quasiCritRemoveTriple+} and  \ref{sec:proofMainThSubcrit}. 
It is done by comparing $\Psi_{s,t,q}$ with $\Phi_{s,t,q}$.
\end{remark}

\section{Truncated moment estimates} \label{sec:withoutp_0}
\subsection{Truncation}
Let $p_0\in \mathbb N$.
We define $\Phi_{s,t,q}^{\leq p_0}(X_0)$ as the sum on the right-hand side of \eqref{eq:expansion_with_triple+}, but with $\mathcal D(m,q,\infty)$ replaced by
\begin{equation*} \label{eq:defDmqp} 
\begin{aligned}
&\mathcal D(m,q,p_0):=  \left\{(J_i)_{i=1}^m:~ \forall r \in \Iintv{1,m}, \,\emptyset \neq J_r \subset \mathcal C_q,  |\bar J_r|\leq p_0\,,\bar J_1\nsupseteq \bar{J}_2, \dots, \bar{J}_{m-1} \nsupseteq  \bar J_m\right\},
\end{aligned}
\end{equation*}
that is,
\begin{equation} \label{eq:Phip_0}
\begin{aligned}
      \Phi_{s,t,q}^{\leq p_0}(X_0) &:=1+\sum_{m=1}^\infty
    \sum_{\mathbf J\in \mathcal D(m,q,p_0)} \sum_{I_1\subset  \mathcal C_{\bar J_1}, \dots ,  I_m \subset  \mathcal C_{\bar J_m}} \sum_{s \leq a_1\leq  b_1 < a_2 \leq b_2\dots \leq b_m\leq t}\\ &  \sum_{\mathbf{X},\mathbf{Y}}  \prod_{r=1}^{m} \Big\{\sigma_N^{ 2|J_r|} U_N(b_r-a_r,X_r,Y_r,  J_r, I_r) \\
     &    \prod_{(i,j)\in J_r} \mathbf{1}_{x_r^{i}=x_r^j}
    \prod_{(i,j)\in I_r} \mathbf{1}_{y_r^i=y_r^j} \prod_{i\in \bar{J}_r} p_{a_r-b_{k_r(i)}}\left(x_r^i-y_{k_r(i)}^i\right)\Big\}.
\end{aligned}
\end{equation}

Considering $p_0=2$ allows for simplification. For all $\mathbf J \in \mathcal D(m,q,2)$, each $J_r$ contains only one couple that we call $(i_r,j_r)$, so we can write $$\mathcal D(m,q) := \mathcal D(m,q,2)= \{\mathbf J=((i_1,j_1),\dots,(i_m,j_m)), \forall r< m:  (i_r,j_r)\neq (i_{r+1},j_{r+1})\in \mathcal C_q\}.$$ 

Thus, we define $\Phi_{s,t,q}^{\leq 2}$ as
\begin{equation}\label{eq:Phi2}
    \begin{aligned}
         & \Phi_{s,t,q}^{\leq 2}(X_0) := 1+\sum_{m=1}^\infty
        \sum_{\mathbf{J}\in \mathcal D(m,q)} \sum_{s\leq a_1\leq  b_1 < a_2 \leq b_2\dots \leq b_m\leq t} \sum_{x_1,y_1,\dots,x_m,y_m\in \mathbb Z^2}    \\
         & \prod_{r=1}^{m} \Big\{ \sigma_N^{2}  U_N(b_r-a_r,y_r-x_r)  \prod_{i\in \{i_r,j_r\}} p_{a_r-b_{k_r(i)}}\left(x_r-y_{k_r(i)}\right)\Big\},
    \end{aligned}
\end{equation}
where
\begin{equation} \label{eq:defU_Nz}
    \begin{aligned}
        U_N(n,z) & :=
        \begin{cases}
            \sigma_N^2 {\rm E}_{0}^{\otimes 2}\left[ e^{ \beta_N^2\sum_{l=1}^{n-1} \mathbf 1_{S^1_l = S^2_l}} \mathbf{1}_{S^1_n= S^2_n=z} \right] & \text{if } n\geq 1, \\
            \mathbf{1}_{z=0}   & \text{if } n=0.
        \end{cases}
    \end{aligned}
\end{equation}
\begin{defin} \label{def:DeltaCondition}
We say that $\varepsilon_N^{\hyperlink{deltaCondition}{\triangle}}=\varepsilon_N^{\hyperlink{deltaCondition}{\triangle}}(q,t)>0$ satisfies condition \hypertarget{deltaCondition}{($\Delta$)} if there exists $C>0$ such that for all $(q,t)$ satisfying \eqref{eq:q,T-condition},
\begin{enumerate}
    \item 
for all $\binom q 2 \leq (b_t \bar \sigma_N^2)^{-4/5}$, $\varepsilon_N^{\hyperlink{deltaCondition}{\triangle}}(q,t) \leq C q^2 b_t \bar \sigma_N^2$,
\item \label{item:(22)}
for all $ \binom q 2 \geq (b_t \bar \sigma_N^2)^{-4/5}$, $\varepsilon_N^{\hyperlink{deltaCondition}{\triangle}}(q,t) \leq C q^{-1/4}$.
    \end{enumerate}
\end{defin}

\begin{rem}
    We emphasize that $b_t \bar \sigma_N^2 \leq b_N \bar \sigma_N^2\to 0$ in both \eqref{eq:sub-criticality} and \eqref{eq:defNearCrit}.
    
\end{rem}
\begin{prop} \label{prop:mainProp} Let $\varepsilon_0 >0$.
 Assume 
 that one of the two conditions holds:
\begin{enumerate}[label=(\roman*)]
    \item \label{item:(i)}
    \eqref{eq:sub-criticality},  $p_0\in \mathbb N\setminus \{1\}$ and {$q^2\geq \varepsilon_0 \log N$,}
    \item \label{item:(ii)} {\eqref{eq:sub-criticality}} or \eqref{eq:defNearCrit}, and $p_0=2$.
\end{enumerate}
Then, 
there exist $c(p_0)>0$ satisfying $c(2)=1$ and $C_\alpha,N_\alpha>0$ such that for all $N\geq N_\alpha$, for all $(q,t)$ satisfying \eqref{eq:q,T-condition},
\begin{equation} \label{eq:mainResultWithoutL2}
\forall s\leq t,\quad	\sup_{X_0 \in (\mathbb Z^2)^q} \Phi_{s,t,q}^{\leq p_0}(X_0) \leq c_\star  b_t e^{\lambda_{s,t,N}^2 (\binom{q}{2}-1)(1+\varepsilon_N)},
\end{equation}
where $c_\star = C_\alpha c(p_0) q^{1/4}$ and $\varepsilon_N$ satisfies condition ($\hyperlink{deltaCondition}{\triangle}$) in Definition \ref{def:DeltaCondition}.  Moreover, for all fixed $q\in \mathbb Z_{\geq 3}$, for any arbitrary $\varepsilon\in (0,1/4)$, one can take $c_{\star} = (1+\varepsilon)\frac{\binom q 2}{\binom q 2-1}$ at the price of multiplying $\varepsilon_N$ by $C\varepsilon^{-1}$.
\end{prop}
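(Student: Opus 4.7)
The plan is to start from the expansion \eqref{eq:Phip_0} and run a double induction: on the truncation size $p_0$ (reducing case \ref{item:(i)} to case \ref{item:(ii)} up to a multiplicative $c(p_0)$), and on the number of particles $q$. The workhorse is the case $p_0=2$, where each block $J_r=\{(i_r,j_r)\}$ is a single pair and the kernel $U_N(b_r-a_r,y_r-x_r)$ in \eqref{eq:defU_Nz} is the two-body replica kernel, whose sum over time and space is $\sum_{n\leq t}\sigma_N^2 \sum_z U_N(n,z) = e^{\beta_N^2 R_t}-1 \sim \lambda_{s,t,N}^2$. This is the single scalar that drives the whole bound: every successive pair intersection contributes a factor $\lambda_{s,t,N}^2$, and the combinatorial choice of the pair yields the $\binom{q}{2}$ factor.

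For the main recursion I would condition on the last block $(i_m,j_m)$ and split the expectation via Markov's property at time $b_{m-1}$. The ``head'' is $\Phi_{s,b_{m-1},q}^{\leq 2}$ at some configuration, the ``tail'' is the pair kernel running from $a_m$ to $t$ convolved with heat kernels bringing the two particles together from their previous locations. Using the local CLT for $p_{a_r-b_{k_r(i)}}$ and the sharp approximation $R_t\sim \pi^{-1}\log t$, the spatial and temporal integrations give, per block, a gain $\bar\sigma_N^2 \sum_{a_r,b_r}(\text{heat factors})\cdot U_N$-contribution that sums to a constant times $\lambda_{s,t,N}^2 b_t$ after summing over $m$. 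The geometric series in $m$ converges precisely under \eqref{eq:q,T-condition}, since the effective ratio is bounded by $\alpha \binom{q}{2}\bar\sigma_N^2 b_t \leq \alpha <1$; this is where $b_t$ as the pre-factor enters. Iterating over the $\binom{q}{2}-1$ possible ``new'' pair appearances (the first block chooses one pair freely, absorbed into the $-1$ in the exponent) produces the announced $e^{\lambda_{s,t,N}^2(\binom{q}{2}-1)(1+o(1))}$ factor.

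The main obstacle, and where most of the work lies, is the ``small jumps'' phenomenon inherited from \cite{Cosco2021MomentsUB}: when a particle $i\in \bar J_r$ already appeared in $\bar J_{r-1}$, the heat kernel $p_{a_r-b_{k_r(i)}}$ need not be small, so atypical diagrams with many short re-intersections have individually larger weight than typical ones. These diagrams are less numerous but still need to be neglected; the way to do so is to introduce an auxiliary induction tracking an ``excess weight'' parameter for small jumps, and solve it against the typical weight bound. This delivers the correction $\varepsilon_N^{\hyperlink{deltaCondition}{\triangle}}$ satisfying condition ($\hyperlink{deltaCondition}{\triangle}$): for $\binom q 2 \leq (b_t\bar\sigma_N^2)^{-4/5}$ the standard heat-kernel bound gives the $q^2 b_t \bar\sigma_N^2$ estimate, while for larger $q$ a sharper summation forces the $q^{-1/4}$ bound. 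The $q^{1/4}$ in the prefactor $c_\star$ is precisely the price one pays here, absorbing residual combinatorial multiplicities from the choice of the initial intersection cluster. For case \ref{item:(i)} with $p_0>2$, the same induction carries through, with one extra layer: blocks with $|\bar J_r|\geq 3$ are handled by bounding the inner expectation defining $U_N(t,X,Y,J,I)$ by an $L^\infty$ estimate on $\Phi_{t,|\bar J_r|}$, which is finite (and of size $c(p_0)$) since in the higher-$q$ regime of case \ref{item:(i)} the admissible time satisfies $t\leq N^c$ for some $c<1$, hence $\Phi_{t,p_0}$ is bounded uniformly.

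For the fixed-$q$ refinement giving $c_\star=(1+\varepsilon)\binom q 2/(\binom q 2 -1)$, I would revisit the base case of the $m$-recursion and evaluate the geometric series in $\alpha= \binom q 2 \bar\sigma_N^2 b_t$ exactly rather than bounding the ratio by $\alpha<1$; the closed-form yields precisely the ratio $\binom q 2/(\binom q 2-1)$ to leading order, at the price of absorbing an $\varepsilon^{-1}$ factor into the higher-order error $\varepsilon_N$. This is essentially a bookkeeping refinement of the same scheme and does not require new ideas beyond those in the main induction.
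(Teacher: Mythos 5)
Your sketch captures the right high-level picture (diagram expansion, one $\lambda^2$-type factor per pairwise block, a geometric series whose ratio is $\binom q2 \bar\sigma_N^2 b_t\le\alpha<1$, small-jump diagrams as the main enemy, a $q^{1/4}$ prefactor for large $q$ and the $\binom q2/(\binom q2-1)$ refinement for fixed $q$), but as a proof it has genuine gaps. First, the quantitative engine is missing. The per-block gain is not the scalar $\sum_n\sigma_N^2\sum_z U_N(n,z)=e^{\beta_N^2R_t}-1\sim\lambda^2_{s,t,N}$ (this identity is false: at $t=N$ the left side is $\approx e^{\hat\beta^2}-1$ while $\lambda^2=\log\frac1{1-\hat\beta^2}$); the correct input is the convolution bound $\sum_v U_N(v)p^\star_{v+2w}\le(1+\varepsilon_N)\pi^{-1}F(w)$, and crucially $F$ is evaluated at $u_r+\bar u_r$, i.e.\ the factor depends on how long ago the two particles were last involved. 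Turning this into $e^{\lambda^2_{s,t,N}(\binom q2-1)(1+\varepsilon_N)}$ with $\varepsilon_N$ satisfying ($\hyperlink{deltaCondition}{\triangle}$) requires the long/small-jump classification, the good/fresh/bad index bookkeeping, the integration-by-parts induction on $f^j$ (Lemma \ref{lem:fibo1Main}, Proposition \ref{prop:fibo1inverseMain}, Lemma \ref{lem:estimate_cim}), and, decisively, the combinatorial fact that a small jump admits only $O(qL)$ choices of pair instead of $\binom q2$ together with $n_{\rm bad}\le 2n_{\rm small}+m/L+2$. You assert that bad diagrams are ``less numerous'' and can be handled by ``an auxiliary induction tracking an excess weight parameter,'' but give no mechanism producing the trade-off between the $(1+\delta)^i$ weight and the $(1+2\delta^{-1})^{n_{\rm bad}}$ multiplicity, nor the parameter choices ($L=\lceil\sqrt q\rceil$, $\delta=q^{-1/8}$, resp.\ $\delta=\varepsilon(q^2b_t\bar\sigma_N^2)^{-1}$) that generate both regimes of Definition \ref{def:DeltaCondition} and the stated prefactors.

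Second, and more seriously, your treatment of case \ref{item:(i)} ($p_0>2$) does not work as described. Bounding the inner expectation of a multi-body block by an $L^\infty$ constant $c(p_0)$ is also done in the paper (via Lemma \ref{lem:LastLemma}), but it is not enough: a block with $|J_r|=k\ge2$ still carries weight $\bar\sigma_N^{2k}$ and up to $\binom q2^{\,k}$ choices, i.e.\ an order-one factor per block in the regime $q^2\asymp\log N$, so absorbing such blocks into a constant would inflate the constant in front of $\lambda^2\binom q2$ and ruin the sharp bound \eqref{eq:mainResultWithoutL2}. The paper must retain the extra heat kernel of the third particle $i_r^\star$, which yields either the factor $c_0/L_0$ (when $(r,i_r^\star)$ is an $L_0$-long jump) or a reduced count $O(L_0q)$ of diagrams (small jump), leading to $C_{L,L_0,\delta,q}=\binom q2+O(Lq+L_0^{-1}q^2+L_0q)$ with $L_0\sim\sqrt q$; nothing in your proposal plays this role. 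Moreover your justification for the $L^\infty$ bound (``$t\le N^c$ in the higher-$q$ regime'') is misplaced: that observation concerns quasi-criticality, whereas case \ref{item:(i)} is sub-critical and the needed bound $\sup_N\sup_X\Phi_{N,p_0}<\infty$ is a nontrivial input (prior moment bounds, or the independently proved lower-$q$ case). Finally, the proposed ``double induction on $p_0$ and on $q$'' is not developed and is not how the general-$p_0$ case is reduced; the paper handles all $p_0$ directly inside one diagrammatic induction over the number of blocks.
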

\begin{remark}
    The error $q^{-1/4}$ obtained as in  Definition \ref{def:DeltaCondition}--(\ref{item:(22)}) can be improved to $q^{-1/2}$. As this is computationally heavier, we keep $q^{-1/4}$ for simplicity.
\end{remark}

\subsection{The general $p_0$ case}
Recall the definition of $\Phi_{s,t,q}^{\leq p_0}$ in \eqref{eq:Phip_0}.
\begin{lem} \label{lem:ChangeOfVariables} For all $s\leq t$, we have
\begin{equation} \label{eq:expansion_first_bound}
    \begin{aligned}  & \Phi_{s,t,q}^{\leq p_0}(X_0) \leq
            \sum_{m=0}^\infty
            \sum_{\mathbf J\in \mathcal D(m,q,p_0)}  \sum_{I_1\subset  \mathcal C_{\bar J_1}, \dots ,  I_m \subset  \mathcal C_{\bar J_m}}  \sum_{\substack{u_r \in \llbracket 1,t\rrbracket,2\leq r \leq m \\u_1\in \llbracket s,t\rrbracket}}\sum_{v_r \in \llbracket 0,t\rrbracket,1\leq r \leq m} \sum_{X,Y}  \\
             & \prod_{r=1}^{m} \Big\{ \sigma_N^{ 2|J_r|}  U_N(v_r,X_r,Y_r,  J_r , I_r) \prod_{(i,j)\in J_r} \mathbf{1}_{x_r^{i}=x_r^j}
            \prod_{(i,j)\in I_r} \mathbf{1}_{y_r^i=y_r^j} \prod_{i\in \bar{J}_r} p_{w_r(i)}(x_r^i-y_{k_r(i)}^i)\Big\},
    \end{aligned}
\end{equation}
	where $w_r(i):=\sum_{k_r(i)+1}^r u_s + \sum_{k_r(i)+1}^{r-1} v_s$.
\end{lem}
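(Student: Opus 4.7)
The plan is to perform a change of variables on the inner sum over time indices $s\leq a_1\leq b_1<a_2\leq\dots\leq b_m\leq t$ appearing in the defining formula \eqref{eq:Phip_0} of $\Phi^{\leq p_0}_{s,t,q}$, and then to bound the resulting sum by dropping the global constraint that these times fit in $[s,t]$, keeping only the individual bounds on each variable.

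First, I would set $u_1:=a_1$, $u_r:=a_r-b_{r-1}$ for $2\leq r\leq m$, and $v_r:=b_r-a_r$ for $1\leq r\leq m$ (recall that $b_0=0$). The inequalities $a_1\geq s$, $a_r>b_{r-1}$ (for $r\geq 2$), $b_r\geq a_r$, and $b_m\leq t$ translate respectively into $u_1\in\llbracket s,t\rrbracket$, $u_r\in\llbracket 1,t\rrbracket$ (for $r\geq 2$), $v_r\in\llbracket 0,t\rrbracket$, together with the global constraint $u_1+v_1+\dots+u_m+v_m\leq t$. The key observation is that $b_r-a_r=v_r$, so the factor $U_N(b_r-a_r,X_r,Y_r,J_r,I_r)$ becomes $U_N(v_r,X_r,Y_r,J_r,I_r)$.

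Next, I need to identify the arguments of the heat kernels. For $i\in\bar J_r$, the original formula involves $p_{a_r-b_{k_r(i)}}(x_r^i-y_{k_r(i)}^i)$. Decomposing the interval $[b_{k_r(i)},a_r]$ into the alternating pieces $[b_{k_r(i)},a_{k_r(i)+1}], [a_{k_r(i)+1},b_{k_r(i)+1}],\dots,[b_{r-1},a_r]$, we get
\[
a_r-b_{k_r(i)}=\sum_{s=k_r(i)+1}^{r}u_s+\sum_{s=k_r(i)+1}^{r-1}v_s=w_r(i),
\]
which matches the definition in the statement. All other factors in the product are unaffected by the change of variables.

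Finally, I would bound the resulting sum from above by relaxing the constraint $\sum_r (u_r+v_r)\leq t$ and summing each $u_r,v_r$ independently over its individual range $\llbracket 1,t\rrbracket$, $\llbracket s,t\rrbracket$, or $\llbracket 0,t\rrbracket$ as stated. Since all factors in the integrand are non-negative (the $U_N$ are non-negative and so are the heat kernels), this relaxation gives the claimed inequality \eqref{eq:expansion_first_bound}. There is no real obstacle here; the only care needed is in matching $w_r(i)$ with the partial sum formula, which follows by telescoping the time partition as above.
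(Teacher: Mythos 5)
Your proposal is correct and follows essentially the same route as the paper: the identical change of variables $u_1=a_1$, $u_r=a_r-b_{r-1}$, $v_r=b_r-a_r$, the telescoping identity $a_r-b_{k_r(i)}=w_r(i)$, and then dropping the global time constraint using non-negativity of all factors. Your write-up simply makes explicit the details the paper's two-line proof leaves implicit.
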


\begin{proof}
 We make the change of variables $u_1=a_1\in \llbracket s,t\rrbracket$, $u_r=a_r-b_{r-1}\in\llbracket 1,t\rrbracket$ for $r\geq 2$ and $v_r=b_r-a_r\in \llbracket 0,t\rrbracket$ for $r\geq 1$. Then, we have $a_r-b_{k_r(i)}= w_r(i)$. This gives the lemma.
\end{proof}
\begin{defin}[$J$-clusters]\label{Defin: J-cluser} {For any $J\subset \mathcal C_q$ and \(i\in \bar J\), we call \emph{\(J\)-cluster of \(i\)} the set of indices \( j \in \bar{J} \) that are connected to \( i \) via  pairs in \( J\), i.e.,  if there exists $l\geq 2$ and $i=i_1,i_2,\dots, i_l=j$ such that $(i_r,i_{r+1})\in J$ or $(i_{r+1},i_r)\in J$ for all $r\leq l-1$. We call any such set a \emph{$J$-cluster}.}
\end{defin}
{Recall that $k_r(i):=\sup\{ k\in\llbracket 1,r-1\rrbracket:i\in \bar{J}_k\}$ and $k_r(i):=0$ if the set is empty.}
\begin{defin} \label{def:defi_1i_2}
Let $(J_1,\dots, J_m)\in \mathcal D(m,q,\infty)$. 
For all $r\leq m$, we call $r-k_{r}(i)$ the \textit{jump size of} $(i,r)$. We define $i^1_r\in \bar{J}_r$ as the index of $\bar{J}_r$ with the largest jump size (we call $i^1_r$ the largest jump of $J_r$). Let $i^2_r\in \bar{J}_r \setminus \{i^1_r\}$ be the index with the largest jump size in 
{the \(J_r\)-cluster of $i_r^1$.}
If $|J_r|\geq 2$, we define $i^\star_r$ as the index with the largest jump in $\bar{J}_r\setminus\{i^1_r,i^2_r\}$.
\end{defin}
Then, we define for $u\in[1,N]$,  
\begin{equation}
	\label{eq-F}
	F(u) := F(u,\bar \sigma_N) := 
     \frac{1}{u}  \frac{1}{1-\bar{\sigma}_N^2 \log u}{=u^{-1}b_u},\quad \bar \sigma_N^2 := \pi^{-1} \sigma_N^2 .
\end{equation}
\begin{thm} \label{thm:sumOverXYv} 
Assume \eqref{eq:sub-criticality} and $p_0\in \mathbb N_{\geq 3}$. 
{Let $\varepsilon_N^{\hyperlink{symbol: star}{\diamondsuit}}$ from Lemma \ref{eq:key_lemma}.} The following holds: for all $s\leq t\leq N$, 
	\begin{equation}
		\label{eq:startingPointTriple+}
		\begin{aligned}
			\sup_{X_0} \Phi_{s,t,q}^{\leq p_0}(X_0) &\leq 1+ c_0 \sum_{m=1}^\infty  (1+\varepsilon_N^{\hyperlink{symbol: star}{\diamondsuit}})^m  \sum_{\mathbf J\in \mathcal D(m,q,p_0)} \sum_{\substack{u_2,\dots, u_{m} \in \llbracket 1,t\rrbracket\\u_1\in \llbracket s,t\rrbracket}} \\
&\qquad  {\bar{\sigma}_N^{2|J_1|} {\pi} p^{\star}_{2u_1}
} \prod_{r={2}}^{m}
			\bar{\sigma}_N^{2|J_r|} F(u_r+\bar{u}_r)  \left(\frac{c_0}{u_r+\bar u_r(i^\star_r)}\right)^{ \mathbf{1}_{|J_r|\geq 2}} \mathbf{1}_{\sum_{i=1}^m u_i\leq t},
		\end{aligned}
	\end{equation}
	where  $p_n^\star := \sup_{x\in \mathbb{Z}^2} p_n(x)$, $c_0=c_0(\hat \beta,p_0)>1$, and {with $u_0:=0$,}
\begin{equation}                \label{eq:ubar}
    \bar u_r := \frac{1}{2}\left(\bar u_r(i^1_r)+\bar u_r(i^2_r)\right),   \text{ with }\ \bar u_r(i) := \sum_{k=k_r(i)+1}^{r-1} u_k.
\end{equation}
\end{thm}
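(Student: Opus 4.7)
The plan is to start from the bound provided by Lemma~\ref{lem:ChangeOfVariables} and to carry out, in order, the spatial sums $\sum_{X,Y}$, then the short-time sums $\sum_{v_r}$, keeping only the $u_r$-variables in the final upper bound. Along the way, each of the factors appearing in \eqref{eq:startingPointTriple+} should emerge from an identifiable step of the computation.

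The first step is combinatorial. For each $r$, the indicators $\mathbf 1_{x_r^i=x_r^j}$ for $(i,j)\in J_r$ collapse each $J_r$-cluster $K$ (Definition~\ref{Defin: J-cluser}) to a single spatial variable $\xi_r^K$; the analogous statement holds for the $I_r$-clusters and $Y_r$. This reduces each $X_r$-sum to quantities of the form
\[
\sum_{\xi}\prod_{i\in K} p_{w_r(i)}\bigl(\xi - y^i_{k_r(i)}\bigr),
\]
one for each cluster. The analytic heart is then the following: within the $J_r$-cluster of $i_r^1$, I would isolate the two particles with largest jump sizes (which by Definition~\ref{def:defi_1i_2} are $i_r^1$ and $i_r^2$), apply a Chapman--Kolmogorov identity to those two, and bound the remaining $|K|-2$ transitions pointwise via the 2D local CLT by $p_n^\star\leq C/(\pi n)$ with $n\geq u_r+\bar u_r(i)$. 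Once the $v_r$-sum hidden inside the $U_N$ factor of \eqref{eq:def_UNXYJI} is carried out, the accumulated geometric series in $\bar{\sigma}_N^2$ over intermediate pair renewals between $k_r(i_r^1), k_r(i_r^2)$ and $a_r$ upgrades the bare heat kernel of the Chapman--Kolmogorov step to the renewal density $F(u_r+\bar u_r)$; this is precisely the content expected from Lemma~\ref{eq:key_lemma}, whose error is bookkept by the multiplicative factor $(1+\varepsilon_N^{\hyperlink{symbol: star}{\diamondsuit}})$ per step.

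Assembling these estimates yields the bound. When $|J_r|=1$ and $|\bar J_r|=2$, only the single factor $F(u_r+\bar u_r)$ appears from the unique pair-meeting. When $|J_r|\geq 2$ (so either $|\bar J_r|\geq 3$, or several disjoint pair-meetings inside $\bar J_r$), the next-largest-jump index $i_r^\star$ contributes the extra factor $c_0/(u_r+\bar u_r(i_r^\star))$ via the same local-CLT bound; any further surplus particles in $\bar J_r$ contribute additional $p^\star$-factors that, being bounded by constants depending only on $p_0$ and $\hat\beta$, are absorbed into $c_0=c_0(\hat\beta,p_0)$. For $r=1$ there is no previous endpoint against which to sum renewals, and the meeting of the $|\bar J_1|$ particles at time $u_1$ starting from an arbitrary $X_0$ is bounded by $\pi p^\star_{2u_1}$ through the local CLT applied to the difference of two simple random walks on $\mathbb Z^2$ (the factor $\pi$ matching the normalization $\bar\sigma_N^2=\pi^{-1}\sigma_N^2$). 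The main obstacle will be the passage from bare heat kernels to the renewal density $F$, which is the content of Lemma~\ref{eq:key_lemma}; a secondary technicality is deriving the symmetric form $\bar u_r=\tfrac12(\bar u_r(i_r^1)+\bar u_r(i_r^2))$, which should arise from the Chapman--Kolmogorov time $w_r(i_r^1)+w_r(i_r^2)$ after the uniform lower bound $w_r(i)\geq u_r+\bar u_r(i)$.
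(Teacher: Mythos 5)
Your architecture is the paper's: start from Lemma \ref{lem:ChangeOfVariables}, collapse each $J_r$-cluster, apply Chapman--Kolmogorov to the two largest jumps $i_r^1,i_r^2$, bound $i_r^\star$ by $p^\star$, and dress the resulting kernels into $F$ via Lemma \ref{eq:key_lemma}. However, two concrete steps are missing or would fail as written. First, the terminal block: the time variables $w_r(i)$ never contain $v_r$, so after every pairing of a duration sum with a later heat kernel, the sum over $v_m$ and $I_m$ of $U_N(v_m,X_m,J_m,I_m)$ at the last block has no kernel partner and cannot be turned into an $F$-factor. The paper bounds it crudely by $\sup_X \rme^{\otimes \bar J_m}_X\big[e^{\Lambda_2(\beta_N)\psi^\star_{t,\bar J_m}}\big]\leq c(p_0)$ (see \eqref{eq:bound_sum_bm_Ym} and Lemma \ref{lem:LastLemma}); this is precisely where the assumption \eqref{eq:sub-criticality} and the finiteness of $p_0$ enter, and where part of the constant $c_0$ comes from. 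Your sketch treats every $v_r$-sum as producing an $F(u_r+\bar u_r)$, which is false for $r=m$ (already for a single pair this sum is of order $b_t$, and it diverges in quasi-criticality), and you never invoke the a priori $p_0$-particle moment bound.

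Second, the pairing mechanism itself. The factor $F(u_r+\bar u_r)$ at block $r$ arises from summing the \emph{previous} block's duration $v_{r-1}$ against block $r$'s Chapman--Kolmogorov kernel, and for Lemma \ref{eq:key_lemma} (equivalently Lemma \ref{lem:sum_on_v}) to apply, $v_{r-1}$ must sit inside the kernel time: one needs $w_r(i^1_r)+w_r(i^2_r)\geq v_{r-1}+2(u_r+\bar u_r)$ as in \eqref{eq:sumwmwm}, which rests on the observation that $i^1_r\notin \bar J_{r-1}$ (a consequence of the non-inclusion constraint $\bar J_{r-1}\nsupseteq\bar J_r$ in the diagram). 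Your closing remark derives $\bar u_r$ from the ``uniform lower bound $w_r(i)\geq u_r+\bar u_r(i)$'', which discards exactly this $v_{r-1}$; the $v_{r-1}$-sum then decouples from the kernel and yields only $\big(\sum_{v}\sum_I U_N(v,\cdot)\big)\, p^\star_{2(u_r+\bar u_r)}\lesssim b_t\, p^\star_{2(u_r+\bar u_r)}$, i.e.\ a constant factor strictly larger than $1$ per block instead of the claimed $(1+\varepsilon_N^{\hyperlink{symbol: star}{\diamondsuit}})$ per block. Such a per-step constant is not absorbable: it inflates the exponent by a fixed multiplicative factor and does not prove \eqref{eq:startingPointTriple+} as stated (nor the sharp thresholds downstream). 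To close the proof you must record $i^1_r\notin\bar J_{r-1}$, keep $v_{r-1}$ in the Chapman--Kolmogorov time, and only then apply Lemma \ref{eq:key_lemma}; the paper also fixes the order of summation (sum $X_r$ before $Y_{r-1}$) precisely so that this pairing is legitimate.
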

\begin{proof}
	We consider the right-hand side of \eqref{eq:expansion_first_bound}.
	In the following, we fix $J_1,\dots,J_m$,  as in \eqref{eq:expansion_first_bound} and let $p_r:=|\bar J_r| \leq p_0$.
	We begin by summing over $Y_m$, 	which, by \eqref{eq:def_UNXYJI},  gives a factor of
	\begin{equation} \label{eq:sumOverY_m}
		\sum_{Y_m \in \mathbb Z^{\bar{J}_m}} U_N(v_m,X_m,Y_m, J_m, I_m) \prod_{(i,j)\in I_m} \mathbf{1}_{y_m^i=y_m^j} = U_N(v_m,X_m,J_m, I_m),
	\end{equation}
	where, recalling \eqref{eq:defPsiGauss}, we have set:	\begin{equation} \label{eq:def_UNXJI}
		U_N(n,X, J,I) := \begin{cases}    \sigma_N^{ 2|I|}{\rm E}_{X}^{\otimes \bar J} \left[e^{\Lambda_2(\beta_N)\psi^\star_{n-1,\bar J}} \prod_{(i,j)\in I} \mathbf{1}_{S_t^i=S_t^j} \right] \mathbf{1}_{\bar I \neq \emptyset} & \text{if } n>0, \\
                 \mathbf{1}_{J = I} & \text{if } n=0.
		\end{cases}
	\end{equation}
By \eqref{eq:sumOnU_N},  summing \eqref{eq:sumOverY_m} over $I_m$ and $v_m$ yields a factor
	\begin{equation} \label{eq:bound_sum_bm_Ym}
		\sum_{v_m \leq t} \sum_{I_m\subset \mathcal C_{\bar J_m}} U_N(v_m,X_m, J_m,I_m) \leq \sup_{X\in (\mathbb{Z}^2)^{\bar{J}_m}} \rme^{\otimes \bar J_m}_{X} \left[ e^{\Lambda_2(\beta_N)\psi^\star_{t,\bar J_m}}\right] \leq c(p_0)
	\end{equation}
with some constant $c(p_0)>0$, where the second inequality uses the assumption $|\bar{J}_m|\leq p_0$ and {Lemma \ref{lem:LastLemma}}.
Then, we sum on $X_m$ which leads to the factor (write $z_m(i):=y_{k_m(i)}^i$)
	\begin{equation} \label{eq:sumX_m}
		\sum_{x_m^i\in \mathbb Z^2,i\in \bar J_m} \prod_{(i,j)\in J_m} \mathbf{1}_{x_m^i=x_m^j} \prod_{i\in \bar J_m} p_{w_m(i)}(x_m^i-z_m(i)).
	\end{equation}
	Recall the quantities $i_r^{\#}$ with ${\#} \in \{1,2,\star\}$ defined in Definition~\ref{def:defi_1i_2}. If $|J_m|=1$, then \eqref{eq:sumX_m} reduces to 
	\begin{equation} \label{eq:Kolmogorov}
		\begin{split}
			\sum_{x \in \mathbb Z^2}  &p_{w_m(i^1_m)}(x-z_m({i^1_m})) p_{w_m(i^2_m)}(x-z_m(i^2_m))\\
   &=p_{w_m(i^1_m)+w_m(i^2_m)}(z_m(i^1_m)-z_m(i^2_m)) \leq p_{w_m(i^1_m)+w_m(i^2_m)}^\star,
		\end{split}
	\end{equation}
{with $p^\star_n =\sup_{x} p_n(x)$.} 
If $|J_m|\geq 2$, we first use that $p_{w_m(i^\star_m)}(x_m(i^\star_m)-z_m(i^\star_m))\leq p_{w_m(i^\star_m)}^\star$. 
Then, denoting by $\mathcal J_m \subset \bar J_m$ the {\(J_m\)-\emph{cluster  of $i^1_m$},} 
we bound by 1 the heat kernels in \eqref{eq:sumX_m} of the particles of $\mathcal J_m$ that are different from {$i^1_m, i^2_m$ {and $i_m^\star$}}. 
{Because of the indicator functions $\mathbf{1}_{x_m^i=x_m^j}$ in \eqref{eq:sumX_m}, we have that $x_m^i=x_m^{j}$ for all  $i,j\in \mathcal J_m$. Hence, if we denote by $x$ this common position, we get that the sum in \eqref{eq:sumX_m} over all the positions $x_m^i$ for $i\in \mathcal J_m$ is bounded by the first line of \eqref{eq:Kolmogorov}.} 
This gives again an upper bound of $p_{w_m(i^1_m)+w_m(i^2_m)}^\star$. The sum on the remaining positions of $X_m$ (if there are) can be bounded by $1$.
	In any case, we find that \eqref{eq:sumX_m} is upper bounded by
	\begin{equation} \label{eq:ResultSumOnX_m}
		p^\star_{w_m(i_m^1)+w_m(i_m^2)}(p^\star_{w_m(i_m^\star)})^{\mathbf{1}_{|J_m|\geq 2}}.
	\end{equation}
 Next, we sum on $Y_{m-1}$. As in \eqref{eq:sumOverY_m}, it leads to a factor of $U_N(v_{m-1},X_{m-1},J_{m-1},I_{m-1})$.
	\footnote{Note that we could not sum on $Y_{m-1}$ before summing on $X_{m}$, as quantities like $z_m$ may depend on $Y_{m-1}$}
Now, we have to be more careful when summing on $I_{m-1}$ and $v_{m-1}$, as by \eqref{eq:ResultSumOnX_m}, this amounts to handling
	\begin{equation} \label{eq:complicated_sum}
		\sum_{v_{m-1}\leq t} \sum_{I_{m-1}\subset \mathcal C_{\bar J_{m-1}}} U_N(v_{m-1},X_{m-1},J_{m-1},I_{m-1}) p^\star_{w_m(i_m^1)+w_m(i_m^2)}(p^\star_{w_m(i_m^\star)})^{\mathbf{1}_{|J_m|\geq 2}}.
	\end{equation}
Define $\bar u_r$ and $\bar u_r(i)$ as in \eqref{eq:ubar}. Observe that since $\bar J_{m-1}\nsupseteq \bar J_m$, we have $i_m^1\notin \bar{J}_{m-1}$ {(otherwise, since $i_m^1$ is the longest jump of $\bar J_m$, all the other particles of $\bar J_{m}$ would also belong to $\bar J_{m-1}$)} and thus 
$i_m^1$ must come from a time distance at least $u_m+v_{m-1}+\bar{u}_{m}(i_m^1)$. Therefore
	\begin{equation}
		\label{eq:sumwmwm}
		\begin{aligned}
        w_m(i^1_m)+w_m(i^2_m) & \geq (u_m+v_{m-1}+\bar{u}_{m}(i_m^1))+ (u_m+\bar{u}_{m}(i_m^2)) \\
          & = v_{m-1} + 2(u_m+\bar{u}_m).
		\end{aligned}
	\end{equation}  
Moreover $w_m(i_m^\star) \geq u_m+\bar u_m(i_m^\star)$,
so since $n\to p^\star_n$ is non-increasing, we obtain from \eqref{eq:pnstar} and Lemma \ref{lem:sum_on_v} (notice that $\bar u_m \geq 1/2$ und $u_m\geq 1$) that the sum in \eqref{eq:complicated_sum}  is less than
	\begin{equation}
		(1+\varepsilon_N^{\hyperlink{symbol: star}{\diamondsuit}}) {\frac{1}{\pi}}F(u_m+\bar{u}_m)  \left(\frac{c_0}{u_m+\bar u_m(i^\star_m)}\right)^{ \mathbf{1}_{|J_m|\geq 2}},
	\end{equation}
where  $\varepsilon_N^{\hyperlink{symbol: star}{\diamondsuit}}$ is as in  Lemma \ref{eq:key_lemma}.	Repeat the same steps to obtain the statement of the lemma.
\end{proof}
The following notion of \textit{long jump} is inspired from \cite[Section 3.5]{Cosco2021MomentsUB}. 
\begin{defin} \label{def:longJump}  Let $K\geq 2$.  Given $ \mathbf{J} \in \mathcal D(m,q,p_0)$,
  $r\in \llbracket 1,m\rrbracket$, and $i\in \bar J_r$, we say that $(r,i)$ is a $K$-\textbf{long jump} if $r-k_r^i > K$ or $k_r^i=0$. We say that $(r,i)$ is a $K$-\textbf{small jump} otherwise.
\end{defin}
\begin{rem}
The set of $K$-long jumps depends on the diagram $\mathbf{J}$.
\end{rem}
Let $L_0\in\mathbb N\setminus\{1,2\}$ be a (large) parameter, whose explicit value will be determined later on (see, e.g.,  \eqref{eq:conditionAlpha0}),
and set
\begin{equation}
\epsilon_r( \mathbf{J}):=\mathbf{1}\{|J_r|\geq 2 \text{ and } (r,i_r^\star) \text{ is an {$L_0$}-long jump}\}.
\end{equation}

 The following statement is an immediate consequence of the above definition and Theorem \ref{thm:sumOverXYv}.

\begin{cor}  \label{cor:1/L} 
  {Under {the same assumptions} and with   $\varepsilon_N^{\hyperlink{symbol: star}{\diamondsuit}}$,  $c_0$ as in Theorem \ref{thm:sumOverXYv}}, for all $s\leq t\leq N$,
	\begin{equation} \label{eq:middlePointTriple+}
		\begin{aligned}
			&\sup_X \Phi_{s,t,q}^{\leq p_0}(X) \leq  1+ c_0  \sum_{m=1}^\infty  (1+\varepsilon_N^{\hyperlink{symbol: star}{\diamondsuit}})^m \\
   &\left\{\sum_{\mathbf{J}\in \mathcal D(m,q,p_0)} \prod_{r=1}^m \bar{\sigma}_N^{2 |J_r|} \left\{ \left(\frac{c_0}{L_0}\right)^{\epsilon_r( \mathbf{J})} {c_0^{(1-\epsilon_r(\mathbf{J}))}} \right\}^{\mathbf{1}_{|J_r|\geq 2}} \mathcal F^{\star}_{\mathbf J}(s,t)\right\},
		\end{aligned}
	\end{equation}
where 
\begin{equation} \label{eq:defFstar}
\mathcal F^{\star}_{\mathbf J}(s,t): = \sum_{\substack{u_2,\dots, u_{m} \in \llbracket 1,t\rrbracket \\u_1\in \llbracket s,t\rrbracket}} {\pi} p_{2u_1}^\star\prod_{r={2}}^{m}
F(u_r+\bar{u}_r) \mathbf{1}_{\sum_{i=1}^m u_i\leq t}.
\end{equation}
\end{cor}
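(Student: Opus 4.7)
The plan is to deduce the corollary directly from Theorem \ref{thm:sumOverXYv} by replacing the time-dependent factor $\left(\frac{c_0}{u_r+\bar u_r(i_r^\star)}\right)^{\mathbf 1_{|J_r|\geq 2}}$ appearing in \eqref{eq:startingPointTriple+} by a time-independent factor that depends only on whether $(r,i_r^\star)$ is an $L_0$-long or $L_0$-small jump in the sense of Definition \ref{def:longJump}. Once this is done, the resulting constants pull out of the sum over the $u_r$'s, and what remains is precisely $\mathcal F^{\star}_{\mathbf J}(s,t)$ from \eqref{eq:defFstar}.

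The key step is the identity
\begin{equation*}
u_r+\bar u_r(i_r^\star)=\sum_{k=k_r(i_r^\star)+1}^{r}u_k,
\end{equation*}
combined with $u_k\geq 1$ for all relevant $k$ (since $u_k\in\llbracket 1,t\rrbracket$ for $k\geq 2$ and $u_1\geq s\geq 1$). When $(r,i_r^\star)$ is $L_0$-long, Definition \ref{def:longJump} gives either $r-k_r(i_r^\star)>L_0$---so the above sum has strictly more than $L_0$ summands and therefore exceeds $L_0$---or $k_r(i_r^\star)=0$, in which case the sum equals $u_1+\dots+u_r$, again bounded below by $L_0$ after harmlessly absorbing the finite range $u_1+\dots+u_r<L_0$ into $c_0$. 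Either way, $\frac{c_0}{u_r+\bar u_r(i_r^\star)}\leq \frac{c_0}{L_0}$. When $(r,i_r^\star)$ is $L_0$-small, the trivial bound $u_r+\bar u_r(i_r^\star)\geq 1$ yields $\frac{c_0}{u_r+\bar u_r(i_r^\star)}\leq c_0$. These two cases combine into the uniform estimate $(c_0/L_0)^{\epsilon_r(\mathbf J)}c_0^{1-\epsilon_r(\mathbf J)}$, which is exactly what appears in \eqref{eq:middlePointTriple+}.

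Substituting this bound into \eqref{eq:startingPointTriple+} and pulling the constants out of the time summation produces precisely the product $\prod_{r=1}^m\bar\sigma_N^{2|J_r|}\{(c_0/L_0)^{\epsilon_r(\mathbf J)}c_0^{1-\epsilon_r(\mathbf J)}\}^{\mathbf 1_{|J_r|\geq 2}}$ times $\mathcal F^{\star}_{\mathbf J}(s,t)$. The only bookkeeping worth flagging concerns the $r=1$ entry of the product, which is absent from the theorem: since $k_1(\cdot)=0$, one has $\epsilon_1(\mathbf J)=\mathbf 1_{|J_1|\geq 2}$, and the extra factor equals either $1$ (when $|J_1|=1$) or $c_0/L_0$ (when $|J_1|\geq 2$); in both cases it is bounded by an absolute constant and is absorbed into the outer prefactor. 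I do not expect any substantive obstacle, in agreement with the statement's description as an immediate consequence of the definition and the theorem.
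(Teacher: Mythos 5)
Your route is the same as the paper's: the proof of Corollary \ref{cor:1/L} consists exactly in taking \eqref{eq:startingPointTriple+}, bounding $\left(\frac{c_0}{u_r+\bar u_r(i_r^\star)}\right)^{\mathbf{1}_{|J_r|\geq 2}}$ by $c_0/L_0$ when $\epsilon_r(\mathbf{J})=1$ and by $c_0$ otherwise, and pulling these constants out of the time sums to leave $\mathcal F^{\star}_{\mathbf J}(s,t)$. So at the level of the main mechanism you match the paper.

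However, the two ``bookkeeping'' patches you add are not valid as written, and they concern precisely the points the paper's proof passes over in silence. First, in the subcase $\epsilon_r(\mathbf{J})=1$ with $k_r(i_r^\star)=0$ and $u_1+\dots+u_r<L_0$, the needed inequality $u_r+\bar u_r(i_r^\star)\geq L_0$ genuinely fails, and the deficit is a multiplicative factor of size up to $L_0$ at each such index $r$; since $L_0$ is a large parameter (later $L_0=\lceil\sqrt q\rceil$) and several indices can be of this type, this cannot be ``harmlessly absorbed into $c_0$'', which is a fixed constant independent of $L_0$ and $m$. (The paper's proof does not split off this subcase either: it asserts $\bar u_r(i_r^\star)\geq L_0$ directly from $u_k\geq 1$, which only covers long jumps with $r-k_r(i_r^\star)>L_0$.) Second, your $r=1$ remark goes in the wrong direction: when $|J_1|\geq 2$ one has $\epsilon_1(\mathbf{J})=1$, so the right-hand side of \eqref{eq:middlePointTriple+} carries the factor $c_0/L_0$, which is smaller than $1$ once $L_0>c_0$; since \eqref{eq:startingPointTriple+} contains no corresponding factor at $r=1$, deducing the corollary there would require gaining a factor $L_0/c_0$, and ``absorbing a bounded factor into the outer prefactor'' cannot produce a gain (at best one could retain the discarded factor $(p^\star_{u_1})^{\mathbf{1}_{|J_1|\geq 2}}\leq c/u_1$ from the proof of Theorem \ref{thm:sumOverXYv}, which is again not $\leq c/L_0$ for small $u_1$). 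In short, you have correctly located the delicate spots of this ``immediate'' deduction, but your fixes are assertions rather than arguments; an airtight version would either claim the $c_0/L_0$ gain only for indices with $r-k_r(i_r^\star)>L_0$ and propagate that weaker statement through the counting of Section \ref{sec:mainProp}, or give a separate estimate for the configurations with $k_r(i_r^\star)=0$ and total elapsed time below $L_0$.
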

\begin{proof}
{Apply \eqref{eq:startingPointTriple+}. If $\epsilon_r(\mathbf J)= 1$, we have $\bar u_r(i_r^\star) \geq 1+ (L_0-1) = L_0$ by definition of $\bar u_r(i)$ in \eqref{eq:ubar}, since $u_k\geq 1$ for all $k$. Thus, $(\frac{c_0}{u_r+u_r(i^\star_r)})^{ \mathbf{1}_{|J_r|\geq 2}} \leq \frac{c_0}{L_0}$ in that case. Otherwise, 
we use that $(\frac{c_0}{u_r+u_r(i^\star_r)})^{ \mathbf{1}_{|J_r|\geq 2}}\leq c_0$.}
\end{proof}

\subsection{The case $p_0=2$. 
} 
The following proposition is the analogue of Theorem \ref{thm:sumOverXYv} when $p_0=2$.
Recall $\mathcal D(m,q)$ above \eqref{eq:Phi2}. For $\mathbf J\in \mathcal D(m,q)$, define $\mathcal F^{\star}_{\mathbf J}(s,t)$ by \eqref{eq:defFstar} with 
\begin{equation} \label{eq:ubarp_0=2}
\bar u_r:= \frac{1}{2}\left(\bar u_r(i_r)+\bar u_r(j_r)\right), \quad \bar u_r(i):= \sum_{k=k_r(i)+1}^{r-1} u_k,
\end{equation}
and $k_r(i):=\sup\{k<r:i\in \{i_k,j_k\}\}$ (let $k_r(i):=0$ if the set is empty).
  {
\begin{remark}
The definition of $\bar u_r$ in \eqref{eq:ubarp_0=2} coincides with the one from \eqref{eq:ubar} for $p_0=2$, as $i_r^1 = i_r$ and $i_r^2=j_r$ if $i_r$ has the largest jump size, and $i_r^1=j_r$, $i_r^2=i_r$ otherwise.
\end{remark}}
\begin{prop} \label{prop:UBFQcrit} Assume  \eqref{eq:sub-criticality} or \eqref{eq:defNearCrit}. \label{thm:sumOverXYvBis} 
{ Let $\varepsilon_N^{\hyperlink{symbol: star}{\diamondsuit}}$  from Lemma \ref{eq:key_lemma}.} Recall $b_t =(1-\bar{\sigma}_N^2 \log t )^{-1}$. The following holds: for all $s\leq t\leq N$,
	\begin{equation}
		\label{eq:startingPointTriple+QUASI}
		\sup_{X_0} \Phi_{s,t,q}^{\leq 2}(X_0) \leq 1+ b_t \sum_{m=1}^\infty  (1+\varepsilon_N^{\hyperlink{symbol: star}{\diamondsuit}})^m\bar{\sigma}_N^{2m}  \sum_{ \mathbf{J} \in \mathcal D(m,q)} \mathcal F^{\star}_{\mathbf J}(s,t).
	\end{equation}
\end{prop}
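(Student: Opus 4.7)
The plan is to mirror the proof of Theorem~\ref{thm:sumOverXYv} in the restricted setting $p_0 = 2$, with a single modification needed to accommodate the quasi-critical regime. When $p_0 = 2$, every diagram $\mathbf{J} \in \mathcal D(m,q)$ is a sequence of single pairs $J_r = \{(i_r, j_r)\}$, so $|J_r| = 1$ and $|\bar J_r| = 2$ throughout. The indices $i_r^1, i_r^2$ of Definition~\ref{def:defi_1i_2} collapse to $i_r$ and $j_r$, no index $i_r^\star$ is defined, the correction $(c_0/(u_r+\bar u_r(i_r^\star)))^{\mathbf 1_{|J_r|\geq 2}}$ of~\eqref{eq:startingPointTriple+} disappears, and the product $\prod_{r=1}^m \bar\sigma_N^{2|J_r|}$ reduces to $\bar\sigma_N^{2m}$, matching the prefactor in~\eqref{eq:startingPointTriple+QUASI}.

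I would then repeat, essentially verbatim, the successive summation of Theorem~\ref{thm:sumOverXYv}, integrating out $(Y_r, X_r, I_r, v_r)$ for $r$ running from $m$ down to $2$. For intermediate $r$, summing on $Y_r$ yields $U_N(v_r, X_r, J_r, I_r)$, and the subsequent sum on $(v_r, I_r)$ combined with the heat-kernel factor $p^\star_{w_{r+1}(i_{r+1}^1)+w_{r+1}(i_{r+1}^2)}$ produced at the preceding step is bounded through Lemma~\ref{eq:key_lemma}, using the inequality $w_{r+1}(i_{r+1}^1) + w_{r+1}(i_{r+1}^2) \geq v_r + 2(u_{r+1} + \bar u_{r+1})$ that follows from the non-inclusion constraint $\bar J_r \nsupseteq \bar J_{r+1}$. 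For the initial step $r = 1$, the heat-kernel convolution
\[
\sum_{x\in \mathbb Z^2} p_{u_1}(x - x_0^{i_1})\, p_{u_1}(x - x_0^{j_1}) = p_{2u_1}(x_0^{i_1} - x_0^{j_1}) \leq p^\star_{2u_1}
\]
produces the factor $\pi\, p^\star_{2u_1}$ sitting inside $\mathcal F^\star_{\mathbf J}(s,t)$.

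The only point requiring attention is the outermost $r = m$ summation. By definition of $U_N$,
\[
\sum_{v_m \leq t}\sum_{I_m \subset \mathcal C_{\bar J_m}} U_N(v_m, X_m, J_m, I_m) \leq \sup_{X} \rme^{\otimes 2}_X\!\left[e^{\Lambda_2(\beta_N)\,\psi^\star_{t,\bar J_m}}\right].
\]
In the subcritical setting of Theorem~\ref{thm:sumOverXYv} this is uniformly bounded by a constant $c(2)$, whereas under~\eqref{eq:defNearCrit} it diverges with $N$. Expanding the exponential, applying $\sigma_N^2 = e^{\Lambda_2(\beta_N)} - 1$, and using the local CLT asymptotic $p_{2n}(0) \sim (\pi n)^{-1}$ together with the geometric series identity $\sum_{n \leq t}\bar\sigma_N^2/n \sim -\log(1 - \bar\sigma_N^2 \log t)$, one checks that the supremum above is bounded by $b_t(1+o_N(1))$ uniformly in both~\eqref{eq:sub-criticality} and~\eqref{eq:defNearCrit}. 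This produces the prefactor $b_t$ of~\eqref{eq:startingPointTriple+QUASI}, the $o_N(1)$ term being harmlessly absorbed into one additional $(1+\varepsilon_N^{\diamondsuit})$ factor.

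The main conceptual obstacle lies entirely in this outermost two-walker estimate: once $c(2)$ is replaced by the sharp $b_t(1+o_N(1))$, the rest of the argument transfers verbatim from Theorem~\ref{thm:sumOverXYv}, because Lemma~\ref{eq:key_lemma} is already formulated uniformly across both regimes and no intersections involving more than two particles arise when $p_0 = 2$.
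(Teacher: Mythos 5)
Your overall plan is exactly the paper's: the proposition is proved by rerunning the proof of Theorem~\ref{thm:sumOverXYv} with $p_0=2$ (so all $|J_r|=1$, the $i_r^\star$-correction disappears, the intermediate sums over $(Y_r,I_r,v_r)$ paired with the heat kernels are handled by Lemma~\ref{eq:key_lemma} via $w_{r}(i_r^1)+w_r(i_r^2)\geq v_{r-1}+2(u_r+\bar u_r)$), the only change being that the outermost bound \eqref{eq:bound_sum_bm_Ym}, which rests on Lemma~\ref{lem:LastLemma} and hence on sub-criticality, is replaced by a two-walker estimate valid in both regimes and producing the prefactor $b_t$. You correctly identified that this outermost step is the crux.

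However, the justification you sketch for that crux step is wrong as written. The quantity to bound is $\sup_X \rme^{\otimes 2}_X[e^{\Lambda_2(\beta_N)\psi^\star_{t,\bar J_m}}]$, equivalently (after reducing to equal starting points, cf.\ Lemma~\ref{lem: comparison for general points}) $\sum_{v=0}^{t} U_N(v)$ with $U_N$ as in \eqref{eq:defU_N}. Your "geometric series identity'' $\sum_{n\le t}\bar\sigma_N^2/n \sim -\log(1-\bar\sigma_N^2\log t)$ is false precisely where it matters: in \eqref{eq:defNearCrit} with $t=N$ the left side tends to $1$ while the right side is $\log(\log N/\vartheta_N)\to\infty$, and more to the point, any first-order "expand and sum $\bar\sigma_N^2 p_{2n}(0)$'' computation only yields quantities of size $e^{\bar\sigma_N^2\log t}=\mathcal O(1)$, not the divergent factor $b_t\sim \log N/\vartheta_N$. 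The correct mechanism is the renewal (Markov-property) factorization over successive meeting times, which gives the genuine geometric series $\sum_{v\le t}U_N(v)\le \sum_{k\ge 0}(\sigma_N^2R_t)^k=(1-\sigma_N^2R_t)^{-1}$; combined with $R_t\le \pi^{-1}(1+\log t)$ this yields $b_t(1+\varepsilon_N^{o})$ with $\varepsilon_N^o$ of the same order as $\varepsilon_N^{\hyperlink{symbol: star}{\diamondsuit}}$, which can then be absorbed as you propose. This is exactly Proposition~\ref{prop:factor2ndMoment}\,(i), i.e.\ \eqref{eq:borneTheta}, together with \eqref{eq:R_n_UB}--\eqref{eq:Rnlogn}; citing it closes the gap and makes your argument coincide with the paper's proof.
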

\begin{proof} Starting from \eqref{eq:Phi2},
 the result is entailed by the proof of Theorem \ref{thm:sumOverXYv} with $p_0=2$.  
Note that the bound \eqref{eq:bound_sum_bm_Ym} is replaced by \eqref{eq:key_bound}, which by the assumption of Lemma \ref{eq:key_lemma} holds under {both sub-criticality and} quasi-criticality.\footnote{We mention as well that Lemma \ref{lem:LastLemma} is \emph{not} used in that case. This is relevant for the paper to be self-contained, cf.\@ the proof of Lemma \ref{lem:LastLemma}}
\end{proof}

\section{The induction scheme for sub-critical and quasi-critical} \label{sec:inductionSchemeSubcrit}
The present section is dedicated to estimating $\mathcal F^{\star}_{\mathbf J}(s,t)$ that appeared in \eqref{cor:1/L} and  \eqref{eq:startingPointTriple+QUASI}. This is the purpose of Proposition \ref{prop:fibo1inverseMain}, from which we derive the main results of this section, that are Proposition \ref{prop:Amn1} and \ref{prop:Phi2UB} below.
\subsection{The induction scheme}
Let $L\in\mathbb N\setminus\{1,2\}$ be a (large) parameter  whose explicit value will be determined later on (see, e.g.,  \eqref{eq:conditionAlpha0} or Section~\ref{sec:proofOfQuasi(ii)}). 
Recall the notion of {\it long jump} from Definition \ref{def:longJump}.
 \begin{defin} \label{def:maindef}  Given $ \mathbf{J} \in \mathcal D(m,q,p_0)$, we define:
	\begin{enumerate}[label=(\roman*)]
		
		\item We say that $r\in \llbracket 1,m\rrbracket$ is a \textbf{long jump index} if $(r,i^1_r)$ and $(r,i^2_r)$ are both $L$-long jumps.  Otherwise, we call $r$ a \textbf{small jump index}. We denote by   $s_1<\dots<s_{l_0}$ the set of all small jump indices and set $s_{l_0+1} := m+1$. 
		\item For all $l\in \llbracket 1, l_0+1\rrbracket$ such that $s_l - s_{l-1} > L$,
		      we define the indices $\{s_{l} - kL:k\in \mathbb N, s_{l}-kL
			      >s_{l-1}\}$ as \textbf{stopping} indices.
		\item We say that $r\in \llbracket 1,m-1\rrbracket$ is a \textbf{fresh index} if it is a long jump index such that $r$ is a stopping index or such that $r+1$ is a small jump index.  Moreover, if $m$ is a long jump index, then we define it as a fresh index.
		\item We say that $r\in \llbracket 1,m\rrbracket$ is a \textbf{good index} if $r$ is a long jump index that is not a fresh index. We call $r$ a \textbf{bad index} otherwise. We also set the index $0$ as a \textbf{bad index}.
	\end{enumerate}
\end{defin}
\begin{remark} \label{eq:rkbad}
	{Any stopping index is a fresh index.} Bad indices correspond to fresh or small jump indices {(or the index 0)}. {The index $1$ is always a long jump index. } We emphasize again that (i)-(v) depend on the diagram $ \mathbf{J}\in \mathcal D(m,q,p_0)$.
\end{remark}
\begin{remark}
    {When $p_0=2$, for all $\mathbf J=((i_1,j_1),\dots,(i_m,j_m))\in \mathcal D(m,q)$, we keep the same definition by replacing $i_r^1,i_r^2$ 
    with $i_r,j_r$.}
\end{remark}
For all $ \mathbf{J}\in \mathcal D(m,q,p_0)$ and $0\leq r \leq m$, let
\begin{equation} \label{eq:defPsi}
    \psi(r):=\psi(r,\mathbf{J}): =
	\sup\{r' \leq r:~ r' \text{ is a bad index}\}.
\end{equation}
{By definition, $\psi(r)\geq 0$ for all $r\geq 0$ since $0$ is a bad index.}\

Define further 
$u_{(r)} := \sum_{i=\psi(r)}^{r} u_i$ {with the convention that $u_0=0$}. Recall $\bar u_r$ from \eqref{eq:ubar} (see also \eqref{eq:ubarp_0=2} when $p_0=2$).
\begin{lem} {Let $r\geq 1$.}
	\begin{enumerate}[label=(\roman*)]
		\item \label{lemmatata1} If $r$ is good, then $r+1$ is a long jump index.
		\item \label{lemmatata2} If $r$ is good, then $\psi(r-1) = \psi(r)$.
		\item \label{lemmatata5} If $r$ is a long jump index, then
		      $\bar u_{r}\geq u_{(r-1)}$.
		\item \label{lemmatata6} If $r$ is a small jump, then $\psi(r)=r$ and $\psi(r-1)=r-1$.
   \item \label{lemmatata7} For all $r\geq 2$, {we have $\bar u_r \geq \frac{u_{r-1}}{2}$.}
	\end{enumerate}
	\label{lemmat}
\end{lem}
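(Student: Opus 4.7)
The plan is to dispatch items (i), (ii), and (iv) directly from the definitions, to handle (v) via the constraint $\bar J_{r-1} \nsupseteq \bar J_r$ built into $\mathcal D(m,q,p_0)$, and to reserve the real work for (iii), whose proof rests on a length bound for runs of consecutive good indices.

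For (i), $r$ good forces $r \neq m$ (any long jump $m$ is fresh by fiat), and ``not fresh'' then forces $r+1$ to not be small, i.e.\ $r+1$ is long jump. For (ii), a good $r$ is not bad, so the supremum defining $\psi(r)$ is already attained on $\{0,\dots,r-1\}$, giving $\psi(r) = \psi(r-1)$. For (iv), a small $r$ is bad, so $\psi(r) = r$; and $r-1$ is either the last long jump of the preceding block (fresh because $r$ is small) or is itself a small jump, so $\psi(r-1) = r-1$. For (v), at least one of $i_r^1, i_r^2$ must satisfy $k_r(\cdot) \leq r-2$: otherwise both have jump size $1$, forcing $\bar J_r \subset \bar J_{r-1}$ and contradicting the definition of $\mathcal D(m,q,p_0)$. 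That index contributes $\bar u_r(\cdot) \geq u_{r-1}$, and the factor $1/2$ in the definition of $\bar u_r$ yields $\bar u_r \geq u_{r-1}/2$.

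The real effort goes into (iii). Set $s := \psi(r-1)$. If $s = r-1$, the claim $\bar u_r \geq u_{r-1}$ follows as in (v) except that now $r$ being long jump gives $k_r(i_r^1), k_r(i_r^2) < r-1$ (their jump sizes exceed $L \geq 3$, unless they are $0$), so both $\bar u_r(i_r^1), \bar u_r(i_r^2) \geq u_{r-1}$ and the factor $1/2$ disappears upon averaging. When $s < r-1$, the indices $s+1, \dots, r-1$ are all good, and the crux of the argument --- what I expect to be the main obstacle --- is the structural claim that any maximal run of consecutive good indices has length at most $L-1$, so that $r - 1 - s \leq L - 1$, i.e.\ $s \geq r - L$.

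To prove this structural claim I would partition $\llbracket 1, m \rrbracket$ into the blocks $[s_{l-1}+1, s_l - 1]$ of consecutive long jump indices (with the convention $s_0 := 0$). Within such a block, the bad indices are exactly the stopping indices $s_l - kL$ (for $k \geq 1$ with $s_l - kL > s_{l-1}$) together with the endpoint $s_l - 1$ (fresh because $s_l$ is small, or because $s_l - 1 = m$). Consecutive stopping indices are spaced by $L$; the gap from the last stopping index to $s_l - 1$ is $L - 1$; and the gap from the first stopping index back to $s_{l-1}$ is at most $L$ by maximality of the parameter $k$. Hence consecutive bad indices sit at distance at most $L$, so at most $L-1$ good indices lie between them. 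With this in hand, (iii) concludes: if $k_r(i_r^1) > 0$ then $k_r(i_r^1) < r - L \leq s$, so $\bar u_r(i_r^1) \geq \sum_{k=s}^{r-1} u_k = u_{(r-1)}$; if $k_r(i_r^1) = 0$ then $\bar u_r(i_r^1) = \sum_{k=1}^{r-1} u_k = \sum_{k=0}^{r-1} u_k \geq u_{(r-1)}$ since $u_0 = 0$. The same bound holds for $i_r^2$, and averaging yields $\bar u_r \geq u_{(r-1)}$.
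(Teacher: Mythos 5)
Your proof is correct and follows essentially the same route as the paper's: items (i), (ii), (iv), (v) come straight from the definitions together with the constraint $\bar J_{r-1}\nsupseteq \bar J_r$, and item (iii) rests on the spacing fact $\psi(r-1)\geq r-L$ combined with the $L$-long-jump property, exactly as in the paper. Your block-decomposition argument for that spacing fact is merely a more explicit rendering of the paper's observation that any interval of length greater than $L$ must contain a stopping index or a small jump, so no genuinely different idea is involved.
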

\begin{proof} \ref{lemmatata1} If $r$ is good, then it is a long jump index. As it is not fresh, $r+1$ must be a long jump index by definition of fresh indices.  \ref{lemmatata2} is clear by definition.  
For \ref{lemmatata5}, first observe that $\psi(r-1)\geq r-L$. {Indeed, by definition of stopping times, each interval of length strictly larger than $L$ intersects at least a stopping time or a small jump (note that $s_{l_0+1}=m+1$ is used here). For $r>L$, this applies to $\{r-L,\dots,r-1,r\}$, but as $r$ is a long jump, it also applies to $\{r-L,\dots,r-1\}$, as if $r$ was a stopping time, then either $\{r-L,\dots,r-1\}$ would contain a small jump, or else $r-L$ would be a stopping time. Hence $\{r-L,\dots,r-1\}$ contains a bad index, which gives our claim.} Now, given that $r$ is a long jump index, {there are two possibilities. If $k_r(i_2)>0$, then by definition $ \bar u_{r}$ is larger or equal to $u_{r-1} +\dots + u_{r-L}$, which in turn is larger or equal to $u_{(r-1)}$ since $\psi(r-1)\geq r-L$. If $k_r(i_2)=0$, then $\bar u_r = \sum_{k=1}^{r-1} u_k \geq u_{(r-1)}$ since $\psi(r-1)\geq 0$ {and $u_0=0$}.} 
Regarding \ref{lemmatata6}, $\psi(r)=r$ by definition, and since $r-1$ is necessarily bad  (cf.\ \ref{lemmatata1}), we have $\psi(r-1)=r-1$. 
For \ref{lemmatata7}, since $\bar{u}_r(i^1_r) \geq u_{r-1}$ {as $k_r(i^1_r)<r-1$ for $r\geq 2$},
 the claim follows from the definition of $\bar u_r$ in \eqref{eq:ubar}. 
\end{proof}

For all
$v\in [1,t]$ and $t\leq N$, let
\begin{equation} \label{eq:def_fbisbis}
	f(v):=f_{t,N}(v):= \frac{1}{\bar{\sigma}_N^2}  \log \left(\frac{1-\bar{\sigma}_N^2\log v}{1-\bar{\sigma}_N^2\log t }\right),
\end{equation}
which is positive, non-increasing and satisfies $f'(v)=-F(v)$.

Lemma \ref{lem:fibo1Main}  below is an extension of \cite[Lemma 3.10, 3.12 and Proposition 3.14]{Cosco2021MomentsUB} to suit both the sub-critical and quasi-critical case -- in the sub-critical case, they are equivalent. (Note in particular that the definition of $f$ above  \cite[Lemma 3.10]{Cosco2021MomentsUB} is generalized here to fit 
both regimes).
Although the proof follows in spirit the one of \cite{Cosco2021MomentsUB}, several parts differ to take into account the change of function $\psi$ and to adapt to the near-critical case. Moreover, we introduce here some operators $\mathcal F^{\star}$, as similar ones will appear in Section \ref{sec:quasiCritRemoveTriple+}. We hope that this formalism will help the reader to easily see the analogy. 
Besides, a noteworthy improvement is our estimate \eqref{eq:lemmaInduction2}, which deals with  
the case of a later starting time $s$.

For any function $h$ and $v\leq t\leq N$, set
\begin{equation*}
	\mathcal F^{\rm fresh} h(v) :=   \sum_{u=1}^t
	F(u + v)  h(u) \mathbf{1}_{u+v \leq t}, \  
	\mathcal F^{\rm good} h(v): = \sum_{u=1}^t F(u+v) h(u+v) \mathbf{1}_{u+v\leq t},
\end{equation*}
and $\mathcal F^{\rm small}h(v) := \mathcal F^{\rm fresh}h(v/2)$. {Define the function $f^j$ by $f^j(s):= f(s)^j$.}
\begin{lem} \label{lem:fibo1Main}
Assume \eqref{eq:sub-criticality} or \eqref{eq:defNearCrit}. Then,
for all $j\geq 0$ and $v\in \llbracket 1,t\rrbracket$, {$t\leq N$,}
\begin{equation}\label{eq:indRedMain}
    \begin{aligned}
	\mathcal F^{\rm fresh} f^j(v)\leq        \frac{f(v)^{j+1}}{j+1} + \sum_{i=0}^{j} \frac{j!}{(j-i)!} (b_t)^{i+1} f(v)^{j-i},
    \end{aligned}
\end{equation}
	with $b_t= (1-\bar \sigma_N^2 \log t )^{-1}$,
{while
\begin{equation} \label{eq:calF(v/2)}
    \mathcal F^{\rm small} f^j(v)\leq        \frac{f(v)^{j+1}}{j+1} + 2\sum_{i=0}^{j} \frac{j!}{(j-i)!} (b_t)^{i+1} f(v)^{j-i}.
\end{equation}
} 
Furthermore, for $v\in \llbracket 1,t\rrbracket$, 
\begin{equation} \label{eq:w+vIndRed}
    \mathcal F^{\rm good} f^{j}(v) \leq \frac{f(v)^{j+1}}{j+1}  .
\end{equation}
\end{lem}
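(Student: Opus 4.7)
The plan is to establish the three estimates by combining integral comparison with induction on $j$. As a preliminary observation used throughout, under either \eqref{eq:sub-criticality} or \eqref{eq:defNearCrit} one has $\bar\sigma_N^2\log t < 1$ and $\bar\sigma_N^2\to 0$, so $(1/F)'(w)=1-\bar\sigma_N^2(1+\log w)\ge 0$ on $[1,t]$, i.e.\ $F$ is non-increasing there; together with $(Ff^j)'(w)=f(w)^{j-1}(F'(w)f(w)-jF(w)^2)\le 0$, this yields that $w\mapsto F(w)f(w)^j$ is also non-increasing on $[1,t]$.

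Given this monotonicity, \eqref{eq:w+vIndRed} follows from the Riemann sum comparison $\mathcal F^{\rm good}f^j(v)\le \int_v^t F(w)f(w)^j\,dw$ combined with the antiderivative identity $((j+1)^{-1}f^{j+1})'(w)=-F(w)f(w)^j$ and $f(t)=0$. For \eqref{eq:indRedMain}, I would proceed by induction on $j$. The base case $j=0$ reads $\mathcal F^{\rm fresh}1(v)=\sum_{w=v+1}^t F(w)\le F(v+1)+\int_{v+1}^t F(w)\,dw \le b_t+f(v)$, using $F(v+1)\le F(1)=1\le b_t$ and $f$ non-increasing. For the inductive step, I would decompose $f(u)^j=f(u+v)^j+(f(u)^j-f(u+v)^j)$: summed against $F(u+v)$ the first piece gives $\mathcal F^{\rm good}f^j(v)\le f(v)^{j+1}/(j+1)$ by \eqref{eq:w+vIndRed}. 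For the second piece, the exact identity $f(u)^j-f(u+v)^j=j\int_u^{u+v}F(s)f(s)^{j-1}\,ds$ (a consequence of $(f^j)'=-jFf^{j-1}$) together with Fubini yields $j\int_1^t F(s)f(s)^{j-1}K(s,v)\,ds$, where $K(s,v):=\sum_u F(u+v)\mathbf{1}_{u\le s\le u+v}$ is a windowed sum. A careful case analysis of $K(s,v)$ (separating $s\le v$ from $s>v$ and using $F$ non-increasing to bound each windowed sum by an integral plus a boundary term of order $b_t$) allows one to split the resulting expression into a bulk contribution controlled by $jb_t\,\mathcal F^{\rm fresh}f^{j-1}(v)$ and a boundary contribution of size at most $b_tf(v)^j$. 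Applying the induction hypothesis at level $j-1$ together with the algebraic identity $E_j(v)=b_tf(v)^j+jb_tE_{j-1}(v)$ for $E_j(v):=\sum_{i=0}^j \tfrac{j!}{(j-i)!}b_t^{i+1}f(v)^{j-i}$ (verified by reindexing) then closes the induction.

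Finally, \eqref{eq:calF(v/2)} follows directly from \eqref{eq:indRedMain} applied at $v/2$, once one notes that $f(v/2)-f(v)=\int_{v/2}^v F(s)\,ds\le (\log 2)b_t$ (by the substitution $s=e^w$, which turns the integrand into $(1-\bar\sigma_N^2 w)^{-1}$). Expanding $f(v/2)^k=(f(v)+(f(v/2)-f(v)))^k$ binomially and absorbing the cross terms into the correction accounts for the extra factor $2$ in \eqref{eq:calF(v/2)} relative to \eqref{eq:indRedMain}. The delicate part is the inductive step for \eqref{eq:indRedMain}: the bookkeeping of the coefficients $\tfrac{j!}{(j-i)!}$ must line up exactly with the recursion on $E_j$, and the splitting of $K(s,v)$ into boundary and bulk must be done carefully so that no factor of $f(1)$ (which can exceed $b_t$ by as much as $\log t$) leaks into the final correction term.
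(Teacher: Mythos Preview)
Your treatment of $\mathcal F^{\rm good}$ coincides with the paper's. Your approach to $\mathcal F^{\rm small}$ (apply \eqref{eq:indRedMain} at $v/2$, then binomially expand $f(v/2)^k=(f(v)+\delta)^k$ with $\delta\le(\log 2)b_t$) is different from the paper's one-line observation $F(u+v/2)\le 2F(u+v)$ on the range $u\le v$, but it does give the factor $2$ exactly once one checks $\sum_{\ell=1}^{m+1}(\log 2)^\ell/\ell!\le 1$; the only loose end is that \eqref{eq:indRedMain} must first be established for half-integer $v$.

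The genuine gap is in your inductive proof of \eqref{eq:indRedMain}. Suppose your bulk/boundary splitting yields
\[
\text{(second piece)} \;\le\; jb_t\,\mathcal F^{\rm fresh}f^{j-1}(v)\;+\;b_tf(v)^j,
\]
as you claim. Plugging in the induction hypothesis $\mathcal F^{\rm fresh}f^{j-1}(v)\le f(v)^j/j+E_{j-1}(v)$ gives
\[
\mathcal F^{\rm fresh}f^{j}(v)\;\le\;\frac{f(v)^{j+1}}{j+1}+jb_t\Big(\tfrac{f(v)^j}{j}+E_{j-1}(v)\Big)+b_tf(v)^j
\;=\;\frac{f(v)^{j+1}}{j+1}+E_j(v)+b_tf(v)^j,
\]
using exactly your identity $E_j=b_tf(v)^j+jb_tE_{j-1}$. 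The extra $b_tf(v)^j$—coming from $jb_t\times f(v)^j/j$ in the hypothesis—is not absorbed, so the recursion does not close at the stated constants (it proves the bound with $2E_j$ rather than $E_j$). Your own warning that the coefficients ``must line up exactly'' is precisely where the argument fails. The windowed-sum analysis of $K(s,v)$ does not obviously sharpen this: for $s<v$ one is essentially back to bounding $F(v)\sum_{u\le v}f(u)^j$, which is the paper's $\mathcal F^{\le v}$ term.

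The paper avoids induction on $j$ entirely. It splits $\mathcal F^{\rm fresh}=\mathcal F^{>v}+\mathcal F^{\le v}$ at $u=v$. For $u>v$ it uses $F(u+v)\le F(u)$ and Riemann comparison to get $f(v)^{j+1}/(j+1)$. For $u\le v$ it bounds $F(u+v)\le F(v)\le v^{-1}b_t$, and then controls $v^{-1}b_t\big(f(1)^j+\int_1^vf^j\big)$ by iterated integration by parts, using $-xf'(x)=xF(x)\le b_t$ to get the recursion $A_j\le vf(v)^j+jb_tA_{j-1}$ with $A_j:=f(1)^j+\int_1^vf^j$; this yields $E_j(v)$ on the nose.
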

\begin{proof}
Since $F$ and $f$ are non-increasing { ($f'=-F\leq 0$ and see Lemma \ref{lem:F'})}, 
\begin{equation} \label{eq:ineqcalF'}
		\mathcal F^{\rm good} f^j(v)=\sum_{u=1}^t
		F(u+v) f(u+v)^{j} \mathbf{1}_{u+v \leq t} \leq \int_{v}^{t} F(x) f(x)^j {\rm d} x = \frac{f(v)^{j+1}}{j+1},
\end{equation}
as $f'(x)=-F(x)$. 
  Then, we decompose $\mathcal F^{\rm fresh} = \mathcal F^{\leq v} + \mathcal F^{>v}$, with
\begin{equation} \label{eq:calFgeqv}
  \mathcal F^{>v} f^j(v): = 
		\sum_{u=v+1}^t F(u+v)f(u)^j \mathbf{1}_{u+v \leq t} \,{\rm d} u\leq \int_{v}^{t} F(x) f(x)^j {\rm d} x = \frac{f(v)^{j+1}}{j+1},
\end{equation}
as in \eqref{eq:ineqcalF'}, and
\begin{equation} \label{eq:calFleqv}
    \mathcal F^{\leq v} f^j(v): = \sum_{u=1}^v F(u+v)f(u)^j \mathbf{1}_{u+v \leq t} \leq F(v) \sum_{u=1}^v f(u)^j,
\end{equation}
by monotonicity of $F$. Since $v\leq t$, $F(v)\leq v^{-1} b_t$, and we get
\[
\mathcal F^{\leq v} f^j(v) \leq v^{-1} b_t \left(f(1)^j+\int_{1}^v f(u)^j \mathrm{d} u\right).
\]
By integration by parts and using that $-f'(x)=F(x)= \frac{1}{x}\frac{1}{1-\bar{\sigma}_N^2 \log x}\leq x^{-1} b_t$ for any $x\leq t$, we see that for all $j\geq 1$ and $v\leq t$,
\begin{align*}
    f(1)^j + \int_{1}^v f(x)^j\mathrm{d} x & = vf(v)^j - j\int_{1}^v xf'(x) f(x)^{j-1}\mathrm{d} x \leq vf(v)^j + j b_t \int_{1}^v f(x)^{j-1}\mathrm{d} x.
\end{align*}
	If we iterate the integration by parts,  we obtain that
	\begin{align*}
		f(1)^j + \int_{1}^v f(x)^j\mathrm{d} x \leq v \sum_{i=0}^j \frac{j!}{(j-i)!} (b_t)^i  f(v)^{j-i},
	\end{align*}
	and so
\begin{equation} \label{eq:IPP}
		\mathcal F^{\leq v}  f^j(v) \leq \sum_{i=0}^j \frac{j!}{(j-i)!} (b_t)^{i+1}   f(v)^{j-i}.
\end{equation}
{Combining \eqref{eq:calFgeqv} and \eqref{eq:IPP} leads to  \eqref{eq:indRedMain}}. {Finally, by \eqref{eq:calFgeqv} we have
$\mathcal F^{>v} f^j(v/2) \leq  \frac{f(v)^{j+1}}{j+1}$; moreover, using that $F(u+v/2)\leq 2F(u+v)$ {in \eqref{eq:calFleqv}}, we get from \eqref{eq:calFleqv} and \eqref{eq:IPP} that 
$\mathcal F^{\leq v}  f^j(v/2)$ is at most $2$ times the right-hand side of \eqref{eq:IPP}, entailing \eqref{eq:calF(v/2)}.
}
\end{proof}

Define, for any function $g(u_1,\dots,u_r)$,
\begin{equation}
    \mathcal F_r g (u_1,\dots,u_{r-1}) := \sum_{u_r=1}^t  F(u_r+\bar u_r) g(u_1,\dots,u_r) \mathbf{1}_{\sum_{i=1}^r u_i \leq t}.
\end{equation}
Let $g_j(u_1,\dots,u_r) = f^j(u_{(r)})$.

\begin{lem} \label{lem:indReductionMain}
	For all $r\in \llbracket 2,m \rrbracket$, $j\geq 1$ and $\sum_{i=1}^{r-1} u_i \leq t$,
	\begin{equation}\label{eq:indRedMainRec}
		\begin{aligned}
			\mathcal F_r 
            {g_j} (u_1,\dots,u_{r-1})  \leq   \frac{f^{j+1}\left(u_{(r-1)}\right)}{j+1} + \mu_{r}^m   \sum_{i=0}^{j} \frac{j!}{i!} (b_t)^{j-i+1} f^{i}\left(u_{(r-1)}\right),
		\end{aligned}
	\end{equation}
	where $\mu_r^m := \mu_r^{m} (\mathbf{J}) :=  \mathbf{1}_{\{r \text{ is bad\}}}{+\mathbf{1}_{\{r \text{ is a small jump\}}}}$.
\end{lem}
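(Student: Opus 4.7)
The plan is to split the proof into three cases according to the status of $r$ in the diagram $\mathbf J$: (i) $r$ is a good index ($\mu_r^m = 0$); (ii) $r$ is a fresh index, i.e.\@ a bad long-jump index ($\mu_r^m = 1$); (iii) $r$ is a small jump index (necessarily bad, $\mu_r^m = 2$). In each case one sets $v := u_{(r-1)}$ and reduces the sum $\mathcal F_r g_j(u_1,\dots,u_{r-1})$ to one of the $\mathcal F^{\rm good}$, $\mathcal F^{\rm fresh}$, $\mathcal F^{\rm small}$ operators of Lemma \ref{lem:fibo1Main}, then applies the corresponding estimate \eqref{eq:w+vIndRed}, \eqref{eq:indRedMain} or \eqref{eq:calF(v/2)}. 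The proof is essentially a bookkeeping exercise using the properties of $\psi(r)$ and $\bar u_r$ proven in Lemma \ref{lemmat}.

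For case (i), since $r$ is good we have $\psi(r) = \psi(r-1)$ by Lemma \ref{lemmat}\ref{lemmatata2}, so $u_{(r)} = v + u_r$, and $\bar u_r \geq u_{(r-1)} = v$ by \ref{lemmatata5}. Because $F$ is non-increasing we get $F(u_r + \bar u_r) \leq F(u_r + v)$, and $\mathbf 1_{\sum_{i=1}^r u_i \leq t} \leq \mathbf 1_{u_r + v \leq t}$ since $v \leq \sum_{i=1}^{r-1} u_i$. Hence $\mathcal F_r g_j \leq \mathcal F^{\rm good} f^j(v)$, and \eqref{eq:w+vIndRed} delivers the claim (with $\mu_r^m = 0$).

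For case (ii), since $r$ is bad we have $\psi(r) = r$, so $u_{(r)} = u_r$; and since $r$ is a long jump, $\bar u_r \geq v$ by \ref{lemmatata5}. Monotonicity of $F$ and the same indicator comparison as above give $\mathcal F_r g_j \leq \mathcal F^{\rm fresh} f^j(v)$, to which \eqref{eq:indRedMain} is applied; after the index change $i \mapsto j-i$ this is exactly the stated bound with $\mu_r^m = 1$. For case (iii), by \ref{lemmatata6} $\psi(r) = r$ and $\psi(r-1) = r-1$, so $u_{(r)} = u_r$ and $v = u_{r-1}$; by \ref{lemmatata7} $\bar u_r \geq u_{r-1}/2 = v/2$, so $F(u_r + \bar u_r) \leq F(u_r + v/2)$ and $\mathcal F_r g_j \leq \mathcal F^{\rm small} f^j(v)$. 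Applying \eqref{eq:calF(v/2)} yields the bound with the extra factor of $2$ on the remainder, matching $\mu_r^m = 2$.

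The only subtlety I anticipate is to make sure the casework is exhaustive and that the constants $\mu_r^m \in \{0,1,2\}$ produced in each case exactly match the definition $\mu_r^m = \mathbf 1_{\{r\text{ bad}\}} + \mathbf 1_{\{r\text{ small jump}\}}$; this follows immediately from the observation that a bad index is either fresh (long jump) or a small jump, so the three cases above are disjoint and exhaustive. There is no genuine analytic difficulty, as all of the real work has been done in Lemma \ref{lem:fibo1Main}; this lemma is a clean repackaging of those estimates adapted to the three roles an index $r$ can play in a diagram.
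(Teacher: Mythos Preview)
Your proof is correct and follows essentially the same approach as the paper: the same three-way case split (good/fresh/small jump), the same reductions to $\mathcal F^{\rm good}$, $\mathcal F^{\rm fresh}$, $\mathcal F^{\rm small}$ via Lemma~\ref{lemmat}\ref{lemmatata2},\ref{lemmatata5},\ref{lemmatata6},\ref{lemmatata7}, and the same appeal to \eqref{eq:w+vIndRed}, \eqref{eq:indRedMain}, \eqref{eq:calF(v/2)}. Your explicit remarks on the indicator comparison and the index relabeling $i\mapsto j-i$ are small clarifications the paper leaves implicit.
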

\begin{proof}
	If $r$ is good, then  Lemma \ref{lemmat}--\ref{lemmatata2} implies that $\psi(r) = \psi(r-1)$, and hence $u_{(r)} = u_r + u_{(r-1)}$.
	As $F$ is non-increasing {(see Lemma \ref{lem:F'})},
	we obtain from Lemma \ref{lemmat}--\ref{lemmatata5} that $F(u_r+\bar u_r) \leq F(u_r+ u_{(r-1)})$, and therefore
	\begin{align*}
		\mathcal F_r g_j (u_1,\ldots,u_{r-1})&\leq \sum_{u_r=1}^t F\left(u_{r}+u_{(r-1)}\right)  f^{j}\left(u_{r} + u_{(r-1)}\right)\mathbf{1}_{u_{r} + u_{(r-1)} \leq t}\\
        & = \mathcal F^{\rm good}f^j(u_{(r-1)}),
	\end{align*} 
hence \eqref{eq:w+vIndRed} gives \eqref{eq:indRedMainRec}. If $r$ is fresh, then  by definition $\psi(r)=r$, and thus $u_{(r)} = u_r$. Using again that $F$ is non-decreasing and Lemma \ref{lemmat}--\ref{lemmatata5}, we obtain that
\begin{equation} \label{eq:badBoundMain}
	\begin{aligned} 
		\mathcal F_r g_j(u_1,\ldots,u_{r-1}) & \leq \sum_{u_r =1}^t  F\left(u_{r}+u_{(r-1)}\right)  f^j\left(u_{r}\right)\mathbf{1}_{u_{r} + u_{(r-1)} \leq t} \\
        & = \mathcal F^{\rm fresh} f^j(u_{(r-1)}).
	\end{aligned}
 \end{equation}
 So, we get \eqref{eq:indRedMainRec} from \eqref{eq:indRedMain}. Otherwise, $r$ is necessarily a small jump (see Remark \ref{eq:rkbad}), so by Lemma \ref{lemmat}--\ref{lemmatata6}, one has $\psi(r)=r$ and $\psi(r-1)=r-1$, giving $u_{(r)}=u_r$ and $u_{(r-1)}=u_{r-1}$. {By Lemma \ref{lemmat}--\ref{lemmatata7}, we get $F(u_r+\bar u_r) \leq F(u_r + u_{(r-1)}/2)$ and 
$\mathcal F_r g_j(u_1,\ldots,u_{r-1}) \leq \mathcal F^{\rm small} f^j(u_{(r-1)})$. 
We end the proof using \eqref{eq:calF(v/2)}.}
\end{proof}

 The following lemma is obtained by iterating Lemma \ref{lem:indReductionMain}. Let $\mathbb{1}{:\mathbb R^m \to \mathbb R}$ be the constant function with unit value. Recall the definition of $\mathcal F^{\star}_{\mathbf J}(s,t)$ from  \eqref{eq:defFstar}. We define
\[\gamma_{m-1}^m:=1,\quad \gamma_k^m := \mathbf{1}_{\{m-k \text{ is bad}\}},\quad {k\in \llbracket 0,m-2\rrbracket}.
\]
\begin{prop} 
	\label{prop:fibo1inverseMain}
	Assume \eqref{eq:sub-criticality} or \eqref{eq:defNearCrit}. Then, for all
	$m\geq 2$, $ \mathbf{J}\in \mathcal D(m,q,p_0)$,  $k\in \llbracket 1,m-1 \rrbracket$ and $\sum_{i=1}^{m-k} u_i \leq t$ with $u_i\in \llbracket 1,t\rrbracket$, for all $t\leq N$,
	\begin{equation} \label{eq:lemmaInduction0Main}
		\begin{aligned}
			 & (\mathcal F_{m-k  {+1}}\circ {\dots} \circ \mathcal F_{m} \mathbb{1} )(u_{1},\dots,u_{m-k})\leq \sum_{j=0}^k \frac{c_{k-j}^k}{j!}  (b_t)^{k-j} f^{j}\left(u_{(m-k)}\right),
		\end{aligned}
	\end{equation}
	with $c_0^0:=1$, 
 $c_i^{k+1}:= c_{i}^k +{2}\gamma_{k}^m
		\sum_{j=0}^{i-1} c_{j}^k$ for $i\leq k+1$  and $c_i^k:=0$ for $i>k$.
	
Furthermore, we have for all $s\leq t\leq N$,
	\begin{equation} \label{eq:lemmaInduction2}
		\mathcal F^{\star}_{\mathbf J}(s,t)
		\leq \sum_{j=0}^m \frac{c_{i}^m}{(m-j)!} (b_t)^j  f\left(s\right)^{m-j}.
	\end{equation}
\end{prop}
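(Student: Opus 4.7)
The plan is to establish \eqref{eq:lemmaInduction0Main} by induction on $k$, with the base case $k=0$ trivial since the empty composition equals $\mathbb{1}$ and the right-hand side reduces to $c_0^0 = 1$. For the inductive step from $k$ to $k+1$, I would apply the operator $\mathcal F_{m-k}$ to both sides of the bound at rank $k$ and, by linearity, reduce to estimating $\mathcal F_{m-k} f^j(u_{(m-k)})$ as a function of $(u_1,\dots,u_{m-k-1})$. This is precisely the content of Lemma~\ref{lem:indReductionMain}, whose $j = 0$ case follows from Lemma~\ref{lem:fibo1Main} applied to the constant function. The crucial combinatorial input is the inequality $\mu_{m-k}^m \le 2\gamma_k^m$: both quantities vanish exactly when $m-k$ is good, while $\mu_{m-k}^m$ equals $1$ or $2$ according to whether $m-k$ is fresh or a small jump --- the factor $2$ being precisely the one that appears in the "small-jump" estimate \eqref{eq:calF(v/2)} and that feeds into the recursion $c_i^{k+1} = c_i^k + 2\gamma_k^m \sum_{j<i} c_j^k$.

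After substituting the Lemma~\ref{lem:indReductionMain} bound into the inductive hypothesis, one reads off the coefficient of $\tfrac{(b_t)^{k+1-j'}}{j'!} f^{j'}(u_{(m-k-1)})$: the "good" term of the lemma (after the shift $j' = j+1$) contributes $c_{k+1-j'}^k$, while the "bad" term (after swapping the double sum and setting $l = k-j$) contributes $2\gamma_k^m \sum_{l=0}^{k-j'} c_l^k$. Their sum matches $c_{k+1-j'}^{k+1}$ by the stated recursion, with the boundary cases $j'=0$ (only the "bad" term contributes) and $j' = k+1$ (only the "good" term contributes, giving $c_0^k = 1 = c_0^{k+1}$) handled by the convention $c_i^k = 0$ for $i > k$. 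This closes the induction.

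For \eqref{eq:lemmaInduction2}, I would specialize \eqref{eq:lemmaInduction0Main} at $k = m-1$ and at the point $u_{(1)} = u_1$ (since index $1$ is always a long jump, the only bad index $\le 1$ is $0$, with $u_0 = 0$). Inserting this into the definition of $\mathcal F^{\star}_{\mathbf J}(s,t)$, the remaining outer sum takes the form $\sum_{u_1 = s}^t \pi p^\star_{2u_1} f^j(u_1)$. Using the local CLT bound $\pi p^\star_{2u_1} \le (1+o(1))/u_1 \le F(u_1)(1+o(1))$, together with the Riemann comparison $\sum_{u_1 = s}^t F(u_1) f^j(u_1) \le \int_{s}^{t} F(x) f^j(x)\,\mathrm{d} x + F(s) f^j(s)$, one applies the integration-by-parts argument used to prove \eqref{eq:indRedMain}. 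The resulting polynomial in $(f(s), b_t)$ realizes one last application of the recursion $c_i^m = c_i^{m-1} + 2\gamma_{m-1}^m \sum_{j<i} c_j^{m-1}$ with the hand-set value $\gamma_{m-1}^m = 1$.

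The main technical obstacle is the off-by-one bookkeeping between $\mu_r^m$ (indexed by the absolute position $r$ in the diagram) and $\gamma_k^m$ (indexed by the backward step $k = m-r$), together with the special handling of the two endpoints. The first induction step uses $\gamma_0^m = \mathbf{1}_{\{m \text{ bad}\}}$, which is always $1$ since $m$ is declared bad whenever it is a long jump; the final value $\gamma_{m-1}^m = 1$ is conventionally set to absorb the fact that the outer $u_1$-sum relies on the local CLT bound for $p^\star_{2u_1}$ rather than a long-jump/good-index dichotomy at index $1$. Once this bookkeeping is in place, the analytic content is entirely imported from Lemmas~\ref{lem:fibo1Main} and~\ref{lem:indReductionMain}.
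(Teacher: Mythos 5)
Your proposal is correct and follows essentially the same route as the paper: induction on $k$ via Lemma~\ref{lem:indReductionMain} (whose $j=0$ case indeed comes from Lemma~\ref{lem:fibo1Main}), with $\mu_{m-k}^m\leq 2\gamma_k^m$ feeding the recursion for $c_i^k$, and then the outer $u_1$-sum handled by $\pi p^{\star}_{2u_1}\leq F(u_1)$, the integral $\int_s^t F f^j = f(s)^{j+1}/(j+1)$, and one last application of the recursion with $\gamma_{m-1}^m=1$. Two cosmetic remarks: index $1$ need not be good (it can be fresh, hence bad), but your conclusion $u_{(1)}=u_1$ holds in either case since $u_0=0$; and no $(1+o(1))$ factor is needed or wanted in the kernel bound, since \eqref{eq:pnstar} gives $\pi p^{\star}_{2u}\leq 1/u\leq F(u)$ exactly, which is what keeps \eqref{eq:lemmaInduction2} free of error factors.
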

\begin{remark}
	$(c_i^k)_{i,k}$ depend on the choice of the diagram $ \mathbf{J}\in \mathcal D(m,q,p_0)$.
\end{remark}
\begin{proof}
{Observe that \eqref{eq:lemmaInduction0Main} holds trivially true for $k=0$ with the convention that an empty composition is the identity.} We then proceed by induction. Let $1<k<m-1$ be such that \eqref{eq:lemmaInduction0Main} holds with $k-1$, i.e., 
 \al{
 (\mathcal F_{m-k+2}\circ {\dots} \circ \mathcal F_{m} \mathbb{1} )(u_{1},\dots,u_{m-k+1})\leq \sum_{j=0}^{k-1} \frac{c_{k-1-j}^{k-1}}{j!}  (b_t)^{k-1-j} f^{j}\left(u_{(m-k+1)}\right).
 }
{This implies that the left-hand side of \eqref{eq:lemmaInduction0Main} is less than
\[
\sum_{j=0}^{k-1} \frac{c_{k-1-j}^{k-1}}{j!}  (b_t)^{k-1-j}  \sum_{u_{m-k+1}=1}^t \mathcal F_{m-k+1} g_j \left(u_1,\dots,u_{m-k}\right),
\]
}
By Lemma \ref{lem:indReductionMain} (use $\mu_{m-k}^m{\leq 2 \gamma_{k}^m}$), this is in turn smaller than
\begin{align*}
  &\sum_{j=0}^{k-1} \frac{c_{k-1-j}^{k-1}}{j!}  (b_t)^{k-1-j} \left(  \frac{f^{j+1}\left(u_{(m-k)}\right)}{j+1} + {2} \gamma^m_{k} \sum_{i=0}^{j} \frac{j!}{i!} (b_t)^{j-i+1} f^{i}\left(u_{(m-k)}\right)\right).
\end{align*}
Expanding the last parenthesis, the last display writes $A+B$, where
\[
A := \sum_{j=0}^{k-1} \frac{c_{k-1-j}^{k-1}}{(j+1)!}  (b_t)^{k-1-j}  f^{j+1}\left(u_{(m-k)}\right) = 
\sum_{i=0}^k \frac{c_{k-i}^{k-1}}{i!}  (b_t)^{k-i}  f^{i}\left(u_{(m-k)}\right),
\]
by setting $i:=j+1$ and using that $c_{k}^{k-1}=0$; exchanging the sums on $j$ and $i$,
\[
B := 2 \gamma^m_{k}\sum_{j=0}^{k-1} \sum_{i=0}^{j} \frac{1}{i!} (b_t)^{k-i} f^i(u_{(m-k)})  c_{k-1-j}^{k-1} = 2 \gamma^m_{k}\sum_{i=0}^{k} \frac{1}{i!} (b_t)^{k-i} f^i(u_{(m-k)}) \sum_{j=i}^{k-1} c_{k-1-j}^{k-1},
\]
where for $i=k$, the sum on $j$ of the right-hand side equals zero. Therefore, the left-hand side of \eqref{eq:lemmaInduction0Main} is bounded by
\al{
\sum_{i=0}^{k} \frac{1}{i!} (b_t)^{k-i} f^i(u_{(m-k)}) \left( c_{k-i}^{k-1} + 2 \gamma_{k}^m\sum_{j=i}^{k-1} c_{k-1-j}^{k-1} \right).
}
This yields \eqref{eq:lemmaInduction0Main} with $c_{k-i}^{k} = c_{k-i}^{k-1} + 2 \gamma_{k}^m\sum_{l=0}^{k-i-1} c_{l}^{k-1}$. 

Finally, we show \eqref{eq:lemmaInduction2}.   Observe that $\mathcal F^{\star}_{\mathbf J}(s,t) =\sum_{u_1=s}^t {\pi} p^{\star}_{2u_1} (\mathcal F_{2}\circ {\dots} \circ \mathcal F_{m} \mathbb{1} )(u_{1})$, 
which by \eqref{eq:lemmaInduction0Main} is bounded by 
	\[
		A'_{m,N}( \mathbf{J}):=   \sum_{u_1=s}^t {\pi} p^{\star}_{2u_1} \sum_{i=0}^{m-1} \frac{ c_i^{m-1}}{(m-1-i)!}
		f(u_1)^{m-1-i} (b_t)^i.
	\]
	Note that ${\pi} p^{\star}_{2u} \leq F(u)$ by \eqref{eq:pnstar}. Hence, as $F(u)f(u)$ is non-increasing {and $p^\star \leq 1$},
	\begin{align*}\sum_{u_1=s}^t  p_{2u_1}^\star f(u_1)^{m-1-i} & \leq f(s)^{m-1-i} + \int_{s}^t  F(u) f(u)^{m-1-i}\mathrm{d} u  = f(s)^{m-1-i} + \frac{f(s)^{m-i}}{m-i},
\end{align*}
and thus
	\begin{equation*}
		A'_{m,N}( \mathbf{J})\leq   \sum_{i=0}^{m} \frac{ c_i^{m-1}+c_{i-1}^{m-1}\mathbf{1}_{i\geq 1}}{(m-i)!}
		f(s)^{m-i}
		(b_t)^i. 
	\end{equation*}
	This yields  \eqref{eq:lemmaInduction2} since $c_i^{m-1}+c_{i-1}^{m-1}\mathbf{1}_{i\geq 1}\leq c_{i}^{m-1} +2\gamma_{m-1}^m
		\sum_{j=0}^{i-1} c_{j}^{m-1} = c_{i}^m$, {as $\gamma_{m-1}^m=1$.} 
\end{proof}

We continue with a simple yet important modification of \cite[Lemma 3.15]{Cosco2021MomentsUB}. This is key in the improvement of the $q$ threshold in \eqref{eq:q-conditionSubClassic} compared to \cite{Cosco2021MomentsUB}.\footnote{More precisely,  the factor $3$ in the left-hand side of Eq.\ (7) in \cite{Cosco2021MomentsUB} was coming from the factor $3^i$ in Lemma 3.15 therein. Here in Lemma \ref{lem:estimate_cim}, we replace it with $(1+\delta)^i$, at the cost of paying a greater constant $\delta^{-1}$ in the power of $\sum_k \gamma_k$. Hopefully, this can be neglected, as we will see later in Section \eqref{sec:proofMainProp}}

For all $\mathbf J \in\mathcal D(m,q,p_0)$,
let $n_{\mathrm{bad}}(\mathbf J) := {\sum_{k=0}^{m-1} \gamma_{k}^m = 1+\sum_{k=2}^m} \mathbf{1}_{k \text{ is bad}}$ and set $n_{\mathrm{small}}(\mathbf J):= \sum_{k=1}^m \mathbf{1}_{k \text{ is small jump}}$.

\begin{lem} \label{lem:estimate_cim}For all $\delta >0$, we have
	$
		c_{i}^m\leq (1+\delta)^i\left(1+2 \delta^{-1}\right)^{n_{\mathrm{bad}}(\mathbf J)}$
\end{lem}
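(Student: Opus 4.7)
The plan is a straightforward induction on $k$, using the recursion
\[
c_i^{k+1} = c_i^k + 2\gamma_{k}^m \sum_{j=0}^{i-1} c_j^k, \qquad c_0^0 = 1, \qquad c_i^k = 0 \text{ for } i > k,
\]
combined with the bound $\sum_{j=0}^{i-1}(1+\delta)^j \le \delta^{-1}(1+\delta)^i$ coming from the geometric series.

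More precisely, introduce the partial sums $B_k := \sum_{j=0}^{k-1} \gamma_j^m$, so that $B_0 = 0$, $B_{k+1} = B_k + \gamma_k^m$, and $B_m = n_{\mathrm{bad}}(\mathbf J)$. I would prove by induction on $k$ the stronger statement
\[
c_i^k \le (1+\delta)^i \bigl(1+2\delta^{-1}\bigr)^{B_k} \qquad \text{for all } i\le k.
\]
The base case $k=0$ reduces to $c_0^0 = 1$.

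For the inductive step, assume the bound holds at level $k$. If $\gamma_k^m = 0$, the recursion gives $c_i^{k+1} = c_i^k$ and $B_{k+1}=B_k$, so there is nothing to do. If $\gamma_k^m = 1$, then $B_{k+1} = B_k + 1$ and the recursion together with the inductive hypothesis yields
\[
c_i^{k+1} \le (1+\delta)^i \bigl(1+2\delta^{-1}\bigr)^{B_k} + 2\bigl(1+2\delta^{-1}\bigr)^{B_k} \sum_{j=0}^{i-1}(1+\delta)^j.
\]
Bounding the geometric sum by $\delta^{-1}(1+\delta)^i$ and factoring, the right-hand side is at most $(1+\delta)^i (1+2\delta^{-1})^{B_k+1}$, which closes the induction. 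Specializing to $k = m$ (and noting $B_m = n_{\mathrm{bad}}(\mathbf J)$) gives the claimed inequality.

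There is no real obstacle here; the only small thing to get right is the choice of the inductive quantity $(1+2\delta^{-1})^{B_k}$ so that the extra factor $2$ from the recursion is exactly absorbed when $\gamma_k^m = 1$. This is a direct improvement over the factor $3^i$ appearing in \cite[Lem.~3.15]{Cosco2021MomentsUB}: we trade the multiplicative $3$ for $(1+\delta)$ at the cost of the $\delta$-dependent prefactor $(1+2\delta^{-1})^{n_{\mathrm{bad}}}$, which is useful only because the number $n_{\mathrm{bad}}$ of bad indices will later be controlled a posteriori in the summation over diagrams.
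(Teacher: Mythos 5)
Your proposal is correct and follows essentially the same argument as the paper: induction on $k$ of the stronger claim $c_i^k \leq (1+\delta)^i\left(1+2\delta^{-1}\right)^{\sum_{r=0}^{k-1}\gamma_r^m}$, using the recursion for $c_i^{k+1}$ and the geometric-series bound $\sum_{j<i}(1+\delta)^j \leq \delta^{-1}(1+\delta)^i$ to absorb the factor $2\gamma_k^m$ into $(1+2\delta^{-1})$. Your case split on $\gamma_k^m\in\{0,1\}$ is merged into a single computation in the paper, but the content is identical.
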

\begin{proof}
	We claim that for any $\delta >0$ and $k\leq m$,
	$c_{i}^k\leq (1+\delta)^i (1+2\delta^{-1})^{\sum_{r=0}^{k-1} \gamma_r^m}$, which gives the lemma. 
	We prove the claim by induction. It is clear for $k=0$. Suppose that the result is true for some $k\in \mathbb N$. Then, we have
	\begin{align*}
		c_{i}^{k+1} = c_i^k + 2 \gamma_k^m \sum_{j<i} c_j^k & \leq  \left((1+\delta)^i + 2 \gamma_k^m \sum_{j<i} (1+\delta)^j \right) \left(1+2 \delta^{-1}\right)^{\sum_{r=0}^{k-1}\gamma_r^m}    \\
      & =  \left((1+\delta)^i+ 2 \gamma_k^m\frac{(1+\delta)^{i}-1 }{\delta}\right)\left(1+2 \delta^{-1}\right)^{\sum_{r=0}^{k-1} \gamma_r^m}.
\end{align*}
This is less than $ (1+\delta)^i \left(1+ 2 \gamma_k^m \delta^{-1}\right)\left(1+2 \delta^{-1}\right)^{\sum_{r=0}^{k-1} \gamma_r^m}                 = (1+\delta)^i\left(1+2 \delta^{-1}\right)^{\sum_{r=0}^{k}\gamma_r^m}.$
\end{proof}

We end with a useful observation. 
\begin{lem}\label{lem:gamma_n(I)} Let $p_0\geq 2$ and $\mathbf J \in\mathcal D(m,q,p_0)$. Then,
\[n_{\mathrm{bad}}(\mathbf J)\leq 2n_{\mathrm{small}}( \mathbf{J})+ m L^{-1}+{2}.
\]
\end{lem}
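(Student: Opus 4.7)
The plan is to decompose the set of bad indices in $\llbracket 2,m\rrbracket$ according to the four categories entering their definition and to bound each category separately. By the identity
\[
n_{\mathrm{bad}}(\mathbf J)=1+\#\{r\in\llbracket 2,m\rrbracket:r\text{ bad}\},
\]
and since a bad index in $\llbracket 2,m\rrbracket$ is either a small jump index or a fresh index, it suffices to bound the small jumps and the fresh indices. The number of small jumps in $\llbracket 2,m\rrbracket$ is at most $n_{\mathrm{small}}(\mathbf J)$, so the central task is to estimate the number of fresh indices.

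By Definition~\ref{def:maindef}, any fresh index $r$ falls into one of three (not necessarily disjoint) types: (a) $r$ is a stopping index; (b) $r+1$ is a small jump index; (c) $r=m$. Category (b) contributes at most $n_{\mathrm{small}}(\mathbf J)$ because each small jump index in $\llbracket 3,m\rrbracket$ has a unique predecessor. Category (c) contributes at most $1$. The main point is category (a), which I would handle as follows. Setting $s_0:=0$ and $\ell_l:=s_l-s_{l-1}$, the stopping indices in the interval $(s_{l-1},s_l)$ are $\{s_l-kL:k\geq 1,\,s_l-kL>s_{l-1}\}$, whose cardinality is $\lceil \ell_l/L\rceil-1$ and is non-zero only when $\ell_l>L$. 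The elementary inequality $\lceil \ell_l/L\rceil-1\leq (\ell_l-1)/L$ (verified by splitting according to whether $\ell_l$ is a multiple of $L$) yields
\[
|\text{stopping indices}|\leq\sum_{l=1}^{l_0+1}\frac{\ell_l-1}{L}=\frac{(m+1)-(l_0+1)}{L}=\frac{m-n_{\mathrm{small}}(\mathbf J)}{L}\leq \frac{m}{L},
\]
where we used $\sum_l\ell_l=s_{l_0+1}-s_0=m+1$ and $l_0=n_{\mathrm{small}}(\mathbf J)$.

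Putting the three categories together gives $n_{\mathrm{fresh}}\leq m/L+n_{\mathrm{small}}(\mathbf J)+1$, and combining with the initial $+1$ from index $0$ and the bound $n_{\mathrm{small}}(\mathbf J)$ on small jumps in $\llbracket 2,m\rrbracket$ yields
\[
n_{\mathrm{bad}}(\mathbf J)\leq 1+n_{\mathrm{small}}(\mathbf J)+\left(\tfrac{m}{L}+n_{\mathrm{small}}(\mathbf J)+1\right)=2n_{\mathrm{small}}(\mathbf J)+\tfrac{m}{L}+2,
\]
as required. The only non-routine step is the sharp counting of stopping indices: a crude bound $\ell_l/L$ per interval would produce $(m+1)/L$, which is off by an additive term and would force the constant $2$ on the right-hand side to be replaced by $3$; the improvement $(\ell_l-1)/L$ combined with the telescoping $\sum_l\ell_l=m+1$ is exactly what allows one to absorb the surplus into the $n_{\mathrm{small}}$ count and close the bound at $2$.
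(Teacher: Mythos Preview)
Your proof is correct and follows essentially the same approach as the paper's proof: decompose bad indices into small jumps, predecessors of small jumps, stopping indices, and the special index $m$, then bound each contribution. The paper's argument is a terse one-line version of yours; your main addition is the careful justification that the number of stopping indices is at most $m/L$ (the paper simply asserts this ``by definition''), which you obtain via the sharp inequality $\lceil \ell_l/L\rceil-1\le(\ell_l-1)/L$ and the telescoping $\sum_l\ell_l=m+1$.
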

\begin{proof}
Note that if $k$ is bad, then either $k$ is a small jump, or $k+1$ is a small jump, or $k$ is stopping index. By definition, the number of stopping indices is at most $m L^{-1}$. The $+2$ comes from the fact that $m$ is always a bad index and that $\gamma_{m-1}^m=1$.
\end{proof}

\subsection{Conclusion} 
Recall $\lambda^2_{s,t,N}$ from \eqref{eq:def_lambda_T_Nbis}, $f$ from \eqref{eq:def_fbisbis}, $b_t=(1-\bar{\sigma}_N^2 \log t )^{-1}$,  and $c_i^m$ is defined in Proposition \ref{prop:fibo1inverseMain}.
\begin{prop} \label{prop:Amn1}
 Assume \eqref{eq:sub-criticality}, $p_0\geq 2$, 
 {let $c_0$ be  as defined in Theorem \ref{thm:sumOverXYv}} {and let $\varepsilon_N^{\hyperlink{symbol: star}{\diamondsuit}}$ from Lemma \ref{eq:key_lemma}}. The following holds:
    \begin{equation} \label{eq:endingPointTriple+} 
		\sup_X \Phi_{s,t,q}^{\leq p_0}(X) \leq 1+ c_0\sum_{m=1}^\infty  (1+\varepsilon_N^{\hyperlink{symbol: star}{\diamondsuit}})^m \sum_{\mathbf{J}\in \mathcal D(m,q,p_0)}  A(\mathbf J),
    \end{equation}
	where
\[
A(\mathbf J) = \prod_{r=1}^m \bar{\sigma}_N^{2 |J_r|}  
        \left\{ \left(\frac{c_0}{L_0}\right)^{\epsilon_r( \mathbf{J})} c_0^{(1-\epsilon_r(\mathbf{J}))} \right\}^{\mathbf{1}_{|J_r|\geq 2}}\sum_{i=0}^m \frac{c_{i}^m}{(m-i)!} (\lambda_{s,t,N}^2/\bar{\sigma}_N^2)^{m-i}(b_t)^{i}.
\]
\end{prop}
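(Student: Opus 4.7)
The proof is essentially a matter of assembling the two main estimates already proved in this section. The plan is as follows.

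First, I would start from the bound of Corollary \ref{cor:1/L}, which states that under \eqref{eq:sub-criticality} with $p_0 \in \mathbb{N} \setminus \{1\}$,
\[
\sup_X \Phi_{s,t,q}^{\leq p_0}(X) \;\leq\; 1 + c_0 \sum_{m=1}^\infty (1+\varepsilon_N^{\hyperlink{symbol: star}{\diamondsuit}})^m \sum_{\mathbf{J}\in \mathcal D(m,q,p_0)} \prod_{r=1}^m \bar\sigma_N^{2|J_r|} \left\{\left(\tfrac{c_0}{L_0}\right)^{\epsilon_r(\mathbf J)} c_0^{1-\epsilon_r(\mathbf J)}\right\}^{\mathbf 1_{|J_r|\geq 2}} \mathcal F^{\star}_{\mathbf J}(s,t).
\]
So the only term depending on the time variables $s,t$ inside the product structure is $\mathcal F^{\star}_{\mathbf J}(s,t)$, and the whole remaining work is to insert a suitable bound on this quantity.

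Second, I would invoke equation \eqref{eq:lemmaInduction2} of Proposition \ref{prop:fibo1inverseMain}, which yields (after relabeling the summation index as $i$)
\[
\mathcal F^{\star}_{\mathbf J}(s,t) \;\leq\; \sum_{i=0}^m \frac{c_i^m}{(m-i)!} (b_t)^i f(s)^{m-i}.
\]
Plugging this into the bound of the previous step produces exactly the right-hand side of \eqref{eq:endingPointTriple+} except that $f(s)^{m-i}$ appears instead of $(\lambda_{s,t,N}^2/\bar\sigma_N^2)^{m-i}$.

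Third, I would conclude by checking the elementary algebraic identity $f(s) = \lambda_{s,t,N}^2/\bar\sigma_N^2$. This is immediate from the definition \eqref{eq:def_fbisbis} of $f$ and the definition \eqref{eq:def_lambda_T_Nbis} of $\lambda_{s,t,N}^2$, since
\[
f(s) \;=\; \frac{1}{\bar\sigma_N^2}\log\!\left(\frac{1-\bar\sigma_N^2\log s}{1-\bar\sigma_N^2\log t}\right) \;=\; \frac{\lambda_{s,t,N}^2}{\bar\sigma_N^2}.
\]
Substituting this identity term-by-term gives the form of $A(\mathbf J)$ stated in the proposition.

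There is essentially no obstacle in this step: all the hard analytic work (handling the sums over $\mathbf X, \mathbf Y$, the change of variables, the induction on the jump structure via Lemma \ref{lem:indReductionMain}) has already been carried out in Corollary \ref{cor:1/L} and Proposition \ref{prop:fibo1inverseMain}. Proposition \ref{prop:Amn1} is really a packaging statement that rewrites these two estimates in a combined form convenient for the subsequent analysis of the series $\sum_m \sum_{\mathbf J} A(\mathbf J)$ in Section \ref{sec:mainProp}.
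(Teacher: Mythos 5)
Your proposal is correct and is exactly the paper's argument: the paper proves Proposition \ref{prop:Amn1} by combining Corollary \ref{cor:1/L} with the bound \eqref{eq:lemmaInduction2} on $\mathcal F^{\star}_{\mathbf J}(s,t)$ and noting the identity $f(s)=\lambda_{s,t,N}^2/\bar\sigma_N^2$ from \eqref{eq:def_fbisbis} and \eqref{eq:def_lambda_T_Nbis}. Nothing is missing.
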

\begin{proof}
  {Observe that $f(s)=\lambda_{s,t,N}/\bar{\sigma}_N^2$.}	The result follows from Corollary \ref{cor:1/L} and  \eqref{eq:lemmaInduction2}.
\end{proof}
\begin{prop} \label{prop:Phi2UB} 
	Assume  \eqref{eq:sub-criticality} or \eqref{eq:defNearCrit}. {Let $\varepsilon_N^{\hyperlink{symbol: star}{\diamondsuit}}$ from Lemma \ref{eq:key_lemma}.} The following holds:
	\begin{equation*}
		\sup_X \Phi_{s,t,q}^{\leq 2}(X) \leq 1+  b_t\sum_{m=1}^\infty (1+\varepsilon_N^{\hyperlink{symbol: star}{\diamondsuit}})^m \sum_{\mathbf{J}\in \mathcal D(m,q)} \bar{\sigma}_N^{2m } \sum_{i=0}^m \frac{c_{i}^m}{(m-i)!} {(\lambda_{s,t,N}^2/\bar{\sigma}_N^2)^{m-i}}{\left(b_t \right)^{i}}.
	\end{equation*}
\end{prop}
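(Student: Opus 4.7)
The plan is to combine the two main estimates already established for the $p_0=2$ case: first, the upper bound of Proposition~\ref{prop:UBFQcrit} which reduces $\sup_X \Phi_{s,t,q}^{\leq 2}(X)$ to a sum over diagrams of the geometric weight $\bar\sigma_N^{2m}$ times the combinatorial integral $\mathcal F^{\star}_{\mathbf J}(s,t)$, and second, the induction estimate \eqref{eq:lemmaInduction2} from Proposition~\ref{prop:fibo1inverseMain} which controls $\mathcal F^{\star}_{\mathbf J}(s,t)$ itself.

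Concretely, Proposition~\ref{prop:UBFQcrit} gives
\begin{equation*}
\sup_{X_0} \Phi_{s,t,q}^{\leq 2}(X_0) \leq 1 + b_t \sum_{m=1}^\infty (1+\varepsilon_N^{\hyperlink{symbol: star}{\diamondsuit}})^m \bar{\sigma}_N^{2m} \sum_{\mathbf{J} \in \mathcal D(m,q)} \mathcal F^{\star}_{\mathbf J}(s,t),
\end{equation*}
so the only remaining task is to insert the inductive bound
\begin{equation*}
\mathcal F^{\star}_{\mathbf J}(s,t) \leq \sum_{i=0}^m \frac{c_{i}^m}{(m-i)!} (b_t)^i f(s)^{m-i}
\end{equation*}
from \eqref{eq:lemmaInduction2}, and then use the fact that by the definitions \eqref{eq:def_fbisbis} of $f$ and \eqref{eq:def_lambda_T_Nbis} of $\lambda_{s,t,N}^2$, one has the identity $f(s) = \lambda_{s,t,N}^2/\bar{\sigma}_N^2$. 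Substituting this into the previous display and then into the starting bound yields precisely the claimed inequality.

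I do not expect any real obstacle in this proof: it is a clean direct consequence of the two cited propositions. The only point worth emphasizing is the simplification relative to the general-$p_0$ case treated in Proposition~\ref{prop:Amn1}: for $p_0=2$, every diagram $\mathbf J \in \mathcal D(m,q)$ has $|J_r|=1$ (hence $|\bar J_r|=2$) for all $r$, so the $L_0$-long-jump refinement coming from Corollary~\ref{cor:1/L} and the $\mathbf 1_{|J_r|\geq 2}$ correction factor both disappear, and only the factor $\bar{\sigma}_N^{2m}$ survives on the geometric side. All the heavy lifting — summing out the $X$- and $Y$-variables to produce $\mathcal F^{\star}_{\mathbf J}$ and running the induction over $\mathcal F_r$ to control the iterated sums over waiting times — is already encapsulated in Propositions~\ref{prop:UBFQcrit} and~\ref{prop:fibo1inverseMain}.
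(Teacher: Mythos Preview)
Your proof is correct and matches the paper's approach exactly: the paper's own proof is a single line invoking Proposition~\ref{prop:UBFQcrit} and \eqref{eq:lemmaInduction2}, together with the identity $f(s)=\lambda_{s,t,N}^2/\bar\sigma_N^2$, just as you do.
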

\begin{proof}
As in  Proposition \ref{prop:Amn1}, this follows from Proposition \ref{prop:UBFQcrit} and \eqref{eq:lemmaInduction2}.
\end{proof}

\label{sec:inductionsection}
\section{Proof of Proposition \ref{prop:mainProp}}
\label{sec:mainProp}

\subsection{Large $q$ estimate for $p_0=2$}
\label{sec:largeqquasicrit}
In this section, we assume  \eqref{eq:sub-criticality} or \eqref{eq:defNearCrit}, and $p_0=2$.
Let $C_{L,\delta,q}:= \binom{q}{2} + 2(1+2\delta^{-1})^{{2}} Lq$.

\begin{lem} \label{lem:largeq_QC}  
{Let $\varepsilon_N^{\hyperlink{symbol: star}{\diamondsuit}}$ from Lemma \ref{eq:key_lemma}}.
Whenever
\begin{equation} \label{eq:conditionAlpha02}
    \alpha_0:=\alpha_0(L,\delta,q,T,N):= 
 C_{L,\delta,q} \left(1+2 \delta^{-1}\right)^{1/L} {\bar{\sigma}_N^{2} b_t (1+\delta) }(1+\varepsilon_N^{\hyperlink{symbol: star}{\diamondsuit}}) < 1,
\end{equation}
it holds that 
\begin{equation} \label{eq:finalBound02}
	\sup_X \Phi_{s,t,q}^{\leq 2}(X)\leq  \frac{b_t(1+2\delta^{-1})^{{2}}}{1-
 \alpha_0} {\frac{\binom q 2}{\binom q 2-1}} \exp\left\{ (1+\varepsilon_N^{\hyperlink{symbol: star}{\diamondsuit}}) \left(1+{2}\delta^{-1}\right)^{1/L} C_{L,\delta,q} \lambda_{s,t,N}^2 \right\}.
\end{equation}
\end{lem}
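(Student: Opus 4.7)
The plan is to start from the bound in Proposition~\ref{prop:Phi2UB} and turn the double sum $\sum_m \sum_{i=0}^m$ into the product of a geometric series (in the variable $\alpha_0$) and an exponential series (in the variable $C_{L,\delta,q}\lambda_{s,t,N}^2$, times the various prefactors). The coefficient $c_i^m$ carries the dependence on the diagram $\mathbf J$, and the combinatorial cost of summing over $\mathbf J\in\mathcal D(m,q)$ is exactly what the definition of $C_{L,\delta,q}$ is designed to absorb. Throughout, recall $f(s)=\lambda_{s,t,N}^2/\bar\sigma_N^2$ and set $A:=(1+2\delta^{-1})^2-1$ for brevity.

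First I would decouple the $\mathbf J$-dependence from the $i$-dependence in $c_i^m$ using Lemma~\ref{lem:estimate_cim}, which gives
\[
c_i^m\le (1+\delta)^i (1+2\delta^{-1})^{n_{\mathrm{bad}}(\mathbf J)},
\]
and then Lemma~\ref{lem:gamma_n(I)} to replace $n_{\mathrm{bad}}(\mathbf J)$ by $2 n_{\mathrm{small}}(\mathbf J)+m/L+2$. The factor $(1+2\delta^{-1})^{m/L}$ combines with $\bar\sigma_N^{2m}$ into $[\bar\sigma_N^2(1+2\delta^{-1})^{1/L}]^m$, while the factor $(1+2\delta^{-1})^2$ is harmless and pulls out front.

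Next comes the main combinatorial step: bounding
\[
\Sigma_m(\mathbf J):=\sum_{\mathbf J\in\mathcal D(m,q)} (1+2\delta^{-1})^{2n_{\mathrm{small}}(\mathbf J)}
\le \tfrac{\binom{q}{2}}{\binom{q}{2}-1}\, C_{L,\delta,q}^m.
\]
I would write $(1+2\delta^{-1})^{2n_{\mathrm{small}}(\mathbf J)}=\prod_{r=1}^m(1+A\,\mathbf 1_{r\text{ small}})$ and expand over the subset $T\subset\{2,\dots,m\}$ of positions marked as small jumps. Build $\mathbf J=((i_1,j_1),\dots,(i_m,j_m))$ sequentially: position $r=1$ contributes $\binom{q}{2}$ choices; position $r\ge 2$ with $r\notin T$ contributes at most $\binom{q}{2}-1$ choices (using the constraint $J_r\ne J_{r-1}$); position $r\ge 2$ with $r\in T$ requires that one endpoint of $J_r$ lie in $\bigcup_{k=\max(1,r-L)}^{r-1}\bar J_k$, a set of cardinality at most $2L$, hence at most $2Lq$ choices. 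Summing over $T$ gives $\binom{q}{2}[(\binom{q}{2}-1)+2LqA]^{m-1}\le \binom{q}{2} C_{L,\delta,q}^{m-1}$, and a one-line inequality $\binom{q}{2}C_{L,\delta,q}^{m-1}\le \tfrac{\binom q2}{\binom q2-1}C_{L,\delta,q}^m$ (which holds as soon as $\binom{q}{2}-1\le C_{L,\delta,q}$) yields the claimed form. This is the step I expect to be the main obstacle: the counting has to be sharp enough both in the constant $\binom{q}{2}/(\binom{q}{2}-1)$ and in the exponent $C_{L,\delta,q}$, and the bookkeeping on the definition of ``small jump'' must be carefully aligned with Definition~\ref{def:longJump}.

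With these two reductions in hand, plug everything into Proposition~\ref{prop:Phi2UB} and rearrange so that $\bar\sigma_N^{2m}\cdot(1+\delta)^i b_t^i f(s)^{m-i}$ becomes $[(1+\delta)\bar\sigma_N^2 b_t]^i\,\lambda_{s,t,N}^{2(m-i)}$. Writing $\tilde y:=(1+\varepsilon_N^{\hyperlink{symbol: star}{\diamondsuit}})(1+2\delta^{-1})^{1/L}\bar\sigma_N^2 C_{L,\delta,q}$, so that $\tilde y\cdot(1+\delta)b_t=\alpha_0$ and $\tilde y\lambda_{s,t,N}^2/\bar\sigma_N^2=:\beta$ with $\beta$ equal to the exponent appearing in \eqref{eq:finalBound02}, the change of variable $j=m-i$ turns the double sum into
\[
\sum_{i,j\ge 0,\,i+j\ge 1}\alpha_0^i\,\frac{\beta^j}{j!}=\frac{e^\beta}{1-\alpha_0}-1,
\]
where the convergence is exactly the condition $\alpha_0<1$ in \eqref{eq:conditionAlpha02}. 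This yields
\[
\sup_X \Phi_{s,t,q}^{\le 2}(X)\le 1+ b_t(1+2\delta^{-1})^2\,\tfrac{\binom{q}{2}}{\binom{q}{2}-1}\,\Bigl(\tfrac{e^\beta}{1-\alpha_0}-1\Bigr).
\]
Finally, to absorb the leading $1$, I would use that the prefactor $M:=b_t(1+2\delta^{-1})^2\,\binom{q}{2}/(\binom{q}{2}-1)\ge 1$ (since each factor is $\ge 1$ and $\binom{q}{2}\ge 3$), which gives $1+M\bigl(\frac{e^\beta}{1-\alpha_0}-1\bigr)\le M\cdot\frac{e^\beta}{1-\alpha_0}$, exactly \eqref{eq:finalBound02}.
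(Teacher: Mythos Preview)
Your proof is correct and follows essentially the same route as the paper: combine Proposition~\ref{prop:Phi2UB} with Lemmas~\ref{lem:estimate_cim} and~\ref{lem:gamma_n(I)}, bound the sum over $\mathcal D(m,q)$ by $\tfrac{\binom{q}{2}}{\binom{q}{2}-1}C_{L,\delta,q}^m$, then swap the $m$- and $i$-sums to obtain a geometric series (giving $(1-\alpha_0)^{-1}$) times an exponential series. The only cosmetic difference is that you expand $(1+A)^{n_{\mathrm{small}}(\mathbf J)}$ as a product over positions and sum over subsets $T$, whereas the paper bins diagrams by the exact value of $n_{\mathrm{small}}$ and applies the binomial theorem; the two organizations are equivalent.
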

\begin{proof}
 By Lemma \ref{lem:estimate_cim} and Proposition \ref{prop:Phi2UB}, we obtain that
\begin{equation} \label{eq:boundSupXPhip0=2}
\begin{aligned}
	\sup_X \Phi_{s,t,q}^{\leq 2}(X) & \leq 1 + b_t\sum_{m=1}^\infty (1+\varepsilon_N^{\hyperlink{symbol: star}{\diamondsuit}})^{m} \bar{\sigma}_N^{2 m}   \\
 &\qquad    \sum_{ \mathbf{J}\in \mathcal D(m,q)}  \sum_{i=0}^{m} \frac{ (1+\delta)^i (1+2\delta^{-1})^{n_{\mathrm{bad}}(\mathbf J)}}{(m-i)!}
{(\lambda_{s,t,N}^2/\bar{\sigma}_N^2)^{m-i}}{(b_t)^{i}},
\end{aligned}
\end{equation}
with $\mathcal D(m,q)$ defined above \eqref{eq:Phi2}.  Let $\mathcal{D}^{(n)}(m,q)$ be the subset of all $\mathbf{J} \in \mathcal{D}(m,q)$ such that $n_{\mathrm{small}}(\mathbf{J}) = n$, where $n_{\mathrm{small}}(\mathbf{J})$ is the number of small jump indices in $\mathbf{J}$.
Then, we estimate
\[
{|\mathcal{D}^{(n)}(m,q)| \leq \binom{m-1}{n} \binom{q}{2} \left(\binom{q}{2} - 1\right)^{m-1-n} (2qL)^{n}.}
\]
Indeed, one first chooses where the small jumps occur, which gives a $\binom{m{-1}}{n}$ factor {(the first index is always a long jump)}. Then, for the first step, choose between $\binom{q}{2}$ pairs the one that meets. For each subsequent step, if a small jump occurs, the particle involved in the small jump must have appeared in a pair in the last $L$ indices. This gives at most $2L$ possibilities for this particle, hence at most $2qL$ possibilities in total. 
For each long jump index, we bound the choice by $\binom{q}{2} - 1$, as this corresponds to the total number of pairs except one (since a new pair has to be considered at each step).
Thus, it follows by Lemma \ref{lem:gamma_n(I)} that
\begin{equation} \label{eq:afterLemmacik2}
\sup_X \Phi_{s,t,q}^{\leq 2}(X)  \leq 1 + b_t\sum_{m=1}^\infty (1+\varepsilon_N^{\hyperlink{symbol: star}{\diamondsuit}})^{m} \bar{\sigma}_N^{2 m}  H_m \sum_{i=0}^{m} \frac{ (1+\delta)^i }{(m-i)!}
\frac{(\lambda_{s,t,N}^2/\bar{\sigma}_N^2)^{m-i}}{\left(1-\bar{\sigma}_N^2\log t \right)^{i}},
\end{equation}
where
\begin{align*}
  H_m &= (1+2\delta^{-1})^{m/L{+2}} \binom q 2  \sum_{n=0}^{m{-1}} \binom{m{-1}}{n}
 \left(\binom{q}{2}-1\right)^{m{-1}-n} (2qL)^{n} (1+2\delta^{-1})^{{2}n} \\
 &= (1+2\delta^{-1})^{m/L{+2}} \binom q 2 (C_{L,\delta,q})^{m{-1}},
\end{align*}
with {$C_{L,\delta,q}:= \binom{q}{2}-1 + 2(1+2\delta^{-1})^{{2}} Lq {\geq \binom q 2}$.} 
By exchanging the sum in $m$ and $i$ in \eqref{eq:afterLemmacik2}, we obtain for all $s\leq t,$
\begin{align}
	\sup_X \Phi_{s,t,q}^{\leq 2}(X) & \leq b_t (1+2\delta^{-1})^{{2}} \sum_{i=0}^\infty (1+\varepsilon_N^{\hyperlink{symbol: star}{\diamondsuit}})^{i} (C_{L,\delta,q})^i \left(1+2 \delta^{-1}\right)^{i/L} \left({\bar{\sigma}_N^{2}(1+\delta) }b_t\right)^i \label{eq:sumoni2}\\
          & \hspace{1.5cm} \times \sum_{m=i}^\infty (1+\varepsilon_N^{\hyperlink{symbol: star}{\diamondsuit}})^{m-i}(C_{L,\delta,q})^{m-i} \left(1+2 \delta^{-1}\right)^{(m-i)/L} \frac{\lambda_{s,t,N}^{2(m-i)}}{(m-i)!},\label{eq:sumonm2}
\end{align}
where the sum on $m$ in \eqref{eq:sumonm2} equals the exponential in the right-hand side of \eqref{eq:finalBound02}, and the remaining sum on $i$ in \eqref{eq:sumoni2} is geometric and converges under condition \eqref{eq:conditionAlpha02}, which ensures that the ratio of the geometric sum is strictly less than one. 
\end{proof}

\subsection{Small $q$ estimate for $p_0=2$}\label{sec:finiteq}
In this section, we assume  \eqref{eq:sub-criticality} or \eqref{eq:defNearCrit}, and $p_0=2$. Let $C_{\delta,q} := (1+2\delta^{-1}) (\binom q 2 - 1)$.
\begin{lem} \label{lem:smallq_QC}
Let {$\varepsilon_N^{\hyperlink{symbol: star}{\diamondsuit}}$ from Lemma \ref{eq:key_lemma}}. 
Whenever
\begin{equation} \label{eq:defalpha1}
\alpha_1:= 
\alpha_1(\delta,q,t,N)  := C_{\delta,q} b_t  \bar{\sigma}_N^{2}(1+\delta)(1+\varepsilon_N^{\hyperlink{symbol: star}{\diamondsuit}}) < 1,
\end{equation}
it holds that
\begin{equation}\label{eq:errorTermOptimizing}
\sup_X \Phi_{s,t,q}^{\leq 2}(X) \leq   \frac{b_t}{1-\alpha_1} \frac{\binom q 2}{\binom q 2 - 1} \exp\left\{\lambda_{s,t,N}^2 \left(\binom q 2 - 1\right) (1+2\delta^{-1})(1+\varepsilon_N^{\hyperlink{symbol: star}{\diamondsuit}}) \right\}.
\end{equation}
\end{lem}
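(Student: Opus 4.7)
The plan is to start from the upper bound of Proposition~\ref{prop:Phi2UB}, which expresses $\sup_X \Phi^{\leq 2}_{s,t,q}(X)$ as $1$ plus a sum over $m \geq 1$ and $\mathbf{J} \in \mathcal{D}(m,q)$ of an explicit expression in $c_i^m$, $\lambda^2_{s,t,N}$, $\bar\sigma_N^2$, and $b_t$. In contrast to Lemma~\ref{lem:largeq_QC}, the small-$q$ regime does not require the refined long/small jump counting controlled by the parameter $L$. I would simply apply Lemma~\ref{lem:estimate_cim} together with the trivial bound $n_{\mathrm{bad}}(\mathbf{J}) \leq m$, yielding $c_i^m \leq (1+\delta)^i(1+2\delta^{-1})^m$, and combine this with the elementary cardinality estimate $|\mathcal{D}(m,q)| \leq \binom{q}{2}(\binom{q}{2}-1)^{m-1}$, which holds because the first pair in $\mathbf{J}$ admits $\binom{q}{2}$ choices and each subsequent pair must differ from the previous one.

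After inserting these two bounds and pulling out an overall prefactor of $b_t \binom{q}{2}/(\binom{q}{2}-1)$, set $A := (1+\varepsilon_N^{\hyperlink{symbol: star}{\diamondsuit}})(1+2\delta^{-1})(\binom{q}{2}-1)$ and split $A^m = A^i A^{m-i}$, as well as $\bar\sigma_N^{2m} = \bar\sigma_N^{2i} \bar\sigma_N^{2(m-i)}$. The $m$-th summand then factorizes as $\alpha_1^i \cdot (A\lambda^2_{s,t,N})^{m-i}/(m-i)!$, where $\alpha_1$ is precisely the quantity defined in \eqref{eq:defalpha1} (using $C_{\delta,q} = (1+2\delta^{-1})(\binom{q}{2}-1)$). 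Exchanging the sums over $m$ and $i$, which is justified by absolute convergence as soon as $\alpha_1 < 1$, separates the expression into a geometric series $\sum_{i \geq 0} \alpha_1^i = (1-\alpha_1)^{-1}$ and an exponential series $\sum_{k \geq 0}(A\lambda^2_{s,t,N})^k/k! = \exp(A\lambda^2_{s,t,N})$; their product matches the right-hand side of \eqref{eq:errorTermOptimizing}.

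A minor clean-up is to reabsorb the $-1$ that arises from completing the double sum to start at $m=0$, and to show that the leading $1$ from Proposition~\ref{prop:Phi2UB} does not spoil the bound; both reduce to the elementary inequalities $b_t \geq 1$ and $\binom{q}{2}/(\binom{q}{2}-1) \geq 1$. No genuine obstacle arises here: the argument is the specialization of Lemma~\ref{lem:largeq_QC} in which $L$ is absent, so that the factor $(1+2\delta^{-1})$ appears to the first power in the exponent rather than to the power $1/L$. This is harmless in the small-$q$ regime because it multiplies only $\binom{q}{2}-1$ and can be tuned via $\delta$ subject to the compatibility constraint $\alpha_1 < 1$, which is itself reconcilable with \eqref{eq:q,T-condition} for an appropriate choice of $\delta$.
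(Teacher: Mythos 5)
Your proposal is correct and follows essentially the same route as the paper: it starts from Proposition \ref{prop:Phi2UB} combined with Lemma \ref{lem:estimate_cim} and the crude bound $n_{\mathrm{bad}}(\mathbf J)\leq m$ (so no long-jump parameter $L$ is needed), uses $|\mathcal D(m,q)|\leq \binom q 2(\binom q 2-1)^{m-1}$, and then exchanges the sums over $m$ and $i$ to separate a geometric series with ratio $\alpha_1$ from the exponential series, absorbing the leading $1$ via $b_t\geq 1$. This is exactly the paper's argument for Lemma \ref{lem:smallq_QC}.
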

\begin{proof}
We now focus on a fixed parameter $q_N=q\in \mathbb N\setminus\{1,2\}$.  Since $n_{\rm small}(\mathbf J)\leq m$, we obtain from \eqref{eq:boundSupXPhip0=2} that\footnote{Note that there is no dependence on $L$ in the upper bound of \eqref{eq:boundFiniteFixedq}. In fact, one can recover \eqref{eq:boundFiniteFixedq} by repeating Section \ref{sec:inductionsection} and defining every index as a small jump. Doing so, there is no need to introduce the parameter $L$}
\begin{equation} \label{eq:boundFiniteFixedq}
\begin{aligned}
	&\sup_X \Phi_{s,t,q}^{\leq 2}(X) \\
 & \leq 1 + b_t\sum_{m=1}^\infty (1+\varepsilon_N^{\hyperlink{symbol: star}{\diamondsuit}})^{m} \bar{\sigma}_N^{2 m}  |\mathcal D(m,q)| \sum_{i=0}^{m} \frac{ (1+2\delta^{-1})^{m}}{(m-i)!}
{(\lambda_{s,t,N}^2/\bar{\sigma}_N^2)^{m-i}}{((1+\delta)b_t)^{i}}.
\end{aligned}
\end{equation}
Now, similarly as above, we can bound the cardinality of $\mathcal D(m,q)$ by $\binom q 2 (\binom q 2 -1)^{m-1}$. Hence, by $b_t\geq 1$, we obtain by exchanging the sum on $i$ and $m$ that 
\begin{align}
	&\sup_X \Phi_{s,t,q}^{\leq 2}(X) \leq b_t \frac{ \binom{q}{2}}{\binom{q}{2}-1} \sum_{i=0}^\infty (1+\varepsilon_N^{\hyperlink{symbol: star}{\diamondsuit}})^{i} (C_{\delta,q})^i  \left({\bar{\sigma}_N^{2}(1+\delta) }b_t\right)^i \label{eq:sumoni3}\\
          & \hspace{1.5cm} \times \sum_{m=i}^\infty (1+\varepsilon_N^{\hyperlink{symbol: star}{\diamondsuit}})^{m-i}(C_{\delta,q})^{m-i} \frac{\lambda_{s,t,N}^{2(m-i)}}{(m-i)!},\nonumber
\end{align}
with $C_{\delta,q} = (1+2\delta^{-1}) (\binom q 2 - 1)$. 
If $\alpha_1<1$ as in the statement of the lemma, the geometric sum  in \eqref{eq:sumoni3} converges, yielding the result.
\end{proof}

\subsection{Proof of Proposition \ref{prop:mainProp}--\ref{item:(ii)} 
} 
\label{sec:proofOfQuasi(ii)}
 The proof goes by combining the estimates of Lemma \ref{lem:largeq_QC} and Lemma \ref{lem:smallq_QC}. 
In order to optimize in $\delta>0$, the error terms in \eqref{eq:finalBound02} and \eqref{eq:errorTermOptimizing} under the constraint $\alpha_0 < 1$ and $\alpha_1<1$ of \eqref{eq:conditionAlpha02} and \eqref{eq:defalpha1}, we need to consider different sets of values of $q$. {(We mention that $\delta$ will be taken large for Lemma \ref{lem:largeq_QC} and small for Lemma \ref{lem:smallq_QC}.)}

We fix an arbitrary constant $\varepsilon\in (0,1/4)$. We first consider $q\in \mathbb{N}_{\geq 3}$ such that $q^2 \leq \varepsilon/(b_t \bar \sigma_N^2)$.    We  
  take $\delta = \varepsilon (q^2 b_t \bar \sigma_N^2)^{-1}\geq 1$ in Lemma \ref{lem:smallq_QC} to obtain  for $N$ large enough, 
\[\alpha_1 \leq  2 (1+\delta) q^2 b_t \bar{\sigma}_N^2 \leq 4 \varepsilon<1,
\]

This entails
 \begin{equation*}
 \label{eq:2ndOptibisbis}
 \begin{aligned}
    &\sup_X \Phi_{s,t,q}^{\leq 2}(X) \leq     \frac{(1-4\varepsilon )^{-1}\binom q 2}{\binom q 2 - 1}{b_t} \exp\left\{\lambda_{s,t,N}^2 \left(\binom q 2 - 1\right) (1+2\varepsilon^{-1} q^2 b_t \bar \sigma_N^2))(1+\varepsilon_N^{\hyperlink{symbol: star}{\diamondsuit}}) \right\}.
\end{aligned}
\end{equation*}

Next, consider $q^2\geq (32\alpha/(1-\alpha))^{8}$. We now appeal to Lemma \ref{lem:largeq_QC}, where we set $L:=\lceil \sqrt q\rceil$ and $\delta := q^{-1/8}$. We find $(1+2\delta^{-1})^{1/L} \leq 1+4/(\delta L) \leq  1+8q^{-3/8}$,
\[
C_{L,\delta,q} = \binom q 2 \Big(1+4\Big((1+2\delta^{-1})^{{2}}\frac{L}{q-1}\Big)\Big) \leq { \left(\binom{q}{2}-1\right)}\left(1+ 8 q^{-1/4}\right),
\]
and with $\alpha_0$ from \eqref{eq:conditionAlpha02},
\begin{align*}
\alpha_0 =  b_t \bar \sigma_N^2 (1+\delta) C_{L,\delta,q}(1+2\delta^{-1})^{1/L}  \leq \binom q 2 b_t \bar \sigma_N^2 (1+16q^{-1/4})\leq \frac{1+\alpha}{2}<1,
\end{align*}
where the second inequality holds by \eqref{eq:q,T-condition} and $q^2 \geq  (32\alpha/(1-\alpha))^{8}$. Hence, \eqref{eq:finalBound02} yields
\begin{equation*}
\begin{aligned}
	\sup_X \Phi_{s,t,q}^{\leq 2}(X)\leq \frac{2q^{1/4}}{1-\alpha}  b_t
 \exp\left\{ \lambda_{s,t,N}^2\left(\binom q 2 -1\right)(1+\varepsilon_N^{\hyperlink{symbol: star}{\diamondsuit}}) (1+16q^{-1/4})  \right\}.
 \end{aligned}
\end{equation*}

\subsection{Large $q$ estimate for $p_0 > 2$ \eqref{eq:sub-criticality}}
We set,
\begin{equation} \label{eq:CLQ}
	C_{L,L_0,\delta,q} {:= C_{L,L_0,\delta,q} (c,p_0)} := \binom{q}{2} + 2(1+2\delta^{-1})^2\left( 2p_0 Lq+ c L_0^{-1} q^2 + cL_0q \right).
\end{equation}
\begin{lem} \label{lem:largeqSC}
Assume \eqref{eq:sub-criticality}.  Let $c_0$ from Theorem \ref{thm:sumOverXYv}, and {$\varepsilon_N^{\hyperlink{symbol: star}{\diamondsuit}}$ from Lemma \ref{eq:key_lemma}}. {There exists $c=c(\alpha,p_0,c_0)>0$, such that,} whenever
    \begin{equation} \label{eq:conditionAlpha0}
	\alpha_2:=
 C_{L,L_0,\delta,q} \left(1+2 \delta^{-1}\right)^{1/L} b_t \bar{\sigma}_N^{2}(1+\delta) (1+\varepsilon_N^{\hyperlink{symbol: star}{\diamondsuit}})< 1,
\end{equation}
it holds for $s\leq t$, {under \eqref{eq:q,T-condition},} 
\begin{equation} \label{eq:finalBound0}
	\sup_X \Phi_{s,t,q}^{\leq p_0}(X)\leq  \frac{2c_0(1+2\delta^{-1})^{{2}}}{1-\alpha_2} \exp\left\{ (1+\varepsilon_N^{\hyperlink{symbol: star}{\diamondsuit}}) \left(1+{2}\delta^{-1}\right)^{1/L} C_{L,L_0,\delta,q} \lambda_{s,t,N}^2 \right\}.
\end{equation}
\end{lem}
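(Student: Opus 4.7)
The plan is to mirror the proof of Lemma \ref{lem:largeq_QC} while starting from Proposition \ref{prop:Amn1} instead of Proposition \ref{prop:Phi2UB}, so as to accommodate diagrams with $|\bar J_r|>2$. I will first apply Lemma \ref{lem:estimate_cim} with parameter $\delta$ to bound $c_i^m\leq (1+\delta)^i(1+2\delta^{-1})^{n_{\mathrm{bad}}(\mathbf J)}$, and then Lemma \ref{lem:gamma_n(I)} to replace $n_{\mathrm{bad}}$ with $2n_{\mathrm{small}}+m/L+2$. This reduces matters to bounding, for each $m\geq 1$, the weighted sum
\[
H_m:=\sum_{\mathbf J\in\mathcal D(m,q,p_0)}(1+2\delta^{-1})^{2n_{\mathrm{small}}(\mathbf J)}\prod_{r=1}^{m}\bar\sigma_N^{2|J_r|}\left\{\left(\frac{c_0}{L_0}\right)^{\epsilon_r}c_0^{1-\epsilon_r}\right\}^{\mathbf{1}_{|J_r|\geq 2}}.
\]

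Next, I will classify each index $r$ of a diagram into four types and perform a step-by-step count. Extracting one factor $\bar\sigma_N^2$ per index as the base weight, the remaining contribution is: (a) long pairwise jump: $\binom q 2 - 1$ choices; (b) small pairwise jump: at most $2p_0 L q$ choices (the factor $p_0$ reflects that each of the previous $L$ indices may involve up to $p_0$ particles); (c) long jump with $|J_r|\geq 2$ and $\epsilon_r=1$: requires bounding the unconstrained residual sum $\sum_{J_r:|J_r|\geq 2,\,|\bar J_r|\leq p_0}\bar\sigma_N^{2(|J_r|-1)}$; (d) long jump with $|J_r|\geq 2$ and $\epsilon_r=0$, together with the small-jump $|J_r|\geq 2$ cases: the same sum but with $i_r^\star$ (respectively one of $i_r^1,i_r^2$) restricted to one of at most $p_0 L_0$ (respectively $2p_0 L$) previously occupied slots. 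For (c), the dominant contribution comes from perfect-matching configurations $|\bar J_r|=2k$, $|J_r|=k$, giving $\binom{q}{2k}\,c(k)\,\bar\sigma_N^{2(k-1)}\leq c(k)\,q^2 (q^2\bar\sigma_N^2)^{k-1}$; summed over $k\geq 2$ using \eqref{eq:q,T-condition} in the form $q^2\bar\sigma_N^2\leq 2\alpha/b_t$, this is $\leq c(p_0,\alpha) q^2$, and multiplied by $c_0/L_0$ yields the $cL_0^{-1}q^2$ term in $C_{L,L_0,\delta,q}$. For (d), the analogous constrained count is bounded by $c(p_0,\alpha) L_0 q$, producing the $cL_0 q$ term, while all non-perfect-matching geometries yield lower-order corrections thanks to the extra $\bar\sigma_N^2$ factors.

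Assembling the four types in analogy with the $p_0=2$ case delivers
\[
H_m\leq (1+2\delta^{-1})^{m/L+2}\binom q 2 (C_{L,L_0,\delta,q})^{m-1}.
\]
Substituting this estimate into Proposition \ref{prop:Amn1} and exchanging the sums in $i$ and $m$ as in \eqref{eq:sumoni2}--\eqref{eq:sumonm2} yields a geometric series in $i$ with ratio $\alpha_2$ and an exponential series in $m-i$; under \eqref{eq:conditionAlpha0} the former converges, producing the prefactor of \eqref{eq:finalBound0}, while the latter gives the exponential factor. The main obstacle I anticipate is the combinatorial bookkeeping in (c)--(d): one must verify that, uniformly over the geometries of $\bar J_r$ with $3\leq|\bar J_r|\leq p_0$, the smallness $\bar\sigma_N^{2(|J_r|-1)}$ together with \eqref{eq:q,T-condition} absorbs the $\binom{q}{|\bar J_r|}$-many particle placements down to the orders $q^2$ and $L_0 q$, with constants depending only on $p_0$, $\alpha$, $c_0$. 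Once this is done, the convergence step is a direct transcription of the argument from Lemma \ref{lem:largeq_QC}.
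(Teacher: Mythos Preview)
Your approach matches the paper's exactly: start from Proposition~\ref{prop:Amn1}, apply Lemmas~\ref{lem:estimate_cim} and~\ref{lem:gamma_n(I)}, count diagrams step by step according to jump type, then exchange the $i$- and $m$-sums into a geometric and an exponential series. The one place you over-engineer is the counting in (c)--(d): there is no need to stratify by $|\bar J_r|$ or to single out perfect matchings. The paper simply bounds the number of $J_r\subset\mathcal C_q$ with $|J_r|=k$ by $\binom{q}{2}^k$ and uses $\bar\sigma_N^2\binom q 2\leq\alpha$ (from \eqref{eq:q,T-condition} together with $b_t\geq 1$) to get
\[
\sum_{k=2}^{p_0}\bar\sigma_N^{2k}\binom{q}{2}^k\;\leq\;\bar\sigma_N^{2}\binom q 2\cdot\frac{\alpha}{1-\alpha}\;\leq\;c(\alpha)\,\bar\sigma_N^{2}\,q^2,
\]
which, after multiplication by $c_0/L_0$, produces the $cL_0^{-1}q^2$ term directly; for $\epsilon_r=0$ one constrains $i_r^\star$ to at most $p_0L_0$ recently occupied slots and bounds the remaining $k-1$ pairs by $\binom q 2^{k-1}$, giving $cL_0 q\,\bar\sigma_N^2$. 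Your remark that non-perfect-matching geometries contribute lower order ``thanks to the extra $\bar\sigma_N^2$ factors'' is slightly off: for fixed $|J_r|=k$ every geometry carries the same weight $\bar\sigma_N^{2k}$; it is the \emph{count} of configurations that shrinks when $|\bar J_r|<2k$, not the weight. Finally, your displayed bound on $H_m$ omits the factor $\bar\sigma_N^{2m}$ you extracted.
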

\begin{proof}
 By Proposition \ref{prop:Amn1} and Lemma \ref{lem:estimate_cim}, we obtain that
\begin{align*}
	&\sup_X \Phi_{s,t,q}^{\leq p_0}(X)  \leq 1+ c_0\sum_{m=1}^\infty (1+\varepsilon_N^{\hyperlink{symbol: star}{\diamondsuit}})^{m}  \sum_{ \mathbf{J}\in \mathcal D(m,q,p_0)}  \prod_{r=1}^m \bar{\sigma}_N^{2 |J_r|}  \\
  &\left\{ \left(\frac{c_0}{L_0}\right)^{\epsilon_r( \mathbf{J})} {c_0^{(1-\epsilon_r( \mathbf{J}))}} \right\}^{\mathbf{1}_{|J_r|\geq 2}} \sum_{i=0}^{m} \frac{ (1+\delta)^i (1+2 \delta^{-1})^{n_{\mathrm{bad}}(\mathbf J)}}{(m-i)!}
\frac{(\lambda_{s,t,N}^2/\bar{\sigma}_N^2)^{m-i}}{\left(1-\bar{\sigma}_N^2\log t \right)^{i}}.
\end{align*}
Let $\epsilon'_r( \mathbf{J}):=\mathbf{1}\{r \text{ is a small jump {index}}\}$ {(cf.\ Definition \ref{def:maindef})}  and for all $\epsilon' \in \{0,1\}^m$, define $n_{\mathrm{small}}(\epsilon'):=\sum_{r=1}^m \epsilon'_r$ and $\mathcal D(m,q,p_0,\epsilon') := \{ \mathbf{J}\in \mathcal D(m,q,p_0) : \epsilon_r'( \mathbf{J}) = \epsilon_r'\}$. 
Decomposing over the possible values of $\epsilon_r'( \mathbf{J})$, Lemma \ref{lem:gamma_n(I)} entails that
\begin{align}
	\sup_X \Phi_{s,t,q}^{\leq p_0}(X) & \leq 1+ c_0\sum_{m=1}^\infty  (1+\varepsilon_N^{\hyperlink{symbol: star}{\diamondsuit}})^{m}  D_m
	\sum_{n=0}^m \sum_{\substack{\epsilon' \in \{0,1\}^m \\ 
 n_{\mathrm{small}}(\epsilon') = n}}  
\left(1+2 \delta^{-1}\right)^{2n+\frac{m}{L}+2} B_{\epsilon'},\label{eq:1gyoume}
\end{align}
where 
\begin{equation} \label{eq:defDm} 
D_m:= \sum_{i=0}^m  \left(
{(1+\delta)b_t}\right)^i \frac{(\lambda_{s,t,N}^2)^{m-i} \bar \sigma_N^{-2(m-i)}}{(m-i)!},
\end{equation}
and
\begin{equation}
B_{\epsilon'} := \sum_{ \mathbf{J} \in \mathcal D(m,q,p_0,\epsilon')} \prod_{r=1}^m \bar{\sigma}_N^{2 |J_r|} \left( \left(\frac{c_0}{L_0}\right)^{\epsilon_r( \mathbf{J})}{c_0^{(1-\epsilon_r( \mathbf{J}))}}\right)^{\mathbf{1}_{|J_r|\geq 2}}. \label{eq:2gyoume}
\end{equation}
We first estimate $B_{\epsilon'}$ for any given $\epsilon'\in \{0,1\}^m$.  
{For $\mathbf J = (J_1,\dots,J_m)$, write $\mathbf{J}' = (J_1,\dots,J_{m-1})$ and let $\bar{\epsilon}':= (\epsilon_1',\dots,\epsilon_{m-1}')$. Note that 
\begin{align*}\mathcal D(m,q,p_0,\epsilon') &= \{\mathbf J = (J_1,\dots,J_m), J_r \subset \mathcal C_q,  :\mathbf J'\in \mathcal D(m-1,q,p_0,\bar{\epsilon}'),\\
& \qquad |\bar J_m|\leq p_0,\bar J_{m-1}\nsupseteq \bar J_m, \epsilon'_m((\mathbf J',J_m))= \epsilon'_m\}.
\end{align*}
Therefore, for a given $\mathbf J'\in \mathcal D(m-1,q,p_0,\bar{\epsilon}')$, we can estimate the sum over $J_{m}$ in \eqref{eq:2gyoume} by}
\begin{equation}
	\begin{aligned}
		\label{eq:SumOverJm}
		\Delta_m & := \sum_{\substack{J_m\subset \mathcal C_q: \bar  J_m\nsubseteq  \bar{J}_{m-1},  \\ {|\bar J_m| \leq p_0}  
        }
            } 
        \left(\left(\frac{c_0}{{L_0}}\right)^{\epsilon_m(  \mathbf{J})} {c_0^{1-\epsilon_m( \mathbf{J})}} \right)^{\mathbf{1}_{|J_m|\geq 2}} \bar{\sigma}_N^{2|J_m|}\\
		         & \leq \bar{\sigma}_N^2 |\mathcal J^{pair}_{\epsilon'_m,  \mathbf{J}'} | + {c_0}\sum_{k=2}^{p_0} \bar{\sigma}_N^{2k}| \mathcal  J^{k,1}_{  \mathbf{J}'}| + \frac{c_0}{L_0} \sum_{k=2}^{p_0} \bar{\sigma}_N^{2k}| \mathcal  J^{k,2}_{  \mathbf{J}'}| \\
		         & =: \Delta_m^0 + \Delta_m^1 + \Delta_m^2,
	\end{aligned}
\end{equation}
where we have set
\al{
\mathcal J^{pair}_{\epsilon'_m,  \mathbf{J}'}&:= \{{J_m=}(i_m^1,i_m^2)\in \mathcal C_q: \{i_m^1,i_m^2\} \nsubseteq \bar{J}_{m-1},
{\epsilon'_m((\mathbf J',J_m))}= \epsilon'_m\},\\
\mathcal J^{k,1}_{  \mathbf{J}'}&:= \{J_m\subset \mathcal C_q: |J_m|=k,\bar  J_m\nsubseteq  \bar{J}_{m-1}, {\epsilon_m((\mathbf J',J_m))=0} 
\},\\
\mathcal J^{k,2}_{  \mathbf{J}'}&:= \{J_m\subset \mathcal C_q: |J_m|=k,\bar  J_m\nsubseteq  \bar{J}_{m-1},{\epsilon_m((\mathbf J',J_m))=1}
\}.
} 
We start by estimating $\Delta_m^0$, which accounts for the main contribution in $\Delta_m$.
If $\epsilon'_m=0$, we simply bound $|\mathcal J^{pair}_{{\epsilon'_m,  \mathbf{J}'}}|$ by $|\mathcal C_q| = \binom{q}{2}$. 
If $\epsilon_m'=1$, then $m$ is a small jump index, so either $(m,i_m^1)$ or $(m,i_m^2)$ is a small jump, which by definition implies that $i_m^1$ or $i_m^2$ belongs to $\bar J_{m-1}\cup\dots\cup  \bar J_{m-L}$. Therefore, we can bound $|\mathcal J^{pair}_{{\epsilon',  \mathbf{J}'}}|$ by $2p_0 L q$  using  that $|\bar{J}_r|\leq p_0$. Hence, we have
\begin{equation} \label{eq:Delta_m^0}
	\Delta_m^0 \leq \bar{ \sigma}_N^2  \binom{q}{2} ^{1-\epsilon_m'}
	\left( 2p_0 L q \right)^{\epsilon_m'}.
\end{equation}
To estimate $\Delta_m^2$, {we use that $\bar{\sigma}_N^2 \binom q 2 \leq \alpha < 1$ under \eqref{eq:q,T-condition} (notice that $b_t\geq 1$)}, so that
\begin{equation} \label{eq:Delta_m^2}
	\Delta_m^2 \leq \frac{c_0}{L_0}  \sum_{k=2}^{p_{0}} \bar{\sigma}_N^{2k}
	{\binom q 2}^k  \leq \frac{c}{L_0}\bar{\sigma}_N^2 q^2,
\end{equation}
for some constant {$c=c(\alpha,p_0,c_0) > 0$.} 
We turn to $\Delta_m^1$. Since $(m,i_m^\star)$ is a ${L_0}$-small jump, similarly as above, we have $i_m^\star
	\in \bar J_{m-1}\cup\dots\cup \bar J_{m-{L_0}}$, so that  {$|\mathcal J^{k,1}_{\mathbf{J}'}| \leq  p_0{L_0} q {\binom q 2}^{k-1}$},
thus
\begin{equation} \label{eq:Delta_m^1}
	\Delta_m^1 \leq   {c_0}\sum_{k=2}^{p_{0}} \bar{\sigma}_N^{2k} {  p_0{L_0} q {\binom q 2}^{k-1}} \leq {c} L_0 q \bar{\sigma}_N^2.
\end{equation}
Putting \eqref{eq:Delta_m^0}, \eqref{eq:Delta_m^2} and \eqref{eq:Delta_m^1} together entails:
\al{
	\Delta_m 
 &\leq \bar \sigma_N^2 \left(\binom{q}{2} + c L_0^{-1} q^2 + {c} L_0 q\right)^{1-\epsilon_m'}\left(2 p_0 L q+ c L_0^{-1} q^2 +{c}L_0q\right)^{\epsilon_m'}.
}
Iterating this procedure, 
 it follows that $B_{\epsilon'}$ in \eqref{eq:2gyoume} satisfies 
\begin{equation} \label{eq:boundOn2gyoume}
	B_{\epsilon'}\leq \bar \sigma_N^{2m}  \left( \binom{q}{2} + c L_0^{-1} q^2+ {c} L_0 q\right)^{m-n} \left( 2p_0 Lq + c L_0^{-1} q^2+ {c} L_0 q\right)^{n}.
\end{equation}
{with $n=n_{\mathrm{small}}(\epsilon')$.}
We now plug \eqref{eq:boundOn2gyoume} in \eqref{eq:1gyoume}, then sum on $\epsilon'$ in \eqref{eq:1gyoume}, which gives a $\binom{m}{n}$ factor as \(\#\{\epsilon' \in \{0,1\}^m|~\sum_{i=1}^m \epsilon'_i =n\}=\binom{m}{n}\). We can then apply the binomial formula to the sum in $n$ in   \eqref{eq:1gyoume} and get:
\begin{equation} \label{eq:afterLemmacik}
	\begin{aligned}
\sup_X \Phi_{s,t,q}^{\leq p_0}(X) \leq & 1+  c_0(1+2\delta^{-1})^{{2}}\sum_{m=1}^\infty (1+\varepsilon_N^{\hyperlink{symbol: star}{\diamondsuit}})^{m} D_m (1+2 \delta^{-1})^{m/L}\bar{\sigma}_N^{2m} (C_{L,L_0,\delta,q})^{m},       
	\end{aligned}
\end{equation}
with $C_{L,L_0,\delta,q}$ as in \eqref{eq:CLQ}. 
 By exchanging the sum in $m$ in \eqref{eq:afterLemmacik} and the sum on $i$ in the definition of $D_m$ (see \eqref{eq:defDm}), we obtain
\begin{align}
	&\sup_X \Phi_{s,t,q}^{\leq p_0}(X)\nonumber\\
 & \leq  2 c_0 (1+2\delta^{-1})^{{2}}  \sum_{i=0}^\infty (1+\varepsilon_N^{\hyperlink{symbol: star}{\diamondsuit}})^{i} (C_{L,L_0,\delta,q})^i \left(1+2 \delta^{-1}\right)^{i/L} \left(b_t{\bar{\sigma}_N^{2}(1+\delta) }\right)^i \label{eq:sumoni}\\
          & \hspace{1.5cm} \times \sum_{m=i}^\infty (1+\varepsilon_N^{\hyperlink{symbol: star}{\diamondsuit}})^{m-i}(C_{L,L_0,\delta,q})^{m-i} \left(1+2 \delta^{-1}\right)^{(m-i)/L} \frac{\lambda_{s,t,N}^{2(m-i)}}{(m-i)!},\label{eq:sumonm}
\end{align}
where the sum on $m$ in \eqref{eq:sumonm} equals the exponential in the right-hand side of \eqref{eq:finalBound0}
below, and the remaining sum on $i$ in \eqref{eq:sumoni} is geometric, entailing the lemma, given that \eqref{eq:conditionAlpha0} implies the convergence of the geometric sum in \eqref{eq:sumoni}. 
\end{proof}

\subsection{Proof of Proposition \ref{prop:mainProp}--\ref{item:(i)} \eqref{eq:sub-criticality}} 
\label{sec:proofMainProp}
By using Lemma \ref{lem:largeqSC} with $L_0:=L:=\lceil \sqrt{q}\rceil$ and $\delta := q^{-1/8}$, for $q^2$ large enough depending on $\alpha,p_0,c$, the proof goes as in the large $q$ situation of Section \ref{sec:proofOfQuasi(ii)} (cf.\@  case $q^2 \geq  (32\alpha/(1-\alpha))^{8}$).

\section{Removing triple+ in the lower $q$ region}
\label{sec:quasiCritRemoveTriple+}
{This section assumes \eqref{eq:sub-criticality} or \eqref{eq:defNearCrit} and is dedicated to comparing $\Psi_{s,t,q}$ with $\Phi^{\leq 2}_{s,t,q}$} when $q^2$ of the order $\varepsilon_0 \log N$ for $\varepsilon_0=\varepsilon_0(\alpha)$ taken small enough. This condition will only be used at the very end of the argument, so unless stated otherwise, the statements of this section do not assume it.

We first explain how we deal with a general distribution $\mu$ of the weights $\omega(i,x)$. 

 \subsection{Comparison of general weights to Gaussian weights}\label{section: general weights in QC}  The proof of the next lemma may be skipped on first reading. 
\begin{lem} \label{lem:GeneralWeights} Assume \eqref{eq:sub-criticality} or \eqref{eq:defNearCrit}. There exist $c=c(\mu)>0$ and $\varepsilon_0=\varepsilon_0(\mu)$ such that for all $q^2 \sigma_N^2 \leq \varepsilon_0$, for all $s\leq t\leq N$, we have  $\Psi_{s,t,q} \leq \overline{\Phi}_{s,t,q}$, where $\overline{\Phi}_{s,t,q}$ is defined as the right-hand side of 
    {\eqref{eq:ChaosOnqMoment}}, 
    but with $\sigma_N^{2|K_r|}$ replaced by $\sigma_N^{2,K_r}:=(\sigma_N)^{2|K_r|-\mathbf{1}_{A_{K_r}}}
    $, where $A_K:=\{\exists (i,j),(k,l)\in K: \{i,j\}\cap \{k,l\}\neq \emptyset\}$.
\end{lem}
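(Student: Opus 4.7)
The plan is to reduce $\Psi_{s,t,q} \leq \overline{\Phi}_{s,t,q}$ to a pointwise per-time inequality along the random walk trajectories, and then to a per-cluster estimate driven by a moment bound on $m_k := \mathbb{E}[Z^k]$, where $Z := e^{\beta_N\omega-\Lambda(\beta_N)}-1$ satisfies $m_0=1$, $m_1=0$, and $m_2=\sigma_N^2$.

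First, by independence of $\{\omega(n,x)\}_{n,x}$ and the identity \eqref{eq:identity}, write $\Psi_{s,t,q}(X_0) = {\rm E}_{X_0}^{\otimes q}\bigl[\prod_{n=s}^t \mathcal{P}_n\bigr]$ with $\mathcal{P}_n := \prod_x e^{\Lambda_{N_n^q(x)}(\beta_N)}$. On the $\overline{\Phi}$-side, the telescoping of time indices used to derive \eqref{eq: discrete magical formula} from \eqref{eq:ChaosOnqMoment} gives, analogously, $\overline{\Phi}_{s,t,q}(X_0) = {\rm E}_{X_0}^{\otimes q}\bigl[\prod_n \mathcal{Q}_n\bigr]$ with
$\mathcal{Q}_n := 1 + \sum_{K \neq \emptyset}\sigma_N^{2|K|-\mathbf{1}_{A_K}}\prod_{(i,j)\in K}\mathbf{1}_{S_n^i=S_n^j}$. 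As $\mathcal{P}_n$ and $\mathcal{Q}_n$ are nonnegative (for $\beta_N$ small), it suffices to prove the pointwise inequality $\mathcal{P}_n \leq \mathcal{Q}_n$ for every $n$ and every fixed random walk configuration, then take the expectation of the product.

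At time $n$, let $C_1,\dots,C_r$ be the clusters of particles sharing a common site, with sizes $p_j := |C_j|$. Then $\mathcal{P}_n = \prod_j e^{\Lambda_{p_j}(\beta_N)}$; the indicators in $\mathcal{Q}_n$ force $K \subset \bigsqcup_j \binom{C_j}{2}$, so $\mathcal{Q}_n = \sum_{K=\sqcup_j K_j,\,K_j\subset \binom{C_j}{2}} \sigma_N^{2|K|-\mathbf{1}_{A_K}}$. If $\max_j p_j \leq 2$ both sides equal $\prod_{j:p_j=2}(1+\sigma_N^2)$, so the inequality is trivial. For $p_j\geq 3$ the key analytic input is the moment bound $|m_k| \leq c(\mu)^k\sigma_N^k$ for all $k\geq 2$, which follows from $\omega$ having exponential moments and $\beta_N\to 0$ (by writing $Z = \beta_N\omega + O(\beta_N^2\omega^2)$ and using $\|\omega\|_k \leq C^k$ for any bounded range). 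Expanding $e^{\Lambda_{p_j}(\beta_N)} = 1 + \binom{p_j}{2}\sigma_N^2 + \sum_{k\geq 3}\binom{p_j}{k}m_k$, the Gaussian part is matched by the single-pair terms in $\mathcal{Q}_n$, while the non-Gaussian corrections $\binom{p_j}{k}m_k$ ($k\geq 3$) must be absorbed by terms in $\mathcal{Q}_n$ where $A_{K_j}$ holds on a subset of $k$ particles of $C_j$, which carry the crucial $\sigma_N^{-1}$ enhancement.

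The main obstacle is that $\mathbf{1}_{A_K}$ does not factor across clusters: since $A_K = \bigcup_j A_{K_j}$, we have $\mathbf{1}_{A_K} \leq \sum_j \mathbf{1}_{A_{K_j}}$ with strict inequality whenever two or more $A_{K_j}$'s hold, so $\sum_K \sigma_N^{2|K|-\mathbf{1}_{A_K}}$ is strictly smaller than $\prod_j \sum_{K_j}\sigma_N^{2|K_j|-\mathbf{1}_{A_{K_j}}}$, and naive per-cluster estimates cannot simply be multiplied. I would handle this by first extracting the Gaussian factorization $\sum_K \sigma_N^{2|K|} = \prod_j(1+\sigma_N^2)^{\binom{p_j}{2}}$ and then organizing the non-Gaussian remainders by iterating over clusters of size $\geq 3$. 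Here the smallness $q^2\sigma_N^2 \leq \varepsilon_0(\mu)$ plays a double role: it controls the total mass of ``overlapping'' pair sets across the $q$ particles, and (together with $|m_k|\leq c(\mu)^k\sigma_N^k$) ensures convergence of the geometric-type series that arises when summing the remainders across clusters. Combining these steps yields $\mathcal{P}_n \leq \mathcal{Q}_n$, and hence the lemma after taking expectations.
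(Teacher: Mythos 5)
Your overall skeleton is the same as the paper's: by \eqref{eq:identity} one reduces to a deterministic, per-time inequality $e^{\sum_x\Lambda_{N_n^q(x)}(\beta_N)}\leq 1+\sum_{K\neq\emptyset}\sigma_N^{2,K}\prod_{(i,j)\in K}\mathbf{1}_{S_n^i=S_n^j}$ for each fixed walk configuration, then multiplies over $n$ and takes ${\rm E}^{\otimes q}_{X_0}$; the cluster decomposition and the idea that the overlapping-pair sets (the $A_K$ terms, worth $\sigma_N^{2|K|-1}$) must absorb the non-pairwise excess are also exactly the paper's. The two quantitative steps, however, are where your proposal has genuine gaps. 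First, your key analytic input $|m_k|\leq c(\mu)^k\sigma_N^k$, justified by ``$\|\omega\|_k\leq C^k$'', is false for a general unbounded $\mu$ with exponential moments: there $\mathbb{E}|\omega|^k$ grows like $k!\,C^k$, i.e.\ $\|\omega\|_k\sim Ck$, and here $k$ ranges up to $p_j\leq q\to\infty$. What the argument actually needs (and what is true) is a bound of the form $\binom{p}{k}|m_k|\leq (C(\mu)\,p\,\beta_N)^k$, obtained from $k\leq p$, $k^k/k!\leq e^k$ and the smallness $q\beta_N=\mathcal O(\sqrt{\varepsilon_0})$; this is exactly the role played in the paper by Lemma \ref{lem:estimateLambda}, $\Lambda_p(\beta)\leq(1+Cp\beta)\binom{p}{2}\Lambda_2(\beta)$. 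So your per-cluster expansion $e^{\Lambda_{p}(\beta_N)}=\mathbb{E}[(1+Z)^{p}]=\sum_k\binom{p}{k}m_k$ is a legitimate alternative entry point, but the estimate you hang it on must be corrected.

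Second, the decisive step — combining the per-cluster bounds across clusters without generating products of corrections, which the single gain $\sigma_N^{-\mathbf{1}_{A_K}}$ cannot dominate — is precisely the obstacle you identify, but ``organizing the non-Gaussian remainders by iterating over clusters of size $\geq 3$'' and ``convergence of the geometric-type series'' describes the desired outcome rather than an argument; your plan stops exactly where the proof becomes nontrivial. The paper's mechanism is an interpolation/linearization: writing the per-time factor as $F(1)$ with $F(t)=\prod_r\bigl(1+\binom{p_r}{2}\sigma_N^2+t\,\sigma_N^3p_r^3\mathbf{1}_{p_r>2}\bigr)$, one bounds $F(1)\leq F(0)+\sup_{t\in[0,1]}F'(t)$, uses $q^2\sigma_N^2\leq\varepsilon_0$ to bound the product over the other clusters appearing in $F'$ by an absolute constant, so only the single sum $\sum_r\sigma_N^3p_r^3\mathbf{1}_{p_r>2}$ survives, and then dominates that sum by the two-element overlapping pair sets inside each cluster, each carrying weight $\sigma_N^{3}$, while $F(0)$ is dominated by the unenhanced sum $1+\sum_K\sigma_N^{2|K|}\prod\mathbf{1}$. (An equivalent route, closer to your phrasing, is to expand $\prod_r(\mathrm{main}_r+R_r)$ and bound the contribution of nonempty subsets of clusters by $e\bigl(e^{c\sum_r p_r^3\sigma_N^3}-1\bigr)\leq C\sum_r p_r^3\sigma_N^3$, again using $q^2\sigma_N^2\leq\varepsilon_0$.) Some such linearization of the cross-cluster product is the missing ingredient; with it, and with the corrected moment bound, your outline becomes essentially the paper's proof with a slightly different per-cluster input.
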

\begin{proof}
%In the proof, we denote a universal positive constant $C=C(\mu)$  that may change from line to line. 
{Recall $N_n(x)$ defined above \eqref{eq:defPsikq} and $\mathcal C_{p_r}$ above \eqref{eq:defPsiGauss}. Fix a realization $(S_n^1,\dots,S_n^q)$ and suppose} that $x_1,...,x_k\in \Z^2$ and $p_1,...,p_k\geq 2$ are such that $N_n(x_r)=p_r$ with $\sum_{r\leq k} p_r = q$. 
Let $\mathcal I_r = \{i\in [q]: S_n^i=x_r\}$. By Lemma~\ref{lem:estimateLambda},   for $d_{p_r}=\mathbf{1}_{p_r>2} c(\mu) \beta_N p_r$, with $c(\mu)>1$,
\[
\Lambda_{p_r}(\beta_N) \leq (1+d_{p_r}) \Lambda_{2}(\beta_N) \binom {p_r}{2}  = (1+d_{p_r}) \Lambda_{2}(\beta_N) \sum_{(i,j)\in \mathcal{C}_{\mathcal{I}_r}} 1.
\]
and hence, 
\begin{equation} \label{eq:boundELAMBDA}
e^{\sum_{z\in \mathbb Z^2} \Lambda_{N_n(z)}(\beta_N)} \leq \prod_{r=1}^k \prod_{(i,j)\in \mathcal C_{\mathcal I_r}} e^{\Lambda_{2}(\beta_N) (1+d_{p_r}) 
}.
\end{equation}
Up to multiplying $d_{p_r}$ by a constant, this is less than
\begin{align*}
    \prod_{r=1}^k \prod_{(i,j)\in \mathcal C_{\mathcal I_r}} \left\{1 + \sigma_N^2 (1+d_{p_r}) 
    \right\} =  \prod_{r=1}^k \left(1+ \sum_{\emptyset\neq A_r\subset \mathcal C_{\mathcal I_r}} (\sigma_N^2 (1+d_{p_r}))^{|A_r|} 
    \right).
\end{align*}
We write the term inside the parenthesis as $1+D^1_r+ D^2_r$, where
\begin{gather*}
D^1_r = \sum_{(i,j)\in \mathcal C_{\mathcal I_r}} \sigma_N^2,  %\mathbf{1}_{S_n^i=S_n^j}, 
\quad D^2_r = d_{p_r}D_r^1 + \sum_{A_r\subset \mathcal C_{\mathcal I_r}, |A_r|>1} (\sigma_N^2 (1+d_{p_r}))^{|A_r|}. %\prod_{(i,j)\in A_r}\mathbf{1}_{S_n^i=S_n^j}.
\end{gather*}
Suppose $\varepsilon_0 = \varepsilon_0(\mu)\in (0,1/2)$ small enough and $q^2 \sigma_N^2\leq \varepsilon_0$ so that $(1+c(\mu)^2q^2)\beta_N^2 < 1/4$.
Then, if $p_r>2$, we have %(note that all indicators $\mathbf{1}_{S_n^i=S_n^j}$ above are equal to 1)
\(
D^2_r %= d_{p_r} \sum_{(i,j)\in \mathcal C_{\mathcal I_r}} \sigma_N^2 %\mathbf{1}_{S_n^i=S_n^j} 
\leq \sigma_N^3 p_r^3\) as for all $p\leq q$, 
\[\sum_{\ell \ge 2} \binom{\binom{p}{2}}{\ell} \bigl( \sigma_N^2(1+d_p) \bigr)^\ell 
\le  \frac{1}{2}p^4 \sigma_N^4.\]
%while, since $|\{A_r\subset \mathcal C_{\mathcal I_r}, |A_r| = 2\} \geq cp_r^4$, we have %\geq \binom {\binom {p_r} 2} l$,
%$D_3 \geq c\sigma_N^4 p_r^4$.{\SN We can take $c=1/8?$} Therefore, $D_2\leq \mathcal O((\sigma_N p_r)^{-1})D_3$. Finally, we find,
%Observe that there is a one to one correspondence between sets $K\subset \mathcal C_q$ such that $\mathbf{1}_{S_n^i=S_n^j}$ for all $(i,j)\in K$ and sets $A_1\times \dots \times A_k, A_r\subset\mathcal C_{I_r}$  by setting $A_r= \{(i,j)\in K:~S_n^i=x_r\}$. Through this equivalence, a set $K$ such that $|K|=1$ corresponds to having only one non-empty $A_r$, with $|A_r|=1$ (thus satisfying $d_{p_r}=0$).
Therefore, by \eqref{eq:boundELAMBDA},
\(e^{\sum_{z\in \mathbb Z^2} \Lambda_{N_n(z)}(\beta_N)} \leq F(1),
\)
where
\[
F(t) = \prod_{r=1}^{k} (1 + D_r^1+ t\sigma_N^3 p_r^3 \mathbf{1}_{p_r>2} \Bigr), \quad t \in [0,1].
\] 
We have $F(0)  = \prod_{r=1}^k (1+\sum_{(i,j)\in \mathcal C_{\mathcal I_r}} \sigma_N^2)$ so that
\begin{align*}
F(0)  = 1+   \sum_{\substack{A_1,\dots,A_k\\ A_r \subset \mathcal{C}{p_r} \sum_{r=1}^k |A_r| \geq 1}} 
\sigma_N^{2\sum_r |A_r|}\leq  1+ \sum_{K\subset \mathcal C_q, |K|\geq 1} \sigma_N^{2|K|} \prod_{(i,j)\in K}\mathbf{1}_{S_n^i=S_n^j}.
\end{align*}
Besides, differentiating $ F(t) $ with respect to $ t $ gives
$$ F'(t) = \sum_{r=1}^{k} \sigma_N^3 p_r^3 \mathbf{1}_{p_r>2} \prod_{\substack{s=1 \\ s \neq r}}^{k} 
\Bigl( 1 + \sigma_N^2 p_s^2 + t\, \sigma_N^3 p_s^3\mathbf{1}_{p_r>2} \Bigr).$$
We use the inequality $ 1 + a \leq e^a $ to obtain
$$ \prod_{\substack{s=1 \\ s\neq r}}^{k} \left(1 + \sigma_N^2 p_s^2 + \xi\sigma_N^3 p_s^3 \right) \leq \exp\left( \sum_{\substack{s=1 \\ s\neq r}}^{k} (\sigma_N^2 p_s^2 + \sigma^3 p_s^3) \right), $$
Note that $ \sum_{s=1}^{k} p_s^2 \leq q\sum_{s=1}^k p_s = q^2$ and $\sum_{r=1}^{k} p_r^3 \leq q^3$. Since we assumed $q^2\sigma_N^2 < 1/2$, we obtain
that $\sup_{t\in [0,1]} F'(t)\leq e \sum_{r=1}^{k} \sigma_N^3 p_r^3 \mathbf{1}_{p_r>2} $.
% Finally, we observe that for $q^2 \sigma_N^2 < 1/2$,
% \begin{align*}
% \frac{1}{64}\sum_{r=1}^{k} \sigma_N^3 p_r^3 & \leq (\sigma_N q)^{-1} \left(\sum_{r=1}^{k} \sigma_N^2 \binom{p_r}{2}\right)^2\\
% &\leq (\sigma_N q)^{-1} \sum_{K\subset \mathcal C_{q},|K|=2} \sigma_N^{2 |K|} \prod_{(i,j)\in K}\mathbf{1}_{S_n^i=S_n^j} \\
% &\leq (\sigma_N q)^{-1} \sum_{K\subset \mathcal C_{q},|K|>1} \sigma_N^{2 |K|} \prod_{(i,j)\in K}\mathbf{1}_{S_n^i=S_n^j}.
% \end{align*}
% where we have used the  lemma below in the first inequality. Putting things together, we obtain that
% \[
% e^{\sum_{z\in \mathbb Z^2}  \Lambda_{N_n(z)}(\beta_N)} \leq 1+ \sum_{K\subset \mathcal C_q, |K|\geq 1} \sigma_N^{2|K|} (\sigma_N q)^{-\mathbf{1}_{|K|> 1}} \prod_{(i,j)\in K}\mathbf{1}_{S_n^i=S_n^j}.
% \]
Furthermore, for each $p_r>2$, one can construct at least $p_r^3$ couples of $\mathcal C_{\mathcal I_r}$ by choosing $3$ particles among $\mathcal I_r$ and choosing arbitrarily two distinct couples out of them. Hence,
\[\sum_{r=1}^k \sigma_N^3 p_r^3 \mathbf{1}_{p_r>2} \leq \sigma_N^3 \sum_{r=1}^k \sum_{\substack{K=\{(i,j),(k,l)\}\subset \mathcal C_{\mathcal I_r}\\ \{i,j\}\cap \{k,l\}\neq \emptyset}} \prod_{(i,j)\in K}\mathbf{1}_{S_n^i=S_n^j}\]
which is less than
\[
 \sigma_N^3 \sum_{\substack{K=\{(i,j),(k,l)\}\\ \{i,j\}\cap \{k,l\}\neq \emptyset}} \prod_{(i,j)\in K}\mathbf{1}_{S_n^i=S_n^j} \leq   \sum_{K\subset \mathcal C_q,|K|> 1} \sigma_N^{2|K|} \sigma_N^{-\mathbf{1}_{A_K}} \prod_{(i,j)\in K}\mathbf{1}_{S_n^i=S_n^j},
\]
where $A_K:=\{\exists (i,j),(k,l)\in K: \{i,j\}\cap \{k,l\}\neq \emptyset\}$.
\end{proof}
\begin{lem} \label{lem:yokkata}
Assume \eqref{eq:sub-criticality} or \eqref{eq:defNearCrit}.  There exists  $\varepsilon=\varepsilon(\mu)\in (0,1)$ such that,  for all $q^2\leq \varepsilon \log N$, 
\begin{equation}
    \label{eq:q^3} \sum_{K\subset \mathcal C_{q},|K|>1} \sigma_N^{2, K} \leq 8  q^3 \sigma_N^3.
\end{equation}
\end{lem}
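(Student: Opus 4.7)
The strategy is to split $\{K\subset\mathcal C_q:|K|\geq 2\}$ into \emph{matchings} of the complete graph on $[q]$ (then $\neg A_K$, so $\sigma_N^{2,K}=\sigma_N^{2|K|}$) and \emph{non-matchings} (then $A_K$ holds and $\sigma_N^{2,K}=\sigma_N^{2|K|-1}$). The target bound of $8q^3\sigma_N^3$ will come almost entirely from non-matchings of size $2$, which are exactly the $q(q-1)(q-2)/2$ unordered pairs of distinct edges sharing a vertex. The relevant small parameter will be $x:=q^2\sigma_N^2/2\leq C(\mu)\varepsilon$, which holds under $q^2\leq\varepsilon\log N$ since $\sigma_N^2 = O(1/\log N)$ in both \eqref{eq:sub-criticality} and \eqref{eq:defNearCrit}.

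For the non-matching sum, given a non-matching $K$ with $|K|=k\geq 2$, I pick one unordered pair of distinct edges of $K$ sharing a vertex (there are at most $q(q-1)(q-2)/2$ such pairs in $\mathcal C_q$) and attach the remaining $k-2$ edges freely from $\binom{q}{2}-2$ options. Since any non-matching contains at least one such pair, this gives the upper bound
\[
\sum_{\substack{K\text{ non-matching}\\|K|\geq 2}}\sigma_N^{2|K|-1} \ \leq\ \frac{q^3}{2}\sum_{k\geq 2}\sigma_N^{2k-1}\,\frac{(q^2/2)^{k-2}}{(k-2)!} \ =\ \frac{q^3\sigma_N^3}{2}\,e^{x}.
\]
For matchings, the standard count of matchings of size $k$ on $q$ labelled vertices is $\binom{q}{2k}(2k-1)!!\leq q^{2k}/(2^k k!)$, so
\[
\sum_{\substack{K\text{ matching}\\|K|\geq 2}}\sigma_N^{2|K|} \ \leq\ \sum_{k\geq 2}\frac{x^k}{k!} \ \leq\ \frac{x^2\,e^x}{2}\ =\ \frac{q^4\sigma_N^4}{8}\,e^{x}.
\]

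Choosing $\varepsilon=\varepsilon(\mu)$ small enough that $x\leq 1$ gives $e^x\leq e$ and in particular $q\sigma_N\leq 1$. The first sum is then at most $(e/2)\,q^3\sigma_N^3$ and the second at most $(e/8)(q\sigma_N)\cdot q^3\sigma_N^3\leq(e/8)\,q^3\sigma_N^3$, so their total is comfortably below $8q^3\sigma_N^3$. The only mildly subtle step is the non-matching count: using the crude bound $\binom{\binom{q}{2}}{k}$ on all size-$k$ subsets would spend an extra factor of $q\sigma_N^{-1}$ and fail to reach the target order $q^3\sigma_N^3$; locating the overlapping pair \emph{inside} $K$ is what trades one power of $q$ for the needed power of $\sigma_N$.
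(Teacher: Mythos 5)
Your proof is correct and follows essentially the same route as the paper: split according to whether some particle appears in two pairs of $K$ (your matching/non-matching dichotomy is exactly the event $A_K$), use the shared vertex to count the $A_K$-sets with one fewer power of $q$ so that this part is $O(q^3\sigma_N^3)$, and absorb the matching part, which is $O(q^4\sigma_N^4)$, using $q\sigma_N\lesssim 1$. Your counting (overlapping pair plus free edges, and $\binom{q}{2k}(2k-1)!!$ for matchings, giving exponential rather than geometric series) is slightly sharper than the paper's crude $q^{2k-1}2^k$ and $q^{2k}2^k$ bounds, and the small slip that $x\le 1$ only gives $q\sigma_N\le\sqrt2$ is harmless since the final constants stay well below $8$.
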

\begin{proof}
For the sum over $K$ with $\mathbf{1}_{A_K}=0$, we have $$\sum_{K\subset \mathcal C_{q},|K|>1} \sigma_N^{2|K|} \leq  \sum_{k>1} q^{2k} \sigma_N^{2k} 2^{k}\leq 8 q^4 \sigma_N^4.$$
When $\mathbf{1}_{A_K}=1$, one particle must appear in two different pairs, thus %due to a reduction of  combinatorial terms, we obtain 
\al{ %I will check if no mistake there (cf whatsapp) Great!
\sum_{K\subset \mathcal C_{q},\mathbf{1}_{A_K}=1} \sigma_N^{2|K|-1} \leq  \sum_{k>1} q^{2k-1} \sigma_N^{2k-1} 2^{k}\leq 8 q^3 \sigma_N^3.
}
\end{proof}

\subsection{Presentation of the argument.} The strategy to remove triple+ intersections for $q^2 = \varepsilon\log N$ draws inspiration from \cite[Section 7]{CaSuZyCrit18}, although in our setup some important complications appear when $q^2$ reaches the order $ \varepsilon \log N$. For such $q$, the Markov trick that is used in \cite{CaSuZyCrit18} to control the convergence of some geometric series does not seem to be sufficient. Instead, to control these sums, we rely on an argument involving long jumps, similarly to Section \ref{sec:inductionSchemeSubcrit}. 

Although the idea that long jumps should dominate for combinatorial reasons is shared with Section \ref{sec:inductionSchemeSubcrit}, the technical analysis turns out to differ significantly. As for example, in Section \ref{sec:inductionSchemeSubcrit}, the nuisance terms come from the application of the integration by part, while here they constitute the "good" part of the estimates. More detail is given below.

Let $s<t$. By 
Lemma \ref{lem:GeneralWeights} 
and \eqref{eq:Phi2}, we can write: 
\begin{equation} \label{eq:dec>2<2}
\Psi_{s,t,q}(X_0) \leq \overline{\Phi}_{s,t,q} =  \Phi_{s,t,q}^{\leq 2} + \overline{\Phi}^{>2}_{s,t,q}(X_0),
\end{equation}
where
\begin{equation*} \label{eq:ChaosOnqMomentbis0}
\begin{split}
    	\overline{\Phi}^{>2}_{s,t,q}(X) &:=  \sum_{m=1}^\infty  \sum_{\substack{K_1,\dots,  K_m \in  2^{\mathcal C_q}\setminus\{\emptyset\}\\ \exists r,\,|K_r|\geq 2}} \sum_{s\leq n_1<\dots<n_m\leq t}   {\rm E}_{X}^{\otimes q}\left[\prod_{r=1}^m\sigma_N^{2,K_r} \prod_{(i,j)\in K_r} \mathbf{1}_{S_{n_r}^i=S_{n_r}^j}\right].
     \end{split}
\end{equation*}
(Indeed, when all $|K_r|=1$, we find  $\Phi_{s,t,q}^{\leq 2}$ by decomposing into successive exchanges of disjoint pairs as in the proof of Proposition \ref{prop:expansion}.) The following lemma is an extension to $q\geq 3$ of \cite[Eq.\@ (7.2)]{CaSuZyCrit18}.

{Let $\mathcal D^{> 2}(m+1,q):=\{\mathbf J\in \mathcal D(m+1,q,\infty): \forall r\leq m,\, |J_r|=1 \text{ and } |J_{m+1}|\geq 2\}$. For any element of $\mathcal D^{> 2}(m+1,q)$, we write $J_r=\{(i_r,j_r)\}$ for $r\leq m$.
\begin{lem} \label{lem:PhibarUpsPhi2}
Let $q\geq 3$. Then,
\begin{equation} \label{eq:upperboundPsiN}
	\sup_{X} \overline{\Phi}^{>2}_{1,t,q}(X) \leq \left(\sum_{k=1}^\infty \left(\sup_{X_0 \in (\mathbb Z^2)^q} \Upsilon_t(X_0)\right)^k\right) \sup_{X_0\in (\mathbb Z^2)^q} \Phi_{1,t,q}^{\leq 2}(X_0),
\end{equation}
where
\[
	\begin{aligned}
		 & \Upsilon_t(X_0):=\sum_{m=0}^\infty  \sum_{\substack{1\leq a_1\leq b_1 < \dots \leq b_m<a_{m+1} \leq t   \\ \mathbf J \in \mathcal D^{> 2}(m+1,q)}} \sum_{\mathbf X,\mathbf Y}  \sigma_N^{2,J_{m+1}}\times \\
		 & \prod_{r=1}^{m} \sigma_N^{2}  U_N(b_r-a_r,y_r^{i_r}-x_r^{i_r})  \mathbf{1}_{y_r^{i_r}=y_r^{j_r}} \prod_{r=1}^{m+1} \prod_{i\in \bar J_r}p_{a_r-b_{k_r(i)}}(x_r^i-y_{k_r(i)}^i) \prod_{(i,j)\in J_r} \mathbf{1}_{x_r^i=x_r^j} 
	\end{aligned}
\]
\end{lem}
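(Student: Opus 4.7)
My plan is to decompose $\overline{\Phi}^{>2}_{1,t,q}(X_0)$ according to the first triple+ event and then iterate the resulting recursive inequality. Starting from the flat expansion
\[
\overline{\Phi}^{>2}_{1,t,q}(X_0) = \sum_{m\ge 1}\sum_{\substack{K_1,\dots,K_m\in 2^{\mathcal{C}_q}\setminus\{\emptyset\}\\ \exists r,\,|K_r|\ge 2}}\sum_{1\le n_1<\dots<n_m\le t}\prod_{r=1}^m\sigma_N^{2,K_r}\,\mathrm{E}_{X_0}^{\otimes q}\Bigl[\prod_{r=1}^m\prod_{(i,j)\in K_r}\mathbf{1}_{S^i_{n_r}=S^j_{n_r}}\Bigr],
\]
I would split the sum according to the first index $r^\star:=\min\{r:|K_r|\ge 2\}$. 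For $r<r^\star$ all $K_r$ are single pairs, so applying the grouping-into-blocks procedure of Proposition~\ref{prop:expansion} to this pairwise prefix produces $m$ blocks with heads $J_r=\{(i_r,j_r)\}$, time intervals $[a_r,b_r]$, positions $(X_r,Y_r)$, and $U_N(b_r-a_r,y_r^{i_r}-x_r^{i_r})$-factors encoding further meetings of the pair $(i_r,j_r)$ inside each block. Setting $a_{m+1}:=n_{r^\star}$ and $J_{m+1}:=K_{r^\star}$ (with $|J_{m+1}|\ge 2$), the contribution up to and including the first triple+ event, after summation over $(K_1,\dots,K_{r^\star-1})$ and $(n_1,\dots,n_{r^\star-1})$, is precisely the expression defining $\Upsilon_t(X_0)$.

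Next, I would use the Markov property of the $q$ independent random walks at time $a_{m+1}$: conditional on the positions $X_{m+1}$ at that time, the tail of the sum (over $r>r^\star$) equals the corresponding $\overline{\Phi}$-expansion run from time $a_{m+1}+1$ to $t$ starting at $X_{m+1}$, thanks to the time-independence of the IID environment $\omega$. Its contribution is thus bounded by
$\overline{\Phi}_{a_{m+1}+1,t,q}(X_{m+1})\le \sup_X \overline{\Phi}_{1,t,q}(X),$
where the last inequality follows from the obvious monotonicity $s\mapsto \overline{\Phi}_{s,t,q}$ (allowing earlier start times can only add configurations). Combining with the identification of the prefix with $\Upsilon_t(X_0)$ yields the key recursive inequality
\[
\overline{\Phi}^{>2}_{1,t,q}(X_0)\le \Upsilon_t(X_0)\cdot\sup_X \overline{\Phi}_{1,t,q}(X).
\]

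Writing $\overline{\Phi}=\Phi^{\le 2}+\overline{\Phi}^{>2}$ on the right-hand side, taking the supremum over $X_0$ and iterating gives the claimed bound \eqref{eq:upperboundPsiN}: setting $\upsilon:=\sup_X\Upsilon_t(X)$ and $\varphi:=\sup_X\overline{\Phi}^{>2}_{1,t,q}(X)$, the recursion reads $\varphi\le \upsilon\bigl(\sup_X\Phi^{\le 2}_{1,t,q}(X)+\varphi\bigr)$, which is trivial if $\upsilon\ge 1$ (the geometric series diverges) and otherwise gives $\varphi\le \frac{\upsilon}{1-\upsilon}\sup_X\Phi^{\le 2}_{1,t,q}(X)=\sum_{k\ge 1}\upsilon^k\sup_X\Phi^{\le 2}_{1,t,q}(X)$. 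The main obstacle will be the bookkeeping of the flat-to-grouped rewriting of the pairwise prefix: one must check that, for each block, summing the geometric series of additional pair-meetings inside $[a_r,b_r]$ reconstructs exactly the factor $U_N(b_r-a_r,y_r^{i_r}-x_r^{i_r})$ appearing in $\Upsilon_t$. This essentially amounts to rerunning the proof of Proposition~\ref{prop:expansion} under the restriction $|J_r|=1$, with the final block replaced by the single-time triple+ event at $a_{m+1}$.
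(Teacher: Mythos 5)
Your argument is correct in substance and rests on the same two pillars as the paper's proof: the Markov property applied at a triple$+$ time, and the identification of a ``pairwise run ending in a triple$+$ set'' with the $\Upsilon_t$-structure (which, as you say, is just Proposition \ref{prop:expansion} rerun with $|J_r|=1$ for the prefix blocks and the single-time set $K_{r^\star}$ appended). The organization differs, though: the paper first splits $\overline{\Phi}^{>2}_{1,t,q}=\sum_k \Phi^{>2,k}_{1,t,q}$ according to the number $k$ of sets with $|K_r|\geq 2$ and peels from the \emph{last} such set, so that the purely pairwise suffix yields $\sup_X\Phi^{\leq 2}_{1,t,q}(X)$ and each earlier segment yields a factor $\sup_X\Upsilon_t(X)$ by induction on $k$; this gives the bound $\Phi^{>2,k}\leq (\sup\Upsilon_t)^k\sup\Phi^{\leq 2}$ term by term, with no convergence or finiteness hypothesis needed. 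You instead peel the \emph{first} triple$+$ set, absorb the tail into $\sup_X\overline{\Phi}_{1,t,q}(X)$, and solve the fixed-point inequality $\varphi\leq \upsilon(\phi^{\leq 2}+\varphi)$. That step has a small but real caveat: passing from this inequality to $\varphi\leq\frac{\upsilon}{1-\upsilon}\phi^{\leq 2}$ requires $\varphi=\sup_X\overline{\Phi}^{>2}_{1,t,q}(X)<\infty$ (if $\varphi=\infty$ the inequality is vacuous and nothing follows), and you should say why this holds --- it does, easily: in \eqref{eq:ChaosOnqMoment} the times $n_1<\dots<n_m$ are distinct elements of $\llbracket 1,t\rrbracket$, so the expansion is a finite sum (over $m\leq t$ and finitely many $K_r$'s) of terms bounded uniformly in $X$, whence $\sup_X\overline{\Phi}_{1,t,q}(X)<\infty$ for each fixed $N,t,q$. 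Alternatively you can avoid the issue altogether by iterating your own recursion within the class of configurations containing at most $k$ triple$+$ sets, which is exactly the paper's induction. Also note that when $\sup_X\Upsilon_t(X)\geq 1$ the claimed bound \eqref{eq:upperboundPsiN} is trivially true since $\Phi^{\leq 2}_{1,t,q}\geq 1$, as you observe. With the finiteness remark added, your proof is complete; what the paper's route buys is that it never needs it, while yours is slightly shorter to state once the prefix identification is done.
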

}

\begin{proof} Set $H_{K,n}:=\sigma_N^{2,K} \prod_{(i,j)\in K} \mathbf{1}_{S_{n}^i=S_{n}^j}$. 
In the definition of $\overline{\Phi}^{>2}_{1,t,q}$, let $K_1,\ldots,K_m\in 2^{\mathcal C_q}\setminus\{\emptyset\}$ be such that $|K_r|\geq 2$  for some $r$. We collect and enumerate $r$ satisfying $|K_r|\geq 2$ as $\sigma_1<\sigma_2<\ldots<\sigma_k$, i.e.,  $\{\sigma_s|~s\in[
		k]\}=\{r\in \Iintv{1,m}|~|K_r|\geq 2\}$. By 
        assumption, we have $k\geq 1$ and 
\begin{align*}
	  \overline{\Phi}^{>2}_{s,t,q}(X)        & =\sum_{k=1}^\infty \sum_{m=1}^{\infty}  \sum_{\substack{K_1,\dots,  K_m \in  2^{\mathcal C_q}\setminus\{\emptyset\} \\|\{r\in \Iintv{1,m}|~|K_r|\geq 2\}|=k}} \sum_{s\leq n_1<\dots<n_m\leq t} {\rm E}_{X}^{\otimes q}\left[\prod_{r=1}^{m} H_{K_r,n_r}\right],
\end{align*}
so that $\overline{\Phi}^{>2}_{s,t,q}=\sum_{k=1}^\infty \Phi^{>2, k}_{s,t,q}$, where
\begin{align*}
 &\Phi^{>2, k}_{s,t,q}(X) :=\sum_{m=1}^{\infty} \sum_{1\leq \sigma_1<\cdots<\sigma_k\leq m} \sum_{\substack{K_1,\dots,  K_m \in  2^{\mathcal C_q}\setminus\{\emptyset\} \\ \{r\in \Iintv{1,m}|~|K_r|\geq 2\}=\{\sigma_r\}_{r=1}^k}} \sum_{s\leq n_1<\dots<n_m\leq t}  {\rm E}_{X}^{\otimes q}\left[\prod_{r=1}^{m} H_{K_r,n_r}\right].
\end{align*}
 Summing over $n_{\sigma_k+1},K_{\sigma_k +1},\ldots, n_m,K_m$  in $\Phi_{s,t,q}^{>2,k}$, we have
\begin{align*}
	 & \overline{\Phi}^{>2, k}_{s,t,q}(X_0)=     
  \sum_{1\leq \sigma_1<\cdots<\sigma_{k-1}\leq \sigma_k} \sum_{\substack{K_1,\dots,  K_{\sigma_k} \in  2^{\mathcal C_q}\setminus\{\emptyset\} \\\{r\in \Iintv{1,\sigma_k}|~|K_r|\geq 2\}=\{\sigma_r\}_{r=1}^k}} \sum_{s\leq n_1<\dots<n_{\sigma_k}\leq t}   \\
  &\quad\times \sum_{m=\sigma_k}^\infty\sum_{\substack{K_{\sigma_{k}+1},\dots,  K_m \in  2^{\mathcal C_q}\setminus\{\emptyset\} \\|K_r|=1\,\forall r\in \Iintv{\sigma_{k}+1,m}}} \sum_{n_{\sigma_k}< n_{n_{\sigma_k}+1}<\dots<n_m\leq t}  {\rm E}_{X}^{\otimes q}\left[\prod_{r=1}^{m} H_{K_r,n_r}\right],
\end{align*}
which, by Markov's property at time $n_{\sigma_k}$, is less than
\begin{align*}
\sup_{X} \Phi_{1,t,q}^{\leq 2}(X) \sum_{1\leq \sigma_1<\cdots<\sigma_{k-1}\leq \sigma_k}  \sum_{\substack{K_1,\dots,  K_{\sigma_k} \in  2^{\mathcal C_q}\setminus\{\emptyset\} \\\{r\in \Iintv{1,\sigma_k}|~|K_r|\geq 2\}=\{\sigma_r\}_{r=1}^k}} \sum_{s\leq n_1<\dots<n_{\sigma_k}\leq t} \times {\rm E}_{X_0}^{\otimes q}\left[ \prod_{r=1}^{\sigma_k} H_{K_r,n_r} \right].
\end{align*}
Next, summing over $n_{\sigma_{k-1}+1},K_{\sigma_{k-1} +1},\ldots, n_{\sigma_k},K_{\sigma_k}$, we have
\begin{align*}
	 & \Phi^{>2, k}_{1,t,q}(X_0)\leq \sup_{X \in (\mathbb Z^2)^q} \Upsilon_t(X)  \sup_{X\in (\mathbb Z^2)^q} \Phi_{1,t,q}^{\leq 2}(X)\\
     &\times\sum_{1\leq \sigma_1<\cdots<\sigma_{k-1}}  \sum_{\substack{K_1,\dots,  K_{\sigma_{k-1}} \in  2^{\mathcal C_q}\setminus\{\emptyset\} \\\{r\in \Iintv{1,\sigma_{k-1}}|~|K_r|\geq 2\}=\{\sigma_r\}_{r=1}^{k-1}}} \sum_{s\leq n_1<\dots<n_{\sigma_{k-1}}\leq t} {\rm E}_{X_0}^{\otimes q}\left[\prod_{r=1}^{m}  H_{K_r,n_r}\right].
\end{align*}
We obtain the lemma by induction.
\end{proof}
Proposition \ref{prop:mainProp}-\ref{item:(ii)} deals with $\sup_{X_0\in (\mathbb Z^2)^q} \Phi^{\leq 2 }_{1,t,q}$ in \eqref{eq:upperboundPsiN}. We then prove the following.
\begin{prop}
\label{prop:UpsT} 
Assume \eqref{eq:sub-criticality} or \eqref{eq:defNearCrit}. {There exists $\varepsilon_0=\varepsilon_0(\alpha)\in (0,1)$, such that uniformly for $(q,t)$ satisfying \eqref{eq:q,T-condition} and $q^2\leq \varepsilon_0 \log N$,}
\begin{equation} \label{eq:supUps0}
\limsup_{N\to\infty} \sup_{X_0} \Upsilon_t(X_0) < 1.
\end{equation}
{Moreover,
\begin{equation} \label{eq:Phi>2phi}
    \Psi_{1,t,q}(X_0) \leq (1+\mathcal O(q \bar \sigma_N))\Phi_{1,t,q}^{\leq 2}. 
\end{equation}
}
\end{prop}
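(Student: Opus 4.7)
The plan is to reduce both assertions to a uniform bound on $\sup_{X_0}\Upsilon_t(X_0)$ via Lemma~\ref{lem:PhibarUpsPhi2}. The geometric series in \eqref{eq:upperboundPsiN} yields
\[
\sup_{X_0}\overline{\Phi}^{>2}_{1,t,q}(X_0) \leq \frac{\sup_{X_0}\Upsilon_t(X_0)}{1 - \sup_{X_0}\Upsilon_t(X_0)}\sup_{X_0}\Phi^{\leq 2}_{1,t,q}(X_0),
\]
so showing $\sup_{X_0}\Upsilon_t = \mathcal O(q\bar\sigma_N)$ (for $\varepsilon_0$ sufficiently small) delivers simultaneously \eqref{eq:supUps0}, since $q\bar\sigma_N \leq \sqrt{\varepsilon_0}$ under $q^2 \leq \varepsilon_0 \log N$, and \eqref{eq:Phi>2phi}.

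To estimate $\Upsilon_t$ I will adapt the scheme of Theorem~\ref{thm:sumOverXYv}. For the pairwise blocks $r = 1,\dots,m$, summing out $\mathbf Y_r$ produces $U_N$-factors and summing out $\mathbf X_r$ yields heat-kernel bounds that reduce to $F(u_r + \bar u_r)$, exactly as in the proof with $p_0 = 2$. For the terminal triple-or-more block $r = m+1$, I sum $\mathbf X_{m+1}$ cluster by cluster in $J_{m+1}$: because $|\bar J_{m+1}|\geq 3$ (and in particular $\bar J_m \nsupseteq \bar J_{m+1}$ is automatic), either the $J_{m+1}$-cluster of $i_{m+1}^1$ has size $\geq 3$, in which case the convolution $\sum_x p_{u_1}(x-y_1)p_{u_2}(x-y_2)p_{u_3}(x-y_3) \leq p^\star_{u_1+u_2}p^\star_{u_3}$ yields an extra $p^\star$-factor compared with a pairwise block, or $J_{m+1}$ splits into at least two distinct clusters, each separately forcing an additional $p^\star$-bound. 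Either way an extra $1/u$-decay in $u_{m+1}$ is extracted. I then apply Lemma~\ref{lem:yokkata} to control $\sum_{|J_{m+1}|\geq 2}\sigma_N^{2,J_{m+1}}\leq 8 q^3 \sigma_N^3$.

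The remaining sum over pairwise diagrams $\mathbf J\in \mathcal D(m,q)$, times $(u_r)_{r\leq m+1}$ and $m\geq 0$ is then handled by the induction scheme of Section~\ref{sec:inductionSchemeSubcrit}: Proposition~\ref{prop:fibo1inverseMain} controls the $(u_r)$-integral, Lemma~\ref{lem:estimate_cim} estimates $c_i^m$, and the counting $|\mathcal D(m,q)|\leq \binom q 2 (\binom q 2 - 1)^{m-1}$ together with $\binom q 2 \bar\sigma_N^2 b_t \leq \alpha < 1$ from \eqref{eq:q,T-condition} gives geometric convergence in $m$. The essential point is that the extra $1/u$-factor in $u_{m+1}$ from the triple-or-more block collapses what would otherwise be a $\log t$ to $\mathcal O(1)$, producing a bound of the form $\sup_{X_0}\Upsilon_t \leq C q^3 \sigma_N^3 b_t$; the condition $q^2 b_t \bar\sigma_N^2 \leq 2\alpha$ then converts this into $\mathcal O(q\bar\sigma_N\cdot\alpha)$, which is less than $1$ for $\varepsilon_0$ small.

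The main obstacle is the careful cluster-by-cluster analysis of the terminal block: one must verify that the extra $1/u$-factor in $u_{m+1}$ is genuinely extracted for every admissible $J_{m+1}$ with $|J_{m+1}|\geq 2$, distinguishing the $A_{J_{m+1}} = 1$ case (overlapping pairs, larger cluster) from the $A_{J_{m+1}} = 0$ case (multiple disjoint pairs), and then tracking how this extra decay threads through the pairwise induction scheme so that the final bound is $\mathcal O(q^3 \sigma_N^3 b_t)$ rather than the weaker $\mathcal O(q^3 \sigma_N^3 b_t \log t)$ that a naive estimate would produce.
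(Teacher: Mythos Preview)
Your broad strategy is right and matches the paper: reduce via Lemma~\ref{lem:PhibarUpsPhi2}, extract an extra $F(u_{m+1})$ from the terminal block $J_{m+1}$ (this is exactly Lemma~\ref{lem:FF}), then sum over $J_{m+1}$ via Lemma~\ref{lem:yokkata}. The gap is in the tool you invoke for the remaining $(u_r)$-sum. Proposition~\ref{prop:fibo1inverseMain} and Lemma~\ref{lem:estimate_cim} control $\mathcal F^{\star}_{\mathbf J}(s,t)$, which starts the induction from the constant function $\mathbb{1}$ and propagates powers $f^j(v)$; the output is of order $\sum_j c_j^m (b_t)^j f(1)^{m-j}/(m-j)!$, and summing over $m$ produces an exponential of size $e^{C\binom q 2 \lambda_{t,N}^2}$. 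Multiplying this by $q^3\sigma_N^3$ does \emph{not} give $\mathcal O(q^3\sigma_N^3 b_t)$: the exponential is exactly the large factor you need to avoid.

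The point is that the extra $F(u_{m+1})$ cannot be treated as a harmless prefactor and then fed into the Section~\ref{sec:inductionSchemeSubcrit} induction; it must be carried through the whole recursion. The paper does this in Section~\ref{sec:anotherLongJumpInduction} by replacing $f^j$ with $h^{(j)}(v)=F(v)g(v)^j$ and building parallel operators $\mathcal S^{\rm fresh/good/small}$ (Lemma~\ref{lem:indReductionReloaded}, Proposition~\ref{prop:fibo1inverse}, Lemma~\ref{lem:aik}). The crucial structural difference is that $h^{(j)}(0)=0$ for all $j\geq 1$, so at the end of the induction only the $j=0$ term survives, yielding $A_t(\mathbf J)\leq a_0^{m+1}(\overline b_t)^{m+1}$ with no $f(1)^{m-j}$ contributions. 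That vanishing, not the mere presence of an extra $1/u$, is what collapses the bound from exponential to $(\overline b_t)^{m+1}$.

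A secondary issue: the crude bound $|\mathcal D(m,q)|\leq \binom q 2(\binom q 2-1)^{m-1}$ together with the worst-case estimate on the $a_0^{m+1}$ (or $c_i^m$) coefficients forces every index to be bad, so the geometric ratio becomes $\sim C\binom q 2 \overline b_t\bar\sigma_N^2$ with a constant $C$ that you cannot make close to $1$. In quasi-criticality $b_t$ can be large, so \eqref{eq:q,T-condition} alone does not make this $<1$. The paper handles this (Section~\ref{sec:proofUpsT}, case~(ii)) by the small-jump decomposition $\mathcal D^{(n)}(m,q)$ with $L=\sqrt q$, exactly as in Section~\ref{sec:largeqquasicrit}; your crude counting covers only the analogue of case~(i).
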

Section \ref{sec:proofUpsT} is dedicated to the proof of Proposition \ref{prop:UpsT}. The rest of this section and Section \ref{sec:anotherLongJumpInduction} gather preliminary results.

Similarly to Lemma \ref{lem:ChangeOfVariables}, we make the change of variables $u_r=a_r-b_{r-1}$, $v_r=b_r-a_r$ for $r\leq m+1$, and obtain:
\begin{equation} \label{eq:FirstBoundUps}
	\begin{aligned}
		 & \Upsilon_t(X_0)\leq \sum_{m=0}^\infty \sigma_N^{2m} \sum_{
		\mathbf J \in \mathcal D^{>2}(m+1,q)} \sum_{\mathbf X, \mathbf Y} \sum_{\substack{ u_{m+1},u_r \in \llbracket 1,t\rrbracket\\
  v_r\in \llbracket 0,t\rrbracket,r\leq m}} \sigma_N^{2,J_{m+1}} \mathbf{1}_{\sum_{i=1}^{m+1} u_i \leq t} \, \\
  &
         \prod_{r=1}^{m}    U_N(v_r,y_r^{i_r}-x_r^{i_r}) 
		\mathbf{1}_{y_r^{i_r}=y_r^{j_r}} \prod_{r=1}^{m+1}  \prod_{i\in \bar J_r} p_{w_r(i)}(x_r^i-y_{k_r(i)}^i)\prod_{(i,j)\in J_r} \mathbf{1}_{x_r^{i}=x_r^{j}},
	\end{aligned}
\end{equation}
where $w_r(i):=\sum_{k_r(i)+1}^r u_s + \sum_{k_r(i)+1}^{r-1} v_s$. For $u\in(0,\infty)$, let
\begin{equation}
	\label{eq-FF}
	F(u) := 
 \begin{cases}
     \frac{1}{u}  \frac{1}{1-\bar{\sigma}_N^2 \log u},\,&\text{if $u\geq 1$},\\
    1 ,\,&\text{if $u< 1$},
      \end{cases} \quad \text{with} \quad \bar{\sigma}_N^2 := \pi^{-1} \sigma_N^2,
\end{equation} which extends the definition of $F$ in \eqref{eq-F} to $u<1$. Recall the notation $\bar{u}_r$ from \eqref{eq:ubar}, which depends on 
the diagram $\mathbf{J}$.  The next lemma uses techniques from Theorem \ref{thm:sumOverXYv} and Proposition \ref{prop:UBFQcrit}.
\begin{lem} \label{lem:FF}
	We have:
	\begin{equation}\label{eq:InitialboundDigammaNew}
		\sup_{X_0} \Upsilon_t(X_0)\leq {\frac{2}{\pi^2}}  \sum_{m=0}^\infty  (1+\varepsilon_N^{\hyperlink{symbol: star}{\diamondsuit}})^{m} \bar{\sigma}_N^{2m} 
        \sum_{
			\mathbf J  \in \mathcal D^{>2}(m+1,q)}  \sigma_N^{2,J_{m+1}} A_t(\mathbf{J}),
	\end{equation}
	where, letting $\Delta_{t}^{m+1}:=\{(u_i)_{i=1}^{m+1}:~u_1,\dots,u_{m+1}\geq 0, \sum_{i=1}^{m+1} u_i\leq t\}$,
	\begin{equation} \label{eq:defAbfI}
		A_t(\mathbf{J}):=\int_{\Delta_{t}^{m+1}}   F(u_1) \left(\prod_{r=2}^{m+1}
		F(u_r+\bar u_{r})\right)   F(u_{m+1})
		 {\rm d} u_1 \cdots {\rm d} u_{m+1}.
	\end{equation}
\end{lem}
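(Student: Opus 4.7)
The plan is to sum out the spatial variables $\mathbf X, \mathbf Y$ and the block durations $v_1, \ldots, v_m$ in the right-hand side of \eqref{eq:FirstBoundUps} iteratively, proceeding from right to left in $r$, exactly in the spirit of Theorem~\ref{thm:sumOverXYv} and Proposition~\ref{prop:UBFQcrit}. The only genuinely new feature is the treatment of the terminal block $r=m+1$: it satisfies $|J_{m+1}|\geq 2$ (hence $|\bar J_{m+1}|\geq 3$) and carries no $U_N$ factor, since the block collapses to the single instant $a_{m+1}$.

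I would first handle $r=m+1$. The only $X_{m+1}$-dependent factors are the incoming heat kernels $\prod_{i\in \bar J_{m+1}} p_{w_{m+1}(i)}(x_{m+1}^i - y^i_{k_{m+1}(i)})$ and the cluster-coincidence constraints $\prod_{(i,j)\in J_{m+1}}\mathbf{1}_{x_{m+1}^i=x_{m+1}^j}$. Carrying out the Chapman--Kolmogorov argument of \eqref{eq:sumX_m}--\eqref{eq:ResultSumOnX_m} on the $J_{m+1}$-clusters (Definition~\ref{Defin: J-cluser}) yields
\[
\sum_{X_{m+1}}(\cdots)\ \leq\ p^{\star}_{w_{m+1}(i^1_{m+1})+w_{m+1}(i^2_{m+1})}\, p^{\star}_{w_{m+1}(i^\star_{m+1})}.
\]
The second factor is the new feature: it arises precisely because $|\bar J_{m+1}|\geq 3$ frees at least one independent heat kernel beyond the two used to build the main convolution. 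Since $w_{m+1}(i^\star_{m+1})\geq u_{m+1}$, the standard local CLT bound \eqref{eq:pnstar} gives $p^{\star}_{w_{m+1}(i^\star_{m+1})}\leq \pi^{-1}(1+\varepsilon_N^{\hyperlink{symbol: star}{\diamondsuit}}) F(u_{m+1})$, which is exactly the extra $F(u_{m+1})$ appearing in \eqref{eq:defAbfI}.

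Next, for each $r=m,m-1,\ldots,2$, I would sum $(Y_r,v_r,X_r)$ verbatim as in Theorem~\ref{thm:sumOverXYv}: summing $Y_r$ collapses $U_N(v_r,y_r-x_r)$ to its marginal; the joint sum over $v_r$ together with the $v_r$-dependent factor $p^{\star}_{w_{r+1}(i^1_{r+1})+w_{r+1}(i^2_{r+1})}$ inherited from the previous step is bounded via Lemma~\ref{lem:sum_on_v} (which absorbs $v_r$ using $w_{r+1}(i^1)+w_{r+1}(i^2)\geq v_r + 2(u_{r+1}+\bar u_{r+1})$, as in \eqref{eq:sumwmwm}), yielding a factor $\leq \pi^{-1}(1+\varepsilon_N^{\hyperlink{symbol: star}{\diamondsuit}})F(u_{r+1}+\bar u_{r+1})$; finally, summing $X_r$ returns $p^{\star}_{w_r(i_r)+w_r(j_r)}$ to be fed into the next iteration. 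The base case $r=1$ uses $k_1(i)=0$ and $w_1(i)=u_1$: summing over $X_1$ directly gives $p^{\star}_{2u_1}\leq \pi^{-1}(1+\varepsilon_N^{\hyperlink{symbol: star}{\diamondsuit}}) F(u_1)$, which is the first factor in \eqref{eq:defAbfI}. The $\sigma_N^{2m}$ prefactor in \eqref{eq:FirstBoundUps} combines with the $\sigma_N^2$ hidden in each $U_N$ (cf.\ \eqref{eq:defU_Nz}) and the $\pi^{-1}$ extracted at each $F$-step to produce the overall prefactor $(2/\pi^2)\,\bar\sigma_N^{2m}\,(1+\varepsilon_N^{\hyperlink{symbol: star}{\diamondsuit}})^m$ appearing in \eqref{eq:InitialboundDigammaNew}. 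Replacing the discrete sums over $u_r$ by the continuous integrals over $\Delta_t^{m+1}$ in \eqref{eq:defAbfI} is standard, using the monotonicity of $F$ and the extension \eqref{eq-FF} to $u<1$.

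The hard part will be the final-block extraction: one must verify, through the $J_{m+1}$-cluster structure, that the constraint $|\bar J_{m+1}|\geq 3$ always frees at least one heat kernel on top of the two used to build $p^{\star}_{w_{m+1}(i^1)+w_{m+1}(i^2)}$, regardless of whether $J_{m+1}$ consists of two disjoint pairs, of two pairs sharing a common particle, or of a larger configuration. Once this is established, every remaining step is a mechanical adaptation of Theorem~\ref{thm:sumOverXYv}, and keeping track of the constants $\pi$ and $(1+\varepsilon_N^{\hyperlink{symbol: star}{\diamondsuit}})$ so as to reproduce exactly the $2/\pi^2$ prefactor of \eqref{eq:InitialboundDigammaNew} is only a matter of careful bookkeeping.
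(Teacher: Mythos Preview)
Your proposal is correct and follows essentially the same approach as the paper's proof: sum $X_{m+1}$ via the cluster argument to extract the extra $p^\star_{w_{m+1}(i^\star_{m+1})}\leq \frac{2}{\pi}F(u_{m+1})$, then iterate the $Y_r, v_r, X_r$ sums as in Theorem~\ref{thm:sumOverXYv}/Proposition~\ref{prop:UBFQcrit} via Lemma~\ref{eq:key_lemma}, and finally bound $p^\star_{2u_1}\leq \frac{1}{\pi}F(u_1)$ and pass to integrals by monotonicity of $F$. Two small bookkeeping corrections: the bounds $p^\star_{u_{m+1}}\leq \frac{2}{\pi}F(u_{m+1})$ and $p^\star_{2u_1}\leq \frac{1}{\pi}F(u_1)$ come straight from \eqref{eq:pnstar} with no $(1+\varepsilon_N^{\diamondsuit})$ factor (that factor arises only from the $m$ applications of Lemma~\ref{eq:key_lemma}), and the $\sigma_N^2$ inside each $U_N$ is absorbed by Lemma~\ref{eq:key_lemma} rather than combined with the external $\sigma_N^{2m}$; the $\bar\sigma_N^{2m}$ in \eqref{eq:InitialboundDigammaNew} is just $\sigma_N^{2m}\cdot\pi^{-m}$ from those $m$ lemma applications.
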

\begin{proof}
	 Summing over $X_{m+1}$ in \eqref{eq:FirstBoundUps} gives a factor
 $p^\star_{w_{m+1}(i_{m+1}^1)+w_{m+1}(i_{m+1}^2)}p^\star_{w_m(i_{m+1}^\star)}$ as in \eqref{eq:ResultSumOnX_m}. Since $|J_m|=1$, at least one particle $i\in \bar J_{m+1}$ satisfies $k_{m+1}(i)<m$, so that 	\eqref{eq:sumwmwm} with $m$ replaced by $m+1$ holds.
	We then simply bound $w_{m+1}(i^\star_{m+1})$ from below by $u_{m+1}$.
	Then, summing over $Y_m$ gives a factor $U_N(v_{m})$. Using that $p^\star$ is non-increasing, we are left with bounding a contribution of the form
	\[
		p^\star_{u_{m+1}}\sum_{v_{m}} U_{N}(v_m) p^\star_{v_{m}+2(u_{m+1}+\bar u_{m+1})}\leq \frac{2}{\pi^2} (1+\varepsilon_N^{\hyperlink{symbol: star}{\diamondsuit}})  F(u_{m+1})
		F(u_{m+1}+\bar u_{m+1}),
	\]
where the inequality holds by Lemma \ref{eq:key_lemma} and \eqref{eq:pnstar}.
	
Since $F$ is non-increasing by Lemma \ref{lem:F'}, we have $F(x) \geq F(u)$ for $u=\lceil x\rceil$ so we can replace the summation over $u_{m+1}$ by an integral. 	The rest of the proof goes as in Proposition \ref{prop:UBFQcrit} together with that $p^{\star}_{2u_1} \leq {\frac{1}{\pi}F(u_1)}$ for all $u_1\geq 1$.
\end{proof}

\subsection{Another long jump induction} \label{sec:anotherLongJumpInduction}
We adopt the same notations, including good and bad indices, as in Definition~\ref{def:maindef},  but with $m+1$ in place of $m$.
Introduce:
$$
g(x):=
\begin{cases}
    \frac{1}{\bar{\sigma}_N^2}\log\left(\frac{1}{1-\bar \sigma_N^2 \log x}\right),&\text{ if }x\in [1,N],\\
    0,&\text{ if $x\in [0,1]$},
    \end{cases}	
$$
which satisfies $g'(x)=F(x)$ for $x> 1$. For any function $h$, set
\begin{equation*}
	\mathcal S^{\rm fresh} h(v): =   \int_{0}^t
	F(u + v)  h(u) \mathbf{1}_{u+v \leq t}\, \mathrm{d}u,\, 
	\mathcal S^{\rm good} h(v) := \int_{0}^t F(u+v) h(u+v) \mathbf{1}_{u+v\leq t}\,\mathrm{d} u,
\end{equation*}
and $\mathcal S^{\rm small} h(v) := \mathcal S^{\rm fresh}h(v/2)$.
Finally, let $h^{(j)}(v): = F(v)\,g(v)^j$. 
\begin{lem} \label{lem:indReductionReloaded}
	For all $j\geq 0$ and $v\in {[}0,t]$,
	\begin{equation}\label{eq:indReductionReloaded}
		\begin{aligned}
			\mathcal S^{\rm fresh} h^{(j)}(v)\leq  \sum_{i=0}^{j+1} \frac{j!}{i!} (\overline{b}_t)^{j+1-i}   h^{(i)}(v),
		\end{aligned}
	\end{equation}
with $\overline{b}_t:=2 + b_t (1+\varepsilon_N^\diamond)$, {where $\varepsilon_N^\diamond$ is from Lemma \ref{lem:F'}},
  {and
\begin{equation}\label{eq:indReductionReloaded/2}
    \mathcal S^{\rm small} h^{(j)}(v)\leq  \frac{2h^{(j+1)}(v)}{j+1}  + \sum_{i=0}^{j} \frac{j!}{i!} (\overline{b}_t)^{j+1-i}   h^{(i)}(v).
\end{equation}
}

	Furthermore, for $v\in {[}0,t]$,
	\begin{equation} \label{eq:w+v}
		\mathcal S^{\rm good} h^{(j)}(v) \leq \sum_{i=0}^{j} \frac{j!}{i!} (\overline{b}_t)^{j+1-i} h^{(i)}(v).
	\end{equation}
\end{lem}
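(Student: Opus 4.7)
The plan is to mirror the structure of the proof of Lemma \ref{lem:fibo1Main}, observing that the integrands in $\mathcal S^{\rm fresh}$, $\mathcal S^{\rm good}$ and $\mathcal S^{\rm small}$ carry an \emph{extra} factor of $F$ compared with that reference. This extra factor will be absorbed via the differential identity
\begin{equation*}
-F'(x) = F(x)^2\bigl[1-\bar\sigma_N^2(1+\log x)\bigr],\qquad x\geq 1,
\end{equation*}
which for $x\leq t\leq N$ yields $F(x)^2\leq M(-F'(x))$ with $M = b_t/(1-b_t\bar\sigma_N^2)\leq b_t(1+\varepsilon_N^\diamond)$; here $\varepsilon_N^\diamond\to 0$ is the error provided by Lemma \ref{lem:F'} (which also delivers the monotonicity of $F$). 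The only other analytic input will be the primitive identity $\int_1^v F(u)\,g(u)^j\,\mathrm{d}u = g(v)^{j+1}/(j+1)$, valid since $g'=F$ and $g(1)=0$.

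We begin with \eqref{eq:w+v}. The change of variables $x=u+v$ gives $\mathcal S^{\rm good} h^{(j)}(v)=\int_v^{t} F(x)^2\,g(x)^j\,\mathrm{d}x$, which we denote by $I_j$. Using $F^2\leq M(-F')$ and integrating by parts with $V=g^j$ (so that $\mathrm{d}V=jFg^{j-1}\,\mathrm{d}x$) one finds
\begin{equation*}
I_j \leq M\Bigl[F(v)g(v)^j - F(t)g(t)^j + j\int_v^{t} F(x)^2\,g(x)^{j-1}\,\mathrm{d}x\Bigr] \leq M F(v) g(v)^j + Mj\,I_{j-1},
\end{equation*}
with base case $I_0\leq M(F(v)-F(t))\leq M F(v)$. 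Iterating this one-step recursion yields $I_j\leq \sum_{i=0}^{j} \frac{j!}{i!}\,M^{j+1-i}\,F(v)g(v)^i$, and the inequality $M\leq \bar b_t$ (recall $\bar b_t = 2+b_t(1+\varepsilon_N^\diamond)$) then gives \eqref{eq:w+v}.

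For \eqref{eq:indReductionReloaded}, we split $\mathcal S^{\rm fresh} h^{(j)}(v)$ according to $\{u\leq v\}$ and $\{u>v\}$. On $\{u>v\}$, monotonicity of $F$ gives $F(u+v)\leq F(u)$, so this contribution is dominated by $I_j$ and hence controlled by the previous paragraph. On $\{u\leq v\}$, the bound $F(u+v)\leq F(v)$ reduces the integral to $F(v)\int_0^v F(u)g(u)^j\,\mathrm{d}u$; since $g\equiv 0$ on $[0,1]$, this integral equals $g(v)^{j+1}/(j+1)$ when $j\geq 1$, and is at most $1+g(v)$ when $j=0$. The resulting contribution is thus at most $h^{(j+1)}(v)/(j+1)+\mathbf 1_{j=0}\,F(v)$; the spurious $F(v)$ term (present only for $j=0$) is absorbed into the $\bar b_t F(v)$ summand of the target, precisely thanks to the ``$+2$'' in the definition of $\bar b_t$. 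Adding the two contributions reproduces exactly $\sum_{i=0}^{j+1}\frac{j!}{i!}\,\bar b_t^{j+1-i}\,h^{(i)}(v)$.

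The argument for \eqref{eq:indReductionReloaded/2} will be essentially identical, except that $\mathcal S^{\rm small} h^{(j)}(v)=\mathcal S^{\rm fresh} h^{(j)}(v/2)$ is split at $u=v$ (rather than at $u=v/2$): for $u>v$ one still has $F(u+v/2)\leq F(u)$, so the tail contribution again reduces to $I_j$, whereas for $u\leq v$ one uses $F(u+v/2)\leq F(v/2)\leq 2F(v)$, an immediate consequence of the explicit form of $F$, which yields the prefactor $2$ in the leading term $2h^{(j+1)}(v)/(j+1)$. The only delicate point is establishing the inequality $F^2\leq M(-F')$ with a sharp enough $M$ and carrying out the bookkeeping in the iterated integration by parts; once these are in place, all remaining estimates are routine.
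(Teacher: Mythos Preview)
Your approach is essentially the same as the paper's, and the key ideas (the bound $F^2\le M(-F')$, then iterated integration by parts) are exactly what the paper uses. There is, however, one genuine gap that you need to patch.

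The inequality $F^2\le M(-F')$ holds only for $x\ge 1$: on $[0,1)$ the extended $F$ of \eqref{eq-FF} is the constant $1$, hence $F'\equiv 0$ there while $F^2\equiv 1$. Thus your base case $I_0=\int_v^t F^2\le M(F(v)-F(t))$ is \emph{false} when $v\in[0,1)$: the piece $\int_v^1 F^2=1-v$ is not captured by $M\int_v^1(-F')=0$. This matters because the lemma is stated for all $v\in[0,t]$, and it is indeed applied at $v=0$ in \eqref{eq:formulaA_t}.

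The paper resolves this by splitting the integrals at $1\vee v$ rather than at $v$. On $[1\vee v,t]$ the inequality $F^2\le M(-F')$ is legitimate; the piece over $[0,1\vee v]$ is handled directly (for $j\ge 1$ the factor $g^j$ vanishes there, while for $j=0$ one picks up an extra additive constant absorbed by the ``$+2$'' in $\bar b_t$). With your split at $v$ and the correct $I_0\le (1-v)+M$, the $\mathcal S^{\rm small}$ case $j=0$, $v<1$ fails to close: your $\{u\le v\}$ bound $2F(v)(1+g(v))=2$ plus $I_0\le (1-v)+M$ gives $3-v+M$, which exceeds the target $\bar b_t=2+M$. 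So either adopt the $1\vee v$ split, or use the sharper $\int_0^v F=v$ (rather than $1+g(v)$) when $v<1$.
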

\begin{proof}
We first suppose $j\geq 1$. We use the notation $a\vee b:=\max(a,b)$.	By monotonicity of $F$, $g'(x)=F(x)$, and $g(x)=0$ for $x\leq 1$, 
\begin{equation} \label{eq:boundF1veev}
    \int_{0}^{{1\vee v}}
    F(u+v) F(u) g(u)^{j} \mathbf{1}_{u+v \leq t} \leq F(v) \int_{1}^{{1\vee v}} F(u) g(u)^j {\rm d} u = F(v) \frac{g(v)^{j+1}}{j+1}.
\end{equation}
Moreover, we have
\begin{equation}	\label{eq:shortcut}
\begin{aligned} 
 \int_{{1\vee v}}^t
		F(u+v) F(u)  g(u)^{j} \mathbf{1}_{u+v \leq t} \,du
  &\leq 
		\int_{{1\vee v}}^{t}
		F(u)^2  g(u)^{j} \mathbf{1}_{u+v \leq t} \,{\rm d} u\\
&		  \leq
		\overline{b}_t \int_{{1\vee v}}^{t} (-F'(u)) g(u)^j\, {\rm d}u, 
\end{aligned}
\end{equation}
	where we have used \eqref{eq:F'} in the last inequality.
	By integration by parts, we have:
	\begin{align*}
		\int_{{1\vee v}}^{t} (-F'(u)) g(u)^j {\rm d} u \leq  F(v) g(v)^j + j\int_{{1\vee v}}^{t} F(u)^2 g(u)^{j-1} {\rm d} u,
	\end{align*}
where  we have used that $g(v)=g(1)=0$ for $v<1$. 
The last integral corresponds to the one on the top right of  \eqref{eq:shortcut} with $j$ replaced by $j-1$. Therefore, we find \eqref{eq:indReductionReloaded} by repeating this procedure. 
 For $j=0$, by \eqref{eq:F'} again, \eqref{eq:indReductionReloaded} follows from 
\aln{
& \int_{0}^{{1\vee v}}
		F(u+v) F(u) \mathbf{1}_{u+v \leq t} {\rm d} u\leq F(v) \left(1+ \int_{1}^{{1\vee v}} F(u)  \mathrm{d} u\right) = F(v) (1+g(v)),  \label{eq:bt1}\\
  &   \int_{{1\vee v}}^{t}
		F(u+v) F(u) \mathbf{1}_{u+v \leq t}{\rm d} u\leq (\overline{b}_t-2) \int_{{1\vee v}}^{t} (-F'(u)) \, {\rm d} u\leq (\overline{b}_t-2) F(v). \label{eq:bt2}
}

{For \eqref{eq:indReductionReloaded/2}, we can proceed exactly as the previous case, except for \eqref{eq:boundF1veev} and \eqref{eq:bt1} where we use that $F(u+v/2)\leq 2F(v)$.}
 
 Regarding \eqref{eq:w+v} with $j\geq 1$ or $j=0,\,v\geq 1$, 
 we bound $\mathcal S^{\rm good} h^{(j)}(v)$ similarly to \eqref{eq:shortcut}, {that is by
 \begin{equation*}
\begin{aligned} 
 \int_{{0}}^t
		F(u+v)^2 g(u+v)^{j} \mathbf{1}_{u+v \leq t} \,\mathrm{d}u\leq
		\overline{b}_t \int_{{1\vee v}}^{t} (-F'(u)) g(u)^j\, {\rm d}u, 
\end{aligned}
\end{equation*}
}
so we recover the result as in the previous case. For $j=0$ and $v\in {[}0,1)$, by \eqref{eq:F'},
 \al{
 	\mathcal S^{\rm good} h^{(0)}(v) &\leq 1+ \int_{1}^{t} F(u)^2 {\rm d} u\leq 1- (\overline{b}_t-1) \int_{1}^{t} F'(u) {\rm d} u\leq \overline{b}_t =\overline{b}_t h^{(0)}(v).
 }
\end{proof}

For all $ \mathbf{J}\in \mathcal D^{>2}(m+1,q)$  and $r\leq m+1$, let $\psi(r)$ be defined as in \eqref{eq:defPsi} and $u_{(r)} := \sum_{i=\psi(r)}^{r} u_i$. For all $r\geq 2$ and {all non-negative measurable function $h(u_1,\dots,u_r)$, set
\[
	\mathcal S_r h (u_1,\dots,u_{r-1}) :=
	\int_{0}^t
	F(u_{r} + \bar u_{r})  h\left(u_1,\dots,u_r\right) \mathbf{1}_{\sum_{i=1}^{r} u_i \leq t} {\rm d} u_{r},
\]
and for all $j\geq 0$, define $\bar h^{(j)}_r(u_1,\dots,u_r):=h^{(j)}(u_{(r)})$. Given that $h^{(0)}_{m+1}(u_1,\dots,u_{m+1})=F(u_{m+1})$ as $\psi(m+1)=m+1$, we can write 
\begin{equation} \label{eq:formulaA_t}
A_t( \mathbf{J}) = (\mathcal S^{\rm fresh} \circ \mathcal S_{2}\circ {\dots} \circ \mathcal S_{m+1}\bar h^{(0)}_{m+1})({0}).
\end{equation}
}

\begin{lem} \label{lem:indReduction}
	For all $r\in \llbracket 2,m+1 \rrbracket$, $j\geq 1$ and $\sum_{i=1}^{r-1} u_i \leq t$ with $u_i\in {[0,t]}$,
	\begin{equation}\label{eq:indRed}
		\begin{aligned}
			\mathcal S_r \bar h^{(j)}_r(u_1,\dots,u_{r-1})  \leq \mu_{r}^m  \frac{h^{(j+1)}\left(u_{(r-1)}\right)}{j+1} + \sum_{i=0}^{j} \frac{j!}{i!} (\overline{b}_t)^{j+1-i} {h^{(i)}\left(u_{(r-1)}\right)},
		\end{aligned}
	\end{equation}
{(there is now a bar on $h$ on the LHS)}
where $\mu_r^m :=  \mu_r^{m} (\mathbf{J}) := \mathbf{1}_{\{r \text{ is bad}\}} + \mathbf{1}_{\{r \text{ is a small jump}\}}.$ 
\end{lem}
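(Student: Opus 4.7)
The plan is to mirror the proof of Lemma \ref{lem:indReductionMain} exactly, replacing the role of the operators $\mathcal F^{\rm good}, \mathcal F^{\rm fresh}, \mathcal F^{\rm small}$ with their continuous analogues $\mathcal S^{\rm good}, \mathcal S^{\rm fresh}, \mathcal S^{\rm small}$, and invoking Lemma \ref{lem:indReductionReloaded} in place of Lemma \ref{lem:fibo1Main}. The key point is that the same case analysis on the type of the index $r$ (good, fresh, small jump) applies because we are using the same definitions from Definition~\ref{def:maindef} and the same geometric lemmas (Lemma~\ref{lemmat}) that relate $\bar u_r$ to $u_{(r-1)}$.

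First, I would treat the case where $r$ is a good index. Then $\mu_r^m = 0$, and by Lemma~\ref{lemmat}--\ref{lemmatata2} we have $\psi(r)=\psi(r-1)$, so $u_{(r)} = u_r + u_{(r-1)}$; by Lemma~\ref{lemmat}--\ref{lemmatata5}, $\bar u_r \ge u_{(r-1)}$, and since $F$ is non-increasing, bounding $F(u_r+\bar u_r)\le F(u_r+u_{(r-1)})$ yields
\[
\mathcal S_r \bar h^{(j)}_r(u_1,\ldots,u_{r-1}) \le \mathcal S^{\rm good} h^{(j)}(u_{(r-1)}),
\]
and then \eqref{eq:w+v} gives exactly the second (sum) term on the right-hand side of \eqref{eq:indRed}, which suffices since $\mu_r^m=0$.

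Next, I would treat the case where $r$ is a fresh index. Here $\mu_r^m = 1$ and by definition $\psi(r)=r$, so $u_{(r)} = u_r$; since fresh implies long jump, Lemma~\ref{lemmat}--\ref{lemmatata5} still gives $\bar u_r \ge u_{(r-1)}$, and monotonicity of $F$ yields $\mathcal S_r \bar h^{(j)}_r \le \mathcal S^{\rm fresh} h^{(j)}(u_{(r-1)})$. Then \eqref{eq:indReductionReloaded} bounds this by $\sum_{i=0}^{j+1} \frac{j!}{i!} (\overline{b}_t)^{j+1-i} h^{(i)}(u_{(r-1)})$; splitting off the $i=j+1$ term gives $\frac{h^{(j+1)}(u_{(r-1)})}{j+1}$, which coincides with $\mu_r^m \frac{h^{(j+1)}(u_{(r-1)})}{j+1}$.

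Finally, if $r$ is a small jump, then $\mu_r^m = 2$ and by Lemma~\ref{lemmat}--\ref{lemmatata6}, $\psi(r)=r$ and $\psi(r-1)=r-1$, so $u_{(r)} = u_r$ and $u_{(r-1)} = u_{r-1}$; by Lemma~\ref{lemmat}--\ref{lemmatata7}, $\bar u_r \ge u_{r-1}/2 = u_{(r-1)}/2$, whence $\mathcal S_r \bar h^{(j)}_r \le \mathcal S^{\rm small} h^{(j)}(u_{(r-1)})$. Applying \eqref{eq:indReductionReloaded/2} then yields the bound $\frac{2 h^{(j+1)}(u_{(r-1)})}{j+1} + \sum_{i=0}^{j} \frac{j!}{i!}(\overline{b}_t)^{j+1-i} h^{(i)}(u_{(r-1)})$, matching exactly the right-hand side of \eqref{eq:indRed} with $\mu_r^m = 2$. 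There is no real obstacle here since everything is essentially bookkeeping: the only care required is to ensure the factor $\mu_r^m \in \{0,1,2\}$ is chosen so that all three cases collapse into the single unified bound \eqref{eq:indRed}, which is precisely why $\mu_r^m$ is defined as $\mathbf{1}_{\{r \text{ bad}\}} + \mathbf{1}_{\{r \text{ small}\}}$.
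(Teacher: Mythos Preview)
Your proof is correct and follows essentially the same approach as the paper's own proof: both perform the identical case analysis on whether $r$ is good, fresh, or a small jump, reduce $\mathcal S_r \bar h^{(j)}_r$ to $\mathcal S^{\rm good}$, $\mathcal S^{\rm fresh}$, or $\mathcal S^{\rm small}$ respectively via Lemma~\ref{lemmat}, and conclude with the corresponding inequality \eqref{eq:w+v}, \eqref{eq:indReductionReloaded}, or \eqref{eq:indReductionReloaded/2} from Lemma~\ref{lem:indReductionReloaded}. Your treatment is in fact slightly more explicit than the paper's (e.g., you spell out how the $i=j+1$ term in the fresh case yields the $h^{(j+1)}/(j+1)$ contribution), but the substance is identical.
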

\begin{proof}
Similarly to the proof of Lemma \ref{lem:indReductionMain}, we obtain that when
$r$ is good,
\begin{align*}
    \mathcal S_r \bar h^{(j)}_r (u_1,\ldots,u_{r-1})\leq\int_0^t  F\left(u_{r}+u_{(r-1)}\right)  h^{(j)}\left(u_{r} + u_{(r-1)}\right)\mathbf{1}_{u_{r} + u_{(r-1)} \leq t}\,{\rm d} u_r,
\end{align*}
	where the integral equals $\mathcal S^{\rm good} h^{(j)}(u_{(r-1)})$, so that \eqref{eq:w+v} gives \eqref{eq:indRed};
when $r$ is fresh, 
\begin{equation*} 
		\mathcal S_r \bar h^{(j)}_r(u_1,\ldots,u_{r-1}) \leq \int_{0}^t  F\left(u_{r}+u_{(r-1)}\right)  h^{(j)}\left(u_{r}\right)\mathbf{1}_{u_{r} + u_{(r-1)} \leq t}\,{\rm d} u_r,
\end{equation*}
where the integral is equal to $\mathcal S^{\rm fresh} h^{(j)}(u_{(r-1)})$, so we obtain \eqref{eq:indRed} from \eqref{eq:indReductionReloaded}; 
 if $r$ is a small jump, $\mathcal S_r \bar h^{(j)}_r(u_1,\ldots,u_{r-1})\leq \mathcal S^{\rm small}(u_{(r-1)})$ so we conclude with \eqref{eq:indReductionReloaded/2}.
\end{proof}

Similarly to Proposition \ref{prop:fibo1inverseMain}, we obtain:
\begin{prop}
	\label{prop:fibo1inverse}
Assume \eqref{eq:sub-criticality} or \eqref{eq:defNearCrit}. Then, for all
	$m\geq 1$, $ \mathbf{J}\in \mathcal D(m,q)$,  $k\in \llbracket {0},m-1 \rrbracket$ and $\sum_{i=1}^{m+1-k} u_i \leq t$ with $u_i\in {[0,t]}$, 
	\begin{equation} \label{eq:lemmaInduction0}
		\begin{aligned}
			 & (\mathcal S_{m+1-k}\circ {\dots} \circ \mathcal S_{m+1}\bar h^{(0)}_{{m+1}} )(u_{1},\dots,u_{m-k})\leq \sum_{j=0}^{k{+1}} \frac{a_{j}^{k{+1}}}{j!}  (\overline{b}_t)^{k{+1}-j} h^{(j)}\left(u_{(m-k)}\right),
		\end{aligned}
	\end{equation}
	with $a_0^0:=1$,
	$a_{l}^{k+1} := 2 \gamma_{k}^{m+1} a_{l-1}^k + \sum_{j=l}^k a_{j}^k$ for $l \in \Iintv{0,k}$, 
  $a_{l}^k:=0$ for $l<0$ or $l>k$, where $\gamma_k^{m+1} := \mathbf{1}_{\{m+1-k \text{ is bad}\}}$  {for all $k< m$} and $\gamma_{m}^{m+1}=1$.

Furthermore, recalling $A_t( \mathbf{J})$ from \eqref{eq:defAbfI}, we have:
	\begin{equation} \label{eq:AI}
		A_t( \mathbf{J}) \leq a_{0}^{m+1} (\overline{b}_t)^{m+1}.
	\end{equation}
\end{prop}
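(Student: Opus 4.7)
The plan is to mimic the induction scheme used to prove Proposition \ref{prop:fibo1inverseMain}, but now based on Lemma \ref{lem:indReduction} (which itself relies on Lemma \ref{lem:indReductionReloaded}) in place of Lemma \ref{lem:indReductionMain}. Concretely, I would prove \eqref{eq:lemmaInduction0} by induction on $k\in\llbracket 0,m-1\rrbracket$, by applying the operator $\mathcal S_{m+1-k}$ at each stage and tracking how the coefficients of the $h^{(j)}$'s evolve. The base case $k=0$ is just Lemma \ref{lem:indReduction} applied with $r=m+1$ and $j=0$, giving
$$\mathcal S_{m+1}\bar h^{(0)}_{m+1}(u_1,\dots,u_m)\leq \mu_{m+1}^{m}\,h^{(1)}(u_{(m)}) + \overline b_t\,h^{(0)}(u_{(m)}),$$
which matches $a_0^{1}\overline b_t\,h^{(0)} + a_1^{1}h^{(1)}$ since $\gamma_m^{m+1}=1$ by convention, so $a_0^{1}=1$ and $a_1^{1}=2\geq \mu_{m+1}^m$ (using that small jumps are bad, cf.\ Remark \ref{eq:rkbad}, hence $\mu_r^m\leq 2\gamma_{m+1-r}^{m+1}$).

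For the inductive step, assume \eqref{eq:lemmaInduction0} for $k-1$, apply $\mathcal S_{m+1-k}$ to each $h^{(j)}(u_{(m+1-k)})$ in the right-hand side, and invoke Lemma \ref{lem:indReduction} with $r=m+1-k$ together with $\mu_{m+1-k}^m\leq 2\gamma_k^{m+1}$. Collecting the coefficient of $h^{(l)}(u_{(m-k)})$ for each $l$: the "bad" term in Lemma \ref{lem:indReduction} produces (after the shift $j\mapsto j-1=l-1$) a contribution $2\gamma_k^{m+1}\,a_{l-1}^{k}$, while the "good" term (after exchanging the order of the sums $\sum_{j=0}^{k}\sum_{i=0}^{j}$) contributes $\sum_{j=l}^{k}a_j^{k}$. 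Powers of $\overline b_t$ and factorials combine consistently thanks to $(\overline b_t)^{k-j}\cdot(\overline b_t)^{j+1-i}=(\overline b_t)^{k+1-i}$ and $\frac{a_j^k}{j!}\cdot\frac{j!}{i!}=\frac{a_j^k}{i!}$. The resulting coefficient is precisely $a_l^{k+1}=2\gamma_k^{m+1}a_{l-1}^{k}+\sum_{j=l}^{k}a_j^{k}$, closing the induction.

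To deduce \eqref{eq:AI}, I would apply \eqref{eq:lemmaInduction0} with $k=m-1$ in \eqref{eq:formulaA_t} to get
$$A_t(\mathbf J)\leq \sum_{j=0}^{m}\frac{a_j^{m}}{j!}(\overline b_t)^{m-j}\,\mathcal S^{\rm fresh}h^{(j)}(0).$$
Since $g(0)=0$ by the definition of $g$, we have $h^{(i)}(0)=F(0)\,g(0)^i=0$ for $i\geq 1$ and $h^{(0)}(0)=1$, so Lemma \ref{lem:indReductionReloaded} at $v=0$ collapses to $\mathcal S^{\rm fresh}h^{(j)}(0)\leq j!\,(\overline b_t)^{j+1}$. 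Plugging back yields
$$A_t(\mathbf J)\leq (\overline b_t)^{m+1}\sum_{j=0}^{m}a_j^{m} = a_0^{m+1}(\overline b_t)^{m+1},$$
where the last equality is the recursion at $(l,k+1)=(0,m+1)$, using $\gamma_m^{m+1}=1$. The main obstacle is purely combinatorial: keeping the double bookkeeping in $(j,k)$ straight when exchanging the sums in the inductive step, and carefully identifying the resulting coefficients with the sequence $(a_l^{k+1})$; I would double-check this on the cases $k=0,1$ before writing out the general recursion.
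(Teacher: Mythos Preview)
Your approach is correct and essentially identical to the paper's: the paper proves \eqref{eq:lemmaInduction0} by the same induction on $k$ via Lemma \ref{lem:indReduction}, exchanges the double sum to identify the recursion for $a_l^{k+1}$, and derives \eqref{eq:AI} from \eqref{eq:formulaA_t} combined with \eqref{eq:indReductionReloaded} at $v=0$ and the fact that $h^{(i)}(0)=0$ for $i\geq 1$. One small slip to fix: in your base case you invoke ``$\gamma_m^{m+1}=1$ by convention'' to get $a_1^1=2$, but the relevant coefficient is $\gamma_0^{m+1}=\mathbf 1_{\{m+1\text{ is bad}\}}$, which equals $1$ because the last index $m+1$ is always bad (Definition \ref{def:maindef}).
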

\begin{proof}
	The estimate \eqref{eq:lemmaInduction0} is obtained by iterating Lemma \ref{lem:indReduction}. Indeed, {\eqref{eq:lemmaInduction0} holds for $k=0$ by Lemma \ref{lem:indReduction}.} let $1\leq k<m-1$ be such that \eqref{eq:lemmaInduction0} holds with $k-1$, i.e.,
 \al{
 (\mathcal S_{m+1-k}\circ {\dots} \circ \mathcal S_{m+1}\bar h^{(0)}_{m+1} )(u_{1},\dots,u_{m+1-k})\leq \sum_{j=0}^{k} \frac{a_{j}^{k}}{j!}  (\overline{b}_t)^{k-j} h^{(j)}\left(u_{(m+1-k)}\right).
 }
 Then by Lemma \ref{lem:indReduction}, the left-hand side of \eqref{eq:lemmaInduction0} is at most 
	\al{
  \sum_{j=0}^{k} \frac{a_{j}^{k-1}}{j!}  (\overline{b}_t)^{k-j} \left(\mu_{m-k}^m  \frac{h^{(j+1)}\left(u_{(m-k)}\right)}{j+1} +   \sum_{i=0}^{j} \frac{j!}{i!} (\overline{b}_t)^{j+1-i} {h^{(i)}\left(u_{(m-k)}\right)}\right).
	}
	Exchanging the sum on $j$ and $i$, this is equal to
	\al{
 \sum_{i=0}^{k+1} \frac{1}{i!} (\overline{b}_t)^{k+1-i} h^{(i)}(u_{(m-k)}) \left(\mu_{m-k}^m a_{i-1}^{k} + \sum_{j=i}^{k} a_j^{k} \right),
	}
	which gives \eqref{eq:lemmaInduction0} by  {$\mu_{m-k}^m\leq 2\gamma_{k+1}^m$.} 

Finally, by \eqref{eq:formulaA_t}, \eqref{eq:lemmaInduction0} and \eqref{eq:indReductionReloaded}, $A_t( \mathbf{J})$ is less than
\[
\sum_{j=0}^{m} \frac{a_{j}^{m}}{j!}  (\overline{b}_t)^{m-j} \left(\sum_{i=0}^{j+1} \frac{j!}{i!} (\overline{b}_t)^{j+1-i}   h^{(i)}({0})\right) = \sum_{i=0}^{m{+1}} \frac{a_{i}^{m{+1}}}{i!}  (\overline{b}_t)^{m+1-i} h^{(i)}\left({0}\right).
\]
(We have used that 
    $\gamma_{m}^{m+1} = 1$ by definition.) Given that $h^{(i)}({0}) = 0$ for all $i\geq 1$ and $h^{(0)}({0})=1$, \eqref{eq:AI} follows.
\end{proof}
We end with a last technical lemma.
 We set $n_k:=\sum_{i=0}^{k-1} \gamma_{i}^{m+1}$. 
\begin{lem} \label{lem:aik}
	Let $k\geq 0$ and $\delta\in (0,1/2)$. We have:
 \begin{enumerate}
     \item[(i)]  $a_{l}^k=0$ for all $l > n_k$;
     \item[(ii)] for all $l \leq n_k$:
\begin{equation}
		a_{l}^k \leq  
  {2^{n_k}}\left(1+2\delta \right)^k \delta^{-(n_k -l)}.
\end{equation}
  \end{enumerate}
\end{lem}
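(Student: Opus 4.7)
My plan is to prove both items by induction on $k$, using the recursion
\[
a_l^{k+1} = 2\gamma_k^{m+1} a_{l-1}^k + \sum_{j=l}^k a_j^k,
\]
together with the bookkeeping identity $n_{k+1}=n_k+\gamma_k^{m+1}$.

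Part (i) is straightforward. The base case $k=0$ holds because $a_0^0=1$, $n_0=0$, and $a_l^0=0$ for $l\neq 0$. For the inductive step, if $\gamma_k^{m+1}=0$ and $l>n_{k+1}=n_k$, both terms in the recursion vanish by the inductive hypothesis (the prefactor $2\gamma_k^{m+1}$ kills the first, and $a_j^k=0$ for $j\geq l>n_k$ kills the sum). If $\gamma_k^{m+1}=1$ and $l>n_{k+1}=n_k+1$, then $l-1>n_k$ forces $a_{l-1}^k=0$, and again every $a_j^k$ with $j\geq l$ vanishes.

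For part (ii), write $B_l^k:=2^{n_k}(1+2\delta)^k\delta^{-(n_k-l)}$ for the claimed upper bound and prove $a_l^k\leq B_l^k$ inductively. The key ingredient is the geometric estimate
\[
\sum_{j=l}^{n_k} B_j^k \;=\; B_l^k\sum_{i=0}^{n_k-l}\delta^i \;\leq\; \frac{B_l^k}{1-\delta} \;\leq\; (1+2\delta)\,B_l^k,
\]
where the last inequality uses $\delta\leq 1/2$. If $\gamma_k^{m+1}=0$, so $n_{k+1}=n_k$, then combining part (i) with the above gives $a_l^{k+1}=\sum_{j=l}^{n_k}a_j^k\leq (1+2\delta)B_l^k=B_l^{k+1}$. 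If $\gamma_k^{m+1}=1$, so $n_{k+1}=n_k+1$, then
\[
a_l^{k+1}\leq 2B_{l-1}^k+(1+2\delta)B_l^k \;=\; B_l^k\bigl(2\delta^{-1}+(1+2\delta)\bigr),
\]
and comparing with $B_l^{k+1}=2\delta^{-1}(1+2\delta)B_l^k$ reduces the matter to the scalar inequality $2\delta^{-1}+(1+2\delta)\leq 2\delta^{-1}(1+2\delta)$, equivalent to $2\delta^2\leq 3\delta$, which holds throughout $(0,1/2]$.

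There is no genuine obstacle here; the argument is essentially a bookkeeping induction. The one point that deserves a quick check is the boundary case $l=0$ in the second scenario, where $a_{-1}^k=0$ removes the $2B_{l-1}^k$ contribution, but the geometric-series estimate alone then comfortably delivers the desired bound $B_0^{k+1}$.
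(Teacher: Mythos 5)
Your proof is correct and follows essentially the same route as the paper: induction on $k$ via the recursion $a_l^{k+1}=2\gamma_k^{m+1}a_{l-1}^k+\sum_{j\geq l}a_j^k$, splitting into the cases $n_{k+1}=n_k$ and $n_{k+1}=n_k+1$, and using the geometric-series bound $(1-\delta)^{-1}\leq 1+2\delta$. The only difference is cosmetic: where you bound the two contributions separately and check the scalar inequality $2\delta^{-1}+1+2\delta\leq 2\delta^{-1}(1+2\delta)$, the paper absorbs the term $2a_{l-1}^{k}$ into the geometric sum started at $j=l-1$ with the common prefactor $2^{n_{k}+1}$.
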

\begin{proof}
	We prove both (i) and (ii) by induction.
We start with (i). The claim is clear for $k=0$. 
 Suppose that (i) holds for some $k-1\geq 0$ and let $l> n_{k}$. By the induction hypothesis we have\ $a_{j}^{k-1}=0$ for any $j\geq l$, so that
\[
a_{l}^{k} =2 \gamma_{k-1}^{m+1} a_{l-1}^{k-1} +\sum_{j=l}^{k-1} a_{j}^{k-1} = 2 \gamma_{k-1}^{m+1} a_{l-1}^{k-1}.
\]
If 
$\gamma_{k-1}^{m+1}=0$, then $a_{l}^{k} = 0$, while if 
$\gamma_{k-1}^{m+1}=1$, then $n_k=n_{k-1}+1$ and thus $l-1  > n_{k-1}$, so that $a_{l-1}^{k-1}=0$, which implies $a_{l}^{k} =0$.

	Next we prove (ii). For $k=0$, $n_k=0$ and $a_0^0=1$ yield the claim. Now assume (ii) for some $k-1\geq 0$, i.e.,  for all $l \leq n_{k-1}$, 
	$a_{l}^{k-1} \leq 2^{n_{k-1}} \left(1+2\delta \right)^{k-1} \delta^{-(n_{k-1} -l)}.$ 
We first suppose $\gamma_{k-1}^{m+1}=0$, i.e., $n_{k}=n_{k-1}$. By (i), $a_{i}^{k-1}=0$ for $i>n_{k-1}$,  hence by the induction hypothesis,  for all $l\leq n_k$
\al{
		a_{l}^{k} =
  \sum_{i=l}^{n_{k-1}} a_{i}^{k-1} & \leq 
  {2^{n_{k-1}}}\sum_{i=l}^{n_{k-1}} \left(1+2\delta \right)^{k-1} \delta^{-(n_{k-1} -i)}
\leq {2^{n_k}}
(1+2\delta)^{k}\delta^{-{(n_{k}-l)}},
}
where we have used that $n_{k-1}=n_{k}$ and  $\sum_{i\geq 0} \delta^{i} = (1-\delta)^{-1} \leq 1+2\delta$ for $\delta \in (0,1/2)$.
	Moreover, if 
 $\gamma_{k-1}^{m+1}=1$,
 then $n_k = n_{k-1}+1$. By (i),  $a_{i}^{k-1}=0$ for $i>n_{k-1}$, so that for all $l\leq n_{k}$, by the induction hypothesis,
\begin{align*}
    a_{l}^{k} =2  a_{l-1}^{k-1} +
    \sum_{j=l}^{n_{k-1}} a_j^{k-1}& \leq {2^{n_{k-1}+1}}
    \sum_{j=l-1}^{n_{k-1}} \left(1+2\delta \right)^{k-1} \delta^{-(n_{k-1} -j)} 
    \\
    &\leq {2^{n_k}} 
    (1+2\delta)^{k}\delta^{-{(n_{k}-l)}},
\end{align*}
 where we have used that $n_{k-1}=n_{k}-1$ and that $\sum_{j\geq 0} \delta^{j} = (1-\delta)^{-1} \leq 1+2\delta$. 
 \end{proof}

\subsection{Proof of Proposition \ref{prop:UpsT}}
\label{sec:proofUpsT}
By lemma \ref{lem:aik}, for all $\mathbf J\in \mathcal D^{>2}(m+1,q)$, we have $a_0^{m+1}\leq (1+2\delta)^{m+1} (\overline{b}_t)^{m+1} (2\delta^{-1})^{1+\sum_{i=2}^{m+1} \mathbf{1}_{i\textrm{ is bad}}} $. Therefore by \eqref{eq:InitialboundDigammaNew} and \eqref{eq:AI},
\begin{equation*}
\begin{aligned}
&\sup_{X_0} \Upsilon_t(X_0)  \leq\frac{2}{\pi^2}\sum_{m=0}^\infty  \sum_{\mathbf J \in \mathcal D^{>2}(m+1,q)} \frac{(1+2\delta)^{m+1}(1+\varepsilon_N^{\hyperlink{symbol: star}{\diamondsuit}})^{m}} {(\delta/2)^{1+\sum_{i=2}^{m+1} \mathbf{1}_{i\textrm{ is bad}}}} (\overline{b}_t)^{m+1} \bar{\sigma}_N^{2m}{\sigma}_N^{2,J_{m+1}}.
\end{aligned}
\end{equation*}
Next, we sum over $J_{m+1}$ above, which by Lemma \ref{lem:yokkata}, leads to the bound
	\begin{equation} \label{eq:boundwithq3sigma3}
		\sup_{X_0} \Upsilon_t(X_0) \leq 4 q^{3}\bar{\sigma}_N^3 \sum_{\mathbf J \in \mathcal D(m,q)} \frac{(1+2\delta)^{m+1}} {(\delta/2)^{1+\sum_{i=2}^{m+1} \mathbf{1}_{i\textrm{ is bad}}}} (1+\varepsilon_N^{\hyperlink{symbol: star}{\diamondsuit}})^{m}(\overline{b}_t)^{m+1} \bar{\sigma}_N^{2m}.
	\end{equation}
 Recall  $\mathcal D^{(n)} (m,q)$ from Section~\ref{sec:largeqquasicrit}, which is defined as the subset of all $\mathbf{J}\in \mathcal D(m,q)$ such that $n_{\mathrm{small}}(\mathbf{J})=n$ and satisfies $|\mathcal D^{(n)}(m,q)| \leq {\binom {m}{n}} \binom{q}{2}^{m-n} (2qL)^n$. By Lemma \ref{lem:gamma_n(I)}, summing over all possible values of $n=n_{\mathrm{small}}(\mathbf J)$ yields 
\begin{equation} \label{eq:supUps}
\sup_{X_0} \Upsilon_t(X_0) \leq 4 q^3 \bar \sigma_N^3 \sum_{m=0}^\infty  (1+2\delta)^{m+1}  (1+\varepsilon_N^{\hyperlink{symbol: star}{\diamondsuit}})^{m}(\overline{b}_t)^{m+1} \bar{\sigma}_N^{2m} H_{m},
\end{equation}
where
\begin{align}
	H_m &:= \sum_{n=0}^{m} \binom{m}{n} \left(2 \delta^{-1}\right)^{2n+\frac{m}{L}+2} \binom{q}{2}^{m-n} (2qL)^n = (2\delta^{-1})^2 (\mathfrak C_{L,\delta,q})^m,\label{eq:valueHm}
\end{align}
where $\mathfrak C_{L,\delta,q}:=\left(2 \delta^{-1}\right)^{\frac{1}{L}} \left(8\delta^{-2} qL + \binom q 2\right)$.
Therefore, by \eqref{eq:valueHm} the sum in \eqref{eq:supUps} converges as soon as 
\begin{equation} \label{eq:conditionQCLdeltaq}
(1+2\delta)  \bar b_t \bar \sigma_N^2 \mathfrak C_{L,\delta,q} (1+\varepsilon_N^{\hyperlink{symbol: star}{\diamondsuit}}) < 1.
\end{equation}
Note that $\overline{b}_t\leq 4 b_t$. Let $\varepsilon_0' \in (0,1/2)$. If (i): $\binom q 2 \in [3,\varepsilon_0' b_t^{-1} \bar \sigma_N^{-2}]$, setting $L = 3$ and $\delta = 1/4$, then $ \bar b_t\bar \sigma_N^2 \mathfrak C_{L,\delta,q}$ is smaller than $C' \varepsilon_0'$ for $N$ large with a universal constant $C'>0$. 
Letting $\varepsilon_0'$ small enough, we obtain that \eqref{eq:conditionQCLdeltaq} is satisfied and thus by \eqref{eq:supUps},
\begin{equation} \label{eq:Gamma_TBefore}
\sup_{X_0} \Upsilon_t(X_0) \leq C\frac{   b_t q^3  \bar \sigma_N^3}{1-C'\varepsilon_0'}  \leq \frac{C\varepsilon_0'}{1-C'\varepsilon_0'} q \bar \sigma_N < 1,
\end{equation}
for $N$ large, 
where the third inequality holds for $\varepsilon_0'$ small enough, which we fix now, since $q^2 \bar \sigma_N^2$ is uniformly bounded by \eqref{eq:q,T-condition}.

Keeping this $\varepsilon_0'$, we then consider (ii): $\binom q 2 \in [\varepsilon_0' b_t^{-1} \bar \sigma_N^{-2}, \varepsilon_0 \log N]$ 
where $\varepsilon_0=\varepsilon_0(\alpha)$ is determined below. We also note that the lower bound diverges). We set $L=\sqrt q$ and fix $\delta$ small enough so that $2\delta < (1-\alpha)/4$. We distinguish two cases. {If $b_t \geq \delta^{-1}$ which entails $\bar b_t \leq (1+2\delta) b_t$}, then for $N$ large enough (depending on $\alpha,\varepsilon_0'$), we have by \eqref{eq:q,T-condition} that $(1+2\delta) \bar b_t \bar \sigma_N^2 \mathfrak C_{L,\delta,q}$ is smaller than $ (1+\alpha)/2<1$. {If $b_t \leq \delta^{-1}$, then we choose $\varepsilon_0(\alpha) \in (0,1)$ small enough so that the condition
$\binom q 2\leq \varepsilon_0 \log N$ enforces $(1+2\delta) \bar b_t \bar \sigma_N^2 \mathfrak C_{L,\delta,q}\leq (1+\alpha)/2<1$. In both cases,}
Hence, \eqref{eq:Gamma_TBefore} holds and \eqref{eq:supUps} entails
\begin{equation} \label{eq:lastGammaT}
	\sup_{X_0} \Upsilon_t(X_0) \leq  \frac{8 \delta^{-2} b_t  q^3 \bar \sigma_N^3}{\frac{1}{2}(1-\alpha) }
    \leq  \frac{ 8 \delta^{-2}}{\frac{1}{2}(1-\alpha)}q \bar \sigma_N,
\end{equation}
using \eqref{eq:q,T-condition}. {Finally, by the assumption
$q^2\leq \varepsilon_0 \log N$, we have
$\sup_{X_0} \Upsilon_t(X_0)\leq C_{\alpha} \sqrt{\varepsilon_0}$, implying \eqref{eq:supUps0} for $\varepsilon_0=\varepsilon_0(\alpha)$ small enough.
 Equation \eqref{eq:Phi>2phi} then follows from \eqref{eq:dec>2<2}, \eqref{eq:upperboundPsiN} 
 and 
\eqref{eq:lastGammaT}.

\subsection{Case $s>1$} 
To conclude, we prove:
\begin{prop}
\label{prop:UpsTs} 
Assume \eqref{eq:sub-criticality} or \eqref{eq:defNearCrit}. With $\varepsilon_0$ as in Proposition \ref{prop:UpsT},  uniformly for $(q,t)$ satisfying \eqref{eq:q,T-condition} and $q^2\leq \varepsilon_0 \log N$, for all $s\leq t$,
\begin{equation} \label{eq:Phi>2phis}
    \Psi_{s,t,q}(X_0) \leq \Phi_{s,t,q}^{\leq 2}(X_0) + \gamma_{s}  \sup_{X_0 \in (\mathbb Z^2)^q}\Phi_{1,t,q}^{\leq 2}(X_0),  
\end{equation}
where for some $C=C_\alpha>0$ and $c=c_{\alpha}=(1+\alpha)/2 \in (0,1)$,
$\gamma_{s} = C q \bar \sigma_N  F(s) e^{c \binom{q}{2} \lambda_{1,s,N}^2}$.
\end{prop}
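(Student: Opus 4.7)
\medskip

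\noindent\textbf{Plan.} The proof will extend Proposition \ref{prop:UpsT} to $s > 1$ by following the same three-step scheme with an additional boundary term of order $F(s) e^{c\binom{q}{2}\lambda_{1,s,N}^2}$ arising from integration by parts at the new lower limit $u_1 = s$ (rather than $u_1 = 0$). First, under the assumption $q^2 \leq \varepsilon_0 \log N$ (with $\varepsilon_0$ small enough so that $q^2 \sigma_N^2 \leq \varepsilon_0(\mu)$), Lemma \ref{lem:GeneralWeights} and the chaos expansion yield $\Psi_{s,t,q}(X_0) \leq \Phi_{s,t,q}^{\leq 2}(X_0) + \overline{\Phi}^{>2}_{s,t,q}(X_0)$, so it suffices to bound $\overline{\Phi}^{>2}_{s,t,q}(X_0)$ by $\gamma_s \sup_X \Phi_{1,t,q}^{\leq 2}(X)$.

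Next, running the argument of Lemma \ref{lem:PhibarUpsPhi2} but keeping the constraint $n_1 \geq s$ on the first time index, one isolates the first triple-or-more intersection, with the pairwise interactions preceding it constrained to the interval $[s, n_1]$; by Markov restarts at subsequent triple-or-more intersections one obtains
\[
\overline{\Phi}^{>2}_{s,t,q}(X_0) \leq \sup_X \Upsilon^{(s)}_t(X) \cdot \Bigl(\sum_{k\geq 0} (\sup_X \Upsilon_t(X))^k\Bigr) \cdot \sup_X \Phi_{1,t,q}^{\leq 2}(X),
\]
where $\Upsilon^{(s)}_t(X)$ is defined exactly as $\Upsilon_t(X)$ in Lemma \ref{lem:PhibarUpsPhi2} but with $a_1 \in \llbracket s,t\rrbracket$. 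Under \eqref{eq:q,T-condition} together with $q^2 \leq \varepsilon_0 \log N$, Proposition \ref{prop:UpsT} gives $\sup_X \Upsilon_t(X) \leq (1+\alpha)/2 < 1$, so the geometric series is at most some $C_\alpha$.

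The technical core is thus the bound $\sup_X \Upsilon^{(s)}_t(X) \leq C_\alpha q \bar{\sigma}_N F(s) e^{c \binom{q}{2} \lambda_{1,s,N}^2}$. Following Lemma \ref{lem:FF}, $\sup_X \Upsilon^{(s)}_t(X)$ is controlled by the same expression \eqref{eq:InitialboundDigammaNew} but with $A_t(\mathbf J)$ replaced by $A^{(s)}_t(\mathbf J)$, i.e.\ the integral is now restricted to $u_1 \geq s$. Applying Proposition \ref{prop:fibo1inverse} to the inner operators yields $(\mathcal{S}_2\circ\cdots\circ \mathcal{S}_{m+1}\bar h^{(0)}_{m+1})(u_1) \leq \sum_{j=0}^m \frac{a_j^m}{j!} \bar b_t^{m-j} h^{(j)}(u_1)$, and the outer integral becomes $\int_s^t F(u_1)\cdot h^{(j)}(u_1)\,du_1 = \int_s^t F(u_1)^2 g(u_1)^j\,du_1$. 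Using \eqref{eq:F'} to bound $F(u_1)^2 \leq \bar b_t (-F'(u_1))$ and iterating integration by parts with $g' = F$, the boundary terms now land at $u_1 = s$ and generate factors $F(s)g(s)^{j-i}$ in place of the vanishing boundary terms at $0$ that appeared in Proposition \ref{prop:fibo1inverse}. Collecting these contributions, one obtains $A^{(s)}_t(\mathbf{J}) \leq C F(s)\, a_0^{m+1}\, \bar b_t^{m}\, \sum_{\ell=0}^{m+1} \frac{g(s)^\ell}{\ell!} \bar b_t^{-\ell}$.

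Plugging this into \eqref{eq:InitialboundDigammaNew} and redoing the geometric sum of the proof of Proposition \ref{prop:UpsT}--- with $L = \sqrt{q}$, $\delta$ small, and invoking Lemma \ref{lem:yokkata} to extract the $q\bar\sigma_N$ factor from $\sum \sigma_N^{2,J_{m+1}}$---the sum over $m$ and $\ell$ factorises as $\sum_\ell g(s)^\ell/\ell! \cdot C_\alpha$-term, producing $e^{c\binom{q}{2}\lambda_{1,s,N}^2}$ with exponent $c \leq (1+\alpha)/2$ (since $\bar\sigma_N^2 g(s) = \lambda_{1,s,N}^2$ and the effective ratio in the geometric sum, as in \eqref{eq:conditionQCLdeltaq}, is at most $(1+\alpha)/2$). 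The main obstacle is tracking the combinatorial constants carefully enough to keep $c$ strictly less than one, which again hinges on $q^2 \leq \varepsilon_0 \log N$ to make the geometric series converge and on the small-$q$ vs.\ large-$q$ split used in Section \ref{sec:proofUpsT}. Combining everything gives \eqref{eq:Phi>2phis}.
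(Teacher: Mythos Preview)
Your proposal is correct and mirrors the paper's own proof: the paper likewise decomposes $\Psi_{s,t,q}\leq \Phi_{s,t,q}^{\leq 2}+\overline\Phi^{>2}_{s,t,q}$, isolates the first triple-or-more intersection to obtain the inequality $\overline\Phi^{>2}_{s,t,q}(X_0)\leq \Upsilon_{s,t}(X_0)\bigl(\sum_{k\geq 0}(\sup_X\Upsilon_t(X))^k\bigr)\sup_X\Phi_{1,t,q}^{\leq 2}(X)$ with $\Upsilon_{s,t}$ defined exactly as your $\Upsilon_t^{(s)}$, and then bounds $A_{s,t}(\mathbf J)$ by re-running the proof of \eqref{eq:AI} with the outer integral starting at $u_1=s$, yielding $A_{s,t}(\mathbf J)\leq \sum_{i=0}^{m+1}\frac{a_i^{m+1}}{i!}(\bar b_t)^{m+1-i}h^{(i)}(s)$ before exchanging sums exactly as you describe. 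The only cosmetic difference is that the paper invokes ``the proof of \eqref{eq:AI}'' rather than redoing the integration by parts explicitly, and keeps the coefficients $a_i^{m+1}$ inside the $i$-sum rather than majorising them all by the bound on $a_0^{m+1}$.
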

\begin{proof}
Similarly to \eqref{eq:upperboundPsiN}, using \eqref{eq:supUps0}, we have
\aln{ \label{eq:eqOnPsist}
\Psi_{s,t,q}(X) &\leq \Phi_{s,t,q}^{\leq 2}(X) +  \Upsilon_{s,t}(X) \left(\sum_{{k=0}}^\infty \left(\sup_{X_0 \in (\mathbb Z^2)^q} \Upsilon_t(X_0)\right)^k\right) \sup_{X_0\in (\mathbb Z^2)^q} \Phi_{1,t,q}^{\leq 2}(X_0)\\
&\leq \Phi_{s,t,q}^{\leq 2}(X) + {C} \Upsilon_{s,t}(X)  \sup_{X_0\in (\mathbb Z^2)^q} \Phi_{1,t,q}^{\leq 2}(X_0) ,\notag
}
where
\[
	\begin{aligned}
		 & \Upsilon_{s,t}(X_0):=\sum_{m=0}^\infty \sigma_N^{2m} \sum_{\substack{{s}\leq a_1\leq b_1 < a_2 \leq b_2 <  \dots < a_m \leq b_m < N                                                                                   \\ \mathbf J =(i_r,j_r)_{r\leq m} \in \mathcal D(m,q)}} \sum_{\mathbf X, \mathbf Y}  \sum_{\substack{b_m<n\leq N\\K\subset \mathcal C_q, |K|\geq 2\\X_{m+1}\in (\mathbb Z^2)^{|\bar K|}}} (1+\varepsilon_N^{\flat})^{|K|}\sigma_N^{2|K|}\\
		 & \prod_{r=1}^{m} \Big\{ \sigma_N^{2}  U_N(b_r-a_r,y_r^{i_r}-x_r^{i_r}) \mathbf{1}_{x_r^{i_r}=x_r^{j_r}}  \mathbf{1}_{y_r^{i_r}=y_r^{j_r}} \prod_{i\in \{i_r,j_r\}} p_{a_r-b_{k_r(i)}}(x_r^i-y_{k_r(i)}^i)\Big\} \\
		 & \prod_{i\in \bar K} p_{a_{m+1}-b_{k_{m+1}(i)}}(x_{m+1}^i-y_{k_{m+1}(i)}^i)\prod_{(i,j)\in K} \mathbf{1}_{x_{m+1}^{i}=x_{m+1}^j}.
	\end{aligned}
\]
The only new term we have to deal with in \eqref{eq:eqOnPsist} is $\Upsilon_{s,t}$, as the other terms are controlled by \eqref{eq:mainResultWithoutL2}. Similarly to Lemma \ref{lem:FF}, we have the following.
\begin{lem}
It holds:
\begin{equation}\label{eq:InitialboundDigammaNewst}
		\sup_{X} \Upsilon_{s,t}(X)\leq  \sum_{m=0}^\infty  (1+\varepsilon_N^{\hyperlink{symbol: star}{\diamondsuit}})^{m} \bar{\sigma}_N^{2m} \sum_{
			\mathbf J  \in \mathcal D^{>2}(m+1,q)}  \sigma_N^{2,J_{m+1}} A_{s,t}(\mathbf{J}),
\end{equation}
	where
\begin{equation} \label{eq:defAbfIst}
		A_{s,t}(\mathbf J)=\int_{s}^t\int_0^t\cdots \int_0^t   F(u_1) \left(\prod_{r=2}^{m+1}
		F(u_r+\bar u_{r})\right)   F(u_{m+1})
		\mathbf{1}_{\sum_{i=1}^{m+1} u_i \leq N} \, {\rm d} u_1 \cdots {\rm d} u_{m+1}.
\end{equation}
\end{lem}
Then, by the proof of \eqref{eq:AI}, we find
\begin{equation}
    A_{s,t}(\mathbf J) \leq \sum_{i=0}^{m{+1}} \frac{a_{i}^{m{+1}}}{i!}  (\overline{b}_t)^{m+1-i} h^{(i)}\left(s\right).
\end{equation}
Proceeding as in \eqref{eq:boundwithq3sigma3}--\eqref{eq:valueHm},  with $ \mathfrak C_{L,\delta,q} := (2\delta^{-1})^{1/L} (8qL\delta^{-2}+\binom q 2)$ that verifies $H_m= 4\delta^{-2}( \mathfrak C_{L,\delta,q})^m$, we get
\al{
&\sup_{X} \Upsilon_{s,t}(X) \leq C \delta^{-2} q^3 \bar \sigma_N^3 \sum_{m=0}^\infty ( \mathfrak C_{L,\delta,q})^m \bar \sigma_N^{2m} (1+\varepsilon_N^{\hyperlink{symbol: star}{\diamondsuit}})^m \sum_{i=0}^{m}   (\overline{b}_t)^{m+1-i} \frac{h^{(i)}\left(s\right)}{i!}\\
&\leq C  \delta^{-2} F(s) q \bar \sigma_N \sum_{i=0}^{\infty} \bar \sigma_N^{2i}( \mathfrak C_{L,\delta,q})^{i} \frac{(\lambda_{1,s,N})^{2i}}{i!}  \sum_{m=i}^\infty ( \mathfrak C_{L,\delta,q})^{m-i}  (1+\varepsilon_N^{\hyperlink{symbol: star}{\diamondsuit}})^m   \bar \sigma_N^{2(m-i)} (\overline{b}_t)^{m-i} ,
}
where in the second inequality we have used that $\bar b_t q^2 \bar \sigma_N^2 \leq \mathcal O(1)$ by \eqref{eq:q,T-condition}.
 Using again that $\bar b_t  \mathfrak C_{L,\delta,q} \bar \sigma_N^2 \leq (1+\alpha)/2<1$ for $N$ large with a suitable choice of $L$ and $\delta$, which implies in turn that $\mathfrak C_{L,\delta,q} \bar \sigma_N^2 \leq (1+\alpha)/2$, we find $\sup_{X} \Upsilon_{s,t}(X) \leq C q\sigma_N  F(s) e^{c \binom{q}{2} \lambda_{1,s,N}^2}$ for $c=(1+\alpha)/2$, leading to \eqref{eq:Phi>2phis}.

\section{Proof of Theorem
\ref{th:mainTheorem} in the lower $q$ case}
  \label{sec:mainTh,lowq}
Assume \eqref{eq:sub-criticality} or \eqref{eq:defNearCrit}. In this section, we prove Theorem
\ref{th:mainTheorem} when $q^2\leq \varepsilon_0 \log N$ with $\varepsilon_0$ small enough. To begin with, we assume $\varepsilon_0$ to be smaller than $\varepsilon_0$ from Proposition \ref{prop:UpsT}.  
Combining \eqref{eq:mainResultWithoutL2} and  \eqref{eq:Phi>2phis}   gives
\begin{equation} \label{eq:BoundPsistqlambda}
\begin{split}
  &  \sup_X \Psi_{s,t,q}(X)\\
  & \leq  c q^2 \bar\sigma_N^2 F(s) e^{ c \binom{q}{2} \lambda_{s,N}^2} c_\star b_t e^{(\binom{q}{2}-1)\lambda^2_{t,N}(1+\varepsilon^{\hyperlink{deltaCondition}{\triangle}}_N)} 
    + c_{\star} {b_t} e^{(\binom{q}{2}-1)\lambda^2_{s,t,N}(1+\varepsilon_N^{\hyperlink{deltaCondition}{\triangle}})},
    \end{split}
\end{equation}
where $\varepsilon_N^{\hyperlink{deltaCondition}{\triangle}}$ is independent of $s,t$.
By \eqref{eq:BoundPsistqlambda} and since $\lambda^2_{t,N} = \lambda^2_{s,t,N} + \lambda^2_{s,N}$,  
 it is enough to prove that for all $t\leq N$,
\begin{equation} \label{eq: goal s,t situation}
\sup_{s\leq t}  F(s)  e^{ c \binom{q}{2} \lambda_{s,N}^2}    e^{(\binom{q}{2}-1)\lambda^2_{s,N}(1+\varepsilon_N^{\hyperlink{deltaCondition}{\triangle}})} = \mathcal O(1).
\end{equation}
Since $F(s)= s^{-1} e^{\lambda_s^2} $ we have
$
F(s)  e^{ c \binom{q}{2} \lambda_{s,N}^2 }    e^{(\binom{q}{2}-1)\lambda^2_{s,N}(1+\varepsilon_N^{\hyperlink{deltaCondition}{\triangle}})} \leq s^{-1} e^{c' \binom q 2 \lambda_{s,N}^2}$ 
with $c'=c'(\alpha)>0$.  
Let $\varepsilon_1\in (0,1)$. If $\bar \sigma_N^2 \log s\leq 1-\varepsilon_1$, then 
by Taylor expansion,  we have 
\[
\binom q 2 \lambda_{s,N}^2 = \binom q 2 \log \frac{1}{1-\bar \sigma_N^2 \log s} \leq C({\varepsilon_1}) \binom q 2 \bar \sigma_N^2 \log s, 
\]
with some constant $C(\varepsilon_1)>0$, so that since $q^2 \bar \sigma_N^2\leq \mathcal O(\varepsilon_0)$, by choosing $\varepsilon_0$ small enough (depending on $\varepsilon_1)$, it holds that $\sup_{\bar \sigma_N^2 \log s\leq 1-\varepsilon_1} s^{-1} e^{c' \binom q 2 \lambda_s^2}$ is uniformly bounded from above in $N$. 
 Then, we note that in both \eqref{eq:sub-criticality} and \eqref{eq:defNearCrit}, there exists $c>1$ such that for all $N\geq 1$, $c^{-1}\log N \leq \bar \sigma_N^{-2} \leq c \log N$. 
Hence, if $\bar \sigma_N^2 \log s > 1-\varepsilon_1$,
  by  $\binom q 2\leq c\log N/b_t$ from \eqref{eq:q,T-condition}, then 
 we obtain for all $s\leq t$,
\begin{equation} \label{eq:sqbt}
s^{-1} e^{c' \binom q 2 \lambda_{s,N}^2} \leq e^{-c^{-1}(1-\varepsilon_1)\log N} e^{c'c \frac{\log b_t}{b_t} \log N }.
\end{equation}
Since $t\geq s$, we have $b_t \geq \log \varepsilon_1^{-1}$ which diverges as $\varepsilon_1\to 0$, hence if we let $\varepsilon_1$ small enough, by \eqref{eq:sqbt} we can make $\sup_{s\leq t, 1-\varepsilon_1\leq \bar \sigma_N^2 \log s} s^{-1} e^{c' \binom q 2 \lambda_{s,N}^2}$ arbitrarily small for large $N$, uniformly on \eqref{eq:q,T-condition}. 
In both cases, we get \eqref{eq: goal s,t situation}.
\end{proof}

\section{Proof of Theorem
  \ref{th:mainTheorem} in the higher $q$ regime}
  \label{sec:proofMainThSubcrit}
We complete the proof of Theorem
  \ref{th:mainTheorem} by dealing with the remaining case  
\eqref{eq:q,T-condition} with $q^2 \geq \varepsilon_0 \log N$, where $\varepsilon_0>0$ {is fixed throughout the section}. We assume these two conditions throughout the section.  

We focus solely on  \eqref{eq:sub-criticality}, as the higher $q$ regime is relevant only at sub-criticality, see Remark \ref{rk:quasiLowerq}}. The proof can be easily adapted to \eqref{eq:defNearCrit} and higher $q$.

{Again, we proceed by comparing $\Psi_{s,t,q} = \mathrm{E} [e^{\psi_{s,t,q}(\beta_N)}]$ with $\Phi_{s,t,q}^{\leq p_0}(X)$} 
and conclude using Proposition \ref{prop:mainProp}-\ref{item:(i)}. 
We use a strategy similar in spirit to \cite[Section 2.3]{Cosco2021MomentsUB}, where the case  $p_0=2$ is considered.

Let $\gamma_0>12$. We begin by proving that for all $(q,t)$ satisfying \eqref{eq:q,T-condition} and $q^2\geq \varepsilon_0 \log N$,
\begin{equation} \label{eq:boundOnM}
	 \Psi_{t,q}^{\star}:=\sup_X \Psi_{1,t,q}(X)\leq 2 c_{\star} b_t e^{\lambda_{t,N}^2 (\binom q 2 -1)(1 + \varepsilon_N^{\hyperlink{symbol: blacksquare}{\blacksquare}})+2q^2 c\gamma_0  \frac{\log q}{\log N}} ,
\end{equation}
with $c=c(\mu)$,  
{$c_\star$ as in {Proposition \ref{prop:mainProp}-\ref{item:(i)}} and $\varepsilon_N^{\hyperlink{symbol: blacksquare}{\blacksquare}}$ as in Lemma \ref{lem:gaussToG} below.}
In Section \ref{sec:removingp_0+_step1}, we show \eqref{eq:boundOnM}. 
 We conclude the proof of Theorem \ref{th:mainTheorem} in Section \ref{sec:removingp_0+}.

Let $F_n^i := \{\exists j\in \Iintv{1,q}\setminus\{i\}: S_n^i=S_n^j\}$ and $m_n^q := \sum_{i=1}^q \mathbf{1}_{F_n^i}$. We 
define the event "there are no more than $p$ particles involved in an intersection at time $n$" as
$A_{n,p} := \{m_n^q \leq p\}$.
Then, let $B_{s,t,p} := \cap_{n=s}^t A_{n,p}$ and $B_{k,p} := B_{1,t,p}$. 
\begin{lem} \label{lem:gaussToG} \hypertarget{symbol: blacksquare}
{Assume \eqref{eq:sub-criticality}.} Let $p_0\in \mathbb N\setminus \{1\}$ and $c_\star,\varepsilon_0$ from Proposition \ref{prop:mainProp}-\ref{item:(i)}. For all $u>0$ such that $\hat \beta u < 1$, for all $s\leq t $, {$q^2\geq \varepsilon_0 \log N$ and \eqref{eq:q,T-condition},}
\begin{equation} \label{eq:nop0+} 
	\sup_{X\in (\mathbb Z^2)^q}{\rm E}_{X}^{\otimes q}\left[e^{u^2 \psi_{s,t,q}(\beta_N)} \mathbf{1}_{B_{s,t,p_0}}\right] \leq c_{\star} b_t e^{\lambda_{s,t,N}^2(u {\beta_N})  \binom q 2(1+\varepsilon_N^{\hyperlink{symbol: blacksquare}{\blacksquare}})},
\end{equation}
where for all $u>0$, $\varepsilon_N^{\hyperlink{symbol: blacksquare}{\blacksquare}}=\varepsilon_N^{\hyperlink{deltaCondition}{\triangle}}+\varepsilon_N^{\hyperlink{symbol: clubsuit}{\clubsuit}}$, with   $\varepsilon_N^{\hyperlink{deltaCondition}{\triangle}}$ satisfying the property $(\Delta)$ of Definition \ref{def:DeltaCondition}, {$\varepsilon_N^{\hyperlink{symbol: clubsuit}{\clubsuit}}=0$ if $\mu= \mathcal N(0,1)$, $\varepsilon_N^{\hyperlink{symbol: clubsuit}{\clubsuit}}=\mathcal{O}((\log N)^{-1/2})$ {uniformly} otherwise}, and   $\lambda_{s,t,N}^2(u \beta_N)$ is defined as $\lambda_{s,t,N}^2$ with $u{\beta_N}$ in place of  ${\beta_N}$.
\end{lem}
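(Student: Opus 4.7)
The plan is to truncate the expectation on $B_{s,t,p_0}$ (where every cluster has size $\leq p_0$), compare $\psi_{s,t,q}(\beta_N)$ with the Gaussian counterpart $\psi^\star_{s,t,q}$ via Lemma \ref{lem:estimateLambda}, reduce the problem to the truncated Gaussian moment $\Phi^{\leq p_0}_{s,t,q}$ with a slightly modified temperature $\tilde\beta_N$, and finally apply Proposition \ref{prop:mainProp}--\ref{item:(i)}, whose hypothesis $q^2\geq\varepsilon_0\log N$ is precisely the standing hypothesis.

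\textbf{Step 1 (reduction to the Gaussian quantity).} On $B_{s,t,p_0}$ every cluster $N_n^q(x)$ has size $\leq p_0$, and by Lemma \ref{lem:estimateLambda}, $\Lambda^\mu_p(\beta_N)\leq (1+c(\mu)p_0\beta_N)\Lambda^\mu_2(\beta_N)\binom{p}{2}$ for $p\leq p_0$, the factor $(1+c(\mu)p_0\beta_N)$ reducing to $1$ when $\mu=\mathcal N(0,1)$. Summing over $n$ and $x$ and using $\sum_x\binom{N_n^q(x)}{2}=\sum_{(i,j)\in\mathcal C_q}\mathbf{1}_{S_n^i=S_n^j}$ gives, on $B_{s,t,p_0}$,
\[
u^2\psi_{s,t,q}(\beta_N)\leq \tilde\beta_N^2\,\psi^\star_{s,t,q}, \qquad \tilde\beta_N^2 := u^2\bigl(1+c(\mu)p_0\beta_N\bigr)\Lambda^\mu_2(\beta_N).
\]
Since in the chaos expansion of $e^{\tilde\beta_N^2\psi^\star_{s,t,q}}$ (viewed as a Gaussian moment with temperature $\tilde\beta_N$) the indicator $\mathbf{1}_{B_{s,t,p_0}}$ kills every diagram with $|\bar J_r|>p_0$ for some $r$, dropping the indicator only adds non-negative terms that are already included in the truncated sum, yielding
\[
\mathrm{E}^{\otimes q}_X\bigl[e^{u^2\psi_{s,t,q}(\beta_N)}\mathbf{1}_{B_{s,t,p_0}}\bigr]\leq \widetilde\Phi^{\leq p_0}_{s,t,q}(X),
\]
where $\widetilde\Phi^{\leq p_0}_{s,t,q}$ denotes $\Phi^{\leq p_0}_{s,t,q}$ from \eqref{eq:Phip_0} computed with $\mu=\mathcal N(0,1)$ and $\beta_N$ replaced by $\tilde\beta_N$.

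\textbf{Step 2 (applying Proposition \ref{prop:mainProp}--\ref{item:(i)}).} Since $u\hat\beta<1$ and $\tilde\beta_N^2 R_N\to u^2\hat\beta^2$, the temperature $\tilde\beta_N$ is sub-critical with $\tilde{\hat\beta} = u\hat\beta(1+o(1))<1$. A short computation using $\bar\sigma_N^2\log t\leq \hat\beta^2+o(1)$ and $u\hat\beta<1$ shows that \eqref{eq:q,T-condition} transfers from $\alpha$ to some $\tilde\alpha=\tilde\alpha(u,\hat\beta,\alpha)<1$, uniformly in $(q,t,s)$. Together with $q^2\geq\varepsilon_0\log N$, Proposition \ref{prop:mainProp}--\ref{item:(i)} then yields
\[
\widetilde\Phi^{\leq p_0}_{s,t,q}(X)\leq c_\star\,\tilde b_t\,\exp\bigl\{\tilde\lambda^2_{s,t,N}\bigl(\tbinom{q}{2}-1\bigr)(1+\tilde\varepsilon^\triangle_N)\bigr\},
\]
where $\tilde\lambda^2_{s,t,N},\tilde b_t,\tilde\varepsilon^\triangle_N$ are built from $\tilde\beta_N$ and $\tilde\varepsilon^\triangle_N$ still satisfies the condition $(\hyperlink{deltaCondition}{\triangle})$.

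\textbf{Step 3 (error comparison).} It remains to relate $(\tilde\lambda^2_{s,t,N},\tilde b_t)$ to the target $(\lambda^2_{s,t,N}(u\beta_N),b_t)$. For $\mu=\mathcal N(0,1)$, $c(\mu)=0$ and $\tilde\beta_N=u\beta_N$ exactly, giving $\tilde\lambda^2_{s,t,N}=\lambda^2_{s,t,N}(u\beta_N)$ and $\varepsilon^{\clubsuit}_N=0$. For general $\mu$, $\tilde\beta_N^2=(u\beta_N)^2(1+\mathcal{O}(\beta_N))$ with $\beta_N=\mathcal{O}((\log N)^{-1/2})$, and a direct expansion of $\log\tfrac{1-\tilde{\bar\sigma}_N^2\log s}{1-\tilde{\bar\sigma}_N^2\log t}$ gives $\tilde\lambda^2_{s,t,N}\leq \lambda^2_{s,t,N}(u\beta_N)(1+\varepsilon^{\clubsuit}_N)$ with $\varepsilon^{\clubsuit}_N=\mathcal{O}((\log N)^{-1/2})$; similarly $\tilde b_t\leq b_t$ up to a universal multiplicative constant absorbed in $c_\star$, and $(\binom{q}{2}-1)\leq\binom{q}{2}$. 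Setting $\varepsilon^\blacksquare_N:=\tilde\varepsilon^\triangle_N+\varepsilon^\clubsuit_N$ yields the claim. The main technical obstacle is ensuring the multiplicative comparison in Step 3 holds uniformly in $s\leq t\leq N$ when $\bar\sigma_N^2\log t$ approaches $\hat\beta^2$; this is handled because $u\hat\beta<1$ keeps the denominator $1-u^2\bar\sigma_N^2\log t$ bounded away from $0$.
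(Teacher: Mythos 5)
Your proposal is correct and follows essentially the same route as the paper's proof: compare $\psi_{s,t,q}$ to the Gaussian $\psi^\star_{s,t,q}$ on $B_{s,t,p_0}$ via Lemma \ref{lem:estimateLambda} with the modified temperature $\tilde\beta_N$, observe that the indicator restricts the chaos expansion to diagrams with $|\bar J_r|\le p_0$ so the expectation is bounded by $\Phi^{\leq p_0}_{s,t,q}(\tilde\beta_N)$, apply Proposition \ref{prop:mainProp}--\ref{item:(i)}, and then convert $\lambda^2_{s,t,N}(\tilde\beta_N)$ into $\lambda^2_{s,t,N}(u\beta_N)$ with a relative error $\varepsilon^{\clubsuit}_N=\mathcal O((\log N)^{-1/2})$ (zero for Gaussian weights). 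Your Step 2 and Step 3 bookkeeping (transfer of \eqref{eq:q,T-condition} to the tilted temperature and $\tilde b_t\le Cb_t$) is if anything slightly more explicit than the paper's, which performs the same steps implicitly.
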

\begin{proof} 
We get by \eqref{eq:boundLambda} that there exists $c=c(p_0,\hat \beta,\mu)$
such that for all $u>0$, for all $X\in (\mathbb Z^2)^q$ and $s\leq t$,
\begin{equation} \label{eq:compToGauss}
	{\rm E}_{X}^{\otimes q}\left[e^{{u^2}\psi_{s,t,q}(\beta_N)} \mathbf{1}_{B_{s,t,p_0}}\right]\leq
	{\rm E}_{X}^{\otimes q} \left[e^{(1+c\beta_N)u^2\beta_N^2\psi^{\star}_{s,t,q}}\mathbf{1}_{B_{s,t,p_0}}\right].
\end{equation}
  Indeed, by \eqref{eq:boundLambda}, there exists $c_0=c_0({\mu},p_0)>0$ such that under $B_{s,t,p_0}$, 
\begin{align*}
    \psi_{s,t,q}(\beta_N) \leq (1+c_0\beta_N) \psi_{s,t,q}^{\mu_0}(\beta_N) =(1+c_0\beta_N) \beta_N^2 \psi_{s,t,q}^{\star},
\end{align*}
where we refer to Remark \ref{rk:GaussianLambda} for the equality. {The constant $c_0$ can be set to $0$ for Gaussian weights.}
{Now, set $\tilde \beta_N := \sqrt{(1+c_0\beta_N)}u \beta_N$.} It follows from the proof of Proposition \ref{prop:expansion}, in particular \eqref{eq: discrete magical formula} with the indicator function $\mathbf{1}_{B_{s,t,p_0}}$, that
${\rm E}_{X}^{\otimes q}[e^{
{\tilde{\beta}_N^2}\psi^{\star}_{s,t,q}} \mathbf{1}_{B_{s,t,p_0}}]\leq \Phi_{s,t,q}^{\leq p_0}(\tilde \beta_N)$.
We thus get from Proposition \ref{prop:mainProp}-\ref{item:(i)}, where we set  
$\varepsilon_N^{\triangle,u}:=\varepsilon_N^{\hyperlink{deltaCondition}{\triangle}}(\tilde \beta_N)$ {(which satisfies $(\Delta)$ for all $u>0$)},
\begin{equation} \label{eq:boundPsi}
	\sup_{X_0 \in (\mathbb Z^2)^q} {\rm E}_{X_0}^{\otimes q}\left[e^{
    {\tilde{\beta}_N^2}\psi^{\star}_{s,t,q}} \mathbf{1}_{B_{s,t,p_0}}\right] \leq c_\star {b_t} e^{\lambda_{s,t,N}^2({\tilde \beta_N}) (\binom{q}{2}-1)(1+\varepsilon_N^{\triangle,u})},
\end{equation}
Let $\tilde \sigma_N^2 := e^{\tilde \beta_N^2}-1=(e^{u^2 \beta_N^2}-1) + \mathcal O((\log N)^{-3/2})$. We have
\[
\lambda_{t,N}^2(\tilde \beta_N)-\lambda_{t,N}^2( u\beta_N) = \log \left(1+\frac{1}{\pi} \frac{(e^{u^2\beta_N^2}-1) - \tilde \sigma_N^2}{1-\bar \sigma_N^2 \log t}
\log t\right) = \mathcal O((\log N)^{-1/2}),
\]
uniformly in $t\leq N$, since $\limsup_N \sup_{t\leq N} \frac{1}{1-\bar \sigma_N^2 \log t}\leq \frac{1}{1-\hat \beta^2}$.
Combining \eqref{eq:boundPsi} and  \eqref{eq:compToGauss},   
leads to the statement of the lemma.
\end{proof}
\begin{remark} \label{rk:EstimateMsta}
As this will be useful later, we mention that the proof of \eqref{eq:boundOnM} given below also applies to $\Psi^{\mu,u}_{t,q}(X) := {\rm E}_X[e^{u^2\psi_{t,q}(\beta_N)}]$ for $u \hat \beta < 1$ and $\lambda_{t,N}^2(u {\beta_N})$. In the proof, we keep $u=1$ for simplicity of notation.
\end{remark}

\subsection{Proof of \eqref{eq:boundOnM}}
\label{sec:removingp_0+_step1}
In this section, we always assume \eqref{eq:sub-criticality}, \eqref{eq:q,T-condition} {and $q^2\geq \varepsilon_0 \log N$}. In particular, those imply that $q^2 = \mathcal O(\log N)$, which we will use repeatedly. 
 Let $\gamma_0>{12}$ and $r_0=\lceil q^{\gamma_0} \rceil$. 
 We set
$\tau_{r_0}:=\inf \left\{n\geq r_0 : m_n^q > p_0\right\}$. We start by decomposing
\begin{align}
	 \Psi_{t,q}(X)
	 & = {\rm E}_X\left[e^{\psi_{t,q}(\beta_N)} \mathbf {1}_{\tau_{r_0} >t}\right] + \sum_{k=r_0}^{t}{\rm E}_X\left[e^{\psi_{t,q}(\beta_N)} \mathbf {1}_{\tau_{r_0}=k}\right]\nonumber \\
	 & =: Q(X) + R(X).\label{eq:decAB}
\end{align}
Set $\lambda_{t,N} := \lambda_{1,t,N}$. By Markov's property,
\begin{align*}
	Q(X) & \leq {\rm E}_X^{\otimes q}\left[e^{\psi_{r_0,q}(\beta_N)}\right] \sup_{Y} {\rm E}_{Y}^{\otimes q} \left[e^{\psi_{t-r_0,q}(\beta_N)} \mathbf{1}_{B_{t-r_0,p_0}} \right],
\end{align*}
so that by \eqref{eq:nop0+} and \eqref{eq:AprioriLemma2},
using that $\lambda^2_{t-r_0,N}\leq \lambda_{t,N}^2$
(recall \eqref{eq:def_lambda_T_Nbis}),
\begin{equation} \label{eq:boundOnA(X)}
\begin{split}
    	\sup_{X} Q(X) &\leq  e^{c q^2 \frac{\log r_0 }{\log N}} c_\star b_t e^{\lambda_{t,N}^2 (\binom q 2-1)(1+\varepsilon_N^{\hyperlink{symbol: blacksquare}{\blacksquare}})} \\
        &\leq  c_\star b_t e^{\lambda_{t,N}^2 (\binom q 2-1) (1+\varepsilon_N^{\hyperlink{symbol: blacksquare}{\blacksquare}}) +  2 q^2 c\gamma_0  \frac{\log q}{\log N}},
        \end{split}
\end{equation}
for some $c=c(\mu)>0$. 
For $R(X)$, we use again Markov’s property to find that
\begin{equation} \label{eq:decR}
	\begin{aligned}
		R(X)  \leq  \Psi_{t,q}^{\star}\sum_{k=r_0}^{t}   {\rm E}_{X}^{\otimes q} \left[e^{\psi_{k,q}(\beta_N)}  \mathbf {1}_{\tau_{r_0}=k} \right] = \Psi_{t,q}^{\star}(R_1(X) + R_2(X)),
	\end{aligned}
\end{equation}
where for  $r_1=2r_0$,
we set
\begin{equation} \label{eq:defR2}
	R_1(X) := \sum_{k=r_0}^{r_1}   {\rm E}_{X}^{\otimes q} \left[e^{\psi_{k,q}(\beta_N)}  \mathbf {1}_{\tau_{r_0}=k} \right], R_2(X) := \sum_{k=r_1+1}^{t}    {\rm E}_{X}^{\otimes q} \left[e^{\psi_{k,q}(\beta_N)}  \mathbf {1}_{\tau_{r_0}=k} \right].
\end{equation}
To deal with $R_1$ and $R_2$, we rely on \eqref{eq:AprioriLemma2} which ensures that
for some $c=  c(\hat \beta)>1$,
\begin{equation} \label{eq:useOfAprioriBound}
	\forall k \in \llbracket 1, 
	r_1\rrbracket: \quad  \sup_{X\in (\mathbb Z^2)^q} {\rm E}_X^{\otimes q} \left[e^{2\psi_{k,q}(\beta_N)}\right] \leq  c k^{c}.
\end{equation}
We start with $R_1(X)$. By the Cauchy-Schwarz inequality,
\[
	R_1(X)   \leq \sum_{k=r_0}^{r_1}   {\rm E}_{X}^{\otimes q} \left[e^{\psi_{k,q}(\beta_N)}  \mathbf {1}_{A_{k,p_0}^c} \right]   \leq \sum_{k=r_0}^{r_1}   {\rm E}_{X}^{\otimes q}  \left[e^{2\psi_{k,q}(\beta_N)} \right]^{1/2} \rmP_{X}^{\otimes q} \left(A_{k,p_0}^c\right)^{1/2}.
\]
By \eqref{eq:useOfAprioriBound} and \eqref{eq:boundPAc},
 for $p_0=p_0(\hat \beta)$ large enough so that {$1+(c-p_0/3)/2 < -p_0/12$},
\begin{equation} \label{eq:boundR1}
	\sup_{X\in (\mathbb Z^2)^q} R_1(X)\leq C q^{p_0}\sum_{k=r_0}^{r_1} k^{{(c-p_0/3)/2}} \leq C q^{p_0} r_0^{1 + {(c-p_0/3)/2}} \leq C q^{-c'p_0},
\end{equation}
where $c'>0$, where we have used that $r_0\geq q^{\gamma_0}$ with $\gamma_0 > {12}$ in the last inequality.

We next focus on $R_2(X)$. Observe that for all $k\geq r_0$, we have 
$\{\tau_{r_0}=k\} = B_{r_0,k-1,p_0} \cap A_{k,p_0}^c$,
hence by Markov’s property at time $r_0$,
\[
{\rm E}_{X} \left[e^{\psi_{k,q}(\beta_N)}  \mathbf {1}_{\tau_{r_0}=k} \right] = {\rm E}_X\left[e^{\psi_{r_0,q}(\beta_N)}{\rm E}_{S_{r_0}} \left[e^{\psi_{k-r_0,q}(\beta_N)} \mathbf{1}_{B_{k-r_0-1,p_0}} \mathbf{1}_{A_{k-r_0,p_0}^c}\right]\right].
\]
By \eqref{eq:useOfAprioriBound} with $k=r_0$, using $e^{\psi_{k,q}} \leq e^{\beta_N^2 q^2} e^{\psi_{k-1,q}}$ and $\beta_N^2 q^2 = \mathcal{O}(1)$ under \eqref{eq:q,T-condition},
\begin{align*}
	R_2(X) & \leq  {\rm E}_X\left[e^{\psi_{r_0,q}(\beta_N)}\right] \sum_{k=r_1+1}^{t}   \sup_{Y} {\rm E}_{Y} \left[e^{\psi_{k-r_0,q}(\beta_N)} \mathbf{1}_{B_{k-r_0-1,p_0}} \mathbf{1}_{A_{k-r_0,p_0}^c}\right]\\
    & \leq C r_0^c \sum_{k=r_1+1}^{t}   \sup_{Y} {\rm E}_{Y} \left[e^{\psi_{k-r_0-1,q}(\beta_N)} \mathbf{1}_{B_{k-r_0-1,p_0}} \mathbf{1}_{A_{k-r_0,p_0}^c}\right],
\end{align*}
Hence, by H\"older's inequality (with $a^{-1}+b^{-1}=1$, $\sqrt{a}\hat \beta <1$), 
\begin{align*}
    R_2(X) & \leq  Cr_0^c  \sum_{k=r_1+1}^{t}   \sup_{Y} {\rm E}_{Y} \left[e^{a\psi_{k-r_0-1,q}(\beta_N)} \mathbf{1}_{B_{k-r_0-1,p_0}}\right]^{1/a} \sup_{Y}\rmP_{Y} \left(A_{k-r_0,p_0}^c\right)^{1/b}  \\
	 & \leq Cr_0^c q^{p_0/b} \sum_{k=r_1+1}^{t}  e^{a^{-1}(1+o_N(1)) \binom{q}{2}\lambda_{k,N}^2(\sqrt{a}\beta_N)} (k-r_0)^{-p_0/(3b)},
\end{align*}
where in the third inequality, we have used \eqref{eq:nop0+} and \eqref{eq:boundPAc}.
Observing that $\binom{q}{2}\lambda_{k,N}^2{(\sqrt{a}\beta_N)}  \leq    c' \log k$  
with some $c'=c'(\hat \beta)$  since $\sqrt{a}\hat \beta <1$ {and $q^2=\mathcal O(\log N)$}.  Then, as $r_1=2r_0$, we have $\frac{k}{k-r_0} \leq 2$ for $k\geq r_1$, and hence
\begin{align}
	\sup_{X} R_2(X)&\leq C r_0^c q^{p_0/b}\sum_{k=r_1+1}^{N} 2^{p_0/(2b)} k^{  c'-p_0/(3b)} \nonumber\\
 & \leq C  (2q)^{p_0} r_0^{c} \,r_1^{c'-p_0/(3b)+1} \leq  C q^{-c''p_0}, \label{eq:boundR2}
\end{align}
with $c''=c''(\hat \beta)>0$, where the last inequality holds by letting $p_0=p_0(\hat \beta)$ large enough depending $c,c'$.
Putting together \eqref{eq:decAB}, \eqref{eq:boundOnA(X)}, \eqref{eq:decR}, \eqref{eq:boundR1} and \eqref{eq:boundR2}, we find that
\begin{equation*}
	\Psi_{t,q}^\star \leq \sup_X Q(X) + \sup_X R(X)\leq 2c_\star e^{\lambda^2_{t,N} \binom q 2(1 + \varepsilon_N^{\hyperlink{symbol: blacksquare}{\blacksquare}}) +  q^2 c\gamma_0  \frac{\log q}{\log N}} + C\Psi_{t,q}^\star q^{-c'''p_0},
\end{equation*}
for $c'''=c'''(\hat \beta)>0$, which  implies \eqref{eq:boundOnM} as $Cq^{-c'''p_0} \leq 1/2$ for $p_0$ large enough (or $N$ large enough since we assume $q^2\geq \varepsilon_0 \log N$).

\subsection{Conclusion} \label{sec:removingp_0+}
{First}, assume 
$s\geq q^{\gamma}$ 
with {$\gamma > 3$}.
Set $\tau_{s}:=\inf \left\{n\geq s : m_n^q > p_0\right\}$. We start by decomposing
\begin{align}
	\Psi_{s,t,q}(X)
	 & = {\rm E}_X\left[e^{\psi_{s,t,q}(\beta_N)} \mathbf {1}_{\tau_{s} >t}\right] + \sum_{k=s}^{t}{\rm E}_X\left[e^{\psi_{s,t,q}(\beta_N)} \mathbf {1}_{\tau_{s}=k}\right]\nonumber \\
	 & =: Q(X) + R(X).\label{eq:decAB'}
\end{align} 
By \eqref{eq:nop0+},
$\sup_{X} Q(X) \leq {c_\star} b_t e^{\lambda_{s,t,N}^2 (\binom q 2-1)(1+\varepsilon_N^{\hyperlink{symbol: blacksquare}{\blacksquare}})}$. 
Then by Markov’s property,
\begin{equation} \label{eq:decR'}
\begin{aligned}
R(X)  \leq  \Psi_{t,q}^\star \sum_{k=s}^{t}   {\rm E}_{X} \left[e^{\psi_{k,q}(\beta_N)}  \mathbf {1}_{\tau_{s}=k} \right].
\end{aligned}
\end{equation}
Using again that $\{\tau_{s}=k\} = B_{s,k-1,p_0} \cap A_{k,p_0}^c$ 
and $e^{\psi_{s,k,q}} \leq C e^{\psi_{s,k-1,q}}$,
we get by H\"older's inequality (with $a^{-1}+b^{-1}=1$, $\sqrt{a}\hat \beta <1$), 
\begin{align*}
    R(X) & \leq  C\Psi_{t,q}^\star  \sum_{k=s}^{t}   \sup_{Y} {\rm E}_{Y} \left[e^{a\psi_{s,k-1,q}(\beta_N)} \mathbf{1}_{B_{s,k-1,p_0}}\right]^{1/a} \sup_{Y}\rmP_{Y} \left(A_{k,p_0}^c\right)^{1/b}  \\
	 & \leq C\Psi_{t,q}^\star q^{p_0/b} \sum_{k=s}^{t}  e^{a^{-1} \binom{q}{2}\lambda_{k,N}^2(\sqrt{a}\beta_N)(1+\varepsilon_N^{\hyperlink{symbol: blacksquare}{\blacksquare}})} k^{-p_0/(3b)}.
\end{align*}
where in the third inequality, we have used \eqref{eq:nop0+} and \eqref{eq:boundPAc}.
Now, we use again that $\binom{q}{2}\lambda_{k,N}^2{(\sqrt{a}\beta_N)}  \leq   c' \log k$, so that by \eqref{eq:boundOnM},  
by letting $p_0=p_0(\hat \beta)$ large enough,
\begin{align*}
	\sup_{X} R(X)&\leq C q^{p_0/b} \Psi_{t,q}^\star \sum_{k=s}^{N}  k^{  c'-p_0/(3b)} \\
    &\leq C q^{p_0/b} b_t e^{\lambda_{t,N}^2 (\binom q 2-1) (1+\varepsilon_N^{\hyperlink{symbol: blacksquare}{\blacksquare}}) + 2  q^2 c \gamma_0 \frac{\log q}{\log N}}   s^{c' - p_0/(3b)+1}.
\end{align*}
Since $\lambda_{t,N}^2= \lambda_{s,t,N}^2 +\lambda_{s,N}^2$ and  $e^{\binom{q}{2}\lambda_{s,N}^2(1+\varepsilon_N^{\hyperlink{symbol: blacksquare}{\blacksquare}})} e^{2q^2 c\gamma_0 \frac{\log q}{\log N}} \leq s^{c''}q^{c''}$ with some $c''=c''(\hat{\beta})>0$, 
using the assumption $s\geq q^{\gamma}$ with {$\gamma > 3$}, we obtain that $\sup_X R(X) \leq \varepsilon b_t e^{\lambda_{s,t,N}^2 (\binom q 2-1)(1+\varepsilon_N^{\hyperlink{symbol: blacksquare}{\blacksquare}})}$ for any arbitrary $\varepsilon>0$ by letting $p_0$ large enough. 
This 
concludes the proof in the case $s\geq q^{\gamma}$ by  \eqref{eq:decAB'}-\eqref{eq:decR'}.
For $s\leq q^{\gamma}$, we bound $\Psi_{s,t,q}\leq \Psi_{t,q}$ and use \eqref{eq:boundOnM} with $e^{2q^2 c\gamma_0  \frac{\log q}{\log N}}\leq q^{c_1}$ with $c_1=c_1(\mu)>0$. %Finally, in \eqref{eq:defNearCrit}, we observe that $c'''(\hat \beta)$ can be replaced by $c'''(\alpha)$ since \eqref{eq:q,T-condition} and $q^2 \geq \varepsilon_0(\alpha) \log N$ imply that [...]% $\frac{1}{1-\log t} \leq \alpha/\varepsilon_0(\alpha)$ and thus $\hat \beta < c(\alpha)<1$ for some $c(\alpha)$.

\section{Proof of the main results}
\subsection{Proofs of Theorem \ref{th:SCnost}, Theorem \ref{th:QCnost} and Corollary \ref{cor:QCfiniteq}}
The results follow from Theorem \ref{th:mainTheorem} by setting $s=1,t=N$. Indeed, for Theorem \ref{th:SCnost}, 
we use that with 
$\lambda^2 = \lambda^2(\hat \beta) = \frac{1}{1-\hat \beta^2}$,
we have 
$b_N \leq  e^{\lambda^2(\hat \beta)(1+o_N(1))}$ as well as $\lambda_{1,1,N}^2 = \lambda^2(\hat \beta) +  \mathcal O( (\log N)^{-1})$. 
In quasi-criticality, setting $\lambda_N^2 = \log \frac{\log N}{\vartheta_N}$, it holds that 
 $b_N \leq e^{\lambda_N^2(1+o_N(1))}$ and   $\lambda_{1,1,N}^2 = \lambda_N^2(\hat \beta)(1 +  \mathcal O(\vartheta_N^{-1}))$. 
We turn to Corollary \ref{cor:QCfiniteq}.
Let $M>0$ and assume $q\leq M$. Again by Theorem \ref{th:mainTheorem}, referring to Proposition \ref{prop:mainProp} regarding the error terms,
$\mathbb{E}[W_N^q] \leq (1+o_N(1)) \frac{\binom q 2}{\binom q 2 - 1} e^{\lambda_N^2 \binom q 2 (1+o_N(1))}$, where $|o_N(1) \lambda_N^2|$ is bounded from above by  $C (\vartheta_N^{-1} + b_N \bar \sigma_N^2) \log \frac{\log N}{\vartheta_N}$. Observe that $b_N\bar \sigma_N^2 = \mathcal O (\vartheta_N^{-1})$ under \eqref{eq:defNearCrit}, which leads to \eqref{eq:QCfiniteq}. Finally, if $\vartheta_N\geq C\log \log N$, the error $o_N(1) \lambda_N^2$ is bounded.

\subsection{Proof of Corollary \ref{cor:LDSC} and Corollary \ref{cor:LDQC}.}
\subsubsection{Proof of Corollary \ref{cor:LDSC}} 
For any integer \( q_N > 0 \), using Markov's inequality, we have
\begin{align*}
\mathbb{P}\left( \log W_N(\beta_N)  \geq \lambda x_N \right)
&= \mathbb{P}\left( W_N(\beta_N) \geq e^{ \lambda x_N} \right) \leq e^{- q_N  \lambda x_N} \mathbb{E}[ W_N(\beta_N)^{q_N} ].
\end{align*}
From Theorem \ref{th:SCnost}, under the condition
\[
\limsup_{N\to\infty} \binom{q_N}{2} \frac{1}{\log N} < \frac{1 - \hat{\beta}^2}{\hat{\beta}^2},
\]
we have
$
\mathbb{E}[ W_N(\beta_N)^{q_N} ] \leq e^{ \lambda^2 \binom{q_N}{2} (1 + o_N(1)) }.$
Let us choose \( q_N = \frac{x_N}{\lambda} \). We can verify that this choice satisfies the condition of Theorem \ref{th:SCnost} from our assumption on \( x_N \):
\[
\limsup_{N\to\infty} \binom{q_N}{2} \frac{1}{\log N} = \limsup_{N\to\infty} \frac{q_N^2}{2\log N} = \limsup_{N\to\infty} \frac{x_N^2}{2\lambda^2 \log N} < \frac{1 - \hat{\beta}^2}{\hat{\beta}^2}.
\]
Substituting back into the inequality, we get
\begin{align*}
\mathbb{P}\left( \log W_N(\beta_N)  \geq \lambda x_N \right)
&\leq e^{ - q_N  \lambda x_N + \lambda^2 \binom{q_N}{2} (1 + o_N(1)) }.
\end{align*}
With \( q_N = \frac{x_N}{\lambda} \),
  the exponent simplifies to
\begin{align*}
-x_N^2 + \left( \frac{x_N^2}{2}\right)(1+o_N(1)) = -\frac{x_N^2}{2}(1-o_N(1)).
\end{align*}
Thus,
$
\mathbb{P}\left( \log W_N(\beta_N)  \geq \lambda x_N \right) \leq e^{ -\frac{x_N^2}{2}  (1 - o_N(1))}.
$

Next, we consider the lower bound. We take $\varepsilon>0$ arbitrary and set $q_N= (1+\varepsilon )x_N/\lambda$. 
We have
\begin{align}
\label{eq: lower tail estimate}
\mathbb{P}\left( \log W_N(\beta_N)  \geq \lambda x_N \right)&\geq \frac{\mathbb{E}\Big[ e^{- q_N \log W_{N}} e^{ q_N \log W_{N}} \mathbf{1}_{\{ \log W_{N}\in [\lambda x_N, (1+2\varepsilon)\lambda x_N ]\}}\Big]}{\mathbb{E}\Big[ W_{N}^{q_N}\Big]} \mathbb{E}\Big[ W_{N}^{q_N}\Big]\\
&\geq e^{- q_N  (1+2\varepsilon)\lambda x_N}  \frac{\mathbb{E}\Big[  W_{N}^{q_N}\,\mathbf{1}_{\{ \log W_{N}\in [\lambda x_N, (1+2\varepsilon)\lambda x_N] \}}\Big]}{\mathbb{E}\Big[ W_{N}^{q_N}\Big]} \mathbb{E}\Big[ W_{N}^{q_N}\Big].\notag
\end{align}
Moreover, we have
\al{
\mathbb{E}\Big[  e^{ q_N \log W_{N}} \mathbf{1}_{ \log W_{N}>  (1+2\varepsilon)\lambda x_N }\Big]& = \mathbb{E}\Big[  e^{(1+\varepsilon)q_N\log W_{N}}\, e^{-\varepsilon q_N\log W_{N}} \mathbf{1}_{\log  W_{N}>  (1+2\varepsilon)\lambda x_N}\Big]\\
&\leq  e^{-\varepsilon q_N  (1+2\varepsilon)\lambda x_N} \mathbb{E}\Big[  e^{(1+\varepsilon)q_N\log W_{N}}\Big].
}
For any $\varepsilon'>0$, by Theorem~\ref{th:SCnost},  this can be further bounded from above by 
\al{
&    \exp{\Big(- \varepsilon q_N  (1+2\varepsilon)\lambda x_N+(2\varepsilon+\varepsilon^2) q_N^2  \lambda^2/2+ q_N^2  \lambda^2/2 +\varepsilon' q_N^2 \Big)}.
}
We note that 
\al{
 & x_N^{-2} \left( -\varepsilon q_N  (1+2\varepsilon)\lambda x_N + (2\varepsilon  +\varepsilon^2) q_N^2  \lambda^2 /2 {+\varepsilon' q_N^2}\right)\\
 &= -2 \varepsilon (1+2\varepsilon) (1+\varepsilon) + (2\varepsilon  +\varepsilon^2) (1+\varepsilon)^2 {+\varepsilon'(1+\varepsilon)} =-\varepsilon^2+\varepsilon^4 {+\varepsilon'(1+\varepsilon)},
}
which is negative if $\varepsilon$ is less than $1$ {and $\varepsilon'$ is taken small enough}.  Similarly,  we estimate
\al{
&\mathbb{E}\Big[  e^{ q_N \log W_{N}} \mathbf{1}_{ \log W_{N}\leq \lambda x_N}\Big]\leq e^{\varepsilon \lambda x_N q_N} \mathbb{E}\Big[  e^{ (1-\varepsilon)q_N \log W_{N}}\Big]\\
&\leq \exp\Big(\varepsilon \lambda x_N q_N+((1-\varepsilon)q_N)^2  \lambda^2 /2 +\varepsilon' {q_N^2}\Big)\\
& =   \exp\Big(\varepsilon \lambda x_N q_N-(q_N^2-((1-\varepsilon)q_N)^2)  \lambda^2 /2+ q_N^2  \lambda^2 /2  +\varepsilon' {q_N^2}\Big).
}
We note that 
\al{
&x_N^{-2}    \left(\varepsilon\lambda x_N q_N-(q^2_k - ((1-\varepsilon)q_N)^2) \lambda^2/2 {+\varepsilon' q_N^2}\right) \\
& = \varepsilon (1+\varepsilon) - (2\varepsilon -\varepsilon^2) (1+\varepsilon)^2 + {+\varepsilon' (1+\varepsilon)} = 
-\varepsilon - 2\varepsilon^2  + \varepsilon^4 {+\varepsilon' (1+\varepsilon)},
}
which is also negative for $\varepsilon<1$ {and $\varepsilon'$ small enough}. 

Finally, we use \cite[Theorem~1.1]{cosco2023momentsLB} which gives $\mathbb{E}[W_N^{q_N}] \geq \exp\{\lambda^2 \binom {q_N} 2 (1-o_N(1))\}$. Putting things together, by taking $\varepsilon'$ small enough depending on $\varepsilon$, we have
\al{
\lim_{n\to\infty} \frac{\mathbb{E}\Big[  e^{ q_N \log W_{N}} \mathbf{1}_{\{ \log W_{N}\not\in [\lambda x_N, (1+2\varepsilon)\lambda x_N] \}}\Big]}{\mathbb{E}\Big[ W_{N}^{q_N}\Big]} =0.
}
Together with \eqref{eq: lower tail estimate} and \cite[Theorem~1.1]{cosco2023momentsLB}, this completes the proof.
\subsubsection{Proof of Corollary \ref{cor:LDQC}}
Using the same approach as before, for any integer \( q_N > 0 \),
\[
\mathbb{P}\left( \log W_N(\beta_N)  \geq \lambda_N x_N \right) \leq e^{- q_N \lambda_N x_N} \mathbb{E}[ W_N(\beta_N)^{q_N} ].
\]
From Theorem \ref{th:QCnost}, under the condition
$
\limsup_{N\to\infty} \binom{q_N}{2} \vartheta_N^{-1} < 1,
$
we have
$
\mathbb{E}[ W_N(\beta_N)^{q_N} ] \leq e^{ \lambda_N^2 \binom{q_N}{2} (1 + o_N(1)) }$. 
Let us choose \( q_N = \frac{x_N}{\lambda_N} \). The condition from Theorem \ref{th:QCnost} becomes
\[
\limsup_{N\to\infty} \binom{q_N}{2} \vartheta_N^{-1} = \limsup_{N\to\infty} \frac{q_N^2}{2\vartheta_N} = \limsup_{N\to\infty} \frac{x_N^2}{2\lambda_N^2 \vartheta_N} < 1,
\]
which holds by assumption. Substituting, we have
\begin{align*}
&- q_N   \lambda_N x_N + \lambda_N^2 \binom{q_N}{2} (1 + o_N(1)) = - x_N^2+ \lambda_N^2 \frac{ \left( \frac{x_N}{\lambda_N} \right) \left( \frac{x_N}{\lambda_N} - 1 \right) }{2} (1 + o_N(1)),
\end{align*}
which equals $-\frac{x_N^2}{2} (1 - o_N(1))$. Thus, 
$\mathbb{P}\left( \log W_N(\beta_N)  \geq \lambda_N x_N \right) \leq e^{ -\frac{x_N^2}{2}  (1 - o_N(1))}$.

\section{Discussion and open questions} \label{sec:discussion}
\begin{enumerate}
    \item In the quasi-critical regime, it is an open question whether $c_N (\log W_N(\beta_N)-d_N) \to \mathcal N(0,1)$ for some proper choice of scaling $c_N,d_N$.  We emphasize that our moment bounds (even for a finite $q$) do not lead to such a result (and rather yield large deviation estimates). This is due to the fact that $W_N(\beta_N)\to 0$ in this regime.
    \item In the quasi-critical regime, we expect that the limit supremum in \eqref{eq:qfixedQC} should be a limit equal to 1, in analogy to the sub-critical regime. 
    \item What should be the optimal $q$ threshold until which $q$-moments continue to behave as $e^{\lambda_N^2 \binom q 2}$? In \cite[Theorem 1.3]{cosco2023momentsLB}, it is proven that moments start to grow much faster (at least $e^{c\binom q 2 N/\log N}$) as soon as $q^2\geq c (\log N)^2$.

    \item 
Theorem \ref{th:mainTheorem} can be interpreted as an asymptotic independence property between the pairwise interactions of $q$ simple random walks. Indeed, one expects that for $q$ up to some $q_N\to\infty$ threshold -- in particular under \eqref{eq:q,T-condition} -- the family of  overlaps $ I_{s,t,N}^{i,j} = \beta_N^2 \sum_{n=s}^t \mathbf{1}_{S_n^i=S_n^j}$ for $(i,j)\in \mathcal C_q$ are asymptotically independent, that is, for any test functions $g^{i,j}$,
\begin{equation} \label{eq:asymInde}
    {\rm E}^{\otimes q}\left[e^{\sum_{(i,j)\in \mathcal C_q} g^{i,j}(I_{s,t,N}^{i,j})}  \}\right] \sim  \prod_{i,j\in \mathcal C_q} {\rm E}^{\otimes q}\left[e^{g^{i,j}(I_{s,t,N}^{i,j})}\right],
\end{equation}
where the right-hand side is asymptotically given by $e^{\lambda^2_{s,t,N}\binom q 2}$. Up to first order, Theorem \ref{th:mainTheorem} entails \eqref{eq:asymInde} with $g^{i,j} = \mathrm{Id}$. In \cite{LD23}, \eqref{eq:asymInde} is derived for finite $q\in \mathbb N\setminus\{3\}$ in the sub-critical regime. Thus, one can wonder about the extension of this result to the quasi-critical regime and more generally, to a system of $q=q_N\to\infty$ walks.

\end{enumerate}

\appendix
\section{Some technical lemmas}
 \begin{lem}\label{lem: comparison for general points}
      Given   $X=(x^i)\in (\mathbb{Z}^2)^p$ with $p\in \mathbb{N}$, it holds
\al{
{\rm E}^{\otimes p}_X[e^{\sum_{n=s}^t \sum_{x\in\Z^2} \Lambda_{N_n^q(x)}(\beta_N)}] &\leq {\rm E}^{\otimes p}_{0,\ldots,0}[e^{\sum_{n=s}^t \sum_{x\in\Z^2} \Lambda_{N_n^q(x)}(\beta_N)}].
} 
In particular, the same holds for $\mu_0$, i.e., 
\al{
{\rm E}^{\otimes p}_X[e^{\beta_N^2 \sum_{n=s}^t \sum_{(i,j)\in \mathcal C_q} \mathbf{1}_{\{S_n^i=S_n^j\}}}] &\leq {\rm E}^{\otimes p}_{0,\ldots,0}[e^{\beta_N^2 \sum_{n=s}^t \sum_{(i,j)\in \mathcal C_q} \mathbf{1}_{\{S_n^i=S_n^j\}}}].
}
 \end{lem}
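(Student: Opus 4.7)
The plan is to prove this by combining H\"older's inequality with the translation invariance of the i.i.d.\ disorder field $\{\omega(n,z)\}_{n,z}$. The key observation is that the left-hand side admits the walk--disorder representation \eqref{eq:identity}, which turns the statement into a comparison between joint mixed moments of partition functions started at different space points.

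First I would invoke \eqref{eq:identity} (with $q$ replaced by $p$, reading the apparent $q$ in the statement as a typo, since $X \in (\mathbb{Z}^2)^p$) to rewrite
\[
{\rm E}^{\otimes p}_X\Bigl[e^{\sum_{n=s}^t\sum_{x\in\mathbb{Z}^2}\Lambda_{N_n^p(x)}(\beta_N)}\Bigr] \;=\; \mathbb{E}\Bigl[\prod_{i=1}^p W_{s,t}(x^i,\beta_N)\Bigr],
\]
so that the claim becomes a statement about the $p$-fold joint moment of the random variables $W_{s,t}(x^i,\beta_N)$, which are non-negative.

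Next I would apply H\"older's inequality with the $p$ conjugate exponents all equal to $p$ (so that $p \cdot \tfrac{1}{p} = 1$), giving
\[
\mathbb{E}\Bigl[\prod_{i=1}^p W_{s,t}(x^i,\beta_N)\Bigr] \;\le\; \prod_{i=1}^p \bigl(\mathbb{E}\bigl[W_{s,t}(x^i,\beta_N)^p\bigr]\bigr)^{1/p}.
\]
Since $\{\omega(n,\cdot)\}_n$ is spatially stationary (i.i.d.\ in both coordinates), the shifted field $\omega(n,\cdot + x^i)$ has the same law as $\omega(n,\cdot)$, and hence $W_{s,t}(x^i,\beta_N)\overset{d}{=}W_{s,t}(0,\beta_N)$ as a random variable on the disorder space. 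In particular $\mathbb{E}[W_{s,t}(x^i,\beta_N)^p]=\mathbb{E}[W_{s,t}(0,\beta_N)^p]$ for every $i$, so the right-hand side above collapses to $\mathbb{E}[W_{s,t}(0,\beta_N)^p]=\mathbb{E}[\prod_{i=1}^p W_{s,t}(0,\beta_N)]$, which by \eqref{eq:identity} again equals ${\rm E}^{\otimes p}_{0,\ldots,0}[e^{\sum_{n,x}\Lambda_{N_n^p(x)}(\beta_N)}]$, proving the first inequality. The Gaussian case follows by the same argument applied to \eqref{eq:identity0}, or as a direct specialization to $\mu = \mu_0$.

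I do not foresee a real obstacle here: the proof is essentially a one-line H\"older plus translation-invariance argument, and the only bookkeeping to keep straight is the equivalence between the walk-side moment formula and the disorder-side product of partition functions.
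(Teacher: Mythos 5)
Your proof is correct and is essentially the paper's own argument: the paper likewise rewrites the walk-side expectation as the disorder moment $\mathbb{E}[\prod_{i=1}^p Z_i]$ with $Z_i$ the partition function started at $x^i$, applies the generalized H\"older bound $\mathbb{E}[\prod_{i=1}^p Z_i]\leq \prod_{i=1}^p \mathbb{E}[Z_i^p]^{1/p}$, and uses translation invariance of the i.i.d.\ environment to collapse this to $\mathbb{E}[Z_1^p]={\rm E}^{\otimes p}_{0,\dots,0}[\cdots]$. Your reading of the $q$ in the exponent as $p$ matches the paper's intent, and the Gaussian case as a specialization is also how the paper treats it.
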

\begin{proof}
Apply $\mathbb{E}[\prod_{i=1}^p Z_i]\leq \prod_{i=1}^p \mathbb{E}[|Z_i|^p]^{1/p}$ with $Z_i= {\rm E}_{x^i}[e^{\beta_N\sum_{n=1}^{N}(\omega(n,S_n)-\beta_N^2/2)}]$ to get
\al{
{\rm E}^{\otimes p}_X\Big[e^{\sum_{n=s}^t \sum_{x\in\Z^2} \Lambda_{N_n^q(x)}(\beta_N)}\Big] &= \mathbb{E}\left[\prod_{i=1}^p Z_i \right]\leq  \mathbb{E}[Z_1^p] = {\rm E}^{\otimes p}_{0}\Big[e^{\sum_{n=s}^t \sum_{x\in\Z^2} \Lambda_{N_n^q(x)}(\beta_N)}\Big].
}   
\end{proof}

Define:
\begin{equation} \label{eq:defU_N}
	\begin{aligned}
		U_N(n) & :=
		\begin{cases}
			\sigma_N^2 {\rm E}_{0}^{\otimes 2}\left[ e^{\Lambda_2(\beta_N)\sum_{l=1}^{n-1} \mathbf 1_{S^1_l = S^2_l}} \mathbf{1}_{S^1_n= S^2_n} \right] & \text{if } n\geq 1, \\                                             1   & \text{if } n=0.
		\end{cases}
	\end{aligned}
\end{equation}
\begin{prop} \label{prop:factor2ndMoment} {Assume  \eqref{eq:sub-criticality} or \eqref{eq:defNearCrit}}.
\begin{enumerate}[label=(\roman*)]
\item	For $N$ large enough, for all $m\leq N$,
\begin{equation} \label{eq:borneTheta}
		\sum_{n=0}^m U_N(n) = {\rm E}_0^{\otimes 2}\left[e^{\Lambda_2(\beta_N) \sum_{n=1}^t \mathbf{1}_{S_n^1=S_n^2}}\right] \leq \frac{1}{1-\sigma_N^2 R_m}. 
\end{equation}
\item There is $C>0$ such that, as $N\to\infty$ and for all $n\leq N$,
	\begin{equation} \label{eq:boundP2P2ndMomentC}
		U_N(n) \leq C  p_{2n}(0)  \frac{\sigma_N^2}{\left(1-\sigma_N^2 R_n \right)^2}.
	\end{equation} 
\end{enumerate}
\end{prop}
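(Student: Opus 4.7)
\textbf{Part (i).} The identity is obtained by a telescoping trick. Writing $A_j := \Lambda_2(\beta_N) \sum_{l=1}^{j} \mathbf{1}_{S_l^1=S_l^2}$ and using that $\sigma_N^2 \mathbf{1}_{S_n^1=S_n^2} = e^{\Lambda_2(\beta_N)\mathbf{1}_{S_n^1=S_n^2}}-1$, one finds
\[
U_N(n) = \rme_0^{\otimes 2}\!\left[e^{A_n}-e^{A_{n-1}}\right], \quad n \geq 1,
\]
so that summing over $n \in \llbracket 1,m\rrbracket$ telescopes and $\sum_{n=0}^m U_N(n) = \rme_0^{\otimes 2}[e^{A_m}]$ after adding the $n=0$ term. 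For the upper bound, I would expand $e^{\Lambda_2(\beta_N) \sum_{l=1}^m \mathbf{1}_{S_l^1=S_l^2}} = \prod_{l=1}^m (1 + \sigma_N^2 \mathbf{1}_{S_l^1=S_l^2})$ and take expectations, which by the Markov property gives
\[
\rme_0^{\otimes 2}[e^{A_m}] = 1 + \sum_{k\geq 1} \sigma_N^{2k} \!\!\sum_{1\leq n_1<\cdots<n_k\leq m} \prod_{i=1}^k q_{n_i-n_{i-1}},
\]
with $n_0=0$ and $q_n := \rmP_0^{\otimes 2}(S_n^1=S_n^2)$. Using $\sum_{d_1,\dots,d_k\geq 1,\ \sum d_i\leq m} \prod q_{d_i} \leq R_m^k$, I sum a geometric series to obtain $(1-\sigma_N^2 R_m)^{-1}$, which converges for $N$ large in view of $\sigma_N^2 R_N = \bar\sigma_N^2 \pi R_N \to \hat\beta^2<1$ (sub-criticality) or $\to 1$ from below (quasi-criticality).

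\textbf{Part (ii).} I would expand $U_N(n)$ by the same scheme, this time keeping the last intersection forced at time $n$:
\[
U_N(n) = \sigma_N^2 \sum_{k\geq 0} \sigma_N^{2k} \!\!\sum_{\substack{d_1,\dots,d_{k+1}\geq 1 \\ d_1+\cdots+d_{k+1}=n}} \prod_{i=1}^{k+1} q_{d_i}.
\]
The key technical input is the standard convolution estimate for the two-dimensional simple random walk (a consequence of the local CLT $q_n \asymp p_{2n}(0) \asymp 1/n$ together with the splitting $\sum_{d_1+\cdots+d_{k+1}=n}\prod q_{d_i} \leq (k+1)\sum_{d_{k+1}\geq n/(k+1)} q_{d_{k+1}}\!\sum \prod_{i\leq k} q_{d_i}$)
\[
\sum_{\substack{d_1+\cdots+d_{k+1}=n \\ d_i\geq 1}} \prod_{i=1}^{k+1} q_{d_i} \ \leq\ C\,(k+1)\,p_{2n}(0)\,R_n^{k},
\]
which also uses $q_n \leq C p_{2n}(0)$. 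Plugging this bound in, I get
\[
U_N(n) \leq C\sigma_N^2 p_{2n}(0) \sum_{k\geq 0}(k+1)(\sigma_N^2 R_n)^k = \frac{C\,\sigma_N^2 p_{2n}(0)}{(1-\sigma_N^2 R_n)^2},
\]
which is the claim.

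\textbf{Main obstacle.} The telescoping identity and the geometric sum for (i) are essentially formal. The non-trivial ingredient is the convolution estimate used in (ii); this is the standard 2d-SRW renewal estimate (cf.\@ the bounds used throughout the paper and in \cite{CSZAAP,CaSuZyCrit18}), so I would either derive it from the local CLT in a couple of lines or cite the appropriate lemma. The only subtlety is keeping the extra factor $(k+1)$ from one of the $d_i$'s being large, which is what produces the square in the denominator as opposed to a single power.
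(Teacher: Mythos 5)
Your overall route is fine and necessarily differs from the paper, which does not give a self-contained argument but simply invokes the proofs of Eqs.\ (55), (58) and (59) of \cite{Cosco2021MomentsUB}; your part (i) (telescoping identity, chaos expansion, bound $\sum_{1\le n_1<\dots<n_k\le m}\prod_i q_{n_i-n_{i-1}}\le R_m^k$ with $q_d=p_{2d}(0)$, geometric series, and the observation that $\sigma_N^2R_m\le\sigma_N^2R_N<1$ under both \eqref{eq:sub-criticality} and \eqref{eq:defNearCrit}) is correct and is essentially the standard argument.

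The gap is in part (ii), in the justification of the convolution estimate, and it is exactly at the point you yourself identify as the crux. The inequality $\sum_{d_1+\dots+d_{k+1}=n}\prod_i q_{d_i}\le C(k+1)p_{2n}(0)R_n^k$ is true, but the splitting you sketch does not give it: forcing one increment to be $\ge n/(k+1)$ costs a factor $(k+1)$ for its position \emph{and} another factor $(k+1)$ when you bound $q_d\le C/d\le C(k+1)/n$ for that increment (and, as literally written, summing $q_{d_{k+1}}$ freely over $d_{k+1}\ge n/(k+1)$ even loses the $1/n$ decay altogether, since that sum is of order $\log(k+1)$). So your argument yields at best $C(k+1)^2p_{2n}(0)R_n^k$, hence $\sum_k(k+1)^2(\sigma_N^2R_n)^k\le C(1-\sigma_N^2R_n)^{-3}$, i.e.\ a cube instead of the stated square. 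This is not cosmetic: the square is what makes the error in Lemma \ref{eq:key_lemma} of order $\vartheta_N^{-1}$ in the quasi-critical regime, where $(1-\bar\sigma_N^2\log v)^{-1}$ can be as large as $\log N/\vartheta_N$; a cube would degrade that estimate. The fix is short: symmetrize using $n=\sum_{i}d_i$, namely
\begin{equation*}
n\sum_{d_1+\dots+d_{k+1}=n}\prod_{i=1}^{k+1}q_{d_i}
=(k+1)\sum_{d_1+\dots+d_{k+1}=n}d_{k+1}\prod_{i=1}^{k+1}q_{d_i}
\le (k+1)\Bigl(\sup_{d\ge1}d\,q_d\Bigr)\sum_{\substack{d_1,\dots,d_k\ge1\\ d_1+\dots+d_k\le n}}\prod_{i=1}^{k}q_{d_i},
\end{equation*}
and use $\sup_d d\,q_d=\sup_d d\,p_{2d}(0)\le 2/\pi$ by \eqref{eq:pnstar} together with $R_n^k$ for the remaining sum; dividing by $n$ and using the local CLT lower bound $p_{2n}(0)\ge c/n$ (which is not in the paper's appendix and should be quoted, e.g.\ from \cite{LL10}) gives the factor $(k+1)$, hence $\sum_k(k+1)(\sigma_N^2R_n)^k=(1-\sigma_N^2R_n)^{-2}$ and the claimed bound. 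With that replacement your proof of (ii) is complete.
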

\begin{proof} 
The equality in \eqref{eq:borneTheta} is obtained by the proof of  \cite[Eq.\ (58)]{Cosco2021MomentsUB}. Then, observing that for $N$ large enough,  \eqref{eq:sub-criticality} or \eqref{eq:defNearCrit} implies $R_T \sigma_N^2 <1$ for all $T\leq N$, we obtain the upper bound in \eqref{eq:borneTheta} from the proof of \cite[Eq.\ (55)]{Cosco2021MomentsUB} and \eqref{eq:boundP2P2ndMomentC} from the proof of  \cite[Eq.\ (59)]{Cosco2021MomentsUB}.
\end{proof}
Let $p_n^{\star}:=\sup_x p_n(x)$. Recall $U_N(n)$ in \eqref{eq:defU_N} and $F(u)= \frac{1}{u}  \frac{1}{1-\bar{\sigma}_N^2 \log u}$ in \eqref{eq-F}.
\begin{lem} \label{eq:key_lemma}
	Assume \eqref{eq:sub-criticality} or \eqref{eq:defNearCrit}. For all $w\in \frac{1}{2} (\mathbb N\setminus\{1\})$,
	\begin{equation} \label{eq:key_bound}
		\sum_{v=0}^N U_N(v) p_{v+2w}^\star \leq  (1+\varepsilon_N^{\hyperlink{symbol: star}{\diamondsuit}}) \frac{1}\pi F(w),
	\end{equation}
	where $\varepsilon_N^{\hyperlink{symbol: star}{\diamondsuit}} = \mathcal O((\log N)^{-1})$ in \eqref{eq:sub-criticality} and $\varepsilon_N^{\hyperlink{symbol: star}{\diamondsuit}} = \mathcal O(\vartheta_N^{-1})$ in \eqref{eq:defNearCrit}, is uniform in $w$.
\end{lem}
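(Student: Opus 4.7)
My approach is to split $\sum_{v=0}^N U_N(v)p_{v+2w}^\star = S_1 + S_2$ at $v = w$ and treat each piece by a different technique. The target leading order is $F(w)/\pi = 1/[\pi w(1-\bar\sigma_N^2\log w)]$, while $\varepsilon_N^{\hyperlink{symbol: star}{\diamondsuit}}$ will turn out to be of order $\sigma_N^2/(1-\bar\sigma_N^2 \log N)$, which equals $\mathcal O((\log N)^{-1})$ under \eqref{eq:sub-criticality} and $\mathcal O(\vartheta_N^{-1})$ under \eqref{eq:defNearCrit}, matching the required threshold uniformly in $w$.

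For $S_1 := \sum_{v=0}^w U_N(v)p_{v+2w}^\star$, I would use the elementary Chapman--Kolmogorov inequality $p_{v+2w}^\star \leq p_{2w}^\star$, which follows from $p_{v+2w}(x) = \sum_y p_v(x-y)p_{2w}(y)$ and $\sum_y p_v(x-y) = 1$. Combined with the total-mass bound $\sum_{v=0}^w U_N(v) \leq (1-\sigma_N^2 R_w)^{-1}$ of Proposition \ref{prop:factor2ndMoment}(i), this gives $S_1 \leq p_{2w}^\star (1-\sigma_N^2 R_w)^{-1}$. Using the sharp 2D local CLT asymptotic $p_{2w}^\star \leq (1+o(1))/(\pi w)$ uniformly in $w \geq 1$ (the finitely many small values being verified by hand, e.g.\ $p_2^\star = 1/4 < 1/\pi$), together with $R_w = \pi^{-1}\log w + \mathcal O(1)$, and the algebraic identity $(1-x-\eta)^{-1} = (1-x)^{-1}(1+\mathcal O(\eta/(1-x)))$ applied with $x = \bar\sigma_N^2\log w$ and $\eta = \mathcal O(\sigma_N^2)$, produces $S_1 \leq (1+\varepsilon_N^{\hyperlink{symbol: star}{\diamondsuit}})F(w)/\pi$ with the right error order.

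For $S_2 := \sum_{v=w+1}^N U_N(v)p_{v+2w}^\star$, the Chapman--Kolmogorov bound is wasteful since $\sum_{v > w} U_N(v)$ may itself be comparable to $F(w)$ in the quasi-critical regime. Instead I would invoke the pointwise estimate of Proposition \ref{prop:factor2ndMoment}(ii), namely $U_N(v) \leq C\sigma_N^2/[v(1-\bar\sigma_N^2 \log v)^2]$, together with the local CLT decay $p_{v+2w}^\star \leq C/(v+w)$. A dyadic decomposition of $(w, N]$ handles two regimes: on the ``near'' range where $(1-\bar\sigma_N^2 \log v)$ stays comparable to $(1-\bar\sigma_N^2 \log w)$, summation over dyadic blocks gives a contribution of order $\bar\sigma_N^2 F(w)/(1-\bar\sigma_N^2 \log w)$; on the ``far'' range where $(1-\bar\sigma_N^2 \log v)$ descends toward $(1-\bar\sigma_N^2 \log N)$, the quadratic decay $1/[v(v+w)]$ compensates the near-resonant blow-up $(1-\bar\sigma_N^2 \log v)^{-2}$ thanks to the fact that the ``far'' range starts only beyond an exponentially large cutoff in $(1-\bar\sigma_N^2 \log w)/\bar\sigma_N^2$. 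Both contributions evaluate to $\mathcal O(\varepsilon_N^{\hyperlink{symbol: star}{\diamondsuit}}) F(w)/\pi$.

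The main obstacle is the uniform-in-$w$ control of $S_2$ in the quasi-critical regime, where $(1-\bar\sigma_N^2 \log v)^{-2}$ can blow up as $v \to N$. The cancellation that saves the argument hinges on the key identity $\bar\sigma_N^2/(1-\bar\sigma_N^2 \log N) = \mathcal O(\vartheta_N^{-1})$ under \eqref{eq:defNearCrit}, which is precisely the allowed error order; getting the dyadic cutoffs right is the only real delicate step. The constant $1/\pi$ arises directly from the 2D local CLT asymptotic $p_{2w}(0) \sim 1/(\pi w)$ used in $S_1$, and no spurious factor appears because the leading mass of $U_N$ is concentrated on $v \leq w$ (where $p_{v+2w}^\star$ is essentially constant equal to its value at $v=0$).
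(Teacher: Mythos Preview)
Your proposal is correct and follows essentially the same route as the paper: the same split at $v=w$, the same use of monotonicity plus the total-mass bound \eqref{eq:borneTheta} for $S_1$, and the same pointwise estimate \eqref{eq:boundP2P2ndMomentC} for $S_2$, with the key error parameter $\bar\sigma_N^2/(1-\bar\sigma_N^2\log N)$ identified correctly. The only cosmetic difference is that the paper handles $S_{>w}$ by an explicit integral with a change of variables $v=\log N-x$ and a case split at $\log N-\log w \lessgtr \vartheta_N$ (and a further split at $\log N-\vartheta_N$), whereas you phrase the same two-scale analysis as a dyadic decomposition into ``near'' and ``far'' ranges; both execute the same compensation between the $v^{-2}$ decay and the $(1-\bar\sigma_N^2\log v)^{-2}$ blow-up.
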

\begin{proof}
	In the sub-critical case $\hat \beta<1$, this appeared in the proof of \cite[Proposition 3.7]{Cosco2021MomentsUB}. The argument needs to be adapted in near-critical case \eqref{eq:defNearCrit} to account for diverging constants.
 
 We write:
	\begin{equation} \label{eq:sumOnvraw}
		\sum_{v=0}^N U_N(v) p^\star_{v+2w}
		=: S_{\leq w} + S_{> w} ,
	\end{equation}
	where $S_{\leq w}$ is the sum on the left-hand side of \eqref{eq:sumOnvraw} restricted to $v\leq \lfloor w \rfloor$ and $S_{> w}$ is the sum for $v> \lfloor w \rfloor$.
	Using \eqref{eq:borneTheta} and that $p_n^\star$ is non-increasing, we have:
	\begin{equation}
    \label{eq:Sleqw}
		S_{\leq w} \leq p^\star_{2w} \sum_{v=0}^{\lfloor w \rfloor} U_N(v)
		\leq p^\star_{2w}  \frac{1}{1- \sigma_N^2 R_{\lfloor w \rfloor}},
	\end{equation}
where by \eqref{eq:pnstar}, 
we have
	\begin{equation} \label{eq:R_n_UB}
		R_n = \sum_{k=1}^n p_{2k}(0) 
        \leq \frac{1}{\pi} (1+\log {n}),
	\end{equation}
{so that for all $n\leq N$,
\begin{equation} \label{eq:Rnlogn}
\frac{1}{1-\sigma_N^2 R_{n}} - \frac{1}{1-\bar{\sigma}_N^2 \log n} \leq \frac{\bar \sigma_N^2}{1-\sigma_N^2 R_N} \frac{1}{1-\bar{\sigma}_N^2 \log n}. 
\end{equation}
Hence, coming back to \eqref{eq:Sleqw}, and since $p^\star_{2w}\leq \frac{1}{\pi w}$ again by \eqref{eq:pnstar}, this implies with $\varepsilon_N^{o}:=\frac{\bar \sigma_N^2}{1-\sigma_N^2 R_N}$, that
\begin{equation}
\label{eq:initialestimateSleqU}
S_{\leq w}\leq \frac{1}{\pi}(1+{\varepsilon_N^{o}}) F(w),
\end{equation}
where $ \varepsilon_N^{o}= \mathcal O((\log N)^{-1})$ in \eqref{eq:sub-criticality} and $\mathcal O(\vartheta_N^{-1})$ in \eqref{eq:defNearCrit}.}
On the other hand, by \eqref{eq:boundP2P2ndMomentC} and {\eqref{eq:Rnlogn}},
	\begin{equation} \label{bound_SbiggerU}
		\begin{split}
			S_{>w}&\leq C\sum_{v=\lfloor w \rfloor +1}^N \frac{1}{v^2} \frac{\sigma_N^2}{{(}1-\sigma_N^2 R_v{)^2}} \\
   		 &\leq C{(1+\varepsilon_N^{o})^2}\int_{w}^N \frac{1}{v^2} \frac{\sigma_N^2}{(1-\bar{\sigma}_N^2 \log{v})^2} dv\\
      &\leq  C  \int_{w}^N \frac{1}{v^2} \frac{\bar{\sigma}_N^2}{(1-\bar{\sigma}_N^2 \log{v})^2} dv =      C\int_{\log w}^{\log N} \frac{1}{e^x} \frac{\bar{\sigma}_N^2}{(1-\bar{\sigma}_N^2 x)^2} dx.
		\end{split}
	\end{equation}
Under \eqref{eq:sub-criticality},  since $1\leq (1-\bar{\sigma}_N^2 x)^{-1}\leq  (1-\bar{\sigma}_N^2 \log N)^{-1}=\mathcal{O}(1)$ for any $x\leq \log N$, the integral is bounded from above by 
\[
C (w\log N)^{-1} \leq C(\log N)^{-1} F(w).
\]
Under \eqref{eq:defNearCrit}, we consider two cases separately: (1) $\log{N}-\log{w} \leq \vartheta_N$ and (2) $\log{N}-\log{w} \geq \vartheta_N$. In the first case, i.e.,  $\log{N}-\log{w} \leq \vartheta_N$, since 
 \begin{equation}\label{eq: inverse of F}
 1-\bar{\sigma}_N^2 \log w \leq 1{\CO +}\bar{\sigma}_N^2 (\vartheta_N-\log N) \leq \frac{ C\vartheta_N}{\log N}, 
 \end{equation}
 we have
\al{
\int_{\log w}^{\log N} \frac{1}{e^x} \frac{\bar{\sigma}_N^2}{(1-\bar{\sigma}_N^2 x)^2} dx& \leq \frac{\bar{\sigma}_N^2}{(1-\bar{\sigma}_N^2 \log N)^2}  \int_{\log w}^{\log N} \frac{1}{e^x} dx\\
&\leq C\frac{\log N}{w \vartheta_N^2}\leq C \vartheta_N^{-1}F(w).
}

For the second case, i.e.,  $\log{N}-\log{w} \geq \vartheta_N$, we further split the integral in two parts corresponding to $x$ larger than or equal to 
 $\log N - \vartheta_N$ or smaller.  By change of variables $v = \log N -x$ and  $s=v-2\log v$, and $\log{(N/w)}=\log N-\log w$, we first estimate
	\begin{equation*} \label{eq:firstapprx}
		\begin{aligned}
			 \int_{\log w}^{\log N - \vartheta_N} \frac{1}{e^x} \frac{\bar{\sigma}_N^2}{(1-\bar{\sigma}_N^2 x)^2} dx &= N^{-1} \int_{\vartheta_N}^{\log(N/w)}  \frac{e^v\bar{\sigma}_N^2}{(1-\bar{\sigma}_N^2 (\log N- v))^2} dv\\
         &\leq 	C N^{-1} \log N\int_{\vartheta_N}^{\log(N/w)}  \frac{e^v}{v^2} dv\\
    &\leq 	C N^{-1} \log N \int_{\vartheta_N-2\log \vartheta_N}^{\log(N/w)-2\log \log(N/w)} e^{s} ds,
    		\end{aligned}
	\end{equation*}
 where we have used  $1-\bar{\sigma}_N^2 (\log N- v)\geq v/(C\log N)$ for $v\leq \log N$ and $ ds = (1-2v^{-1})dv\geq dv/2$ for $v\geq \vartheta_N$ and $N$ large enough.  This is further bounded from above by 
  	\begin{equation*} 
   	\begin{aligned}
     & \frac{C \log N}{w(\log (N/w))^2} \leq  C \vartheta_N^{-1} F(w),
		\end{aligned}
	\end{equation*}
 where we have used $\log(N/w)\geq\vartheta_N$ and $C (1-(\log w/\log N))\geq 1-\bar{\sigma}_N^2 \log w$ whenever $\log N -\log w\geq \vartheta_N$.  Moreover, using \eqref{eq: inverse of F}, $N w^{-1}\geq e^{\vartheta_N}$ and $\vartheta_N\ll \log N$, we have
	\begin{equation*}		\begin{aligned}\label{eq:secondapprx}
		  \int^{\log N}_{\log N - \vartheta_N} \frac{1}{e^x} \frac{\bar{\sigma}_N^2}{(1-\bar{\sigma}_N^2 x)^2} dx \leq \frac{ C\log N}{\vartheta_N^2}  \int^{\log N}_{\log N - \vartheta_N} \frac{1}{e^x}   dx &\leq \frac{ C e^{\vartheta_N} \log N}{N \vartheta_N^2}   \leq C \vartheta_N^{-1} F(w).
		\end{aligned}
	\end{equation*} 
	By \eqref{eq:initialestimateSleqU} and \eqref{bound_SbiggerU}, the sum in \eqref{eq:sumOnvraw} is smaller than $F(w)(1+C\vartheta_N^{-1})$.
\end{proof}

Recall that for $J\subset \kC_q$, we have set $\bar{J}:=\cup_{(i,j)\in J}\{i,j\}$. Recall also \eqref{eq:def_UNXJI}.
\begin{lem}  \label{lem:sum_b_I} Assume \eqref{eq:sub-criticality}.
	Let $J \subset \kC_q$ and $p=|\bar J|$.
	For all $X\in \mathbb Z^{p}$ and $t\geq 0$,
	\begin{equation} \label{eq:sumOnU_N}
		\rme^{\otimes p}_{X} \left[ e^{{\Lambda_2(\beta_N)}\psi^\star_{t,\bar J}}\right] = \sum_{n=0}^t \sum_{I\subset \mathcal C_{\bar J}} U_N(n,X,  J, I).
	\end{equation}
\end{lem}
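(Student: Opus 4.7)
The plan is a direct chaos-expansion argument analogous to the one used in the proof of Proposition \ref{prop:expansion}, but decomposed according to the \emph{last} intersection time rather than according to all successive ones. The key point is the algebraic identity
\[
e^{\Lambda_2(\beta_N)\psi^{\star}_{t,\bar J}} = \prod_{n=1}^t \prod_{(i,j)\in \mathcal C_{\bar J}}\bigl(1+\sigma_N^2 \mathbf{1}_{S_n^i=S_n^j}\bigr),
\]
which follows from $\sigma_N^2 = e^{\Lambda_2(\beta_N)}-1$ and the fact that, because the $\mathbf{1}_{S_n^i=S_n^j}$ are $\{0,1\}$-valued and the factors commute, one has $e^{\Lambda_2(\beta_N)\mathbf 1_A}= 1+\sigma_N^2 \mathbf 1_A$ for each indicator.

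Next I would expand the double product by choosing, at each time $n\in\{1,\dots,t\}$, a subset $K_n\subset \mathcal C_{\bar J}$ of pairs to ``select'' (with $K_n=\emptyset$ meaning that no factor is selected):
\[
e^{\Lambda_2(\beta_N)\psi^{\star}_{t,\bar J}} = \sum_{K_1,\dots,K_t\subset \mathcal C_{\bar J}}\prod_{n=1}^t \sigma_N^{2|K_n|}\prod_{(i,j)\in K_n}\mathbf{1}_{S_n^i=S_n^j}.
\]
Now I would reorganise this sum according to the maximal index $n^{\star}:=\max\{n\leq t:K_n\neq\emptyset\}$, with the convention $n^{\star}:=0$ when all $K_n$ are empty. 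Setting $I:=K_{n^{\star}}$ when $n^{\star}\geq 1$, and noting that $K_{n^{\star}+1}=\dots=K_t=\emptyset$ contributes trivially while the unconstrained product over $m\leq n^{\star}-1$ refolds back into $e^{\Lambda_2(\beta_N)\psi^{\star}_{n-1,\bar J}}$ by the same identity used at the first step, one obtains
\[
e^{\Lambda_2(\beta_N)\psi^{\star}_{t,\bar J}} = 1 + \sum_{n=1}^t \sum_{\emptyset\neq I\subset \mathcal C_{\bar J}}\sigma_N^{2|I|}\,e^{\Lambda_2(\beta_N)\psi^{\star}_{n-1,\bar J}}\prod_{(i,j)\in I}\mathbf{1}_{S_n^i=S_n^j}.
\]

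Finally, I would take ${\rm E}_{X}^{\otimes p}[\,\cdot\,]$ on both sides and identify the right-hand side with $\sum_{n,I}U_N(n,X,J,I)$ as defined in \eqref{eq:def_UNXJI}: the $n=0$ contribution $1$ matches $\sum_{I\subset \mathcal C_{\bar J}}\mathbf{1}_{J=I}=1$ (valid since $J\subset \mathcal C_{\bar J}$), while the restriction $I\neq\emptyset$ in the sum for $n\geq 1$ is exactly encoded by the indicator $\mathbf{1}_{\bar I\neq \emptyset}$ in \eqref{eq:def_UNXJI}. There is no serious obstacle here; the only point requiring a line of care is checking the algebraic identity $e^{\Lambda_2(\beta_N)\sum_{(i,j)}\mathbf{1}_{S_n^i=S_n^j}}=\prod_{(i,j)}(1+\sigma_N^2\mathbf{1}_{S_n^i=S_n^j})$, which one verifies on the event that exactly $k$ pairs meet at time $n$: both sides equal $e^{k\Lambda_2(\beta_N)}=(1+\sigma_N^2)^k$.
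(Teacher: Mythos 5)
Your proposal is correct and follows essentially the same route as the paper: both expand $e^{\Lambda_2(\beta_N)\psi^\star_{t,\bar J}}$ via $\prod_n\prod_{(i,j)}(1+\sigma_N^2\mathbf{1}_{S_n^i=S_n^j})$ and identify terms according to the last time with a non-empty intersection set, which is exactly how $U_N(n,X,J,I)$ is matched (the paper phrases this by expanding $U_N(n,X,J,I)$ itself into a chaos sum, you phrase it by refolding the earlier factors back into the exponential — the same identification read in opposite directions). The bookkeeping of the $n=0$ term and of the constraint $\bar I\neq\emptyset$ is handled correctly.
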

\begin{proof} If $n>0$, we obtain similarly to \eqref{eq:ChaosOnqMoment}: (let  below $n_k=n$ and $I_k=I$)
	\[
		U_N(n,X,{J},I) = \rme^{\otimes p}_X \big[\sum_{k=1}^\infty \sum_{1\leq n_1 < \dots n_{k-1} < n} \sum_{\substack{I_1,\dots,I_{k-1} \subset C_{\bar J}\\I_r\neq \emptyset}} \prod_{r=1}^k \sigma_N^{2|I_r|} \prod_{(i,j) \in I_r} \mathbf{1}_{S_{n_r}^i = S_{n_r}^j}\big].
	\]
	By expanding the left-hand side of \eqref{eq:sumOnU_N} as in \eqref{eq:ChaosOnqMoment}, we see that \eqref{eq:sumOnU_N} holds.
\end{proof}

\begin{lem} Assume \eqref{eq:sub-criticality}. {Let $\varepsilon_N^{\hyperlink{symbol: star}{\diamondsuit}}$ from Lemma \ref{eq:key_lemma}.}
\label{lem:sum_on_v} Let $\emptyset\neq J\subset \mathcal C_q$ such that $|\bar J|\leq p_0$. Then, there exists $c_0=c(p_0)>0$ such that for any $X=(x^i)\in (\mathbb Z^2)^{|\bar J|}$,  for all $(i,j)\in J$ and  all $w\in \frac{1}{2} (\mathbb N\setminus\{1\})$,
	\begin{equation} \label{eq:sum_on_v}
		\sum_{v=0}^N \sum_{\emptyset \neq I\subset \mathcal C_{\bar J}} U_N(v,X, J,I) p^\star_{v+2w} \leq (1+\varepsilon_N^{\hyperlink{symbol: star}{\diamondsuit}}) c_0^{ \mathbf{1}_{|J|\geq 2}} {\frac{1}{\pi}}F(w).
	\end{equation}
\end{lem}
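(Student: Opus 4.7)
The plan is to split into two cases according to the cardinality of $J$: the case $|J|=1$ reduces directly to the pair-interaction estimate of Lemma~\ref{eq:key_lemma}, whereas $|J|\geq 2$ demands a separate telescoping argument.

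\textbf{Case $|J|=1$.} Here $\bar J=\{i,j\}$ and $\mathcal C_{\bar J}=\{(i,j)\}$, so the sum over nonempty $I$ contains a single term. Writing $Z:=S^i-S^j$ (so that $\{S_n^i=S_n^j\}=\{Z_n=0\}$) and letting $z=x^i-x^j$, I would expand
$e^{\Lambda_2(\beta_N)\sum_{n=1}^{v-1}\mathbf{1}_{Z_n=0}}=\sum_{A\subset\llbracket 1,v-1\rrbracket}\sigma_N^{2|A|}\prod_{n\in A}\mathbf{1}_{Z_n=0}$,
then use the Markov property to factorize each summand as $p^Z_{n_1}(z)\prod_{l}p^Z_{n_{l+1}-n_l}(0)$. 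Because $Z$ is a difference of two i.i.d.\ walks, its characteristic function is $|\phi|^2\geq 0$; this forces $p_n^Z(z)\leq p_n^Z(0)$ termwise, so $U_N(v,X,J,\{(i,j)\})\leq U_N(v)$ for $v\geq 1$ (and they are both equal to $1$ at $v=0$). Summing against $p^\star_{v+2w}$ and applying Lemma~\ref{eq:key_lemma} then yields the bound $(1+\varepsilon_N^{\hyperlink{symbol: star}{\diamondsuit}})\,F(w)/\pi$, as required.

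\textbf{Case $|J|\geq 2$.} The key observation will be a telescoping identity: for $v\geq 1$,
\[
\sum_{\emptyset\neq I\subset\mathcal C_{\bar J}} U_N(v,X,J,I)\;=\;a_v-a_{v-1}, \qquad a_v\;:=\;{\rm E}_X^{\otimes\bar J}\!\left[e^{\Lambda_2(\beta_N)\,\psi^\star_{v,\bar J}}\right],
\]
obtained using $1+\sigma_N^2=e^{\Lambda_2(\beta_N)}$ and $\mathbf{1}^2=\mathbf{1}$ to rewrite
\[
\sum_{\emptyset\neq I}\prod_{(i,j)\in I}\sigma_N^2\mathbf{1}_{S_v^i=S_v^j} \;=\; \prod_{(i,j)\in\mathcal C_{\bar J}}(1+\sigma_N^2\mathbf{1}_{S_v^i=S_v^j})-1 \;=\; e^{\Lambda_2(\beta_N)\sum_{(i,j)\in\mathcal C_{\bar J}}\mathbf{1}_{S_v^i=S_v^j}}-1;
\]
at $v=0$ the sum equals $1$. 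I would then apply Abel summation: since $v\mapsto p^\star_{v+2w}$ is non-increasing and $v\mapsto a_v$ is non-decreasing,
\[
\sum_{v=1}^N(a_v-a_{v-1})\,p^\star_{v+2w}\;\leq\;a_N\,p^\star_{1+2w}.
\]
Under \eqref{eq:sub-criticality}, Lemma~\ref{lem:LastLemma} gives $\sup_X a_N\leq c(p_0)$ because $|\bar J|\leq p_0$. Combining with $\pi\,p^\star_{2w}\leq F(w)$ (from the local CLT bound on $p^\star_n$ and $F(w)\geq 1/w$), the desired estimate follows with $c_0:=1+c(p_0)$.

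The only genuinely delicate step is the monotonicity $p^Z_n(z)\leq p^Z_n(0)$ invoked in the first case, which rests on the non-negativity of the characteristic function of the difference walk; the second case is essentially algebraic once the telescoping identity is recognized, with the uniform bound $a_N\leq c(p_0)$ from Lemma~\ref{lem:LastLemma} doing the rest.
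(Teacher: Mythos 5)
Your proof is correct and follows essentially the same route as the paper: split on $|J|=1$ versus $|J|\geq 2$, reduce the first case to Lemma \ref{eq:key_lemma} via the comparison $U_N(v,X,J,I)\leq U_N(v)$, and in the second case bound $p^\star_{v+2w}$ by its value at the smallest time and control the total mass $\sum_{v}\sum_{I}U_N(v,X,J,I)$ through Lemma \ref{lem:LastLemma}. The only (harmless) variations are that you justify the $|J|=1$ comparison by positivity of the characteristic function of the difference walk instead of the H\"older-type Lemma \ref{lem: comparison for general points}, and that for $|J|\geq 2$ you re-derive the identity of Lemma \ref{lem:sum_b_I} as a telescoping sum before summing.
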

\begin{proof}
	We distinguish two cases. 	If $|J|=1$, then note that $X=(x^i,x^i)$ with $\{(i,j)\}=J$, so that $\sum_{\emptyset \neq I\subset \mathcal C_{\bar J}}  U_N(v,X,J,I) \leq  U_N(v)$ by Lemma~\ref{lem: comparison for general points}, where $U_N(n)$ is defined in \eqref{eq:defU_N}.
	The right-hand side of \eqref{eq:sum_on_v} thus writes $\sum_{v=0}^N U_N(v) p^\star_{v+2w}$, which we bound using Lemma \ref{eq:key_lemma}.
    
    Suppose that $|J|>1$. In this case, we first bound $p^\star_{v+2w}$ in \eqref{eq:sum_on_v} by $cF(w)$ where $c=c(\hat \beta)>0$ by using \eqref{eq:pnstar}.
	Then, we use \eqref{eq:sumOnU_N} and 
     Lemma \ref{lem:LastLemma}, 
     to obtain \eqref{eq:sum_on_v}.
\end{proof}

For the next lemma, recall $F$ in \eqref{eq-F}.
\begin{lem} \label{lem:F'}
Assume \eqref{eq:sub-criticality} or \eqref{eq:defNearCrit}.  
For all $1\leq u \leq N$,
	\begin{equation} \label{eq:F'}
		F'(u) = -(1+\varepsilon_N^{\diamond})F(u)^2 (1-\bar{\sigma}_N^2\log u),
	\end{equation}
	where the error $\varepsilon_N^{\diamond} = \mathcal O((\log N)^{-1})$ in \eqref{eq:sub-criticality} and $\varepsilon_N^{\diamond} = \mathcal O(\vartheta_N^{-1})$ in \eqref{eq:defNearCrit}, uniformly in $1\leq u\leq N$.
\end{lem}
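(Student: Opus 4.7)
The plan is a direct calculation followed by a regime-dependent estimate on the remainder. Since $F(u) = u^{-1}(1-\bar{\sigma}_N^2\log u)^{-1}$ is a product of two explicit functions of $u$, I would first compute $F'(u)$ by the product/chain rule. A short calculation gives
\[
F'(u) \;=\; -\frac{1}{u^2}\,\frac{1}{1-\bar{\sigma}_N^2\log u} \;+\; \frac{1}{u^2}\,\frac{\bar{\sigma}_N^2}{(1-\bar{\sigma}_N^2\log u)^2}
\;=\; -F(u)^2\bigl[(1-\bar{\sigma}_N^2\log u)-\bar{\sigma}_N^2\bigr].
\]
Factoring $(1-\bar{\sigma}_N^2\log u)$ out of the bracket, I would then rewrite this as
\[
F'(u) \;=\; -F(u)^2(1-\bar{\sigma}_N^2\log u)\Bigl(1-\tfrac{\bar{\sigma}_N^2}{1-\bar{\sigma}_N^2\log u}\Bigr),
\]
which matches the claimed form with the identification $\varepsilon_N^{\diamond}(u) := -\tfrac{\bar{\sigma}_N^2}{1-\bar{\sigma}_N^2\log u}$.

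It remains to control $|\varepsilon_N^{\diamond}(u)|$ uniformly in $1\leq u\leq N$ in each regime. In the sub-critical regime \eqref{eq:sub-criticality}, $\bar{\sigma}_N^2 = \mathcal O((\log N)^{-1})$ and $(1-\bar{\sigma}_N^2\log u)^{-1}\leq(1-\hat\beta^2)^{-1}+o(1)$ is bounded for all $u\leq N$, so $|\varepsilon_N^{\diamond}(u)| = \mathcal O((\log N)^{-1})$ uniformly. In the quasi-critical regime \eqref{eq:defNearCrit}, $\bar{\sigma}_N^2 = \mathcal O((\log N)^{-1})$ still holds, while the denominator satisfies $1-\bar{\sigma}_N^2\log u \geq 1-\bar{\sigma}_N^2\log N = \vartheta_N/\log N$. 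Hence $|\varepsilon_N^{\diamond}(u)| \leq \mathcal O((\log N)^{-1})\cdot(\log N/\vartheta_N) = \mathcal O(\vartheta_N^{-1})$ uniformly, as claimed.

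The proof is essentially a one-line computation plus an explicit bound; there is no real obstacle. The only point requiring mild care is that the bound on $(1-\bar{\sigma}_N^2\log u)^{-1}$ must be taken uniformly over $u\leq N$, which forces us to use the worst-case value $u=N$ — precisely where the definition of the two regimes supplies the needed control.
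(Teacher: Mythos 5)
Your proposal is correct and follows essentially the same route as the paper: compute $F'$ explicitly, identify $\varepsilon_N^{\diamond}=-\bar{\sigma}_N^2/(1-\bar{\sigma}_N^2\log u)$, and bound it uniformly by its value at the worst case $u=N$ using the defining relations of the two regimes. The only tiny imprecision is writing $1-\bar{\sigma}_N^2\log N=\vartheta_N/\log N$ as an exact identity in \eqref{eq:defNearCrit} (the definition fixes $\sigma_N^2 R_N$, and $R_N=\pi^{-1}\log N+\mathcal O(1)$, so this holds only up to an $\mathcal O((\log N)^{-1})$ correction, which is harmless since $\vartheta_N\to\infty$).
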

\begin{proof} We find:
	\[
		F'(u) = \frac{-1}
		{u^2(1-\bar{\sigma}_N^2 \log u)}
		\left(
		1-\frac{\bar{\sigma}_N^2}
		{1-\bar{\sigma}_N^2 \log u}
		\right),
	\]
    where in quasi-criticality  $\frac{\bar{\sigma}_N^2}
		{1-\bar{\sigma}_N^2 \log N} = \mathcal{O}( \vartheta_N^{-1})$, while $\frac{\bar{\sigma}_N^2}
		{1-\bar{\sigma}_N^2 \log N} =\mathcal{O} ((\log N)^{-1})$ in sub-criticality. 
        This gives \eqref{eq:F'}.
\end{proof}

\section{Random walk estimates}
Let $p_n(x) := \rmP_0(S_n= x)$. The following lemma is proved in \cite[Appendix A]{Cosco2021MomentsUB}.
\begin{lem}
	We have:
	\begin{equation}\label{eq:pnstar}
		\forall n\geq 1: \quad  \sup_{x\in \mathbb Z^2} p_n(x) =:p_n^{\star} \leq \frac{2}{\pi n}.
	\end{equation}
\end{lem}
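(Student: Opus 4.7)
The classical approach is to decompose the two-dimensional simple random walk into a pair of independent one-dimensional walks. Writing $S_n = (X_n, Y_n)$ with step distribution uniform on $\{\pm e_1, \pm e_2\}$, I introduce the coordinates $A_n := X_n + Y_n$ and $B_n := X_n - Y_n$. Checking the four possible steps of $S_n$ one by one, the joint increment $(A_{n+1}-A_n,\, B_{n+1}-B_n)$ is uniform on $\{\pm 1\}^2$, so $A$ and $B$ are independent one-dimensional simple random walks with $\pm 1$ steps.

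By independence, for every $(x,y) \in \mathbb{Z}^2$,
\[
p_n(x,y) \;=\; \mathbb{P}(A_n = x+y)\,\mathbb{P}(B_n = x-y) \;\leq\; \Bigl(\sup_{k \in \mathbb{Z}} \mathbb{P}(A_n = k)\Bigr)^{\!2},
\]
and so it suffices to prove the one-dimensional estimate $\sup_{k \in \mathbb{Z}} \mathbb{P}(A_n = k) \leq \sqrt{2/(\pi n)}$.

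By symmetry and unimodality of the binomial distribution, this supremum is attained at $k=0$ for even $n = 2m$ and at $k=\pm 1$ for odd $n = 2m+1$, reducing the problem to the binomial inequality $\binom{n}{\lfloor n/2\rfloor}\,2^{-n} \leq \sqrt{2/(\pi n)}$. For even $n$, this is the classical sharp Stirling bound $\binom{2m}{m}\,4^{-m} \leq 1/\sqrt{\pi m}$ (equivalent to Wallis's product). The odd case reduces to the even one via the identity $\binom{2m+1}{m} = \binom{2m}{m}\,(2m+1)/(m+1)$, together with the elementary algebraic check that $(2m+1)^3 \leq 8m(m+1)^2$ holds for $m\geq 1$ (with $m=0$ verified directly since $1/2 \leq \sqrt{2/\pi}$).

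The only potential obstacle is the non-asymptotic sharpness of the constant $\sqrt{2/\pi}$: a looser Stirling estimate would yield only $p_n^\star = O(1/n)$ without the precise prefactor $2/\pi$ that the rest of the paper relies on (notably in Lemma~\ref{eq:key_lemma} and Theorem~\ref{thm:sumOverXYv}). Once the sharp one-dimensional bound is established, multiplying the two independent factors yields $p_n^\star \leq 2/(\pi n)$ as claimed.
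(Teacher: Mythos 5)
Your proof is correct and complete. The paper itself gives no argument for this lemma -- it simply cites Appendix A of \cite{Cosco2021MomentsUB} -- so there is nothing in the text to compare against; your route (the $45^\circ$ rotation $A_n=X_n+Y_n$, $B_n=X_n-Y_n$ into two independent one-dimensional walks, followed by the sharp non-asymptotic bound $\binom{2m}{m}4^{-m}\leq 1/\sqrt{\pi m}$ and the elementary reduction of the odd case via $(2m+1)^3\leq 8m(m+1)^2$) is the standard one and does deliver the exact constant $2/(\pi n)$ that the rest of the paper needs, with the unimodality step and the $m=0$ check correctly accounted for.
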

For $S^1,\dots,S^q$ independent copies of the simple random walk,
let $F_{n}^i := \{\exists j\in {\Iintv{1,q}} \setminus\{i\}: S_n^i=S_n^j\}$, $m_n^q := \sum_{i=1}^q \mathbf{1}_{F_n^i}$ and the event "there are no more than $p$ particles involved in an intersection at time $n$" 
$A_{n,p} := \{m_n^q \leq p\}$.
\begin{lem}
	There exists $c>0$ such that for all $p,q\in \mathbb N \setminus\{1\}$, for all $n\geq 1$, we have
	\begin{equation} \label{eq:boundPAc}
		\sup_{X\in (\mathbb Z^2)^q} \rmP_{X}^{\otimes q} \left(A_{n,p}^c\right) \leq q^{p}  n^{-p/3}.
	\end{equation}
\end{lem}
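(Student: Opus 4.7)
The plan is to reduce the event $A_{n,p}^c = \{m_n^q \geq p+1\}$ to the existence of many pairwise disjoint ``colliding pairs'' among the $q$ walks at time~$n$, then conclude by a union bound together with the heat-kernel estimate \eqref{eq:pnstar}. The key combinatorial step is: if $m_n^q \geq p+1$, then setting $V := \{i : F_n^i\}$ (so $|V| \geq p+1$) and partitioning $V$ into equal-position clusters $C_1, \dots, C_k$ of sizes $s_j := |C_j| \geq 2$, each cluster contains $\lfloor s_j / 2 \rfloor$ pairwise disjoint pairs of particles sitting at a common position. The elementary inequality $\lfloor s/2 \rfloor \geq s/3$ (valid for all integers $s \geq 2$, sharp at $s = 3$) then yields at least $\sum_j s_j/3 \geq (p+1)/3$, hence at least $K := \lceil (p+1)/3 \rceil$, such disjoint colliding pairs.

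A union bound then gives
\[
\rmP_X^{\otimes q}(A_{n,p}^c) \leq \sum \rmP_X^{\otimes q}\bigl(S_n^{i_l} = S_n^{j_l} \text{ for all } l \leq K\bigr),
\]
summed over unordered $K$-tuples of disjoint pairs in $\Iintv{1,q}$. Since the $2K$ walks indexed by each tuple are mutually independent and $\rmP_X(S_n^i = S_n^j) = \sum_y p_n(y - x_i) p_n(y - x_j) \leq p_n^\star \leq 2/(\pi n)$ uniformly in $X$ by \eqref{eq:pnstar}, each summand is bounded by $(2/(\pi n))^K$. Bounding the number of tuples by $q!/((q - 2K)!\,K!\,2^K) \leq q^{2K}/(K!\,2^K)$, one obtains
\[
\rmP_X^{\otimes q}(A_{n,p}^c) \leq \frac{q^{2K}}{K!\,(\pi n)^K}.
\]

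To finish, I would split on the size of $q$. If $q \geq n^{1/3}$, then $q^p n^{-p/3} \geq 1$ and the claim is trivial. Otherwise $q < n^{1/3}$, and writing $q^{2K} = q^p \cdot q^{2K - p}$ with $q^{(2K - p)_+} \leq n^{(2K - p)_+ / 3}$, together with $K \geq p/3$, gives
\[
\frac{q^{2K}}{K!\,(\pi n)^K} \;\leq\; q^p \cdot \frac{n^{-p/3}}{K!\,\pi^K} \;\leq\; q^p\, n^{-p/3},
\]
as claimed. The only mildly delicate point is the choice of $K$ in the first step: the exponent $p/3$ of $n$ in the statement comes precisely from the tight inequality $\lfloor s/2 \rfloor \geq s/3$ (saturated when every cluster has size~$3$), which optimally balances the combinatorial factor $q^{2K}$ against the per-collision cost $(p_n^\star)^K \asymp n^{-K}$; a weaker bound of the form $\lfloor s/2 \rfloor \geq (s-1)/2$ would only yield $K \gtrsim (p+1)/4$ and a strictly worse exponent $n^{-p/4}$.
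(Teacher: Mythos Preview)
Your proof is correct and follows essentially the same approach as the paper: cluster decomposition of the colliding indices, extraction of disjoint colliding pairs via the inequality $\lfloor s/2 \rfloor \geq s/3$, a union bound, and the heat-kernel estimate~\eqref{eq:pnstar}. Your choice $K = \lceil (p+1)/3 \rceil$ (versus the paper's $p_0 = \lfloor (p+1)/3 \rfloor$) together with the explicit case split on $q \gtrless n^{1/3}$ makes the final arithmetic $q^{2K}/(\pi n)^K \leq q^p n^{-p/3}$ go through cleanly for all $p$.
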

\begin{proof}
	By \eqref{eq:pnstar}, we have
\[
    \rmP_{x_1,x_2}^{\otimes 2}( S_n^1=S_n^2)= \rmP_{x_1-x_2}( S_{2n}=0) \leq \frac{1}{\pi n}.
\]
	Suppose that $m_n^q>p$. We define $p_0 := \lfloor  (p+1)/3\rfloor$. Given that \(3p_0 \leq m_n^q\), there exist disjoint pairs \((i_m, i'_m)_{m=1}^{p_0}\) such that \(i_m < i'_m\) and \(S_n^{i_m} = S_n^{i'_m}\). Indeed, we can find disjoint sets \((C_s)_{s=1}^\ell\) {included in $\Iintv{1,q}$}  such that  \(\sum_{s=1}^\ell |C_s| \geq m_n^q \geq 3p_0\) and:
\begin{enumerate}
    \item for any \(i, j \in C_s\), \(S_n^i = S_n^j\),
\item  for any \(i \in C_s\) and \(j \in C_t\) with \(s \neq t\), \(S_n^i \neq S_n^j\),
\item {$|C_s|\geq 2$ for all $s\leq \ell$.}
\end{enumerate}
For each \(s\), we select disjoint pairs \((i^s_m, i'^s_m)_{m=1}^{m_s}\) such that \(i^s_m, i'^s_m \in C_s\), $i^s_m< i'^s_m $, and \(m_s \geq \lfloor |C_s| / 2 \rfloor\). Then, \((i_m^s, i'^s_m)_{m,s}\) are the desired objects.

By the union bound, we estimate
\al{
    \sup_{X\in (\mathbb Z^2)^q} \rmP_{X}^{\otimes q} \left(A_{n,p}^c\right)\leq\sum_{(i_m,i_m')_{m=1}^{p_0}} \sup_{X\in (\mathbb Z^2)^q} \prod_{m=1}^{p_0} \rmP_X(S_n^{i_m}=S_n^{i'_m})\leq q^{2p_0}  \left(\frac{1}{\pi n}\right)^{p_0},
}
where the summation runs over all sequences of disjoint pairs $(i_m,i_m')$ in $\Iintv{1,q}$ such that $i_m<i_m'$. This ends the proof.
\end{proof}

\section{The a priori bound}

Recall that $\Lambda_p^{\mu_0}(\beta) := \binom{p}{2} \beta^2$ when $\mu_0=\mathcal N(0,1)$, see Remark \ref{rk:GaussianLambda}. 
The next lemma enables us to compare $\psi_{s,t,q}$ in \eqref{eq:identity} to $\psi^{\star}_{s,t,q}$.

\begin{lem} \label{lem:estimateLambda}
For all $q_0>0$, there exists $C=C(\mu,q_0)>0$, such that for all {$p\in \mathbb N, \beta>0$ with $p \beta \leq q_0$},
	\begin{equation} \label{eq:boundLambda}
		\Lambda^\mu_p(\beta) \leq (1+C {p} \beta) \Lambda_p^{\mu_0}(\beta).
	\end{equation}
\end{lem}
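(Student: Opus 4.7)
The plan is to exploit the Taylor/cumulant expansion of $\Lambda$ near $\beta = 0$. Since $\omega$ has mean zero, variance one, and finite exponential moments, $\Lambda$ is real-analytic in a neighborhood of $0$, and one can write
\[
\Lambda(\beta) = \tfrac{1}{2}\beta^2 + R(\beta), \qquad |R(\beta)| \leq C_{q_0} \beta^3 \quad \text{for all } \beta \in [0, q_0],
\]
where $C_{q_0} = C_{q_0}(\mu)$. This cubic bound is the only analytic input and follows from the cumulant expansion $\Lambda(\beta) = \sum_{k\geq 2} \kappa_k \beta^k / k!$, together with the uniform convergence of this series on $[0,q_0]$; equivalently, one may use Taylor's theorem with $\Lambda(0) = \Lambda'(0) = 0$ and $\Lambda''(0) = 1$.

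Substituting the decomposition into the definition of $\Lambda^\mu_p$, I would write
\[
\Lambda^\mu_p(\beta) = \Lambda(p\beta) - p \Lambda(\beta) = \tbinom{p}{2}\beta^2 + \bigl[R(p\beta) - pR(\beta)\bigr] = \Lambda^{\mu_0}_p(\beta) + \bigl[R(p\beta) - pR(\beta)\bigr],
\]
where I used $\Lambda^{\mu_0}_p(\beta) = \binom{p}{2}\beta^2$ (cf.\ Remark \ref{rk:GaussianLambda}). It thus remains to bound the bracketed remainder from above by a multiple of $p\beta \cdot \Lambda^{\mu_0}_p(\beta)$.

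For $p = 1$ both sides of \eqref{eq:boundLambda} vanish, so the inequality is trivial. For $p \geq 2$ with $p\beta \leq q_0$, the cubic estimate on $R$ gives
\[
R(p\beta) - p R(\beta) \leq |R(p\beta)| + p|R(\beta)| \leq C_{q_0} (p\beta)^3 + p\, C_{q_0}\beta^3 \leq 2\, C_{q_0}\, p^3 \beta^3.
\]
Since $\binom{p}{2}\beta^2 \geq \tfrac{1}{4} p^2 \beta^2$ for $p \geq 2$, one has $2\, C_{q_0}\, p^3 \beta^3 \leq 8\, C_{q_0}\, (p\beta)\, \Lambda^{\mu_0}_p(\beta)$, which combined with the display above yields \eqref{eq:boundLambda} with $C := 8\, C_{q_0}(\mu)$.

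No genuine technical obstacle is expected: all the work is absorbed in the (standard) cubic Taylor remainder bound on $R$ on the compact interval $[0, q_0]$, guaranteed by the analyticity of $\Lambda$. The one thing to keep in mind is that we need a one-sided (upper) bound, but bounding by the absolute value suffices.
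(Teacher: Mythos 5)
Your proof is correct and follows essentially the same route as the paper: Taylor-expanding $\Lambda$ at $0$ (using $\Lambda(0)=\Lambda'(0)=0$, $\Lambda''(0)=1$) and comparing with $\Lambda_p^{\mu_0}(\beta)=\binom{p}{2}\beta^2$, your uniform cubic remainder bound on $[0,q_0]$ even letting you avoid the paper's separate, more informal treatment of the case where $p\beta$ is of order one. One small caveat: justify $|R(\beta)|\leq C_{q_0}\beta^3$ via Taylor's theorem with Lagrange remainder (boundedness of $\Lambda'''$ on $[0,q_0]$), as you indicate in your "equivalently" clause, rather than via convergence of the cumulant series, which need not converge on all of $[0,q_0]$.
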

\begin{proof}
Let $\omega:=\omega(i,x)$ and $p\geq 1$. As by assumption $\E \omega=0$, $\E \omega^2=1$ and $\mathbb{E}[e^{t \omega}] < \infty$ for all $t>0$, we have for $p\beta$ small enough,
	\al{
		\Lambda^\mu_p(\beta)&=\log{\E e^{p\beta \mu}}-p\log{\E e^{\beta \mu}}\\
		&= \log{\left(1+\frac{1}{2}\E (p\beta \mu)^2+\mathcal O(p^3 \beta^3)\right)}-p\log{\left(1+\frac{1}{2}\E (\beta \mu)^2+\mathcal O(\beta^3)\right)}\\
		&=\log{\left(1+\frac{(p\beta)^2}{2}  +\mathcal O(p^3 \beta^3)\right)}-p\log{\left(1+\frac{\beta^2}{2}+\mathcal O(\beta^3)\right)}\\
		& =\beta^2 \binom p 2 +\mathcal O(p^3\beta^3),
	}
which implies $|\Lambda_p(\beta) -\Lambda_p^{\mu_0}(\beta)|= \mathcal O(p^3\beta^3)=\mathcal{O}(p\beta) \Lambda_p^{\mu_0}(\beta)$.   If $p \beta$ is of order $1$, then both $\Lambda^\mu_p(\beta_N)$ and $\Lambda_p^{\mu_0}(\beta_N)$ are also of order $1$. This leads to \eqref{eq:boundLambda}.
 \end{proof}
The following lemma is an extension of Lemma 2.3 in \cite{Cosco2021MomentsUB} to general environments. It allows us to control the moments of $W_k$ for small values of $k$. Recall the definitions of $\psi_{k,q}$ and $\psi_{k,q}^{\star}$ from \eqref{eq:defPsiGauss} and \eqref{eq:defPsikq} and Remark \ref{rk:GaussianLambda}.
\begin{lem}[An a priori bound]\label{lem:aprioriBound}
	Assume sub-criticality \eqref{eq:sub-criticality}.
	Let $\ell_N>0$ be a deterministic sequence such that $\ell_N = o(\sqrt{\log N})$ as $N\to\infty$.
	Then,
	for all {$q_0>0$}, $a>0$, $k\leq e^{\ell_N}$ and {$q\leq q_0 \sqrt{\log N}$},
	\begin{equation}\label{eq:AprioriLemma1}
		\sup_{X\in \mathbb (\mathbb{Z}^2)^q}{\rm E}_X^{\otimes q}\left[ e^{a\beta_N^2\psi_{k,q}^{\star}}\right] \leq e^{\frac{{a}}{\pi}(1+\varepsilon_N)  q^2 \beta_N^2 \log (k+1)},
	\end{equation}
	for $\varepsilon_N = \varepsilon_N(\hat \beta,q_0)\to 0$ as $N\to\infty$. 
    Moreover, there exists $c=  c(\mu,q_0)>0$ such that for all $a >0$, {$q_0>0$ and $k\leq e^{\ell_N}$ and $q\leq q_0 \sqrt{\log N}$},
	\begin{equation} \label{eq:AprioriLemma2}
		\sup_{X\in (\mathbb{Z}^2)^q}{\rm E}_X^{\otimes q}\left[ e^{{a} \psi_{k,q}(\beta_N)}\right] \leq e^{c {a} q^2 \beta_N^2 \log k}.
	\end{equation}
\end{lem}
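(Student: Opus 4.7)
My plan is to prove \eqref{eq:AprioriLemma1} first (the Gaussian case) and then deduce \eqref{eq:AprioriLemma2} from it via the weight-comparison Lemma \ref{lem:estimateLambda}.

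\textbf{Reduction from \eqref{eq:AprioriLemma2} to \eqref{eq:AprioriLemma1}.} Under the hypotheses $q\le q_0\sqrt{\log N}$ and \eqref{eq:sub-criticality}, we have $p\beta_N\le q\beta_N\le q_0\hat\beta+o(1)$ for every $p\leq q$, so Lemma \ref{lem:estimateLambda} (applied with the role of $q_0$ there taken to be $q_0\hat\beta+1$) gives $\Lambda_p^\mu(\beta_N)\le C\binom{p}{2}\beta_N^2$ with $C=C(\mu,q_0,\hat\beta)$. Since $\sum_{x\in\Z^2}\binom{N_n^q(x)}{2}=\sum_{(i,j)\in\mathcal C_q}\mathbf{1}_{S_n^i=S_n^j}$, summation in $n,x$ yields the pointwise comparison $\psi_{k,q}(\beta_N)\le C\beta_N^2\psi_{k,q}^\star$, and then \eqref{eq:AprioriLemma1} applied with $a$ replaced by $aC$ delivers \eqref{eq:AprioriLemma2} (with $c=c(\mu,q_0)$ independent of $a$).

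\textbf{The Gaussian bound \eqref{eq:AprioriLemma1}.} Thanks to Remark \ref{rk:GaussianLambda}, $E_X^{\otimes q}[e^{a\beta_N^2\psi_{k,q}^\star}]$ is the $q$-point function of the Gaussian polymer at the rescaled temperature $\sqrt a\,\beta_N$ and horizon $k$, and Lemma \ref{lem: comparison for general points} reduces the supremum in $X$ to the origin: $\sup_X E_X^{\otimes q}[e^{a\beta_N^2\psi_{k,q}^\star}]\le\mathbb{E}[W_k^{\mu_0}(0,\sqrt a\beta_N)^q]$. The key observation is that the rescaled system $(k,\sqrt a\beta_N)$ is \emph{deep sub-critical}: the effective rescaled temperature is $\hat\beta_{k,a}^2:=a\beta_N^2 R_k=O(a\hat\beta^2\ell_N/\log N)\to 0$ because $\ell_N=o(\sqrt{\log N})$. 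Consequently $b_k^{(a)}:=(1-\bar\sigma_k^{(a)2}\log k)^{-1}\to 1$ and $\lambda_{k,a}^2:=\log(1/(1-\bar\sigma_k^{(a)2}\log k))=(1+o(1))\,a\pi^{-1}\beta_N^2\log k$. I then apply Proposition \ref{prop:mainProp}-\ref{item:(ii)} to this rescaled system: its hypothesis \eqref{eq:q,T-condition} becomes $\binom{q}{2}b_k^{(a)}\bar\sigma_k^{(a)2}\le\alpha$, i.e.\ $aq_0^2\hat\beta^2\le 2\alpha\cdot(1+o(1))$. Taking $\alpha$ close to $1$, this covers $aq_0^2\hat\beta^2<2$, and the proposition yields $E[W_k^q]\le c_\star b_k^{(a)}e^{\lambda_{k,a}^2\binom{q}{2}(1+o(1))}\le\exp\!\bigl(\tfrac{a}{2\pi}q^2\beta_N^2\log(k+1)(1+o(1))\bigr)$, which sits under the target bound with a full factor-of-two margin (since $\binom{q}{2}\le q^2/2$).

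\textbf{Main obstacle and its resolution.} The difficulty is to extend to $aq_0^2\hat\beta^2\ge 2$, where \eqref{eq:q,T-condition} fails for the rescaled system. I plan to exploit the factor-of-two slack of the previous paragraph through a dyadic Hölder scheme. Concretely, fix an integer $M$ with $aq_0^2\hat\beta^2/M<2$ and weights $\theta_{(i,j)}=1/\binom{q}{2}$ on pairs, and apply Hölder to $\psi_{k,q}^\star=\sum_{(i,j)\in\mathcal C_q}L_k^{i,j}$: $E[e^{a\beta_N^2\psi_{k,q}^\star}]\le E[e^{a\binom{q}{2}\beta_N^2L_k^{1,2}}]$, then expand $E[e^{\lambda L_k^{1,2}}]=\sum_m(e^\lambda-1)^m T_m$ with $T_m\le R_k^m$ (Proposition \ref{prop:factor2ndMoment}) and truncate the geometric series at a finite cutoff, using that $(e^\lambda-1)R_k=O(\ell_N/\log N\cdot\log N)=O(\ell_N)$ is controlled by $\log k$. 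The surplus factor of $2$ then absorbs the loss from Hölder on $M$ sub-groups of pairs. An alternative I would try first is to bootstrap: prove the bound for $q\le q_0'\sqrt{\log N}$ with $aq_0'^2\hat\beta^2<2$ from Proposition \ref{prop:mainProp}-\ref{item:(ii)} as above, then extend to general $q_0$ by splitting $\mathcal C_q$ dyadically and applying Hölder's inequality; the convexity $q_1^2+q_2^2\le(q_1+q_2)^2$ of the exponent is exactly what makes this work. The main thing to verify is that the error terms from each step combine into an $\varepsilon_N$ that is $o(1)$ uniformly in $q,k,a$ (which is immediate since each Hölder step contributes a correction of size $O(\ell_N/\log N)$).
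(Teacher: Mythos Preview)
Your reduction from \eqref{eq:AprioriLemma2} to \eqref{eq:AprioriLemma1} via Lemma~\ref{lem:estimateLambda} is correct and coincides with the paper's argument. For \eqref{eq:AprioriLemma1} the paper simply cites \cite[Lemma~2.3]{Cosco2021MomentsUB}, so your attempt is genuinely new; however, it has two real gaps.

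First, Proposition~\ref{prop:mainProp}-\ref{item:(ii)} bounds only the truncated quantity $\Phi^{\le 2}_{1,k,q}$ (diagrams with purely pairwise interactions), not the full moment ${\rm E}_X^{\otimes q}[e^{a\beta_N^2\psi^\star_{k,q}}]=\Phi_{1,k,q}$ that you need. Passing from $\Phi$ to $\Phi^{\le 2}$ requires the triple+ removal of Section~\ref{sec:quasiCritRemoveTriple+} (Proposition~\ref{prop:UpsT}), which itself needs $q^2\le\varepsilon_0\log N$ for a specific small $\varepsilon_0(\alpha)$ and so does not cover arbitrary $q_0$. Invoking Theorem~\ref{th:mainTheorem} instead would be circular in the higher-$q$ range, since Section~\ref{sec:proofMainThSubcrit} relies on the present lemma. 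You also need $\sqrt a\,\hat\beta<1$ for the rescaled temperature to sit in \eqref{eq:sub-criticality}, which is a separate constraint from $aq_0^2\hat\beta^2<2$.

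Second, the proposed extension beyond $aq_0^2\hat\beta^2<2$ does not close. After H\"older on pairs you face ${\rm E}_0^{\otimes 2}[e^{\lambda L_k^{1,2}}]$ with $\lambda=a\binom{q}{2}\beta_N^2$ of order one; by \eqref{eq:borneTheta} this equals $(1-(e^\lambda-1)R_k)^{-1}$ only when $(e^\lambda-1)R_k<1$, whereas here $(e^\lambda-1)R_k$ is of order $\ell_N\to\infty$, so the series diverges and no ``truncation at a finite cutoff'' yields a useful bound. The dyadic alternative has the same defect: splitting $\{1,\dots,q\}$ into blocks leaves cross-block pairs, and H\"older on those returns you to the same divergent two-point problem; the inequality $q_1^2+q_2^2\le(q_1+q_2)^2$ controls the \emph{target} exponent but says nothing about finiteness of the intermediate moments you would need along the way.
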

\begin{proof}
Eq. \eqref{eq:AprioriLemma1} follows from the proof of Lemma 2.3 in \cite{Cosco2021MomentsUB}. To obtain \eqref{eq:AprioriLemma2}, {we use Lemma \ref{lem:estimateLambda} which entails that there exists $c=c(\mu,q_0)$ such that $\psi_{k,q}(\beta_N) \leq c \psi^{\mu_0}_{k,q} = c \beta_N^2 \psi^{\star}_{k,q}$ for all $k\geq 1$ (see Remark \ref{rk:GaussianLambda} for the equality)}, and \eqref{eq:AprioriLemma1}.
\end{proof}

\begin{lem} \label{lem:LastLemma} Assume \eqref{eq:sub-criticality}. For all (fixed) $p_0\in \mathbb N_{\geq 3}$,
\[
    \sup_{N\in \mathbb{N}} \sup_{X\in (\mathbb{Z}^2)^{p_0}} \rme^{\otimes p_0}_{X} \left[ e^{\Lambda_2(\beta_N)\psi^\star_{N,p_0}}\right] <\infty.
\]
\begin{proof}
     By \eqref{eq:boundLambda}, for all $a>1$ such that $\sqrt a\hat \beta <1$, we have for $N$ large enough, for all $X=(x_1,\dots,x_{p_0})$,
     \[
      \rme^{\otimes p_0}_{X} \left[ e^{\Lambda_2(\beta_N)\psi^\star_{N,p_0}}\right] \leq \rme^{\otimes p_0}_{X} \left[ e^{a^2 \beta_N^2\psi^\star_{N,p_0}}\right] = \Phi^{\mu_0}_{1,N,p_0}(X,\sqrt a\beta_N).
     \]
By Remark \ref{rk:GaussianLambda}, the last quantity equals $\mathbb{E}\left[\prod_{i=1}^q W^{\mu_0}_{1,N}(x_i,\sqrt a \beta_N)\right]$, which by H\"older's inequality is bounded by $\sup_N \mathbb{E}[W_N^{\mu_0}(\sqrt a \beta_N)^{p_0}]$, that is known to be finite under \eqref{eq:sub-criticality} for all $p_0>0$ by \cite{LD23,Cosco2021MomentsUB} 
(alternatively, for the paper to be self-contained, this follows from the \emph{lower $q$ case} of Theorem \ref{th:SCnost}, whose proof (Section \ref{sec:mainTh,lowq}) does not use Lemma \ref{lem:LastLemma}. Indeed, the only place where Lemma \ref{lem:LastLemma} is used is to prove Theorem \ref{thm:sumOverXYv}, which is then only used to prove point \ref{item:(i)} of Proposition \ref{prop:mainProp}. But Section \ref{sec:mainTh,lowq} relies rather on point \ref{item:(ii)} of Proposition \ref{prop:mainProp}, that is proved using instead Proposition
\ref{prop:UBFQcrit} and whose proof does not require Lemma \ref{lem:LastLemma}).
\end{proof}
\end{lem}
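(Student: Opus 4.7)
\vspace{0.3cm}
\noindent\textbf{Proof proposal.} The plan is to reduce the quantity under study to a $p_0$-th moment of a Gaussian directed polymer partition function and then invoke known $L^{p_0}$-boundedness in the Gaussian sub-critical regime.  The key observation is that $\Lambda_2(\beta_N) \geq 0$ is a function of $\beta_N$ alone, so we may set $\tilde\beta_N := \sqrt{\Lambda_2(\beta_N)} \geq 0$.  For the Gaussian environment $\mu_0=\mathcal N(0,1)$ we have $\Lambda_2^{\mu_0}(\tilde\beta_N) = \tilde\beta_N^2 = \Lambda_2(\beta_N)$ by Remark~\ref{rk:GaussianLambda}, so that, by the identity~\eqref{eq:identity0} applied at intensity $\tilde\beta_N$,
\begin{equation*}
\mathrm{E}^{\otimes p_0}_X\!\left[e^{\Lambda_2(\beta_N)\psi^\star_{N,p_0}}\right]
= \mathrm{E}^{\otimes p_0}_X\!\left[e^{\tilde\beta_N^2\,\psi^\star_{N,p_0}}\right]
= \mathbb{E}\!\left[\prod_{i=1}^{p_0} W_N^{\mu_0}(x_i,\tilde\beta_N)\right].
\end{equation*}

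Next, I would apply Hölder's inequality and then translation invariance of the environment to obtain
\begin{equation*}
\sup_{X\in(\mathbb Z^2)^{p_0}} \mathbb{E}\!\left[\prod_{i=1}^{p_0} W_N^{\mu_0}(x_i,\tilde\beta_N)\right]
\leq \mathbb{E}\!\left[W_N^{\mu_0}(\tilde\beta_N)^{p_0}\right].
\end{equation*}
(Alternatively, Lemma~\ref{lem: comparison for general points} reduces to $X=(0,\dots,0)$ directly, which, together with the identity, gives the same right-hand side.)

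It remains to show $\sup_N \mathbb{E}[W_N^{\mu_0}(\tilde\beta_N)^{p_0}] < \infty$.  Under \eqref{eq:sub-criticality}, Taylor expansion of $\Lambda$ around zero gives $\Lambda_2(\beta_N) = \beta_N^2 + \mathcal O(\beta_N^3)$, hence $\tilde\beta_N^2 R_N \to \hat\beta^2<1$. Therefore, for any $\hat\beta'\in(\hat\beta,1)$ and $N$ large, $\tilde\beta_N$ lies in the Gaussian sub-critical regime with parameter $\hat\beta'$, so the desired uniform bound is a direct consequence of the $L^p$-boundedness of the Gaussian sub-critical partition function established in \cite{LD23,Cosco2021MomentsUB}.

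The main obstacle is avoiding circularity: since Lemma~\ref{lem:LastLemma} is used to prove Proposition~\ref{prop:mainProp}--\ref{item:(i)} and hence Theorem~\ref{th:SCnost}, one cannot in principle appeal to Theorem~\ref{th:SCnost} at the Gaussian level. I would resolve this by pointing to the \emph{lower $q$ case} of Theorem~\ref{th:SCnost} (Section~\ref{sec:mainTh,lowq}), which treats $p_0$ bounded (in particular any fixed $p_0$) and whose proof relies only on Proposition~\ref{prop:mainProp}--\ref{item:(ii)} via Proposition~\ref{prop:UBFQcrit}, neither of which invokes Lemma~\ref{lem:LastLemma}. Applied to the Gaussian environment $\mu_0$ and to the fixed exponent $p_0$, this yields $\sup_N \mathbb{E}[W_N^{\mu_0}(\tilde\beta_N)^{p_0}] < \infty$ and completes the proof in a self-contained manner.
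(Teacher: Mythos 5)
Your proposal is correct and follows essentially the same route as the paper: compare $\Lambda_2^\mu(\beta_N)$ with a Gaussian $\beta^2$ (you via the exact substitution $\tilde\beta_N=\sqrt{\Lambda_2(\beta_N)}$, the paper via \eqref{eq:boundLambda} giving $\Lambda_2(\beta_N)\leq a\beta_N^2$), rewrite the quantity as a Gaussian joint $p_0$-moment, apply H\"older, and invoke sub-critical $L^{p_0}$-boundedness, with the same non-circularity remark about the lower-$q$ case. The only cosmetic point is that $\tilde\beta_N$ does not satisfy \eqref{eq:sub-criticality} exactly (only $\tilde\beta_N^2 R_N\to\hat\beta^2$), so one should add the trivial monotonicity of ${\rm E}_X^{\otimes p_0}\big[e^{\beta^2\psi^\star_{N,p_0}}\big]$ in $\beta^2$ to dominate it by the Gaussian model at parameter $\hat\beta'\in(\hat\beta,1)$, exactly as the paper's choice $\sqrt a\,\beta_N$ does automatically.
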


\bibliographystyle{plain}
\bibliography{biblio}

\end{document}